\numberwithin{figure}{section}
\numberwithin{equation}{section}
\theoremstyle{thmstyletwo}%
\newtheorem{theorem}{Theorem}[section]
\newtheorem{lemma}[theorem]{Lemma}
\newtheorem{remark}[theorem]{Remark}%
\begin{document}

\DOI{DOI HERE}
\copyrightyear{2024}
\vol{00}
\pubyear{2024}
\access{Advance Access Publication Date: Day Month Year}
\appnotes{Paper}
\copyrightstatement{Published by Oxford University Press on behalf of the Institute of Mathematics and its Applications. All rights reserved.}
\firstpage{1}

%\subtitle{Subject Section}

\title[Exponential time differencing methods for the matrix-valued Allen--Cahn equation]{On the maximum bound principle and energy dissipation of exponential time differencing methods for the matrix-valued Allen--Cahn equation}

\author{Yaru Liu
\address{\orgdiv{School of Mathematical Sciences}, \orgname{University of Electronic Science and Technology of China}, \orgaddress{\state{Sichuan}, \country{China}}\&\orgdiv{College of Mathematics and System Science}, \orgname{Xinjiang University}, \orgaddress{ \state{Urumqi}, \country{China}}}}
\author{Chaoyu Quan
\address{\orgdiv{School of Science and Engineering}, \orgname{The Chinese University of Hong Kong, Shenzhen}, \orgaddress{\state{Guangdong}, \country{China}}}}
\author{Dong Wang
\address{\orgdiv{School of Science and Engineering}, \orgname{The Chinese University of Hong Kong, Shenzhen}\& Shenzhen International Center for Industrial and Applied Mathematics, Shenzhen Research Institute of Big Data, \orgaddress{\state{Guangdong}, \country{China}}} }

\authormark{Y. Liu, C. Quan, and D. Wang}

%\corresp[*]{Corresponding author: \href{quanchaoyu@cuhk.edu.cn}{quanchaoyu@cuhk.edu.cn}}

\received{Date}{0}{Year}
\revised{Date}{0}{Year}
\accepted{Date}{0}{Year}

%\editor{Associate Editor: Name}

\abstract{This work delves into the exponential time differencing (ETD) schemes for the matrix-valued Allen--Cahn equation. In fact,  the maximum bound principle (MBP) for the first- and second-order ETD schemes is presented in a prior publication [SIAM Review, 63(2), 2021], assuming a symmetric initial matrix field.
Noteworthy is our novel contribution, demonstrating that the first- and second-order ETD schemes for the matrix-valued Allen--Cahn equation—both being linear schemes—unconditionally preserve the MBP, even in instances of nonsymmetric initial conditions. Furthermore, we prove that these two ETD schemes preserve the energy dissipation law unconditionally for the matrix-valued Allen--Cahn equation, and  their convergence analysis is also provided. 
Some numerical examples are presented to verify our theoretical results and to simulate the evolution of corresponding matrix fields.}
\keywords{Matrix-valued Allen--Cahn equation; Exponential time differencing schemes; Maximum bound principle; Energy dissipation law.}

% \boxedtext{
% \begin{itemize}
% \item Key boxed text here.
% \item Key boxed text here.
% \item Key boxed text here.
% \end{itemize}}
\allowdisplaybreaks
\sloppy
\maketitle

%\begin{AMS}
%  
%\end{AMS}

\section{Introduction}

Smooth matrix field values have been widely used in various fields, such as inverse problems in image analysis \cite{batard2014covariant,rosman2014augmented}, directional field synthesis problems arising in geometry processing and computer graphics \cite{vaxman2016directional}, and the study of polycrystals \cite{elsey2013simple,elsey2014fast}.

In this paper, we consider the matrix-valued Allen--Cahn equation \cite{osting2020diffusion} with the periodic boundary condition:
   \begin{equation}\label{eq:mac}
     \begin{aligned}
       \begin{cases}
          U_t=\varepsilon^2\Delta U+f(U),\quad t\in(0,T],~\mathbf x \in \Omega,\\
          U|_{t=0}=U_0,
        \end{cases}
     \end{aligned}
   \end{equation}
where $U(t,\mathbf x )\in \mathbb{R}^{m\times m}$ is a real matrix-valued field with $m\geq 2$, $U_0$ is the initial condition, $\Omega$ is taken to be $[-\frac 12,\frac 12]^m$ for simplicity, $T>0$ is a finite time, $\varepsilon>0$ represents the interface width of two phases, and the nonlinear term $f(U)$ is defined as $$f(U)=U-UU^\text{T}U.$$
Obviously, \eqref{eq:mac} is the $L^2$ gradient flow for the energy 
\begin{equation}
    \begin{aligned}\label{energy}
        E(U)&=\int_{\Omega}\left(\frac{\varepsilon^2}{2}\| \nabla 
        U\|_F^2+\left\langle F(U),I\right\rangle_F\right)d\mathbf x ,
    \end{aligned}
\end{equation}
where $\|\nabla U\|_F^2=\sum_{i,j=1}^m|\nabla U_{ij}|^2$, $F(U)=\frac{1}{4}(U^\text{T}U-I)^2$, $I$ is the $m\times m$ identity matrix, $\|\cdot\|_F$ denotes the Frobenius norm, and $\langle\cdot,\cdot\rangle_F$ represents the Frobenius inner product. 

The energy \eqref{energy} can be considered as a model problem for crystallography, where one considers a field that takes values in $SO(3)$ modulo the symmetry group of the crystal. This is also a model problem for the three-dimensional cross field design problem \cite{viertel2019approach,golovaty2021variational}. This problem is related to problems in rigid motion planning, where one tries to find a time-dependent trajectory, $u: [t,s] \rightarrow T$, where the function takes values in a set that describes admissible rigid motions, such as $T = SO(3)$ \cite{sciavicco2001modelling}. The similar idea can also be extended to a wide range of target-valued harmonic maps, which include applications in target-valued image denoising \cite{osting2020diffusion2}, optimal partition problems \cite{wang2022efficient,wang2019diffusion} and so on.

Cross fields are finding wide-spread use in mesh generation and computer graphics \cite{golovaty2021variational}. For example, in two dimensions (or on surfaces in three dimensions), quad meshes can be obtained by finding proper parametrization based on a cross field \cite{li2012all}. How to find a harmonic orthogonal matrix-valued field plays a crucial role in looking for the harmonic cross field within a fixed arbitrary domain. The matrix-valued Allen--Cahn equation was firstly introduced in \cite{osting2020diffusion} to look for the orthogonal matrix-valued harmonic maps in periodic domains or closed surfaces. They further studied the asymptotic dynamics of the interface raised from matrix-valued Allen--Cahn equation at different time scales and performed several numerical experiments at different time scales \cite{wang2019interface}. Here, the interface is implicitly determined by regions where the determinant of the matrix is positive or negative. In the first time scale, the interface moves along its normal direction by its mean curvature, which is consistent with scale Allen--Cahn equations. However, in the second scale, the motion of the interface is found to be dependent on the matrix-valued field. 
In \cite{wang2019interface}, Wang, Osting, and Wang formally derived that the motion law of the interface is determined by the surface diffusion of the matrix-valued field. It then draws a lot of attention from both theoretical and numerical points of view. 
Li, Quan, and Xu investigated the matrix-valued Allen--Cahn equation using a Strang operator splitting method \cite{li2022stability2}. They established a sharp maximum principle and modified energy dissipation under mild splitting step constraints. 
In the work \cite{du2021maximum}, Du, Ju, Li et al. studied first- and second-order exponential time differencing (ETD) schemes for the matrix-valued Allen--Cahn equation satisfying the maximum bound principle (MBP). Notably, their analysis requires the initial matrix field to be symmetric, a condition that might be not met in a general situation. 
Similarly, Sun and Zhou delved into a fourth-order integrating factor Runge--Kutta (IFRK) scheme for the matrix-valued Allen--Cahn equation \cite{sun2023maximum}, that also requires the initial matrix field to be symmetric.
Recently, Fei, Lin, Wang, et al. rigorously analyzed the sharp interface limit of the matrix-valued Allen--Cahn equation and proved that the sharp interface system is a two-phase flow system: the interface evolves according to motion by mean curvature \cite{fei2023matrix}.

The matrix-valued Allen--Cahn is a generalization of the  scalar Allen--Cahn equation, that is also a phase-field model \cite{chen2002phase,kim2012phase,penrose1990thermodynamically}. It is well-known that the Allen--Cahn equation preserves both the MBP and the energy dissipation law. The MBP means that if the absolute value of the initial data is bounded by some constant, then the solution of the equation at any moment is controlled by that constant. 
The energy dissipation property requires that the numerical method can guarantee that the total energy of the system tends to decrease during the evolution process to ensure that the numerical solution agrees with the behavior of the actual physical system. In recent decades, numerous researchers have delved into the exploration of numerical schemes for solving the phase-field models. A variety of well-developed numerical schemes are available to satisfy the MBP or the energy dissipation law, such as implicit-explicit (IMEX) stabilization methods \cite{chen1998applications,tang2016implicit,li2020second,fu2022unconditionally}, invariant energy quadratization (IEQ) method \cite{yang2016linear}, scalar auxiliary variable (SAV) methods \cite{shen2018scalar,akrivis2019energy}, operator splitting methods \cite{cheng2015fast,li2017convergence,li2022stability}, IFRK methods \cite{ju2021maximum,li2021stabilized}, ETD methods \cite{du2019maximum,li2020arbitrarily,li2021unconditionally,fu2022energy,fu2024higher} and so on.
In particular, for the first- and second-order ETD methods for the scalar Allen--Cahn equation, it has been proved that the MBP is preserved unconditionally \cite{du2021maximum}, as well as the energy dissipation law \cite{fu2022energy}. 
Moreover, the exponential cut-off method studied by Li, Yang, and Zhou \cite{li2020arbitrarily} use the $k$th-order multistep exponential integrator in time. At every time level, the extra values violating the MBP are eliminated by a cut-off operation, so that the numerical solutions at nodal points satisfy the MBP.
%The stabilization method proposed by Li, Ueda, and Zhou \cite{li2020second} linearizes and decouples a nonlinear parabolic system at every time level, with second-order consistency error.

In this work, we consider first- and second-order ETD schemes for the matrix-valued Allen--Cahn equation and prove the MBP preservation of them in the Frobenius norm, based on a more detailed analysis of the nonlinear term.
That is to say, if the initial condition $U_0(\boldsymbol x)\in \mathbb{R}^{m\times m}$ and $\|U_0(\boldsymbol x)\|_F\leq \sqrt{m}$ for any $\boldsymbol x \in \overline{\Omega}$, then the solution to \eqref{eq:mac} satisfies
\begin{align*}
    \max _{\mathbf x  \in \overline{\Omega}}\|U(t,\mathbf x )\|_F\leq \sqrt{m},\quad t\in(0,T].
\end{align*}
Compared to the MBP results in \cite{du2021maximum}, our analysis has removed the symmetric assumption of the initial condition. More precisely speaking, the authors in \cite{du2021maximum} use the fact that real symmetric matrices are diagonalizable to prove the MBP of the ETD schemes of matrix-valued Allen--Cahn equation with respect to the matrix 2-norm, i.e., denote by $| \cdot |_2$ the matrix 2-norm and by $\mathbb{R}^{m\times m}_s$ the set of all real symmetric $m$-by-$m$ matrices. They show that if the initial condition $U_0(\boldsymbol x)\in \mathbb{R}^{m\times m}_s$ and $|U_0(\boldsymbol x) |_2\leq 1$ for any $\boldsymbol x \in \overline{\Omega}$, then the solution to \eqref{eq:mac} satisfies
\begin{align*}
    \max _{\mathbf x  \in \overline{\Omega}} |U(t,\mathbf x )|_2\leq 1,\quad t\in(0,T].
\end{align*} 
Inspired by \cite{fu2022energy}, we prove for the first time that the first- and second-order ETD schemes unconditionally satisfy the energy dissipation law for the matrix-valued Allen--Cahn equation. In addition, we provide error estimates of these two methods with coefficients  independent of $\varepsilon$.
However, we shall mention that by variable transformation, the matrix-valued Allen--Cahn equation \eqref{eq:mac} can be also written as
\begin{equation}\label{eq:MAC_ref}
          U_t=\Delta U+\frac1{\varepsilon^2} f(U),\quad t\in(0,T],~\mathbf x \in \Omega,
\end{equation}
where the parameter $\varepsilon$ appears in front of $f(U)$.
It turns out that the estimation coefficient will depend on $\varepsilon$. For example, one can refer to \cite{akrivis2022error} for detailed error estimate of an BDF method for the scalar Allen--Cahn equation. 

The rest of this paper is organized as follows. In Section \ref{section 2}, we prove the basic properties of the nonlinear operator under the Frobenius norm, and show that the governing equation has a unique solution and satisfy the MBP under these properties. In Section \ref{section 3}, we give the first- and second-order time-discrete schemes of the ETD, and show that the schemes satisfy the discrete MBP and the original energy dissipation unconditionally. Also, we prove the convergence of the ETD schemes. In Section \ref{section 4}, we perform some numerical experiments to verify the theoretical results.

\section{Maximum bound principle, existence, and uniqueness of solution}\label{section 2}

In this section, we first rewrite \eqref{eq:mac} into the following form:
\begin{equation}\label{eq:mac_ka}
      \begin{aligned}
	U_t+\kappa U=\mathcal{L} U+\mathcal{N}[U], \quad t>0,~\mathbf x \in \Omega,
       \end{aligned}
\end{equation}
where $\kappa>0$ is some constant, $\mathcal{L}=\varepsilon^2\Delta$ is the linear elliptic operator, and $\mathcal{N}=\kappa\mathcal{I}+f$ is a nonlinear operator with the identity operator $\mathcal I$. We then show that the matrix-valued Allen--Cahn equation has a unique solution and satisfies the maximum bound principle.
 
\begin{lemma}\label{Nkappa}
For any $V\in \mathbb{R}^{m\times m}$ with $\| V\|_F\leq \sqrt{m}$, if $\kappa\geq \max\left\{\frac{3}{2}m-1,2\right\}$, then we have 
\begin{align}
\| \mathcal{N}[V] \|_F\leq\kappa \sqrt m.
\end{align}
\end{lemma}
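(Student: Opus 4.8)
The plan is to convert this matrix inequality into a scalar inequality over the eigenvalues of the symmetric positive semidefinite matrix $W:=V^{\mathrm T}V$. Since $\mathcal N=\kappa\mathcal I+f$ with $f(V)=V-VV^{\mathrm T}V$, we have $\mathcal N[V]=(\kappa+1)V-VV^{\mathrm T}V$. Expanding $\|\mathcal N[V]\|_F^2=\langle\mathcal N[V],\mathcal N[V]\rangle_F$ and repeatedly using $\langle A,B\rangle_F=\operatorname{tr}(A^{\mathrm T}B)$ together with the cyclic invariance of the trace, every term collapses to a trace of a power of $W$, giving
\begin{equation*}
\|\mathcal N[V]\|_F^2=(\kappa+1)^2\operatorname{tr}(W)-2(\kappa+1)\operatorname{tr}(W^2)+\operatorname{tr}(W^3).
\end{equation*}
Writing $\lambda_1,\dots,\lambda_m\ge 0$ for the eigenvalues of $W$ and using $\operatorname{tr}(W^k)=\sum_i\lambda_i^k$, the right-hand side equals $\sum_{i=1}^m h(\lambda_i)$ with $h(t):=t(\kappa+1-t)^2$, since $(\kappa+1)^2\lambda-2(\kappa+1)\lambda^2+\lambda^3=\lambda(\kappa+1-\lambda)^2$. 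The hypothesis $\|V\|_F\le\sqrt m$ becomes the single linear constraint $\sum_{i=1}^m\lambda_i=\operatorname{tr}(W)=\|V\|_F^2\le m$, together with $\lambda_i\ge 0$ (hence each $\lambda_i\le m$). It therefore suffices to prove $\sum_i h(\lambda_i)\le\kappa^2 m$.

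I would stress that the naive termwise estimate $h(t)\le\kappa^2 t$ is false for small $t$ (it holds only for $t\ge 1$), so one cannot argue coordinate by coordinate; the constraint must be used globally. The remedy is to exploit two elementary properties of $h$ on $[0,m]$, and this is precisely where the two lower bounds on $\kappa$ enter. First, $h''(t)=6t-4(\kappa+1)$, so $h$ is concave on $[0,m]$ if and only if $m\le\frac{2(\kappa+1)}{3}$, i.e.\ $\kappa\ge\frac32 m-1$. Since $h$ is then concave on $[0,m]$ and every $\lambda_i$ lies in $[0,m]$, Jensen's inequality yields $\sum_i h(\lambda_i)\le m\,h(\bar\lambda)$, where $\bar\lambda:=\frac1m\sum_i\lambda_i\le 1$. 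Second, $h'(t)=(\kappa+1-t)(\kappa+1-3t)$ is nonnegative on $[0,1]$ whenever $\kappa\ge 2$, so $h$ is nondecreasing there and $h(\bar\lambda)\le h(1)=\kappa^2$. Chaining the two bounds gives $\sum_i h(\lambda_i)\le m\,h(1)=\kappa^2 m$, and taking square roots yields $\|\mathcal N[V]\|_F\le\kappa\sqrt m$.

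The main obstacle is recognizing that a termwise estimate cannot work and that the correct mechanism is a concavity/Jensen argument; once this is seen, the two seemingly ad hoc thresholds acquire a transparent meaning. The bound $\kappa\ge\frac32 m-1$ is exactly what makes $h$ concave across the entire admissible interval $[0,m]$, which is needed to apply Jensen, while $\kappa\ge 2$ is exactly what makes $h$ monotone on $[0,1]$, which is needed to replace $h(\bar\lambda)$ by its endpoint value $\kappa^2$. The only routine care required is in the trace bookkeeping for the first display and in checking the endpoints of the two intervals of monotonicity and concavity.
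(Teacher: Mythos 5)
Your proof is correct and is essentially the same as the paper's: the paper diagonalizes via the SVD of $V$ and applies the polynomial $g(x)=(\kappa+1)^2x-2(\kappa+1)x^2+x^3$ to the squared singular values, which is exactly your $h$ evaluated at the eigenvalues of $W=V^{\mathrm T}V$, followed by the identical Jensen (concavity on $[0,m]$, requiring $\kappa\ge\tfrac32 m-1$) plus monotonicity-on-$[0,1]$ (requiring $\kappa\ge 2$) argument. Your trace bookkeeping is just an equivalent way of arriving at the same scalar reduction, and your remark explaining why the termwise bound fails is a nice clarification the paper leaves implicit.
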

\begin{proof} For any $V\in \mathbb{R}^{m\times m}$, the singular value decomposition (SVD) of $V$ is written as $V=P\Sigma Q$, where $P$ and $Q$ are orthogonal matrices, and $\Sigma$ is the diagonal matrix of singular values $\sigma_i^2$ with $i=1,\ldots,m$. Thus, we have
\begin{equation}
      \begin{aligned}
	\| \mathcal{N}[V]\|_F^2&=\| \kappa V+V-VV^\text{T}V\|_F^2\\
        &=\sum_{i=1}^m(\kappa \sigma_i+\sigma_i-\sigma_i^3)^2\\
	&=\sum_{i=1}^m(\kappa +1)^2\sigma_i^2-2(\kappa+1)\sigma_i^4+\sigma_i^6.
      \end{aligned}
\end{equation}

Since $\| V\|_F\leq \sqrt{m}$, we have $\sum_{i=1}^m \sigma_i^2\leq m$ and $\sum_{i=1}^m \frac{1}{m}\sigma_i^2\leq 1$. 
Let $g(x)=(\kappa+1)^2x-2(\kappa+1)x^2+x^3$, $x\in [0,m]$.
We can compute that $g'(x)=(\kappa+1)^2-4(\kappa+1)x+3x^2$ and $g''(x)=-4(\kappa+1)+6x$. 
If 
\begin{equation}\label{ineq:g''g'}
    g''(x)\leq 0 \quad\forall x\in[0,m]\quad\mbox{and}\quad g'(x)\geq 0 \quad \forall x\in[0,1],
\end{equation}
then we can use the Jensen's inequality to obtain
\begin{equation}
      \begin{aligned}
	\sum_{i=1}^m\frac{1}{m}g(\sigma_i^2)\leq g\left(\sum_{i=1}^m\frac{1} 
           {m}\sigma_i^2\right)\leq g(1)=\kappa^2.
       \end{aligned}
\end{equation}
The constraint \eqref{ineq:g''g'} is equivalent to
\begin{equation}
    \kappa\geq \frac32 m -1 \quad\mbox{and}\quad \kappa\geq 2,
\end{equation}
i.e., $\kappa\geq \max\left\{\frac32m-1,2\right\}$.
Under this constraint, we then have 
\begin{align*}
    \| \mathcal{N}[V]\|_F^2 = \sum_{i=1}^m g(\sigma_i^2)\leq \kappa^2m.
\end{align*}\end{proof}

\begin{lemma}\label{NNkappa}
For any $V_1,V_2\in \mathbb{R}^{m\times m}$ with $\| V_1\|_F,~\| V_2\|_F\leq \sqrt m$, we have 
    \begin{align}
	\| \mathcal{N}[V_1]-\mathcal{N}[V_2]  \|_F\leq (\kappa+1+5m)\| V_1-V_2  \|_F.
    \end{align}
\end{lemma}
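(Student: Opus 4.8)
The plan is to separate the linear and cubic contributions of $\mathcal{N}$ and estimate them independently. Writing $\mathcal{N}[V]=(\kappa+1)V-VV^{\mathrm{T}}V$, the difference splits as
\[
\mathcal{N}[V_1]-\mathcal{N}[V_2]=(\kappa+1)(V_1-V_2)-\bigl(V_1V_1^{\mathrm{T}}V_1-V_2V_2^{\mathrm{T}}V_2\bigr).
\]
The linear part is immediately bounded by $(\kappa+1)\|V_1-V_2\|_F$ via the triangle inequality, so the whole task reduces to controlling the cubic difference by $5m\|V_1-V_2\|_F$ (in fact $3m$ will suffice).

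For the cubic term I would use a telescoping identity that inserts and subtracts mixed products so that each resulting summand contains exactly one factor equal to $V_1-V_2$ (or its transpose). A convenient choice is
\[
V_1V_1^{\mathrm{T}}V_1-V_2V_2^{\mathrm{T}}V_2=(V_1-V_2)V_1^{\mathrm{T}}V_1+V_2(V_1-V_2)^{\mathrm{T}}V_1+V_2V_2^{\mathrm{T}}(V_1-V_2),
\]
which one verifies by cancelling the two intermediate products $V_2V_1^{\mathrm{T}}V_1$ and $V_2V_2^{\mathrm{T}}V_1$. To each of the three summands I would then apply submultiplicativity of the Frobenius norm, $\|XY\|_F\le\|X\|_F\|Y\|_F$, the isometry $\|X^{\mathrm{T}}\|_F=\|X\|_F$, and the hypothesis $\|V_1\|_F,\|V_2\|_F\le\sqrt{m}$. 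Each summand carries two factors drawn from $\{V_1,V_2,V_1^{\mathrm{T}},V_2^{\mathrm{T}}\}$, whose norms multiply to at most $(\sqrt{m})^2=m$, together with one factor $\|V_1-V_2\|_F$; hence every summand is bounded by $m\|V_1-V_2\|_F$. Summing the three contributions gives $3m\|V_1-V_2\|_F\le 5m\|V_1-V_2\|_F$, and adding the linear estimate yields the claimed Lipschitz constant $\kappa+1+5m$.

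The only real content is the algebraic telescoping identity; once it is in place, the bounds are routine consequences of Frobenius submultiplicativity and the norm constraint on $V_1,V_2$. The mild points worth checking are that submultiplicativity extends to a triple product, $\|XYZ\|_F\le\|X\|_F\|Y\|_F\|Z\|_F$ (apply the two-factor inequality twice), and that transposition leaves the Frobenius norm unchanged. In contrast to the proof of Lemma~\ref{Nkappa}, no spectral or SVD information about the individual matrices $V_i$ is needed here—only the global norm constraint $\|V_i\|_F\le\sqrt{m}$ enters—so I expect no genuine obstacle beyond selecting the telescoping decomposition.
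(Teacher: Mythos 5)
Your proof is correct, and it takes a genuinely different---and cleaner---route through the cubic term. The paper also splits off the linear part $(\kappa+1)(V_1-V_2)$, but for the cubic difference it uses an asymmetric decomposition, eventually arriving at
$V_1V_1^{\mathrm{T}}V_1-V_2V_2^{\mathrm{T}}V_2=(V_1-V_2)(V_1^{\mathrm{T}}V_1+V_2^{\mathrm{T}}V_2-V_2^{\mathrm{T}}V_1)+V_1V_2^{\mathrm{T}}(V_1-V_2)+V_2(V_1^{\mathrm{T}}-V_2^{\mathrm{T}})V_1$,
which after submultiplicativity yields the constant $\|V_1\|_F^2+\|V_2\|_F^2+3\|V_1\|_F\|V_2\|_F\le 5m$---exactly the $5m$ appearing in the statement. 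Your symmetric telescoping $(V_1-V_2)V_1^{\mathrm{T}}V_1+V_2(V_1-V_2)^{\mathrm{T}}V_1+V_2V_2^{\mathrm{T}}(V_1-V_2)$ (which is indeed a correct identity) places exactly one difference factor in each of three summands, each bounded by $m\|V_1-V_2\|_F$, so you obtain $3m\|V_1-V_2\|_F$ and hence the sharper Lipschitz constant $\kappa+1+3m$, which trivially implies the stated bound with $\kappa+1+5m$. Both arguments rest on the same two facts---Frobenius submultiplicativity and invariance under transposition---so the difference is purely in the algebra, where yours is shorter and sharper; had the paper used your decomposition, the constant $5m$ that propagates into Theorem \ref{unique u} and into the error factors $e^{(1+5m)t_n}$ of Theorems \ref{Conv1} and \ref{Conv2} could be replaced by $3m$ throughout.
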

\begin{proof} 
Recalling the definition of the nonlinear operator $\mathcal{N}$: $\mathbb{R}^{m\times m}\to \mathbb{R}^{m\times m}$, for any $V_1,V_2\in \mathbb{R}^{m\times m}$, we have
\begin{equation}
    \begin{aligned}
	\| \mathcal{N}[V_1]-\mathcal{N}[V_2]\|_F
		= & \|\kappa V_1+V_1-V_1V_1^\text{T}V_1-(\kappa V_2+V_2-V_2V_2^\text{T}V_2)\|_F\\
		= &\|(\kappa+1) (V_1-V_2)-(V_1V_1^\text{T}V_1-V_2V_2^\text{T}V_2)\|_F\\
		\leq&(\kappa+1)\| V_1-V_2  \|_F+\| V_1V_1^\text{T}V_1-V_2V_2^\text{T}V_2  \|_F\\
		\leq & (\kappa+1+\| V_1\|_F^2+\| V_2\|_F^2+3\| V_1\|_F\| V_2\|_F)\| V_1-V_2  \|_F,
    \end{aligned}
\end{equation}
where we use the following estimate
\begin{equation}
    \begin{aligned}
    &\| V_1V_1^\text{T}V_1-V_2V_2^\text{T}V_2  \|_F\\
    =&\| (V_1-V_2)(V_1^\text{T}V_1+V_2^\text{T}V_2)-V_1V_2^\text{T}V_2+V_2V_1^\text{T}V_1  \|_F\\
    =&\| (V_1-V_2)(V_1^\text{T}V_1+V_2^\text{T}V_2)+V_1V_2^\text{T}(V_1-V_2)+(V_2V_1^\text{T}-V_1V_2^\text{T})V_1  \|_F\\
    =&\| (V_1-V_2)(V_1^\text{T}V_1+V_2^\text{T}V_2-V_2^\text{T}V_1)+V_1V_2^\text{T}(V_1-V_2)+V_2(V_1^\text{T}-V_2^\text{T})V_1  \|_F\\
    \leq&\|V_1-V_2\|_F\|V_1^\text{T}V_1+V_2^\text{T}V_2-V_2^\text{T}V_1\|_F+\|V_1\|_F\|V_2^\text{T}\|_F\|V_1-V_2\|_F+\|V_2\|_F\|V_1^\text{T}-V_2^\text{T}\|_F\|V_1\|_F\\
    \leq&\|V_1-V_2\|_F(\|V_1^\text{T}V_1\|_F+\|V_2^\text{T}V_2\|_F+\|V_2^\text{T}\|_F\|V_1\|_F+\|V_1\|_F\|V_2^\text{T}\|_F+\|V_2\|_F\|V_1\|_F)\\
    \leq&\| V_1-V_2\|_F(\| V_1\|_F^2+\| V_2\|_F^2+3\| V_1\|_F\| V_2\|_F).
    \end{aligned}
\end{equation}
Since $\| V_1\|_F,~\| V_2\|_F\leq \sqrt m$, we have
\begin{align*}
  \| \mathcal{N}[V_1]-\mathcal{N}[V_2]  \|_F
  \leq(\kappa+1+5m)\| V_1-V_2  \|_F.
\end{align*}
\end{proof}

Different from the symmetric Banach space discussed in \cite{du2021maximum}, we now define a general (possibly non-symmetric) Banach space $\mathcal{X}=C\left(\overline{\Omega} ; \mathbb{R}^{m \times m}\right)$, the space of continuous $\mathbb{R}^{m \times m}$-valued functions defined on $\overline{\Omega}$ equipped with the supremum norm
$$
\|W\|_{\mathcal X}=\max _{\mathbf x  \in \overline{\Omega}}\|W(\mathbf x )\|_F \quad \forall W \in \mathcal{X}.
$$
Furthermore, we denote 
$$\mathcal{X}_m=\{W(\mathbf x )\in \mathcal X~|~ \|W(\mathbf x )\|_F\leq \sqrt{m}\}.$$

%%%%%%%%%%%%%%%%%%%%%%%%%%%%%%%%%%%%%%%%%%%%%%%%%%%%

\begin{lemma}\label{lambda-Delta}
For any $\lambda>0$ and $W\in C^2_{\mbox{per}}\left(\overline{\Omega} ; \mathbb{R}^{m \times m}\right)\coloneqq \{W\in C^2\left(\overline{\Omega}; \mathbb{R}^{m \times m}\right), W \mbox{is periodic}\}$, it holds that
\begin{equation}
    \begin{aligned}\label{lambda}
        \|(\lambda\mathcal{I}-\varepsilon^2\Delta)W(\mathbf x )\|_{\mathcal{X}}\geq \lambda \|W(\mathbf x )\|_{\mathcal{X}}
    \end{aligned}
\end{equation}
and the solution operator of Laplace equation with periodic boundary condition, i.e., $\{e^{t\varepsilon^2}\Delta\}_{t\geq0}$ on $\mathcal{X}$, satisfies the contraction property that $\forall t\geq 0$, 
\begin{equation}\label{eq:2.11}
  \|e^{t\varepsilon^2 \Delta} W(\mathbf{x})\|_{\mathcal X}\leq\|W(\mathbf{x})\|_{\mathcal{X}}.
\end{equation}
\end{lemma}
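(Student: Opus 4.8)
The plan is to treat the two assertions separately, both by working at the spatial point where the Frobenius norm is largest. For the lower bound \eqref{lambda}, since $\overline{\Omega}$ is compact and $W$ is continuous, there exists $\mathbf x_0\in\overline{\Omega}$ with $\|W(\mathbf x_0)\|_F=\|W\|_{\mathcal X}$; by periodicity $\mathbf x_0$ may be regarded as an interior maximizer of the scalar $C^2$ function $h(\mathbf x):=\|W(\mathbf x)\|_F^2=\sum_{i,j}W_{ij}(\mathbf x)^2$. The key step is the second-derivative test. Differentiating gives $\Delta h=2\|\nabla W\|_F^2+2\langle W,\Delta W\rangle_F$, and negative semidefiniteness of the Hessian of $h$ at an interior maximum forces $\Delta h(\mathbf x_0)\le 0$, whence
\begin{equation*}
\langle W(\mathbf x_0),\Delta W(\mathbf x_0)\rangle_F\le -\|\nabla W(\mathbf x_0)\|_F^2\le 0 .
\end{equation*}

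The remainder of the lower bound is a one-line pairing argument. Writing
\begin{equation*}
\lambda\|W(\mathbf x_0)\|_F^2=\langle W(\mathbf x_0),(\lambda\mathcal I-\varepsilon^2\Delta)W(\mathbf x_0)\rangle_F+\varepsilon^2\langle W(\mathbf x_0),\Delta W(\mathbf x_0)\rangle_F
\end{equation*}
and discarding the nonpositive last term, the Cauchy--Schwarz inequality for $\langle\cdot,\cdot\rangle_F$ yields $\lambda\|W(\mathbf x_0)\|_F^2\le \|W(\mathbf x_0)\|_F\,\|(\lambda\mathcal I-\varepsilon^2\Delta)W(\mathbf x_0)\|_F$. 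Dividing by $\|W(\mathbf x_0)\|_F$ (the estimate being trivial when $W\equiv 0$) and bounding the right-hand side by its supremum over $\overline\Omega$ gives $\lambda\|W\|_{\mathcal X}\le\|(\lambda\mathcal I-\varepsilon^2\Delta)W\|_{\mathcal X}$, which is \eqref{lambda}.

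For the contraction estimate \eqref{eq:2.11} I would use the explicit representation of the semigroup. Each entry of $v(\mathbf x,t):=e^{t\varepsilon^2\Delta}W(\mathbf x)$ solves the scalar heat equation with periodic boundary conditions, so $v(\cdot,t)$ is the matrix-valued convolution of $W$ against the periodic heat kernel $G_t$, which is nonnegative and satisfies $\int_\Omega G_t\,d\mathbf y=1$, i.e. $G_t$ is a probability density. Since the Frobenius norm is convex, the generalized (integral) triangle inequality gives, for each fixed $\mathbf x$ and $t\ge 0$,
\begin{equation*}
\|v(\mathbf x,t)\|_F=\Big\|\int_\Omega G_t(\mathbf x-\mathbf y)W(\mathbf y)\,d\mathbf y\Big\|_F\le \int_\Omega G_t(\mathbf x-\mathbf y)\,\|W(\mathbf y)\|_F\,d\mathbf y\le \|W\|_{\mathcal X},
\end{equation*}
and taking the maximum over $\mathbf x\in\overline\Omega$ yields \eqref{eq:2.11}.

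I expect the delicate points to be bookkeeping rather than conceptual. In the first part one must justify reducing the maximum on $\overline\Omega$ to an interior maximum via periodicity, so that no boundary contribution spoils the second-derivative test, and dispose of the degenerate case $\|W\|_{\mathcal X}=0$. In the second part the main thing to pin down is the identification of the abstract semigroup $e^{t\varepsilon^2\Delta}$ with convolution against the periodic heat kernel, together with the positivity and unit mass of that kernel; once these standard facts are in hand the convexity estimate is immediate. An alternative, equally valid route for \eqref{eq:2.11} avoids the kernel: setting $\psi=\|v\|_F^2$ one computes $\partial_t\psi-\varepsilon^2\Delta\psi=-2\varepsilon^2\|\nabla v\|_F^2\le 0$, so $\psi$ is a subsolution of the heat equation and the parabolic maximum principle on the torus makes $\max_{\mathbf x}\psi(\mathbf x,t)$ nonincreasing in $t$; one may also deduce \eqref{eq:2.11} from \eqref{lambda} via the Hille--Yosida theorem, since \eqref{lambda} is exactly the resolvent bound $\|(\lambda\mathcal I-\varepsilon^2\Delta)^{-1}\|\le\lambda^{-1}$.
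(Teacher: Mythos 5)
Your proposal is correct, and the two halves compare differently with the paper. For the resolvent-type bound \eqref{lambda} you argue exactly as the paper does: locate an interior maximizer $\mathbf x_0$ of $\|W\|_F^2$ (legitimate by periodicity), use $\Delta\|W(\mathbf x_0)\|_F^2\le 0$ to get $\langle W(\mathbf x_0),\Delta W(\mathbf x_0)\rangle_F\le 0$, and then pass from the pointwise pairing to the norm inequality; the only cosmetic difference is that you close the estimate with Cauchy--Schwarz and a division by $\|W(\mathbf x_0)\|_F$, whereas the paper uses Young's inequality $ab\le\frac12a^2+\frac12b^2$ to avoid the division (your handling of the degenerate case $W\equiv 0$ covers that). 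For the contraction \eqref{eq:2.11}, however, your primary argument is genuinely different: the paper deduces it abstractly from \eqref{lambda} via the Lumer--Phillips theorem (citing the scalar-case reference), while you represent $e^{t\varepsilon^2\Delta}W$ as convolution with the periodic heat kernel and use its nonnegativity and unit mass together with the integral triangle inequality for $\|\cdot\|_F$. Your route is more concrete and self-contained but requires identifying the semigroup with the kernel; the paper's route is shorter once the semigroup machinery is accepted and shows that the contraction is a formal consequence of the resolvent bound alone. Notably, your two listed alternatives reproduce both of the paper's arguments: the subsolution/parabolic maximum principle argument is precisely the paper's remark (where $\partial_t\|v\|_F^2-\varepsilon^2\Delta\|v\|_F^2\le 0$ is derived by taking the trace against $v^{\mathrm T}$), and the Hille--Yosida/Lumer--Phillips deduction from \eqref{lambda} is the paper's main proof.
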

\begin{proof}
For any $W(\mathbf x )\in \mathcal{X}$, there exists $\mathbf x _0\in \overline{\Omega}$ with the periodic boundary condition such that
\begin{equation}\label{eq:2.12}
    \|W(\mathbf x _0)\|_F=\sqrt{\sum_{i,j=1}^{m}|W_{ij}(\mathbf x _0)|^2} =\|W(\mathbf x )\|_{\mathcal{X}}=\max\limits_{\mathbf x \in\overline{\Omega}}\sqrt{\sum_{i,j=1}^{m}|W_{ij}(\mathbf x )|^2}.
\end{equation}
Since $\mathbf x_0$ is the maximum point of $\|W(\mathbf x)\|_F^2$, it holds that $\Delta \|W(\mathbf x _0)\|_F^2\leq 0$ and consequently, 
\begin{align*}
      2\sum_{i,j=1}^{m}W_{ij}(\mathbf x _0)\Delta W_{ij}(\mathbf x _0)&\leq2\sum_{i,j=1}^{m}W_{ij}(\mathbf x _0)\Delta W_{ij}(\mathbf x _0)+2\sum_{i,j=1}^{m}|\nabla W_{ij}(\mathbf x _0)|^2\\
      &=\Delta \sum_{i,j=1}^{m}|W_{ij}(\mathbf x _0)|^2\leq 0.
\end{align*}
Then, for any $\lambda>0$, we have
\begin{equation}
\begin{aligned}
\lambda^2\|W(\mathbf x _0)\|_F^2
=&\sum_{i,j=1}^m\lambda^2|W_{ij}(\mathbf x _0)|^2
\leq\sum_{i,j=1}^m\lambda^2|W_{ij}(\mathbf x _0)|^2-\lambda\varepsilon^2 W_{ij}(\mathbf x _0)\Delta W_{ij}(\mathbf x _0)\\
=&\sum_{i,j=1}^m\lambda W_{ij}(\mathbf x _0)\left((\lambda\mathcal{I}-\varepsilon^2\Delta)W_{ij}(\mathbf x _0)\right)\\
\leq&\frac{1}{2}\sum_{i,j=1}^{m}\lambda^2|W_{ij}(\mathbf x _0)|^2+\frac{1}{2}\sum_{i,j=1}^{m}|(\lambda\mathcal{I}-\varepsilon^2\Delta)W_{ij}(\mathbf x _0)|^2\\
=&\frac{1}{2}\lambda^2\|W(\mathbf x _0)\|_F^2+\frac{1}{2}\|(\lambda\mathcal{I}-\varepsilon^2\Delta)W(\mathbf x _0)\|_F^2
\end{aligned}
\end{equation}
which leads to 
\begin{equation}
\lambda\|W(\mathbf x )\|_{\mathcal{X}}=\lambda\|W(\mathbf x _0)\|_F\leq \|(\lambda\mathcal{I}-\varepsilon^2\Delta)W(\mathbf x _0)\|_F\leq\|(\lambda\mathcal{I}-\varepsilon^2\Delta)W(\mathbf x )\|_{\mathcal{X}},
\end{equation}
for any $W(\mathbf x )\in \mathcal{X}$.

As the proof of \cite[Lemma 2.1, Property (ii)]{du2021maximum}, one can use the Lumer-Phillips theorem to deduce the contraction property of $e^{t\varepsilon^2\Delta}$ from \eqref{lambda}.
\end{proof}

\begin{remark}
    Here we provide an alternative proof of the contraction property for the operate $e^{t\varepsilon^2\Delta}$. Since $C^2\left(\overline{\Omega} ; \mathbb{R}^{m \times m}\right)$ is dense in $\mathcal{X}$, we only consider the case where $W\in C^2\left(\overline{\Omega}; \mathbb{R}^{m \times m}\right)$. For the simplicity, we consider the following equation
\begin{equation}
   \begin{aligned}\label{eq:02.15}
	 \begin{cases}
	\partial_t W(t,\mathbf x )=\Delta W(t,\mathbf x)\quad t\in(0,T],~\mathbf x \in \Omega,\\
 W(0,\mathbf x)=W^0(\mathbf x),
      \end{cases}
   \end{aligned}
\end{equation}
with the periodic boundary condition. We can obtain the solution of \eqref{eq:02.15} as
\begin{equation}
\begin{aligned}
	W(t,\mathbf x )=e^{t\Delta}W^0(\mathbf x ),\quad t\in(0,T], ~\mathbf x \in \Omega.
	\end{aligned}
\end{equation}
Multiplying both sides of the first equation in \eqref{eq:02.15} by $W^\text{T}(t,\mathbf x)$ and taking the trace, we have
   \begin{align*}
	\text{tr}(W^\text{T}(t,\mathbf x)\partial_t W(t,\mathbf x ))=\text{tr}(W^\text{T}(t,\mathbf x)\Delta W(t,\mathbf x)),
   \end{align*}
which yields
\begin{equation}
   \begin{aligned}\label{eq:02.16}
&\frac 12\partial_t\Big(\sum_{i,j=1}^m W_{ij}^2(t,\mathbf x)\Big)=\sum_{i,j=1}^m W_{ij}(t,\mathbf x )\Delta W_{ij}(t,\mathbf x)\\
&=-\sum_{i,j=1}^m |\nabla W_{ij}(t,\mathbf x )|^2+\frac 12 \Delta\Big(\sum_{i,j=1}^m W_{ij}^2(t,\mathbf x )\Big)\leq\frac 12 \Delta\Big(\sum_{i,j=1}^m W_{ij}^2(t,\mathbf x )\Big).
\end{aligned}
\end{equation}
By the maximum principle of heat equation, we can obtain
\begin{equation}
\begin{aligned}
	\|W(t,\mathbf x)\|_\mathcal{X}=\|e^{t\Delta}W^0(\mathbf x )\|_\mathcal{X}\leq \|W^0(\mathbf x)\|_\mathcal{X},\quad t\in(0,T],~\mathbf x \in \Omega.
	\end{aligned}
\end{equation}
\end{remark}

 %%%%%%%%%%%%%%%%%%%%%%%%%%%%%%%%%%%%%%%%%%%%%%%%%%%
	
\begin{theorem}[Maximum bound principle, existence, and uniqueness of solution to \eqref{eq:mac_ka}]\label{unique u} Suppose that the matrix-valued Allen--Cahn equation \eqref{eq:mac_ka} is equipped with the periodic boundary condition and the initial value $U_0\in \mathcal X_m$.
If $\kappa \geq \max\left\{\frac32m-1,2\right\}$, then the equation (\ref{eq:mac_ka}) has a unique solution $U(t,\mathbf x)\in C([0,T]; \mathcal{X}_m)$, implying $\|U(t)\|_{F}\leq\sqrt{m}$ for any $t\in[0,T]$.
\end{theorem}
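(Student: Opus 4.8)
The plan is to recast \eqref{eq:mac_ka} in its mild (Duhamel) form and then run a Banach fixed-point argument on the invariant set $\mathcal{X}_m$. Since the operator $\mathcal{L}-\kappa\mathcal{I}=\varepsilon^2\Delta-\kappa\mathcal{I}$ generates the semigroup $e^{-\kappa t}e^{t\varepsilon^2\Delta}$ on $\mathcal{X}$, any solution of \eqref{eq:mac_ka} should satisfy
\begin{equation*}
U(t)=e^{-\kappa t}e^{t\varepsilon^2\Delta}U_0+\int_0^t e^{-\kappa(t-s)}e^{(t-s)\varepsilon^2\Delta}\,\mathcal{N}[U(s)]\,ds.
\end{equation*}
I would define the map $\mathcal{F}$ by this right-hand side, acting on the complete metric space $\mathbb{S}_{T_0}=C([0,T_0];\mathcal{X}_m)$ equipped with the metric induced by $\|W\|_{\mathbb{S}_{T_0}}=\max_{t\in[0,T_0]}\|W(t)\|_{\mathcal{X}}$, for a time $T_0\in(0,T]$ to be chosen. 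A fixed point of $\mathcal{F}$ is precisely a mild solution, so it suffices to check that $\mathcal{F}$ maps $\mathbb{S}_{T_0}$ into itself and is a contraction.

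For the invariance step, which is where the MBP is built in, I would take $U\in\mathbb{S}_{T_0}$ and estimate $\mathcal{F}[U](t)$ in the $\mathcal{X}$-norm. Applying the contraction property \eqref{eq:2.11} of $e^{t\varepsilon^2\Delta}$ to each factor and using $\|U_0\|_{\mathcal{X}}\le\sqrt m$ gives
\begin{equation*}
\|\mathcal{F}[U](t)\|_{\mathcal{X}}\le e^{-\kappa t}\sqrt m+\int_0^t e^{-\kappa(t-s)}\,\|\mathcal{N}[U(s)]\|_{\mathcal{X}}\,ds.
\end{equation*}
Because $U(s)\in\mathcal{X}_m$, Lemma \ref{Nkappa} (valid precisely when $\kappa\ge\max\{\frac32 m-1,2\}$) yields $\|\mathcal{N}[U(s)]\|_{\mathcal{X}}\le\kappa\sqrt m$, and the elementary identity $\int_0^t\kappa e^{-\kappa(t-s)}\,ds=1-e^{-\kappa t}$ collapses the bound to $\|\mathcal{F}[U](t)\|_{\mathcal{X}}\le e^{-\kappa t}\sqrt m+(1-e^{-\kappa t})\sqrt m=\sqrt m$. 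Hence $\mathcal{F}[U]\in\mathbb{S}_{T_0}$, and the same estimate applied to the fixed point itself delivers the sought bound $\|U(t)\|_F\le\sqrt m$ for all $t$.

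For the contraction, I would take $U_1,U_2\in\mathbb{S}_{T_0}$; the linear part cancels in $\mathcal{F}[U_1]-\mathcal{F}[U_2]$, and combining \eqref{eq:2.11} with the local Lipschitz bound of Lemma \ref{NNkappa} gives
\begin{equation*}
\|\mathcal{F}[U_1]-\mathcal{F}[U_2]\|_{\mathbb{S}_{T_0}}\le(\kappa+1+5m)\,\frac{1-e^{-\kappa T_0}}{\kappa}\,\|U_1-U_2\|_{\mathbb{S}_{T_0}}.
\end{equation*}
Choosing $T_0$ small enough that the prefactor is strictly less than one makes $\mathcal{F}$ a contraction, so Banach's theorem furnishes a unique mild solution on $[0,T_0]$ lying in $\mathcal{X}_m$. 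The decisive point is that this $T_0$ depends only on $\kappa$ and $m$, not on the particular data; since the produced solution again satisfies $\|U(T_0)\|_F\le\sqrt m$, i.e. $U(T_0)\in\mathcal{X}_m$, I can restart the argument from $t=T_0$ with the identical step length and patch the local pieces together, covering $[0,T]$ in finitely many steps. I expect the main obstacle to be organizing this continuation cleanly---verifying that the uniform-in-data local existence time genuinely permits the restart and that the patched pieces glue into a single element of $C([0,T];\mathcal{X}_m)$---whereas the invariance and contraction estimates follow directly once Lemmas \ref{Nkappa}--\ref{lambda-Delta} are in hand.
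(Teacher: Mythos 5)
Your proposal is correct and follows essentially the same route as the paper: the Duhamel (mild) formulation, invariance of $C([0,t_1];\mathcal{X}_m)$ via Lemma \ref{Nkappa} and the semigroup contraction of Lemma \ref{lambda-Delta}, the Lipschitz estimate of Lemma \ref{NNkappa} yielding the same contraction factor $(\kappa+1+5m)\frac{1-e^{-\kappa t_1}}{\kappa}$, and Banach's fixed point theorem on a short interval. If anything, your treatment of the continuation step is more explicit than the paper's---the paper merely asserts the extension to $[0,T]$, whereas you correctly identify that the local existence time depends only on $\kappa$ and $m$ and that the endpoint value stays in $\mathcal{X}_m$, which is exactly what justifies restarting with a uniform step.
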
	
\begin{proof} For a fixed $t_1>0$ and a given $V\in C([0,t_1];\mathcal{X}_m)$, we define $W:=W(t,\mathbf x )$ to be a solution of the following linear problem:
\begin{equation}\label{eq:2.10}
   \begin{aligned}
	 \begin{cases}
	W_t+\kappa W=\mathcal{L} W+\mathcal{N}[V],\quad t\in(0,t_1],~\mathbf x \in \Omega,\\
	W(\mathbf{x},0)=U_0(\mathbf{x}),
      \end{cases}
   \end{aligned}
\end{equation}
with the periodic boundary condition. Setting $\mathcal{N}[V]=0$ and $U_0=0$ in \eqref{eq:2.10} will led to $W(t)=e^{t(\mathcal{L}-\kappa)}U_0=0$ for each $t\in[0,t_1]$. Thus, $W$ is the unique solution defined on $[0,t_1]\times\Omega$. By Duhamel principle, 
\begin{equation}\label{eq:solution}
      \begin{aligned}
	W(t,\mathbf x )=e^{t(\mathcal{L}-\kappa)}U_0(\mathbf x )+\int_{0}^{t}e^{(t-s) 
        (\mathcal{L}-\kappa)}\mathcal{N}[V(s,\mathbf x )]ds,\quad \mathbf x \in \Omega, t\in(0,t_1].
	\end{aligned}
\end{equation}
Taking  the supremum norm $\|\cdot\|_{\mathcal{X}}$ on both sides of  \eqref{eq:solution} and using Lemmas \ref{Nkappa} and \ref{lambda-Delta}, we obtain
\begin{equation}\label{ETDRK1}
      \begin{aligned}
            \|W(t,\mathbf x )\|_{\mathcal{X}}&\leq e^{-\kappa t}\|U_0(\mathbf x )\|_{\mathcal{X}}+\int_{0}^{t}e^{-\kappa(t-s)}\|\mathcal{N}[V(s,\mathbf x )]\|_{\mathcal{X}}ds\\
		&\leq e^{-\kappa t}\sqrt{m}+\int_{0}^{t}e^{-\kappa(t-s)}\kappa \sqrt{m}ds=\sqrt{m},\quad t\in(0,t_1].
       \end{aligned}
\end{equation}

Next, we define a mapping $\mathcal{A}: C([0,t_1];\mathcal{X}_m)\to C([0,t_1];\mathcal{X}_m)$, $V\mapsto W$ according to \eqref{eq:solution}. For any $V_1,V_2\in C([0,t_1];\mathcal{X}_m)$, we set $W_1=\mathcal{A}V_1$ and $W_2=\mathcal{A}V_2$. From \eqref{eq:solution} and Lemma \ref{NNkappa}, we have

\begin{equation}
      \begin{aligned}
	\|W_1(t,\mathbf x )-W_2(t,\mathbf x )\|_{\mathcal{X}}
 \leq&\int_{0}^{t}e^{-\kappa(t-s)}\|\mathcal{N}[V_1(s,\mathbf x )]-\mathcal{N}[V_2(s,\mathbf x )]\|_{\mathcal{X}}ds\\
	\leq& (\kappa+1+5m)\int_{0}^{t}e^{-\kappa(t-s)}\| V_1(s,\mathbf x )-V_2(s,\mathbf x)\|_{\mathcal{X}}ds\\
	\leq& (\kappa+1+5m)\left(\frac{1}{\kappa}-\frac{1}{\kappa}e^{-\kappa t_1}\right)\| V_1-V_2\|_{\mathcal{X}},\quad \forall t\in[0,t_1].
      \end{aligned}
\end{equation}
Thus, for $t_1<\kappa^{-1}\ln\left(1+\frac{\kappa}{1+5m}\right)$ such that $(\kappa+1+5m)\left(\frac{1}{\kappa}-\frac{1}{\kappa}e^{-\kappa t_1}\right)<1$, $\mathcal{A}$ is a contraction. Since $\mathcal{X}_m$ is closed in $\mathcal{X}$, we know that $C([0,t_1];\mathcal{X}_m)$ is complete with respect to the metric induced by the norm $\|\cdot\|_\mathcal{X}$. Then, by Banach's fixed point theorem we get that $\mathcal{A}$ has a unique fixed point in $C([0,t_1];\mathcal{X}_m)$, which is the unique solution to the matrix-valued Allen--Cahn equation (\ref{eq:mac_ka}) on $[0,t_1]$. We can extend this solution to the entire time domain $[0,T]$.
\end{proof}

\section{Exponential time differencing Runge--Kutta schemes}\label{section 3}
In this section, we use the ETD schemes to construct a temporal approximation of the matrix-valued Allen--Cahn equation \eqref{eq:mac}. We discuss the first- and second-order ETD schemes with the unconditional MBP-preserving property based on the equivalence equation \eqref{eq:mac_ka}. Then, we show that the ETD schemes obtained based on \eqref{eq:mac} has the unconditional energy dissipation property.

\subsection{ETD schemes}

Divide the total time by $\{t_n=n\tau\}_{n\geq 0}$ where $\tau>0$ is a time step size. To establish the ETD schemes for solving \eqref{eq:mac} on the time interval $[t_n,t_{n+1}]$, we let the exact solution $W(s,\mathbf x )=U(t_n+s,\mathbf x )$ satisfying the system
\begin{equation}\label{eq:NWn}
  \begin{aligned}
    \begin{cases}
    W_s+\kappa W=\mathcal{L}W+\mathcal{N}[W],\quad s\in(0,\tau],~\mathbf x \in\Omega,\\
    W(0,\mathbf x )=U(t_n,\mathbf x ).
    \end{cases}
  \end{aligned}
\end{equation}

The first-order ETD (ETD1) scheme is obtained by setting $\mathcal{N}[U(t_n+s)]\approx \mathcal{N}[U^n]$ in \eqref{eq:NWn} (here, it is assumed implicitly that the solution is sufficiently smooth in time). For $n\geq 0$ and given $U^n$, the ETD1 scheme reads
\begin{equation}\label{eq:ETDRK1}
   \begin{aligned}
     U^{n+1}=W^n(\tau)=e^{(\mathcal{L}-\kappa)\tau}U^n+\int_{0}^{\tau}e^{(\mathcal{L}-\kappa)(\tau-s)}\mathcal{N}[U^n]ds,
   \end{aligned}
\end{equation}
that is the solution of the following linear equation 
\begin{equation}\label{eq:3.2}
  \begin{aligned}
    \begin{cases}
    W_s^{n}+\kappa W^{n}=\mathcal{L}W^{n}+\mathcal{N}[U^n],\quad s\in(0,\tau],~\mathbf x \in \Omega,\\
    W^{n}(0,\mathbf x )=U^n(\mathbf x ),
    \end{cases}
  \end{aligned}
\end{equation}
with $U^0=U_0$. Furthermore, the equation \eqref{eq:ETDRK1} can be written as
\begin{equation}\label{eq:ETDRK1.2}
      \begin{aligned}
	U^{n+1}=e^{-\tau L}U^n+(\mathcal{I}-e^{-\tau L})L^{-1}\mathcal{N}[U^n],
      \end{aligned}
\end{equation}
where $L\coloneqq \kappa\mathcal{I}-\mathcal{L}=\kappa\mathcal{I}-\varepsilon^2\Delta$.

The second-order ETD Runge--Kutta (ETDRK2) scheme is obtained by setting $\mathcal{N}[U(t_n+s)]\approx \left(1-\frac{s}{\tau}\right)\mathcal{N}[U^n]+\frac{s}{\tau}\mathcal{N}[\widetilde U^{n+1}]$ in \eqref{eq:NWn}, where $\widetilde U^{n+1}$ is computed from the ETD1 scheme \eqref{eq:ETDRK1} (it is assumed implicitly that the solution is sufficiently smooth in time). For any $n\geq0$, the ETDRK2 scheme reads 

\begin{equation}\label{eq:ETDRK2}
  \begin{aligned}
  U^{n+1}=W^n(\tau)=e^{(\mathcal{L}-\kappa)\tau}U^n+\int_{0}^{\tau}e^{(\mathcal{L}-\kappa)(\tau-s)}\left(\left(1-\frac{s}{\tau}\right)\mathcal{N}[U^n]+\frac{s}{\tau}\mathcal{N}[\widetilde U^{n+1}]\right)ds,
  \end{aligned}
\end{equation}
that is the solution of the following linear equation 
\begin{equation}\label{eq:3.4}
  \begin{aligned}
    \begin{cases}
    W_s^{n}+\kappa W^{n}=\mathcal{L}W^{n}+\left(1-\frac{s}{\tau}\right)\mathcal{N}[U^n]+\frac{s}{\tau}\mathcal{N}[\widetilde U^{n+1}],\quad s\in(0,\tau],~\mathbf x \in \Omega,\\
    W^{n}(0,\mathbf x )=U^n(\mathbf x ),
    \end{cases}
  \end{aligned}
\end{equation}
with $U^0=U_0$. Then, the equation \eqref{eq:ETDRK2} can be written as
\begin{equation}\label{eq:ETDRK2.2}
  \begin{aligned}
  \widetilde{U}^{n+1}&=e^{-\tau L}U^n+(\mathcal{I}-e^{-\tau L})L^{-1}\mathcal{N}[U^n]\\
  U^{n+1}&=\widetilde U^{n+1}-\frac{1}{\tau}\left(e^{-\tau L}-\mathcal{I}+\tau L\right)L^{-2}\left(\mathcal{N}[U^n]-\mathcal{N}[\widetilde U^{n+1}]\right).
  \end{aligned}
\end{equation}

\subsection{Discrete maximum bound principles}
\begin{theorem}[MBP of ETD1 scheme]\label{MBP1} 
If $\|U^0\|_F\leq\sqrt{m}$ for any $\mathbf x\in \overline \Omega$ and $\kappa\geq \max\left\{\frac{3}{2}m-1,2\right\}$, the ETD1 scheme \eqref{eq:ETDRK1} preserves the discrete maximum bound principle unconditionally, i.e., for any time step size $\tau>0$ and $n\geq 1$, the ETD1 solution $U^n$ satisfies $\| U^n \|_F\leq \sqrt{m}$.
\end{theorem}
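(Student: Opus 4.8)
The plan is to argue by induction on $n$, showing that $U^n\in\mathcal X_m$ implies $U^{n+1}\in\mathcal X_m$; combined with the hypothesis $\|U^0\|_F\le\sqrt m$ this yields $\|U^n\|_F\le\sqrt m$ for all $n\ge 1$. The base case $n=0$ is immediate from the assumption on the initial data, so all the content lies in the inductive step, and essentially all the analytical work has already been front-loaded into Lemmas \ref{Nkappa} and \ref{lambda-Delta}.

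For the inductive step I would start from the Duhamel (variation-of-constants) representation of the scheme in \eqref{eq:ETDRK1},
\begin{equation*}
U^{n+1}=e^{\tau(\mathcal{L}-\kappa)}U^n+\int_0^\tau e^{(\tau-s)(\mathcal{L}-\kappa)}\mathcal{N}[U^n]\,ds,
\end{equation*}
and use that $\kappa\mathcal{I}$ commutes with $\mathcal{L}=\varepsilon^2\Delta$ to factor the semigroup as $e^{t(\mathcal{L}-\kappa)}=e^{-\kappa t}e^{t\varepsilon^2\Delta}$ for every $t\ge 0$. This is the key structural observation: it separates the exponentially decaying scalar factor from the contractive heat propagator, allowing each to be estimated independently.

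I would then take the norm $\|\cdot\|_{\mathcal X}$ on both sides and apply the triangle inequality. On the first term I invoke the contraction property \eqref{eq:2.11} of Lemma \ref{lambda-Delta} to get $\|e^{\tau\varepsilon^2\Delta}U^n\|_{\mathcal X}\le\|U^n\|_{\mathcal X}\le\sqrt m$, leaving the scalar factor $e^{-\kappa\tau}$. For the integrand I apply the same contraction property to $e^{(\tau-s)\varepsilon^2\Delta}\mathcal{N}[U^n]$ uniformly in $s$, and then invoke the inductive hypothesis $U^n\in\mathcal X_m$ together with the nonlinear bound of Lemma \ref{Nkappa}—valid precisely because $\kappa\ge\max\{\tfrac32 m-1,2\}$—to obtain $\|\mathcal{N}[U^n]\|_{\mathcal X}\le\kappa\sqrt m$. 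Collecting these estimates gives
\begin{equation*}
\|U^{n+1}\|_{\mathcal X}\le e^{-\kappa\tau}\sqrt m+\kappa\sqrt m\int_0^\tau e^{-\kappa(\tau-s)}\,ds=e^{-\kappa\tau}\sqrt m+\sqrt m\,(1-e^{-\kappa\tau})=\sqrt m,
\end{equation*}
which holds for every $\tau>0$ and thus establishes the unconditional MBP. This is the same computation already carried out in \eqref{ETDRK1} within the proof of Theorem \ref{unique u}, so no genuinely new estimate is required.

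I expect no serious obstacle here, since the difficulty has been absorbed into the two preparatory lemmas. The only points needing care are (i) applying the contraction property to the integrand uniformly in $s$ rather than merely at the endpoint, and (ii) checking that the integral $\int_0^\tau e^{-\kappa(\tau-s)}\,ds=\kappa^{-1}(1-e^{-\kappa\tau})$ exactly cancels the factor $\kappa$ from the nonlinear bound—this cancellation is what makes the final constant collapse to $\sqrt m$ independently of $\tau$, and is the mechanism behind the word \emph{unconditional} in the statement.
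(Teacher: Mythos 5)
Your proposal is correct and follows essentially the same route as the paper: induction on $n$, taking the supremum norm $\|\cdot\|_{\mathcal X}$ of the Duhamel representation \eqref{eq:ETDRK1}, bounding the semigroup term via the contraction property of Lemma \ref{lambda-Delta} (with the scalar factor $e^{-\kappa\tau}$ split off) and the nonlinear term via Lemma \ref{Nkappa}, so that the integral $\int_0^\tau e^{-\kappa(\tau-s)}\,ds=\kappa^{-1}(1-e^{-\kappa\tau})$ cancels the factor $\kappa$ and yields exactly $\sqrt m$. The only difference is presentational: you spell out the semigroup factorization and the induction explicitly, whereas the paper leaves both implicit.
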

\begin{proof} Suppose that $\|U^n\|_F\leq\sqrt{m}$, we only need to prove $\| U^{n+1} \|_F\leq \sqrt{m}$. Taking the supremum norm $\|\cdot\|_{\mathcal{X}}$ on both sides of \eqref{eq:ETDRK1} and using Lemmas \ref{Nkappa} and \ref{lambda-Delta}, we obtain
\begin{equation}
  \begin{aligned}
  \| U^{n+1}\|_{\mathcal{X}}=&\| e^{(\mathcal{L}-\kappa)\tau}U^n+\int_{0}^{\tau}e^{(\mathcal{L}-\kappa)(\tau-s)}\mathcal{N}[U^n]ds\|_{\mathcal{X}}\\
  \leq&e^{-\kappa\tau}\sqrt{m}+\left(\frac{1}{\kappa}-\frac{1}{\kappa}e^{-\kappa\tau}\right)\kappa\sqrt{m}=\sqrt{m},
  \end{aligned}
\end{equation}
which verifies the maximum bound principle of ETD1 scheme \eqref{eq:ETDRK1}.
\end{proof}
	
\begin{theorem} [MBP of ETDRK2 scheme]\label{MBP2}
 If $\|U^0\|_F\leq\sqrt{m}$  for any $\mathbf x\in \overline \Omega$ and $\kappa\geq \max\left\{\frac{3}{2}m-1,2\right\}$, the ETDRK2 scheme \eqref{eq:ETDRK2} preserves the discrete maximum bound principle unconditionally, i.e., for any time step size $\tau>0$ and $n\geq 1$, the ETDRK2 solution $U^n$ satisfies $\| U^n \|_F\leq \sqrt{m}$.
\end{theorem}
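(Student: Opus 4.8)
The plan is to mirror the proof of Theorem \ref{MBP1} and proceed by induction on $n$, assuming $\|U^n\|_F \leq \sqrt{m}$ for all $\mathbf{x} \in \overline{\Omega}$ and establishing the same bound for $U^{n+1}$. The only genuinely new ingredient compared to the ETD1 case is controlling the intermediate stage $\widetilde{U}^{n+1}$ that enters the nonlinear term of \eqref{eq:ETDRK2}.

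First I would observe that $\widetilde{U}^{n+1}$ is precisely the ETD1 update applied to $U^n$ (this is the first line of \eqref{eq:ETDRK2.2}). Therefore, by the inductive hypothesis $\|U^n\|_F \leq \sqrt{m}$ together with the already-proved Theorem \ref{MBP1}, we immediately obtain $\|\widetilde{U}^{n+1}\|_{\mathcal{X}} \leq \sqrt{m}$. With both $U^n$ and $\widetilde{U}^{n+1}$ lying in $\mathcal{X}_m$, Lemma \ref{Nkappa} supplies the two uniform bounds $\|\mathcal{N}[U^n]\|_{\mathcal{X}} \leq \kappa\sqrt{m}$ and $\|\mathcal{N}[\widetilde{U}^{n+1}]\|_{\mathcal{X}} \leq \kappa\sqrt{m}$.

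Next I would take the supremum norm $\|\cdot\|_{\mathcal{X}}$ on both sides of \eqref{eq:ETDRK2} and apply the contraction estimate from Lemma \ref{lambda-Delta} to the exponential operators. The crucial point is that for each $s \in [0,\tau]$ the weights $1 - s/\tau$ and $s/\tau$ are nonnegative and sum to one, so the integrand is a convex combination of $\mathcal{N}[U^n]$ and $\mathcal{N}[\widetilde{U}^{n+1}]$; by the triangle inequality its norm is bounded by $(1 - s/\tau)\kappa\sqrt{m} + (s/\tau)\kappa\sqrt{m} = \kappa\sqrt{m}$, which is exactly the bound used for the single nonlinear evaluation in the ETD1 proof. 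Consequently,
$$\|U^{n+1}\|_{\mathcal{X}} \leq e^{-\kappa\tau}\sqrt{m} + \int_0^\tau e^{-\kappa(\tau-s)}\kappa\sqrt{m}\,ds = \sqrt{m},$$
which closes the induction.

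I do not anticipate any real obstacle: once one recognizes that the convex-combination structure of the ETDRK2 nonlinearity allows the same $\kappa\sqrt{m}$ bound from Lemma \ref{Nkappa} to pass through the time integral unchanged, the argument becomes essentially identical to that of Theorem \ref{MBP1}. The one subtlety worth flagging is the logical dependence of the bound for $U^{n+1}$ on the MBP for the auxiliary stage $\widetilde{U}^{n+1}$, which must be justified first; this is why invoking Theorem \ref{MBP1} at the very start is the key organizational step of the proof.
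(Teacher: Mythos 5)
Your proposal is correct and follows essentially the same route as the paper's proof: induction on $n$, invoking Theorem \ref{MBP1} to bound the intermediate stage $\widetilde{U}^{n+1}$, and then exploiting the convex-combination structure $\left(1-\frac{s}{\tau}\right)\mathcal{N}[U^n]+\frac{s}{\tau}\mathcal{N}[\widetilde{U}^{n+1}]$ together with Lemmas \ref{Nkappa} and \ref{lambda-Delta} to pass the bound $\kappa\sqrt{m}$ through the integral. The paper's argument is exactly this computation, so there is nothing to add.
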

\begin{proof} Suppose that $\|U^n\|_F\leq\sqrt{m}$. From Theorem \ref{MBP1}, we have $\|\widetilde{U}^{n+1}\|_F\leq \sqrt{m}$. Similar to the proof of Theorem \ref{MBP1}, we take the supremum norm $\|\cdot\|_{\mathcal{X}}$ on both sides of \eqref{eq:ETDRK2} and use Lemmas \ref{Nkappa} and \ref{lambda-Delta} to obtain

\begin{equation}
	\begin{aligned}
	    \| U^{n+1}\|_{\mathcal{X}}&= \Big\| e^{(\mathcal{L}-\kappa)\tau}U^n+\int_{0}^{\tau}e^{(\mathcal{L}-\kappa)(\tau-s)}\left((1-\frac{s}{\tau})\mathcal{N}[U^n]+\frac{s}{\tau}\mathcal{N}[\tilde{U}^{n+1}]\right)ds\Big\|_{\mathcal{X}}\\
		&\leq e^{-\kappa\tau}\sqrt{m}+\int_{0}^{\tau}e^{-\kappa(\tau-s)}\left(\left(1-\frac{s}{\tau}\right)\kappa \sqrt{m}+\frac{s}{\tau}\kappa \sqrt{m}\right) ds\\
		&\leq e^{-\kappa\tau}\sqrt{m}+\left(\frac{1}{\kappa}-\frac{1}{\kappa}e^{-\kappa\tau}\right)\kappa\sqrt{m}=\sqrt{m},
	\end{aligned}
\end{equation}
which completes the proof.
\end{proof}

\subsection{Unconditional energy dissipation}

In the following content, we will prove the unconditional energy dissipation of ETD1 \eqref{eq:ETDRK1.2} and ETDRK2 \eqref{eq:ETDRK2.2}. 
Our proof is inspired by the proof for scalar case (i.e., $m=1$) in \cite{fu2022energy}. The key difference is that we shall prove an inequality (that is trivial in the scalar case) as stated in Lemma \ref{Fkappa}. 
\begin{lemma}\label{Fkappa}
For any $U_1,U_2\in \mathbb{R}^{m\times m}$ with $\| U_1\|_F,~\| U_2\|_F\leq \sqrt m$, if $\kappa\geq 3m-1$, there holds
\begin{align*}
\langle F(U_1)-F(U_2), I \rangle_F+\left\langle U_1-U_2,f(U_2)\right \rangle_F\leq\frac{\kappa}{2}\| U_1-U_2\|_F^2,
\end{align*}
where $F(U)=\frac{1}{4}\left(U^\text{T}U-I\right)^2$, $f(U)=U-UU^\text{T}U$, and $\langle\cdot,\cdot\rangle_F$ represents the Frobenius inner product.
\end{lemma}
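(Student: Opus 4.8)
The plan is to exploit the gradient-flow structure linking $f$ and $F$. Writing $\Phi(U):=\langle F(U),I\rangle_F=\tfrac14\operatorname{tr}\big((U^\text{T}U-I)^2\big)$, a direct computation of the Fréchet derivative shows that $\nabla\Phi(U)=UU^\text{T}U-U=-f(U)$. Consequently the quantity to be estimated is precisely the Bregman-type remainder
\[
\langle F(U_1)-F(U_2),I\rangle_F+\langle U_1-U_2,f(U_2)\rangle_F=\Phi(U_1)-\Phi(U_2)-\langle\nabla\Phi(U_2),U_1-U_2\rangle_F,
\]
so the lemma is equivalent to saying that $\Phi$ is semiconcave with constant $\kappa$ on the Frobenius ball $\{\,\|U\|_F\le\sqrt m\,\}$; that is, its Hessian is bounded above by $\kappa$ there.

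Next I would reduce this matrix inequality to a one-variable estimate. Put $V:=U_1-U_2$ and $g(t):=\Phi(U_2+tV)$ on $[0,1]$; the integral form of Taylor's theorem gives
\[
\Phi(U_1)-\Phi(U_2)-\langle\nabla\Phi(U_2),V\rangle_F=\int_0^1(1-t)\,g''(t)\,dt.
\]
Since the Frobenius ball of radius $\sqrt m$ is convex, the whole segment $U_2+tV$ remains inside it, so it suffices to prove the pointwise bound $g''(t)=\langle\nabla^2\Phi(W)[V],V\rangle_F\le\kappa\|V\|_F^2$ for every $W$ with $\|W\|_F\le\sqrt m$; integrating against $(1-t)$ then produces the factor $\tfrac{\kappa}{2}$.

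I would then compute the Hessian explicitly. Differentiating $\nabla\Phi$ yields $\nabla^2\Phi(U)[V]=VU^\text{T}U+UV^\text{T}U+UU^\text{T}V-V$, and pairing with $V$ gives
\[
\langle\nabla^2\Phi(U)[V],V\rangle_F=\|UV^\text{T}\|_F^2+\operatorname{tr}\big((U^\text{T}V)^2\big)+\|U^\text{T}V\|_F^2-\|V\|_F^2.
\]
All three positive terms can be controlled by the submultiplicative estimate $\|AB\|_F\le|A|_2\|B\|_F$: the first and third obey $\|UV^\text{T}\|_F^2,\,\|U^\text{T}V\|_F^2\le|U|_2^2\|V\|_F^2$, and the cross term satisfies $\operatorname{tr}((U^\text{T}V)^2)\le\|U^\text{T}V\|_F^2\le|U|_2^2\|V\|_F^2$. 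Summing and invoking $|U|_2^2\le\|U\|_F^2\le m$ gives $\langle\nabla^2\Phi(U)[V],V\rangle_F\le(3|U|_2^2-1)\|V\|_F^2\le(3m-1)\|V\|_F^2$, which is exactly the required Hessian bound once $\kappa\ge 3m-1$.

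The main obstacle is the cross term $\operatorname{tr}((U^\text{T}V)^2)$, which is exactly the feature that degenerates in the scalar case. Writing $M:=U^\text{T}V$, one has $\operatorname{tr}(M^2)=\sum_{i,j}M_{ij}M_{ji}$, which is \emph{not} a sum of squares when $M$ is non-symmetric; the bound $\operatorname{tr}(M^2)\le\|M\|_F^2$ must be obtained from the entrywise inequality $2M_{ij}M_{ji}\le M_{ij}^2+M_{ji}^2$. This single elementary step has no analogue in the scalar Allen--Cahn computation (where $M$ is a number and $\operatorname{tr}(M^2)=\|M\|_F^2$ holds with equality), and it is responsible for the threshold $\kappa\ge 3m-1$, which reduces to the scalar value $\kappa\ge 2$ at $m=1$.
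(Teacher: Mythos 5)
Your proof is correct, and it takes a genuinely different route from the paper. The paper proceeds by brute-force expansion: it writes both inner products as traces of quartic and quadratic expressions, identifies the quadratic part exactly as $-\tfrac12\|U_1-U_2\|_F^2$, and then bounds the quartic combination $\langle U_1U_1^{\mathrm{T}}U_1U_1^{\mathrm{T}}-4U_1U_2^{\mathrm{T}}U_2U_2^{\mathrm{T}}+3U_2U_2^{\mathrm{T}}U_2U_2^{\mathrm{T}},I\rangle_F$ by $6m\|U_1-U_2\|_F^2$ via an algebraic factorization that extracts factors of $U_1-U_2$ (the same telescoping device used in its Lipschitz lemma for $\mathcal N$), arriving at the identical constant $\tfrac{3m-1}{2}$. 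You instead recognize the left-hand side as the Bregman remainder of the potential $\Phi(U)=\langle F(U),I\rangle_F$ with $\nabla\Phi=-f$, reduce via Taylor's theorem with integral remainder to a Hessian bound on the convex Frobenius ball, compute $\langle\nabla^2\Phi(U)[V],V\rangle_F=\|UV^{\mathrm{T}}\|_F^2+\operatorname{tr}\bigl((U^{\mathrm{T}}V)^2\bigr)+\|U^{\mathrm{T}}V\|_F^2-\|V\|_F^2$, and bound each term by $|U|_2^2\|V\|_F^2\le m\|V\|_F^2$; all steps check out, including the non-symmetric cross-term estimate $\operatorname{tr}(M^2)\le\|M\|_F^2$ and the convexity argument keeping the segment inside the ball. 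What your approach buys is conceptual clarity: the constant $\kappa\ge 3m-1$ is revealed as a semiconcavity (Hessian) bound for the potential on the ball, the factor $\tfrac12$ emerges naturally from $\int_0^1(1-t)\,dt$ rather than from bookkeeping, the reduction to the scalar threshold $\kappa\ge 2$ at $m=1$ is transparent, and the argument would adapt to other polynomial potentials with minimal change. What the paper's computation buys is self-containedness — pure matrix algebra with no Fréchet derivatives or Taylor expansion in function space — which fits its elementary, lemma-by-lemma style; both proofs yield exactly the same sharp constant.
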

\begin{proof} For any $U_1,U_2\in \mathbb{R}^{m\times m}$, we have
\begin{equation}
   \begin{aligned}\label{eq:3.3.1}
   \langle F(U_1)-F(U_2), I \rangle_F
   =&\frac{1}{4}\left\langle (U_1U_1^{\text{T}}-I)^2-(U_2U_2^{\text{T}}-I)^2,I\right\rangle_{F}\\
   =&\frac{1}{4}\left\langle U_1U_1^{\text{T}}U_1U_1^{\text{T}}-2U_1U_1^{\text{T}}-U_2U_2^{\text{T}}U_2U_2^{\text{T}}+2U_2U_2^{\text{T}},I\right\rangle_{F},
   \end{aligned}
\end{equation}
and 
\begin{equation}
   \begin{aligned}\label{eq:3.3.2}
   \left\langle U_1-U_2,f(U_2)\right\rangle_F=&\left\langle U_1-U_2,U_2-U_2U_2^{\text{T}}U_2\right\rangle_F\\
   =&\left\langle U_1U_2^{\text{T}}-U_2U_2^{\text{T}}-U_1U_2^{\text{T}}U_2U_2^{\text{T}}+U_2U_2^{\text{T}}U_2U_2^{\text{T}},I\right\rangle_{F}.
   \end{aligned}
\end{equation}
	
Since $\| U_1\|_F,~\| U_2\|_F\leq \sqrt m$ and $\kappa\geq 3m-1$, combing \eqref{eq:3.3.1}-\eqref{eq:3.3.2}, we get
  \begin{equation}
    \begin{aligned}\label{eq:3.3.3}
	&\langle F(U_1)-F(U_2), I \rangle_F+\left\langle U_1-U_2,f(U_2)\right\rangle_F\\
	=&\frac{1}{2}\left\langle -U_1U_1^{\text{T}}+2U_1U_2^{\text{T}}-U_2U_2^{\text{T}},I\right\rangle_{F}\\
&+\frac{1}{4}\left\langle U_1U_1^{\text{T}}U_1U_1^{\text{T}}-4U_1U_2^{\text{T}}U_2U_2^{\text{T}}+3U_2U_2^{\text{T}}U_2U_2^{\text{T}},I\right\rangle_{F}\\
 \leq&\frac{1}{2}(3m-1)\| U_1-U_2\|_F^2\leq \frac{\kappa}{2}\| U_1-U_2\|_F^2,
    \end{aligned}
\end{equation}
where we use
\begin{equation}
  \begin{aligned}\label{eq:3.3.4}
    \left\langle -U_1U_1^{\text{T}}+2U_1U_2^{\text{T}}-U_2U_2^{\text{T}},I\right\rangle_{F}=&\left\langle -U_1U_1^{\text{T}}+U_1U_2^{\text{T}}+U_2U_1^{\text{T}}-U_2U_2^{\text{T}},I\right\rangle_{F}\\
    =&-\| U_1-U_2\|_F^2
  \end{aligned}
\end{equation}
and
\begin{equation}
     \begin{aligned}\label{eq:3.3.5}	
         &\left\langle U_1U_1^{\text{T}}U_1U_1^{\text{T}}-4U_1U_2^{\text{T}}U_2U_2^{\text{T}}+3U_2U_2^{\text{T}}U_2U_2^{\text{T}},I\right\rangle_{F}\\
	=&\left\langle (U_1U_1^{\text{T}}U_1-U_2U_2^{\text{T}}U_2)U_1^{\text{T}}-3U_2U_2^{\text{T}}U_2(U_1^{\text{T}}-U_2^{\text{T}}),I\right\rangle_{F}\\
	=&\left\langle (U_1-U_2)(U_1^\text{T}U_1+U_2^\text{T}U_2- 
         U_2^\text{T}U_1)U_1^{\text{T}}+U_1U_2^\text{T}(U_1-U_2)U_1^{\text{T}}\right.\\
         &\left.+U_2(U_1^\text{T}-          U_2^\text{T})U_1U_1^{\text{T}}-3U_2U_2^{\text{T}}U_2(U_1^{\text{T}}-U_2^{\text{T}}),I\right\rangle_{F}\\
	=&\left\langle (U_1-U_2)        (U_1^{\text{T}}U_1U_1^{\text{T}}+U_2^{\text{T}}U_2U_1^{\text{T}}-U_2^{\text{T}}U_1U_1^{\text{T}}+U_1^{\text{T}}U_1U_2^{\text{T}}\right.\\
&\left.+U_2^{\text{T}}U_1U_1^{\text{T}}-3U_2^{\text{T}}U_2U_2^{\text{T}}),I\right\rangle_F\\
	=&\left\langle (U_1-U_2)((U_1^{\text{T}}-U_2^{\text{T}})U_1U_1^{\text{T}}+U_2^{\text{T}}U_2(U_1^{\text{T}}-U_2^{\text{T}})+(U_1^{\text{T}}-U_2^{\text{T}})U_1U_2^{\text{T}}\right.\\
	&\left.+U_2^{\text{T}}(U_1-U_2)U_2^{\text{T}}+U_2^{\text{T}}(U_1-U_2)U_1^{\text{T}}+U_2^{\text{T}}U_2(U_1^{\text{T}}- 
         U_2^{\text{T}})),I\right\rangle_F\\
         \leq&\| U_1-U_2\|_F^2\left(\|U_1U_1^{\text{T}}\|_F+\|U_2^{\text{T}}U_2\|_F+\|U_1U_2^{\text{T}}\|_F+\|U_2U_2\|_F\right.\\	&\left.+\|U_2U_1\|_F+\|U_2^{\text{T}}U_2\|_F\right)\\
	\leq&6m\| U_1-U_2\|_F^2.\\
     \end{aligned}
\end{equation}\end{proof}

Based on Lemma \ref{Fkappa}, we are now ready to prove the original energy dissipation law for both ETD1 and ETDRK2 schemes that is inspired by the proof in \cite{fu2022energy}.
	
\begin{theorem}[Original energy dissipation law]\label{Energy}
    For the matrix-valued Allen--Cahn equation \eqref{eq:mac_ka}, if $\kappa\geq 3m-1$, both the ETD1 \eqref{eq:ETDRK1.2} and ETDRK2 \eqref{eq:ETDRK2.2} schemes  satisfy the energy dissipation law, i.e.,
   \begin{align}
    E(U^{n+1})\leq E(U^n),~~\forall n\geq0,
   \end{align}
where $E(U)$ is the original energy defined by \eqref{energy}.
\end{theorem}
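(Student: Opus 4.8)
The plan is to reduce the energy decay of both schemes to a single operator inequality in the increment $\delta^n:=U^{n+1}-U^n$, obtained by pairing the polarization of the quadratic part of \eqref{energy} with the convexity-type estimate of Lemma \ref{Fkappa}. Throughout, write $(V,W):=\int_\Omega \langle V(\mathbf x),W(\mathbf x)\rangle_F\,d\mathbf x$ for the $L^2$ inner product on $\mathcal X$, $\|V\|^2:=(V,V)$, and recall $L=\kappa\mathcal I-\varepsilon^2\Delta$. Since $\kappa\geq 3m-1\geq\max\{\frac32 m-1,2\}$ for $m\geq 2$, Theorems \ref{MBP1} and \ref{MBP2} already guarantee $\|U^n\|_F,\|\widetilde U^{n+1}\|_F,\|U^{n+1}\|_F\leq\sqrt m$ pointwise, so Lemma \ref{Fkappa} may be applied to every pair of iterates below. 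It is convenient to set $v^n:=\varepsilon^2\Delta U^n+f(U^n)=\mathcal N[U^n]-LU^n$, the negative variational derivative of $E$ at $U^n$.

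First I would split $E(U^{n+1})-E(U^n)$. For the quadratic part $\frac{\varepsilon^2}{2}\int_\Omega\|\nabla U\|_F^2=\frac12(-\varepsilon^2\Delta U,U)$, the polarization identity for the symmetric form $(-\varepsilon^2\Delta\,\cdot,\cdot)$ gives its increment as $(-\varepsilon^2\Delta U^n,\delta^n)+\frac12(-\varepsilon^2\Delta\,\delta^n,\delta^n)$. For the nonlinear part I would integrate Lemma \ref{Fkappa} (with $U_1=U^{n+1}$, $U_2=U^n$) over $\Omega$ to obtain $\int_\Omega\langle F(U^{n+1})-F(U^n),I\rangle_F\,d\mathbf x\leq -(\delta^n,f(U^n))+\frac\kappa2\|\delta^n\|^2$. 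Adding the two and regrouping, the linear-in-$\delta^n$ terms combine into $-(v^n,\delta^n)$ while the quadratic-in-$\delta^n$ terms combine (using $-\varepsilon^2\Delta+\kappa\mathcal I=L$) into $\frac12(L\delta^n,\delta^n)$, yielding the unified bound
\[
E(U^{n+1})-E(U^n)\leq -(v^n,\delta^n)+\tfrac12(L\delta^n,\delta^n),
\]
valid for either scheme. It then remains to insert the scheme-specific expression for $\delta^n$ and show the right-hand side is nonpositive.

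For ETD1, \eqref{eq:ETDRK1.2} gives exactly $\delta^n=(\mathcal I-e^{-\tau L})L^{-1}v^n$, so $v^n=L(\mathcal I-e^{-\tau L})^{-1}\delta^n$ and the bound becomes $\big(\big[\tfrac12 L-L(\mathcal I-e^{-\tau L})^{-1}\big]\delta^n,\delta^n\big)$. Since $L$ is self-adjoint and positive definite (with periodic boundary conditions its eigenvalues are $\kappa+\varepsilon^2|\xi|^2\geq\kappa>0$), all operators here are functions of $L$ and commute; diagonalizing, the symbol at an eigenvalue $\mu>0$ is $-\frac{\mu}{2}\cdot\frac{1+e^{-\tau\mu}}{1-e^{-\tau\mu}}<0$ for every $\tau>0$. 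Hence the form is negative semidefinite and $E(U^{n+1})\leq E(U^n)$ unconditionally.

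The ETDRK2 case follows the same reduction but is the main obstacle. From \eqref{eq:ETDRK2.2} one has $\widetilde U^{n+1}-U^n=(\mathcal I-e^{-\tau L})L^{-1}v^n$ and $U^{n+1}-\widetilde U^{n+1}=\frac1\tau(e^{-\tau L}-\mathcal I+\tau L)L^{-2}w^n$ with $w^n:=\mathcal N[\widetilde U^{n+1}]-\mathcal N[U^n]$, so $\delta^n=(\mathcal I-e^{-\tau L})L^{-1}v^n+\frac1\tau(e^{-\tau L}-\mathcal I+\tau L)L^{-2}w^n$. Substituting this into the unified bound produces a quadratic form in the coupled pair $(v^n,w^n)$: its pure-$v^n$ part is exactly the ETD1 form already shown to be nonpositive, but the cross and pure-$w^n$ contributions must be controlled. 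The plan is to exploit the predictor relation above together with a second application of Lemma \ref{Fkappa} to the pair $(\widetilde U^{n+1},U^n)$ to relate $w^n$ back to $v^n$ and $\delta^n$, and then, using the self-adjoint functional calculus of $L$, to reduce everything to scalar inequalities in the entire functions $z\mapsto(1-e^{-z})/z$ and $z\mapsto(e^{-z}-1+z)/z^2$ that must be verified for all $z=\tau\mu>0$. The hard part is precisely this: controlling the sign of the $w^n$-terms—i.e., the nonlinear stage difference—uniformly in the step size, so that the final scalar estimates hold for every $\tau>0$ and the scheme remains \emph{unconditionally} energy dissipative rather than only under a step restriction.
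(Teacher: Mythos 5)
Your ETD1 argument is complete and correct, and it is essentially the paper's proof with slightly tighter bookkeeping: the paper drops the terms $-\frac{\kappa}{2}\|\delta^n\|^2-\frac{\varepsilon^2}{2}\|\nabla\delta^n\|^2$ and works with the form $L-L(\mathcal I-e^{-\tau L})^{-1}$ instead of your $\frac12L-L(\mathcal I-e^{-\tau L})^{-1}$; both symbols are negative for every $\tau>0$. The genuine gap is in ETDRK2, which you yourself flag as unresolved. Substituting $\delta^n=(\mathcal I-e^{-\tau L})L^{-1}v^n+\frac1\tau(e^{-\tau L}-\mathcal I+\tau L)L^{-2}w^n$ into a single unified bound between $U^{n+1}$ and $U^n$ leaves the nonlinear stage difference $w^n=\mathcal N[\widetilde U^{n+1}]-\mathcal N[U^n]$ as an independent quantity: it is not a function of $L$ applied to $v^n$, so the functional calculus of $L$ cannot give the cross and pure-$w^n$ terms a sign; and relating $w^n$ to $\widetilde U^{n+1}-U^n$ through the Lipschitz bound of Lemma \ref{NNkappa} only yields norm estimates with constants, which forces a step-size restriction rather than unconditional dissipation. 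Your proposed ``second application of Lemma \ref{Fkappa} to the pair $(\widetilde U^{n+1},U^n)$'' does not repair this, since that lemma controls energy-type differences, not the operator difference $w^n$.

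The missing idea is a different decomposition: split the energy difference through the predictor, $E(U^{n+1})-E(U^n)=\bigl[E(\widetilde U^{n+1})-E(U^n)\bigr]+\bigl[E(U^{n+1})-E(\widetilde U^{n+1})\bigr]$, and apply your unified bound twice---once to the pair $(\widetilde U^{n+1},U^n)$, which is pure ETD1, and once to the pair $(U^{n+1},\widetilde U^{n+1})$, i.e., Lemma \ref{Fkappa} with $U_2=\widetilde U^{n+1}$. In the second application the quantity that appears is $LU^{n+1}-\mathcal N[\widetilde U^{n+1}]$, and the corrector equation \eqref{eq:ETDRK2.2} gives $\mathcal N[\widetilde U^{n+1}]=\mathcal N[U^n]+\tau(e^{-\tau L}-\mathcal I+\tau L)^{-1}L^2(U^{n+1}-\widetilde U^{n+1})$, so $w^n$ is eliminated \emph{exactly} by the scheme, leaving $\Delta_1(\widetilde U^{n+1}-U^n)+\Delta_2(U^{n+1}-\widetilde U^{n+1})$ with $\Delta_1=L-L(\mathcal I-e^{-\tau L})^{-1}$, $\Delta_2=L-\tau(e^{-\tau L}-\mathcal I+\tau L)^{-1}L^2$. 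Writing $a=\widetilde U^{n+1}-U^n$ and $b=U^{n+1}-\widetilde U^{n+1}$, the two stages sum to $(a,\Delta_1a)+(b,\Delta_1a)+(b,\Delta_2b)$, which is completed as $\frac12(a,\Delta_1a)+\bigl(b,(\Delta_2-\tfrac12\Delta_1)b\bigr)+\frac12\bigl(a+b,\Delta_1(a+b)\bigr)$; non-positivity then follows from the scalar inequalities $y_1(x)=x-\frac{x}{1-e^x}\ge0$ and $y_2(x)=x+\frac{x^2}{e^x-1-x}-\frac12y_1(x)\ge0$ applied at $-\tau L$. Without this two-stage splitting, the sign of your $w^n$-terms cannot be controlled uniformly in $\tau$, so as written your proposal establishes the theorem only for ETD1.
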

\begin{proof} This proof is similar to the scalar case given in \cite{fu2022energy}. For the sake of completeness, we still show the details here.  

Let $V= U^{n+1}$ where $U^{n+1}$ is obtained from the ETD1 scheme \eqref{eq:ETDRK1.2}. 
For the first part of the energy, we have
\begin{equation}
	\begin{aligned}\label{eq:4.1.2}
	&\frac{\varepsilon^2}{2}\int_{\Omega}\left(\| \nabla V\|_F^2-\| 
        \nabla U^n\|_F^2\right)d\mathbf x 
	=\frac{\varepsilon^2}{2}\int_{\Omega}\left(-\left\langle 
        V,\Delta V\right\rangle_{F}+\left\langle U^n,\Delta U^n\right\rangle_F\right)d\mathbf x \\
	=&\int_{\Omega}\left(-\varepsilon^2\left\langle V-U^n,\Delta 
        V\right\rangle_{F}-\frac{\varepsilon^2}{2}\| \nabla(V-U^{n})\|_{F}^2\right)d\mathbf x \\
	=&\int_{\Omega}\left\langle V- 
        U^n,LV\right\rangle_{F}d\mathbf x -\int_{\Omega}\left\langle V-U^n,\kappa V\right\rangle_{F}d\mathbf x -\frac{\varepsilon^2}{2}\int_{\Omega}\| \nabla(V-U^{n})\|_{F}^2d\mathbf x ,
	\end{aligned}
 \end{equation}
 where $L=\kappa\mathcal{I}-\varepsilon^2\Delta$. 
 For the second part of the energy, according to Lemma \ref{Fkappa}, we have
 
\begin{equation}
     \begin{aligned}\label{eq:4.1.1}
	&\int_{\Omega}\langle F(V)-F(U^n), I \rangle_Fd\mathbf x 
	\leq\int_{\Omega}\left\langle V-U^{n},- 
            f(U^n)\right\rangle_Fd\mathbf x +\frac{\kappa}{2}\int_{\Omega}\| V-U^{n}\|_{F}^2d\mathbf x \\
        =&-\int_{\Omega}\left\langle V-U^{n},\mathcal{N}[U^n]\right\rangle_Fd\mathbf x +\int_{\Omega}\left\langle V-U^{n},\kappa U^n\right\rangle_Fd\mathbf x +\frac{\kappa}{2}\int_{\Omega}\| V-U^{n}\|_{F}^2d\mathbf x ,
    \end{aligned}
\end{equation}
where $\mathcal{N}=\kappa\mathcal{I}+f$. Combining \eqref{eq:4.1.2}-\eqref{eq:4.1.1} and according to \eqref{eq:ETDRK1.2}, we obtain
 
\begin{equation}
	\begin{aligned}\label{eq:4.1.3}
	&E(V)-E(U^n)\\
	\leq&\int_{\Omega}\left\langle V-U^{n},LV-\mathcal{N} 
             [U^n]\right\rangle_Fd\mathbf x -\frac{\kappa}{2}\int_{\Omega}\| V-U^{n}\|_{F}^2d\mathbf x -\frac{\varepsilon^2}{2}\int_{\Omega}\| \nabla(V-U^{n})\|_{F}^2d\mathbf x \\
             \leq&\int_{\Omega}\left\langle V-U^{n},LV-\mathcal{N}[U^n]\right\rangle_Fd\mathbf x \\
        =&\int_{\Omega}\left\langle V-U^{n},LV-L(\mathcal{I}-e^{-\tau L})^{-1}(V-e^{-\tau 
           L}U^n)\right\rangle_Fd\mathbf x \\
	=&\int_{\Omega}\left\langle V-U^{n},\left[L-L(\mathcal{I}-e^{-\tau L})^{-1}\right] 
          (V-U^n)\right\rangle_Fd\mathbf x \\
	=&\int_{\Omega}\left\langle V-U^{n},\Delta_1(V-U^n)\right\rangle_Fd\mathbf x ,
	\end{aligned}
\end{equation}
 where $\mathcal{N}[U^n]=L(\mathcal{I}-e^{-\tau L})^{-1}(V-e^{-\tau L}U^n)$ and $\Delta_1=L-L(\mathcal{I}-e^{-\tau L})^{-1}$. Since the function $y_1(x)=x-\frac{x}{1-e^x}\geq 0$  for all $x\in\mathbb R$ (here we make a convention that $y_1(0)=1$) and $\Delta_1=-\frac{1}{\tau} y_1(-\tau L)$, we obtain that the operator $\Delta_1$ is non-positive definite which indicates that the energy is decreasing from $U^n$ to $V$. Thus, we have proved that the ETD1 \eqref{eq:ETDRK1.2} scheme satisfies the energy dissipation law.

 Consider the ETDRK2 scheme \eqref{eq:ETDRK2.2}. 
 Let $V = \widetilde U^{n+1}$ that is obtained from the ETD1 scheme. 
 The same as \eqref{eq:4.1.3}, we can obtain 
 \begin{equation}
	\begin{aligned}\label{eq:3.19}
	E(V)-E(U^n)
	\leq \int_{\Omega}\left\langle V-U^{n},\Delta_1(V-U^n)\right\rangle_Fd\mathbf x .
	\end{aligned}
\end{equation}
Now $U^{n+1}$ is obtained from the ETDRK2 scheme \eqref{eq:ETDRK2.2}. 
Similar to \eqref{eq:4.1.3}, we can compute the energy difference between $U^{n+1}$ and $V$:

\begin{equation}
\begin{aligned}\label{eq:4.1.5}
&E(U^{n+1})-E(V)\leq\int_{\Omega}\left\langle U^{n+1}-V,LU^{n+1}-\mathcal{N}[V]\right\rangle_Fd\mathbf x \\
=&\int_{\Omega}\left\langle U^{n+1}-V,LU^{n+1}-\mathcal{N}[U^n]-\tau(e^{-\tau L}-\mathcal{I}+\tau L)^{-1}L^2(U^{n+1}-V)\right\rangle_Fd\mathbf x \\
=&\int_{\Omega}\left\langle U^{n+1}-V,LV-\mathcal{N}[U^n]+LU^{n+1}-LV\right.\\
&\left.~~~~~-\tau(e^{-\tau L}-\mathcal{I}+\tau L)^{-1}L^2(U^{n+1}-V)\right\rangle_Fd\mathbf x \\
=&\int_{\Omega}\left\langle U^{n+1}-V,\Delta_1(V-U^n)+\left(L-\tau(e^{-\tau L}-\mathcal{I}+\tau L)^{-1}L^2\right)(U^{n+1}-V)\right\rangle_Fd\mathbf x \\
=&\int_{\Omega}\left\langle U^{n+1}-V,\Delta_1(V-U^n)\right\rangle_Fd\mathbf x +\int_{\Omega}\left\langle U^{n+1}-V,\Delta_2(U^{n+1}-V)\right\rangle_Fd\mathbf x ,
\end{aligned}
\end{equation}
where $\Delta_2=L-\tau\left(e^{-\tau L}-\mathcal{I}+\tau L\right)^{-1}L^2$. Combing \eqref{eq:3.19} with \eqref{eq:4.1.5}, we can obtain that

\begin{equation}
    \begin{aligned}\label{eq:4.1.6}
     & E(U^{n+1})-E(U^n)\\
      \leq&\int_{\Omega}\left\langle V-U^{n},\Delta_1(V-U^n)\right\rangle_Fd\mathbf x +\int_{\Omega}\left\langle U^{n+1}-V,\Delta_1(V-U^n)\right\rangle_Fd\mathbf x \\
      &+\int_{\Omega}\left\langle U^{n+1}-V,\Delta_2(U^{n+1}-V)\right\rangle_Fd\mathbf x \\
      =&\frac{1}{2}\int_{\Omega}\left\langle V-U^{n},\Delta_1(V-U^n)\right\rangle_Fd\mathbf x +\int_{\Omega}\left\langle U^{n+1}-V,(\Delta_2-\frac{1}{2}\Delta_1)(U^{n+1}-V)\right\rangle_Fd\mathbf x \\
      &+\frac{1}{2}\int_{\Omega}\left[\left\langle V-U^{n},\Delta_1(V-U^n)\right\rangle_F+2\left\langle U^{n+1}-V,\Delta_1(V-U^n)\right\rangle_F\right.\\
      &\left.+\left\langle U^{n+1}-V,\Delta_1(U^{n+1}-V)\right\rangle_F\right]d\mathbf x \\
      =&\frac{1}{2}\int_{\Omega}\left\langle V-U^{n},\Delta_1(V-U^n)\right\rangle_Fd\mathbf x +\int_{\Omega}\left\langle U^{n+1}-V,(\Delta_2-\frac{1}{2}\Delta_1)(U^{n+1}-V)\right\rangle_Fd\mathbf x \\
      &+\frac{1}{2}\int_{\Omega}\left\langle U^{n+1}-U^{n},\Delta_1(U^{n+1}-U^n)\right\rangle_Fd\mathbf x .
    \end{aligned}
\end{equation}
 Since $\Delta_1$ is non-positive definite, both the first and third terms of \eqref{eq:4.1.6} are non-positive. For the second term of \eqref{eq:4.1.6}, it is already proved in \cite{fu2022energy}  that 
 \begin{equation}
     y_2(x)=x+\frac{x^2}{e^x-1-x}-\frac{1}{2}y_1(x)=\frac{x(e^x(e^x+x-3)+2)}{2(e^x-1)(e^x-1-x)}\geq 0,\quad \forall x\in \mathbb R,
 \end{equation} 
 where we make a convention that $y_2(0)=\frac 32$. Thus, the operator $\Delta_2-\frac{1}{2}\Delta_1=-\frac{1}{\tau}y_2(-\tau L)$ is non-positive definite. The second term of \eqref{eq:4.1.6} is also non-positive. Therefore, we have $E(U^{n+1})-E(U^n)\leq 0$.
\end{proof} 

\subsection{Convergence analysis}
In this section, we give the convergence results for the ETD1 scheme \eqref{eq:ETDRK1} and ETDRK2 scheme \eqref{eq:ETDRK2}. Compared to the convergence analysis for the scalar case (see \cite[Theorem 3.4, 3.5]{du2021maximum}), the norm $\|\cdot\|=\sup _{\mathbf x  \in \hat{\Omega}}|\cdot|$ in \cite{du2021maximum} shall be replaced by $\|\cdot\|_{\mathcal X}=\max _{\mathbf x  \in \overline{\Omega}}\|\cdot\|_F$ for the matrix-valued case.
For the sake of completeness, we show the detailed proofs.

\begin{theorem}[Convergence of ETD1]\label{Conv1}
    Suppose that $\kappa\geq \max\{\frac 32 m-1,2\}$ and $\|U^0\|_{\mathcal{X}}\leq \sqrt{m}$. For the fixed terminal time $T>0$, assume that the exact solution $U$ to \eqref{eq:mac_ka} belongs to $C^1([0,T];\mathcal{X})$ and $\{U^n\}_{n\geq0}$ is generated by the ETD1 scheme \eqref{eq:ETDRK1}. Then, we have
    \begin{equation}\label{eq:conv1}
        \begin{aligned}
            \|U^n-U(t_n)\|_{\mathcal{X}}\leq Cm\left(\frac{\kappa}{1+5m}+1\right)e^{(1+5m)t_n}\tau,
        \end{aligned}
    \end{equation}
for any $\tau>0$ and $t_n\leq T$, where the constant $C>0$ is the $C^1([0,T];\mathcal{X})$-norm of $U$ and is independent of $\tau$.
\end{theorem}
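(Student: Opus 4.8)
The plan is to follow the standard variation-of-constants/Duhamel framework for one-step ETD schemes, converting the global error into a scalar discrete Gronwall recursion whose growth rate is made independent of $\kappa$ (hence of $\varepsilon$) by exploiting the contraction property of Lemma~\ref{lambda-Delta}. First I would write the exact solution on $[t_n,t_{n+1}]$ via the variation-of-constants formula, which takes exactly the same shape as the scheme \eqref{eq:ETDRK1} but with the frozen value $\mathcal{N}[U^n]$ replaced by the time-dependent $\mathcal{N}[U(t_n+s)]$. Subtracting this from \eqref{eq:ETDRK1} and setting $e^n := U^n-U(t_n)$ (with $e^0=0$) gives
$$e^{n+1}=e^{(\mathcal{L}-\kappa)\tau}e^n+\int_0^\tau e^{(\mathcal{L}-\kappa)(\tau-s)}\bigl(\mathcal{N}[U^n]-\mathcal{N}[U(t_n+s)]\bigr)\,ds.$$

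Next I would take $\|\cdot\|_{\mathcal X}$ and apply $\|e^{(\mathcal{L}-\kappa)t}W\|_{\mathcal X}\le e^{-\kappa t}\|W\|_{\mathcal X}$ from Lemma~\ref{lambda-Delta}, then split the integrand as $\mathcal{N}[U^n]-\mathcal{N}[U(t_n)]$ plus $\mathcal{N}[U(t_n)]-\mathcal{N}[U(t_n+s)]$. Because Theorem~\ref{unique u} and Theorem~\ref{MBP1} guarantee that both $U(t_n+s)$ and $U^n$ lie in $\mathcal X_m$, Lemma~\ref{NNkappa} legitimately applies and bounds the first difference by $(\kappa+1+5m)\|e^n\|_{\mathcal X}$; the second is the consistency term, controlled by the $C^1$-regularity of $U$ through $\|U(t_n)-U(t_n+s)\|_{\mathcal X}\le Cs$ and Lemma~\ref{NNkappa} again, giving $(\kappa+1+5m)Cs$. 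Carrying out the two elementary $s$-integrals, $\int_0^\tau e^{-\kappa(\tau-s)}\,ds=\frac{1-e^{-\kappa\tau}}{\kappa}$ and $\int_0^\tau e^{-\kappa(\tau-s)}s\,ds=\frac{\tau}{\kappa}-\frac{1-e^{-\kappa\tau}}{\kappa^2}$, then yields a scalar recursion $\|e^{n+1}\|_{\mathcal X}\le a\|e^n\|_{\mathcal X}+R$.

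The crux, and the step I expect to be the main obstacle, is to show that the amplification factor collapses to something $\kappa$-free, so that the exponent does not blow up with $\kappa$ (and therefore with $\varepsilon$). Combining the decay term with the Lipschitz contribution gives
$$a=e^{-\kappa\tau}+\frac{(\kappa+1+5m)(1-e^{-\kappa\tau})}{\kappa}=1+\frac{(1+5m)(1-e^{-\kappa\tau})}{\kappa}\le 1+(1+5m)\tau\le e^{(1+5m)\tau},$$
whence $a^n\le e^{(1+5m)t_n}$. This cancellation of $\kappa$ is exactly what makes the final constant independent of $\varepsilon$, since the contraction estimate of Lemma~\ref{lambda-Delta} holds uniformly in $\varepsilon$ while $\kappa$ is fixed by $m$ alone. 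A naive Gronwall argument that did not use the $e^{-\kappa\tau}$ decay would instead produce the useless growth $e^{(\kappa+1+5m)t_n}$, so the bookkeeping of this cancellation is the essential point.

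Finally, with $e^0=0$ the recursion unwinds into the geometric sum $\|e^n\|_{\mathcal X}\le R\,(a^n-1)/(a-1)\le R\,e^{(1+5m)t_n}/(a-1)$. Substituting $a-1=(1+5m)(1-e^{-\kappa\tau})/\kappa$ together with $R=(\kappa+1+5m)C\bigl(\frac{\tau}{\kappa}-\frac{1-e^{-\kappa\tau}}{\kappa^2}\bigr)$ makes the awkward factor $1/(1-e^{-\kappa\tau})$ cancel, leaving $\frac{\kappa+1+5m}{1+5m}\,C\,e^{(1+5m)t_n}\bigl(\frac{\tau}{1-e^{-\kappa\tau}}-\frac1\kappa\bigr)$. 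The elementary inequality $e^x\ge 1+x$, equivalently $\frac{x}{1-e^{-x}}-1\le x$ evaluated at $x=\kappa\tau$, bounds the last bracket by $\tau$, producing $C\bigl(\frac{\kappa}{1+5m}+1\bigr)e^{(1+5m)t_n}\tau$; since $m\ge 1$ this already implies the stated estimate, with any further $m$-dependence arising from the consistency constant (e.g.\ bounding $\|U_t\|_{\mathcal X}$) harmlessly absorbed into the prefactor $Cm\bigl(\frac{\kappa}{1+5m}+1\bigr)$, valid for all $\tau>0$ and $t_n\le T$.
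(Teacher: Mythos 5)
Your proposal is correct and follows essentially the same route as the paper's proof: the same Duhamel error equation, the same splitting of the nonlinear difference into a Lipschitz term (Lemma \ref{NNkappa}) and a consistency term, the same use of the contraction property (Lemma \ref{lambda-Delta}) together with the MBP results to justify the Lipschitz bound, and the same crucial cancellation of $\kappa$ in the amplification factor followed by a Gronwall-type argument. The only differences are cosmetic bookkeeping: you evaluate $\int_0^\tau e^{-\kappa(\tau-s)}s\,ds$ exactly and sum the geometric series in closed form (yielding a slightly sharper constant, without the factor $m$ that the paper incurs from its entrywise mean-value estimate), whereas the paper bounds $s\le\tau$ early and runs a discrete induction.
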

\begin{proof}
 
  We know that the ETD1 solution is given by $U^{n+1}=W^n(\tau)$ with the function $W^n:[0,\tau]\rightarrow\mathcal{X}$ solving \eqref{eq:3.2}. Setting $e_1^n=U^n-U(t_n)$ and subtracting \eqref{eq:mac_ka} from \eqref{eq:3.2} yields
    \begin{equation}\label{eq:e1}
        \begin{aligned}
            e_1^{n+1}=e^{(\mathcal{L}-\kappa)\tau}e_1^n+\int_0^{\tau}e^{(\mathcal{L}-\kappa)(\tau-s)}\left(\mathcal{N}[U^n]-\mathcal{N}[U(t_n)]+R_1(s)\right)ds,
        \end{aligned}
    \end{equation}
    where $R_1(s)=\mathcal{N}[U(t_n)]-\mathcal{N}[U(t_n+s)],~s\in [0,\tau]$.
   
    According to Theorem \ref{unique u}, we have $\|U(t_n)\|_F\leq \sqrt{m}$ and $\|U(t_n+s)\|_F\leq \sqrt{m}$. Then, taking the supremum norm $\|\cdot\|_{\mathcal{X}}$ to $R_1(s)$ and using Lemma \ref{NNkappa}, we derive
    \begin{equation}\label{eq:R1}
        \begin{aligned}
            \|R_1(s)\|_{\mathcal{X}}&=\max _{\mathbf x  \in \overline{\Omega}}\|\mathcal{N}[U(t_n)]-\mathcal{N}[U(t_n+s)]\|_F\\
            &\leq(\kappa+1+5m)\max _{\mathbf x  \in \overline{\Omega}}\|U(t_n)-U(t_n+s)\|_F\\
            &\leq (\kappa+1+5m)\left(\sum_{i,j=1}^{m} \left(\max_{\mathbf x  \in \overline{\Omega}}\left|U_{ij}^{'}(t_n+\xi_{ij})\right|\right)^2\right)^{\frac 12}s\\
            &\leq(\kappa+1+5m)\left(m^2C^2\right)^{\frac 12}s\\
           &\leq Cm(\kappa+1+5m)\tau,
        \end{aligned}
    \end{equation}
where $U=(U_{ij})_{m\times m}$, $\xi_{ij}\in[0,\tau]$, and the constant $C$ is the $C^1([0,T];\mathcal{X})$-norm of $U$. Similarly, according to Theorem \ref{MBP1} and Lemma \ref{NNkappa}, we have $\|U^n\|_F\leq\sqrt{m}$ and 
     \begin{equation}\label{eq:3.26}
        \begin{aligned}
        \|\mathcal{N}[U^n]-\mathcal{N}[U(t_n)]\|_{\mathcal{X}}\leq(\kappa+1+5m)\|U^n-U(t_n)\|_{\mathcal{X}}=(\kappa+1+5m)\|e_1^n\|_{\mathcal{X}}.
        \end{aligned}
    \end{equation}
   Taking the supremum norm $\|\cdot\|_{\mathcal{X}}$ to \eqref{eq:e1} and using \eqref{eq:R1}--\eqref{eq:3.26} and Lemma \ref{lambda-Delta}, we can derive
    \begin{equation}\label{eq:e1_n+1}
        \begin{aligned}
            \|e_1^{n+1}\|_{\mathcal{X}}\leq& e^{-\kappa\tau}\|e_1^n\|_{\mathcal{X}}+\int_0^{\tau}e^{-\kappa(\tau-s)}\|\mathcal{N}[U^n]-\mathcal{N}[U(t_n)]\|_{\mathcal{X}}+\|R_1(s)\|_{\mathcal{X}}ds\\
            \leq&e^{-\kappa\tau}\|e_1^n\|_{\mathcal{X}}+\left((\kappa+1+5m)\|e_1^n\|_{\mathcal{X}}+Cm(\kappa+1+5m)\tau\right)\int_0^{\tau}e^{-\kappa(\tau-s)}ds\\
            =&e^{-\kappa\tau}\|e_1^n\|_{\mathcal{X}}+\left((\kappa+1+5m)\|e_1^n\|_{\mathcal{X}}+Cm(\kappa+1+5m)\tau\right)\frac{1-e^{-\kappa\tau}}{\kappa}\\
            \leq&(1+\tau+5m\tau)\|e_1^n\|_{\mathcal{X}}+Cm(\kappa+1+5m)\tau^2,
        \end{aligned}
    \end{equation}
    where we use $1-e^{-a}\leq a$ for any $a>0$.
    By induction, we get
    \begin{equation}
        \begin{aligned}
            \|e_1^n\|_{\mathcal{X}}&\leq (1+\tau+5m\tau)^n\|e_1^0\|_{\mathcal{X}}+Cm(\kappa+1+5m)\tau^2\sum_{k=0}^{n-1}(1+\tau+5m\tau)^k\\
&\leq(1+\tau+5m\tau)^n\|e_1^0\|_{\mathcal{X}}+Cm(\kappa+1+5m)\tau\frac{(1+\tau+5m\tau)^n-1}{1+5m}\\
            &\leq e^{(1+5m)n\tau}\|e_1^0\|_{\mathcal{X}}+Cm\left(\frac{\kappa}{1+5m}+1\right)e^{(1+5m)n\tau}\tau.
        \end{aligned}
    \end{equation}
    where we use $1+a\leq e^a$ for $a\in \mathbb{R}$.
    Since $e_1^0=0$ and $n\tau=t_n$, we can obtain \eqref{eq:conv1}.
\end{proof}

% {\color{red}
% \begin{remark}
%     In Theorem \ref{Conv1}, it is assumed that $U\in C^1([0,T];\mathcal{X})$. In fact, if the initial data $U^0$ is sufficiently smooth and $\varepsilon \leq 1$, then it seems that $\partial_t U$ can be proved to be bounded independent of $\varepsilon$ in $\mathcal X$-norm. Therefore, this assumption is reasonable. We leave the detailed proof in our future work. 
% \end{remark}
% }

\begin{theorem}[Convergence of ETDRK2]\label{Conv2}
Suppose that $\kappa\geq \max\{\frac 32 m-1,2\}$ and $\|U^0\|_{\mathcal{X}}\leq \sqrt{m}$. For the fixed terminal time $T>0$, assume that the exact solution $U$ to \eqref{eq:mac_ka} belongs to $C^2([0,T];\mathcal{X})$ and $\{U^n\}_{n\geq0}$ is generated by the ETDRK2 scheme \eqref{eq:ETDRK2}. Then, we have
    \begin{equation}\label{eq:conv2}
        \begin{aligned}
            \|U^n-U(t_n)\|_{\mathcal{X}}\leq \tilde C e^{(1+5m)t_n}\tau^2,
        \end{aligned}
    \end{equation}
for any $\tau>0$ and $t_n\leq T$, where the constant $\tilde C$ depends on $\kappa$, $m$, $t_n$, and the $C^2([0,T];\mathcal{X})$-norm of $U$, but is independent of $\tau$.
\end{theorem}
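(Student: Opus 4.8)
The plan is to follow the structure of the ETD1 argument in Theorem~\ref{Conv1}, now accounting for the predictor--corrector form of \eqref{eq:ETDRK2} and extracting a genuinely second-order local truncation error. Set $e^n=U^n-U(t_n)$ and subtract the exact Duhamel representation $U(t_{n+1})=e^{-\tau L}U(t_n)+\int_0^\tau e^{-(\tau-s)L}\mathcal N[U(t_n+s)]\,ds$ from the integral form \eqref{eq:ETDRK2} of the scheme. This yields the error recursion
\[
e^{n+1}=e^{-\tau L}e^n+\int_0^\tau e^{-(\tau-s)L}\Big[\big(1-\tfrac{s}{\tau}\big)\big(\mathcal N[U^n]-\mathcal N[U(t_n)]\big)+\tfrac{s}{\tau}\big(\mathcal N[\widetilde U^{n+1}]-\mathcal N[U(t_{n+1})]\big)+R_2(s)\Big]ds,
\]
where $R_2(s)=\big(1-\tfrac{s}{\tau}\big)\mathcal N[U(t_n)]+\tfrac{s}{\tau}\mathcal N[U(t_{n+1})]-\mathcal N[U(t_n+s)]$ is the linear-interpolation remainder of the nonlinear term along the exact trajectory.

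First I would bound $R_2$. Since $U\in C^2([0,T];\mathcal X)$ and $\mathcal N[V]=(\kappa+1)V-VV^{\mathrm T}V$ is a polynomial (hence smooth) map whose arguments all remain in $\mathcal X_m$ by Theorem~\ref{unique u}, the function $\phi(s):=\mathcal N[U(t_n+s)]$ is $C^2$ into $\mathcal X$ with $\|\phi''\|_{\mathcal X}$ bounded in terms of $m$, $\kappa$, and the $C^2$-norm of $U$; the standard linear-interpolation estimate then gives $\|R_2(s)\|_{\mathcal X}\le C_2\tau^2$, which is precisely the source of the extra order over ETD1. The two propagation terms are controlled by the Lipschitz estimate of Lemma~\ref{NNkappa}, which applies because Theorems~\ref{unique u},~\ref{MBP1}, and~\ref{MBP2} ensure $\|U^n\|_F,\|\widetilde U^{n+1}\|_F,\|U(t)\|_F\le\sqrt m$: the first immediately gives $\|\mathcal N[U^n]-\mathcal N[U(t_n)]\|_{\mathcal X}\le(\kappa+1+5m)\|e^n\|_{\mathcal X}$, while for the predictor error I would reuse the single-step ETD1 bound \eqref{eq:e1_n+1}, namely $\|\widetilde U^{n+1}-U(t_{n+1})\|_{\mathcal X}\le(1+(1+5m)\tau)\|e^n\|_{\mathcal X}+Cm(\kappa+1+5m)\tau^2$, so that $\|\mathcal N[\widetilde U^{n+1}]-\mathcal N[U(t_{n+1})]\|_{\mathcal X}$ carries the same bound times $(\kappa+1+5m)$.

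Next I would take $\|\cdot\|_{\mathcal X}$ in the recursion, apply the contraction $\|e^{-(\tau-s)L}W\|_{\mathcal X}\le e^{-\kappa(\tau-s)}\|W\|_{\mathcal X}$ from Lemma~\ref{lambda-Delta}, and evaluate the weight integrals $\int_0^\tau e^{-\kappa(\tau-s)}(1-\tfrac{s}{\tau})\,ds$ and $\int_0^\tau e^{-\kappa(\tau-s)}\tfrac{s}{\tau}\,ds$, whose sum equals $\tfrac{1-e^{-\kappa\tau}}{\kappa}\le\tau$. Exactly as in the ETD1 computation, the dominant contributions collapse the coefficient of $\|e^n\|_{\mathcal X}$ to $1+(1+5m)\tau+O(\tau^2)$, whereas the $R_2$ term and the $O(\tau^2)$ inhomogeneity of the predictor error both integrate to $O(\tau^3)$ per step. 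This produces a recursion $\|e^{n+1}\|_{\mathcal X}\le\big(1+(1+5m)\tau+O(\tau^2)\big)\|e^n\|_{\mathcal X}+\tilde C_0\tau^3$, and a discrete Gr\"onwall/induction argument identical to the end of Theorem~\ref{Conv1}, together with $e^0=0$, delivers \eqref{eq:conv2}.

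The main obstacle is confirming that second-order accuracy survives the predictor stage. Although the ETD1 predictor is only first-order as a global scheme, what matters here is that its single-step truncation error is $O(\tau^2)$ and that, after multiplication by $s/\tau$, weighting by $e^{-\kappa(\tau-s)}$, and integration over $[0,\tau]$, it feeds into the local error only at $O(\tau^3)$, matching the $O(\tau^3)$ produced by the interpolation remainder $R_2$. The delicate bookkeeping is therefore to separate the $O(\tau^2)$ corrections, which merely perturb the amplification factor and are absorbed into a bounded multiplicative constant of the form $e^{O(\tau t_n)}$ in the Gr\"onwall step, from the $O(\tau^3)$ local truncation terms, which accumulate over $n\sim t_n/\tau$ steps into the final $O(\tau^2)$ global error.
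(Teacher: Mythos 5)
Your proposal is correct and follows essentially the same route as the paper's proof: the same Duhamel error recursion, the $O(\tau^2)$ bound for the interpolation remainder $R_2$, the Lipschitz estimate of Lemma~\ref{NNkappa} combined with the single-step ETD1 bound \eqref{eq:e1_n+1} for the predictor term, the semigroup contraction of Lemma~\ref{lambda-Delta}, and the concluding induction with $e^0=0$. The only differences are cosmetic: you bound $R_2$ via an abstract interpolation-remainder argument where the paper Taylor-expands explicitly, and the paper tracks the amplification factor exactly as $1+(1+5m)\tau+\tfrac12(1+5m)^2\tau^2\le e^{(1+5m)\tau}$ instead of absorbing the $O(\tau^2)$ correction into a $t_n$-dependent constant.
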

\begin{proof} 
Setting $e_2^n=U^n-U(t_n)$ and subtracting \eqref{eq:3.4} from \eqref{eq:mac_ka} yields
\begin{equation}\label{eq:e2}
    \begin{aligned}
          e_2^{n+1}=e^{(\mathcal{L}-\kappa)\tau}e_2^n+\int_0^{\tau}&e^{(\mathcal{L}-\kappa)(\tau-s)}\left(\left(1-\frac{s}{\tau}\right)(\mathcal{N}[U^n]-\mathcal{N}[U(t_n)])\right.\\
          &\left.+\frac{s}{\tau}(\mathcal{N}[\widetilde{U}^{n+1}]-\mathcal{N}[U(t_{n+1})])+R_2(s)\right)ds,
    \end{aligned}
\end{equation}
where $R_2(s)=\left(1-\frac{s}{\tau}\right)\mathcal{N}[U(t_n)]+\frac{s}{\tau}\mathcal{N}[U(t_{n+1})]-\mathcal{N}[U(t_n+s)]$ is the truncation error.

According to Theorem \ref{unique u}, we have $\|U(t_n)\|_F\leq \sqrt{m}$, $\|U(t_{n+1})\|_F\leq \sqrt{m}$, and $\|U(t_n+s)\|_F\leq \sqrt{m}$. Then, taking the supremum norm $\|\cdot\|_{\mathcal{X}}$ to $R_2(s)$ and using the Taylor's formula, we derive
    \begin{equation}\label{eq:R2}
        \begin{aligned}
            \|R_2(s)\|_{\mathcal{X}}=&\max _{\mathbf x  \in \overline{\Omega}}\left\|\left(1-\frac{s}{\tau}\right)\mathcal{N}[U(t_n)]+\frac{s}{\tau}\mathcal{N}[U(t_{n+1})]-\mathcal{N}[U(t_n+s)]\right\|_F\\
            \leq&\max _{\mathbf x  \in \overline{\Omega}}\left\|\left(1-\frac{s}{\tau}\right)\mathcal{N}[U(t_n)]+\frac{s}{\tau}\mathcal{N}\left[U(t_{n})+U^{'}(t_n)\tau+\frac{1}{2}\left(U_{ij}^{''}(t_n+\xi_{ij})\right)_{m\times m}\tau^2\right]\right.\\
            &\left.~~~~~~~-\mathcal{N}\left[U(t_n)+U^{'}(t_n)s+\frac{1}{2}\left(U_{ij}^{''}(t_n+\widetilde{\xi}_{ij})\right)_{m\times m}s^2\right]\right\|_F\\
            \leq& C_0\left[\left(\sum_{i,j=1}^{m} \left(\max_{\mathbf x  \in \overline{\Omega}}\left|U_{ij}^{''}(t_n+\xi_{ij})\right|\right)^2\right)^{\frac 12}+\left(\sum_{i,j=1}^{m} \left(\max_{\mathbf x  \in \overline{\Omega}}\left|U_{ij}^{''}(t_n+\widetilde{\xi}_{ij})\right|\right)^2\right)^{\frac 12}\right.\\
            &+\left.\left(\sum_{i,j=1}^{m}\sum_{k,l=1}^{m}\left(\max_{\mathbf x  \in \overline{\Omega}}\left|U_{ij}^{'}(t_n)U_{kl}^{'}(t_n)\right|\right)^2\right)^{\frac 12}\right]\tau^2+C_1(\tau^3+\tau^4+\tau^5+\tau^6)\\
            \leq& C_2\tau^2,
        \end{aligned}
    \end{equation}
    where $U=(U_{ij})_{m\times m}$, $\xi_{ij}, \widetilde{\xi}_{ij}\in[0,\tau]$, the constant $C_0$ depends on $\kappa$ and $m$, $C_1$ depends on $m$ and $C^2([0,T];\mathcal{X})$-norm of $U$, and $C_2$ depends on $\kappa$, $m$, $t_n$, and the $C^2([0,T];\mathcal{X})$-norm of $U$. Here we use the fact that $\tau\leq t_n$.
Using \eqref{eq:e1_n+1} and Lemma \ref{NNkappa}, we can obtain
\begin{equation}\label{eq:03.32}
        \|\widetilde{U}^{n+1}-U(t_{n+1})\|_{\mathcal{X}}\leq
        (1+\tau+5m\tau)\|U^n-U(t_n)\|_{\mathcal{X}}+Cm(\kappa+1+5m)\tau^2,
\end{equation}
and
\begin{equation}\label{eq:3.32}
    \begin{aligned}
        &\|\mathcal{N}[\widetilde{U}^{n+1}]-\mathcal{N}[U(t_{n+1})]\|_{\mathcal{X}}\\
        \leq&(\kappa+1+5m)\|\widetilde{U}^{n+1}-U(t_{n+1})\|_{\mathcal{X}}\\
        \leq&(\kappa+1+5m)(1+\tau+5m\tau)\|e_2^n\|_{\mathcal{X}}+Cm(\kappa+1+5m)^2\tau^2,
    \end{aligned}
\end{equation}
where the constant $C$ is the $C^1([0,T];\mathcal{X})$-norm of $U$ defined in Theorem \ref{Conv1}.

Combining \eqref{eq:3.26} and \eqref{eq:3.32}, we can derive
\begin{equation}\label{eq:3.34}
    \begin{aligned}
    &\left\|\left(1-\frac{s}{\tau}\right)(\mathcal{N}[U^n]-\mathcal{N}[U(t_n)])+\frac{s}{\tau}(\mathcal{N}[\widetilde{U}^{n+1}]-\mathcal{N}[U(t_{n+1})])\right\|_{\mathcal{X}}\\
    \leq&(\kappa+1+5m)\left(1-\frac{s}{\tau}\right)\|e_2^n\|_{\mathcal{X}}+\frac{s}{\tau}\left((\kappa+1+5m)(1+\tau+5m\tau)\|e_2^n\|_{\mathcal{X}}+Cm(\kappa+1+5m)^2\tau^2\right)\\
    =&(\kappa+1+5m)\|e_2^n\|_{\mathcal{X}}+(\kappa+1+5m)(1+5m)s\|e_2^n\|_{\mathcal{X}}+Cm(\kappa+1+5m)^2\tau s.
    \end{aligned}
\end{equation}
Then, taking the supremum norm $\|\cdot\|_{\mathcal{X}}$ to \eqref{eq:e2} and using \eqref{eq:R2}--\eqref{eq:3.32} and Lemma \ref{lambda-Delta}, we obtain
\begin{equation}
    \begin{aligned}
        \|e_2^{n+1}\|_{\mathcal{X}}\leq&e^{-\kappa \tau}\|e_2^n\|_{\mathcal{X}}+\int_0^{\tau}e^{-\kappa(\tau-s)}[(\kappa+1+5m)\|e_2^n\|_{\mathcal{X}}+(\kappa+1+5m)(1+5m)s\|e_2^n\|_{\mathcal{X}}\\
        &+Cm(\kappa+1+5m)^2\tau s+{C_2\tau^2}]ds\\
        =&e^{-\kappa \tau}\|e_2^n\|_{\mathcal{X}}+[(\kappa+1+5m)\|e_2^n\|_{\mathcal{X}}+{C_2\tau^2}]\int_0^{\tau}e^{-\kappa(\tau-s)}ds\\
        &+((\kappa+1+5m)(1+5m)\|e_2^n\|_{\mathcal{X}}+Cm(\kappa+1+5m)^2\tau)\int_0^{\tau}se^{-\kappa(\tau-s)}ds\\
        =&e^{-\kappa\tau}\|e_2^n\|_{\mathcal{X}}+\frac{1-e^{-\kappa\tau}}{\kappa}[(\kappa+1+5m)\|e_2^n\|_{\mathcal{X}}+{C_2\tau^2}]\\
        &+\frac{e^{-\kappa\tau}-1+\kappa\tau}{\kappa^2}((\kappa+1+5m)(1+5m)\|e_2^n\|_{\mathcal{X}}+Cm(\kappa+1+5m)^2\tau)\\
        \leq&\left[e^{-\kappa\tau}+(1-e^{-\kappa\tau})+\frac{1-e^{-\kappa\tau}}{\kappa}(1+5m)+\frac{e^{-\kappa\tau}-1-\kappa\tau}{\kappa}(1+5m)\right.\\
        &\left.+\frac{e^{-\kappa\tau}-1-\kappa\tau}{\kappa}(1+5m)^2\right]\|e_2^n\|_{\mathcal{X}}+\left({C_2}+\frac{C}{2}m(\kappa+1+5m)^2\right)\tau^3\\
        \leq&\left(1+(1+5m)\tau+\frac 12(1+5m)^2\tau^2\right)\|e_2^n\|_{\mathcal{X}}+\left({C_2}+\frac{C}{2}m(\kappa+1+5m)^2\right)\tau^3,
    \end{aligned}
\end{equation}
where we use $1-e^{-a}\leq a$ and $e^{-a}- 1+a\leq\frac{a^2}{2}$ for any $a>0$.

By induction, we get
\begin{equation}
    \begin{aligned}
        \|e_2^{n}\|_{\mathcal{X}}\leq&\left(1+(1+5m)\tau+\frac 12(1+5m)^2\tau^2\right)^n\|e_2^0\|_{\mathcal{X}}\\
        &+\left({C_2}+\frac{C}{2}m(\kappa+1+5m)^2\right)\tau^3\sum_{k=0}^{n-1}\left(1+(1+5m)\tau+\frac 12(1+5m)^2\tau^2\right)^k\\
        \leq&\left(1+(1+5m)\tau+\frac 12(1+5m)^2\tau^2\right)^n\|e_2^0\|_{\mathcal{X}}\\
        &+\left({C_2}+\frac{C}{2}m(\kappa+1+5m)^2\right)\tau^2\frac{\left(1+(1+5m)\tau+\frac 12(1+5m)^2\tau^2\right)^n-1}{1+5m+\frac 12(1+5m)^2\tau}\\
        \leq&e^{(1+5m)n\tau}\|e_2^0\|_{\mathcal{X}}+\frac{{C_2}+\frac{C}{2}m(\kappa+1+5m)^2}{1+5m}e^{(1+5m)n\tau}\tau^2.
    \end{aligned}
\end{equation}
Since $e_2^0=0$ and $n\tau=t_n$, we can obtain \eqref{eq:conv2}.\end{proof}

% {\color{red}
% \begin{remark}
%     Similarly, if the initial data $U^0$ is sufficiently smooth and $\varepsilon \leq 1$, it seems that $\partial_t^2 U$ can be proved to be bounded independent of $\varepsilon$  in $\mathcal X$-norm. We leave the detailed proof in our future work. 
% \end{remark}
% }

\section{Numerical Experiments}\label{section 4}

In this section, we present a series of diverse numerical examples in two and three dimensions to validate the convergence, MBP preservation, and energy dissipation properties of time-discrete ETD schemes applied to the matrix-valued Allen--Cahn equation \eqref{eq:mac} with the periodic boundary condition under the Frobenius norm. We simply use the central finite difference method for the discretization of spatial Laplacian. The products of matrix exponents with vectors are implemented by the fast Fourier transform (FFT). Additionally, we simulate the corresponding matrix field and interface evolution processes. The interface here is defined as the zero level set of the corresponding determinate field of the matrix field. In all examples, the computational domain is defined as $\Omega=[-\frac 12, \frac 12]^m$, where $m=2,3$.

\subsection{Convergence, MBP, and energy dissipation tests}
This section tests the convergence order, discrete maximum bound principles, and unconditional energy dissipation of the ETD schemes. For the test of the convergence order in time, we use $1024\times 1024$ grid points to discretize the computational domain $\Omega=[-\frac 12,\frac 12]^2$ to reduce the impact of spatial discretization errors.

{\bf Example 1.} We consider the matrix-valued Allen--Cahn equation \eqref{eq:mac} in $\Omega$ with $\varepsilon=0.01$ and set the initial condition to be
\begin{equation}\label{eq:4.1}
    \begin{aligned}
	U^0(x,y)=
	\begin{bmatrix}
	\cos\alpha&-\sin\alpha\\
	\sin\alpha&\cos\alpha
	\end{bmatrix}
    \end{aligned}
\end{equation}
where $\alpha=\alpha(x,y)=1+\frac{\pi}{2}\sin(2\pi(x+y))$. The stabilizing parameter is chosen as $\kappa=5$.

We take several different time step sizes $\tau=0.1\times 2^{-k}$ with $k=0,1,\ldots,7$ to obtain the corresponding numerical solution at $t=1$. Furthermore, since the exact solution of \eqref{eq:mac} is unavailable, we take a small time size $\tau=0.1\times 10^{-10}$ to obtain an reference solution at $t=1$. The $L^2$ and $L^{\infty}$ errors between these numerical solutions and the reference solution are shown in Tables \ref{tab:1} and \ref{tab:2}. It can be observed that the convergence rates of ETD1 and ETDRK2 schemes are about $1$ and $2$ (corresponds to Theorems \ref{Conv1}, \ref{Conv2}), respectively. Consistently, the values of the errors in the ETDRK2 scheme is more accurate than those of the ETD1 scheme.
\begin{table}[!htpb]
    \centering
	\normalsize
	\caption{Errors and convergence rates at time $t=1$ computed by the ETD1 scheme in Example 1.}
	\label{tab:1}
	\vspace{10pt}
	\begin{tabular}{ccccc}
	\hline
	$\tau$ & $L^2$ error & Rate&$L^{\infty}$ error & Rate \\
 \hline
	$0.1\times 2^{0}$&$8.399\times 10^{-2}$&-&$3.706\times 10^{-3}$&-\\
	$0.1\times 2^{-1}$&$4.524\times 10^{-2}$&0.8926&$2.001\times 10^{-3}$&0.8892\\
	$0.1\times 2^{-2}$&$2.347\times 10^{-2}$&0.9465&$1.039\times 10^{-3}$&0.9450\\
	$0.1\times 2^{-3}$&$1.193\times 10^{-2}$&0.9770&$5.283\times 10^{-4}$&0.9763\\
	$0.1\times 2^{-4}$&$5.976\times 10^{-3}$&0.9669&$2.648\times 10^{-4}$&0.9966\\
	$0.1\times 2^{-5}$&$2.955\times 10^{-3}$&1.0158&$1.309\times 10^{-4}$&1.0157\\
  	$0.1\times 2^{-6}$&$1.434\times 10^{-3}$&1.0437&$6.353\times 10^{-5}$&1.0436\\
  	$0.1\times 2^{-7}$&$6.699\times 10^{-4}$&1.0977&$2.969\times 10^{-5}$&1.0977\\
	\hline
	\end{tabular}
\end{table}
\begin{table}
    \centering
	\normalsize
	\caption{Errors and convergence rates at time $t=1$ computed by the ETDRK2 scheme in Example 1.}
	\label{tab:2}
	\vspace{10pt}
	\begin{tabular}{ccccc}
 \hline
	$\tau$ &$L^2$ error & Rate&$L^{\infty}$ error & Rate \\
 \hline
	$0.1\times 2^{0}$&$1.783\times 10^{-2}$&-&$7.904\times 10^{-4}$&-\\
	$0.1\times 2^{-1}$&$5.210\times 10^{-3}$&1.7753&$2.310\times 10^{-4}$&1.7744\\
	$0.1\times 2^{-2}$&$1.412\times 10^{-3}$&1.8839&$6.261\times 10^{-5}$&1.8836\\
	$0.1\times 2^{-3}$&$3.676\times 10^{-4}$&1.9411&$1.631\times 10^{-5}$&1.9409\\
	$0.1\times 2^{-4}$&$9.381\times 10^{-5}$&1.9705&$4.161\times 10^{-6}$&1.9704\\
	$0.1\times 2^{-5}$&$2.368\times 10^{-5}$&1.9861&$1.050\times 10^{-6}$&1.9861\\
  	$0.1\times 2^{-6}$&$5.933\times 10^{-6}$&1.9968&$2.632\times 10^{-7}$&1.9966\\
  	$0.1\times 2^{-7}$&$1.469\times 10^{-6}$&2.0133&$6.523\times 10^{-8}$&2.0127\\
	\hline
	\end{tabular}
\end{table}

In the follows, we will test the discrete MBP and unconditional energy dissipation of the ETD schemes. We use $256\times 256$ mesh points to discretize the computational domain $\Omega=[-\frac 12,\frac 12]^2$ and set the time step size $\tau=0.01$ and the terminal time $T=50$.

Figures \ref{fig:4.1} and \ref{fig:4.2} plot the evolution of the supremum norm and the energy of the ETD1 and ETDRK2 schemes, respectively. As shown in Theorems \ref{MBP1} and \ref{MBP2}, the supremum norm $\|\cdot\|_{\mathcal{X}}$ of the numerical solutions at any time does not exceed $\sqrt{2}$. Moreover, we see that the energy function decreases monotonically with time in both ETD1 and ETDRK2 schemes, which verifies the results we proved in Theorem \ref{Energy}.

\begin{figure}[ht!]
    \centering
		\subfigure{\includegraphics[width=0.42\textwidth,
		height=45mm]{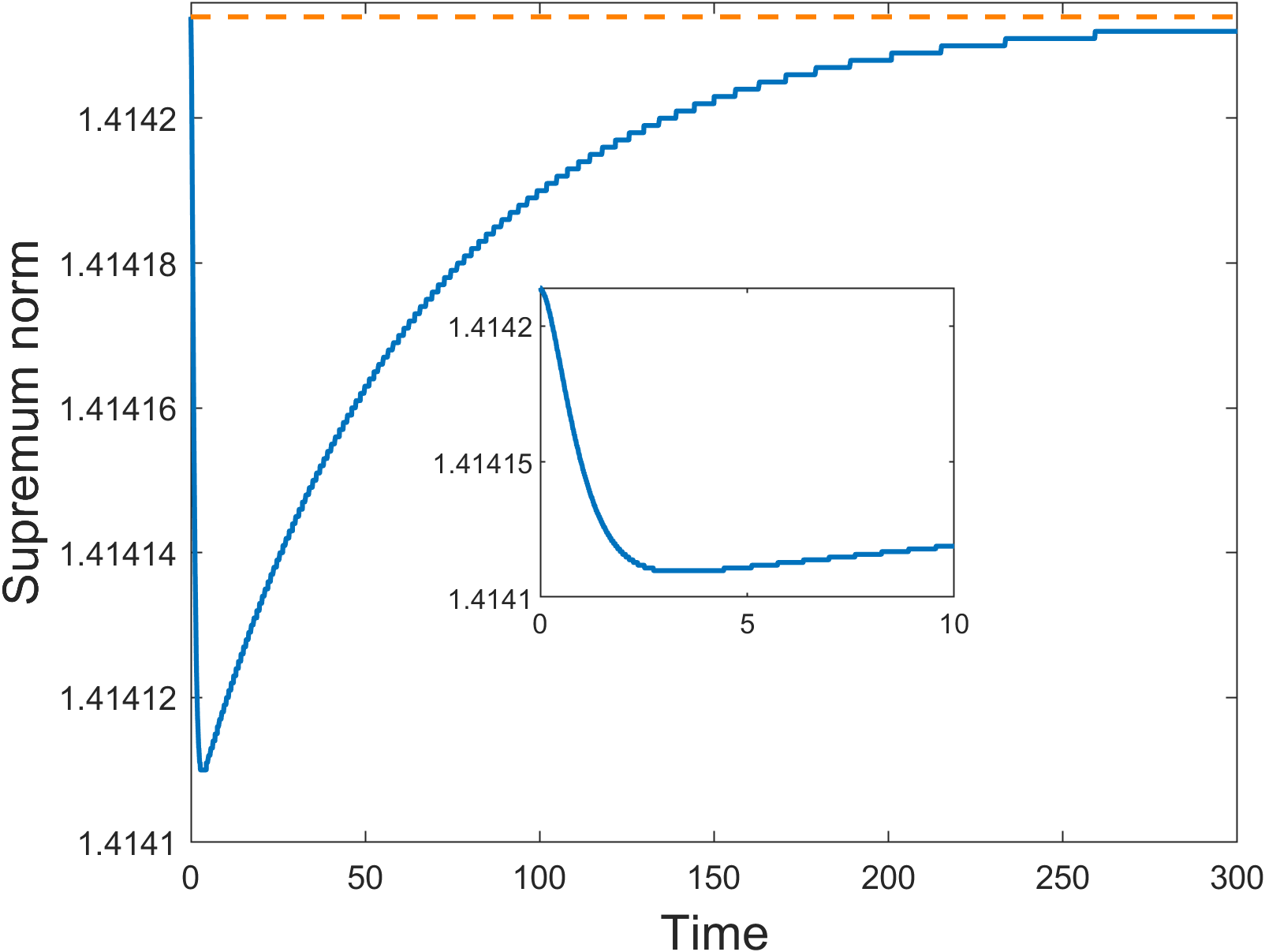}}\quad
		\subfigure{\includegraphics[width=0.40\textwidth,
		height=47mm]{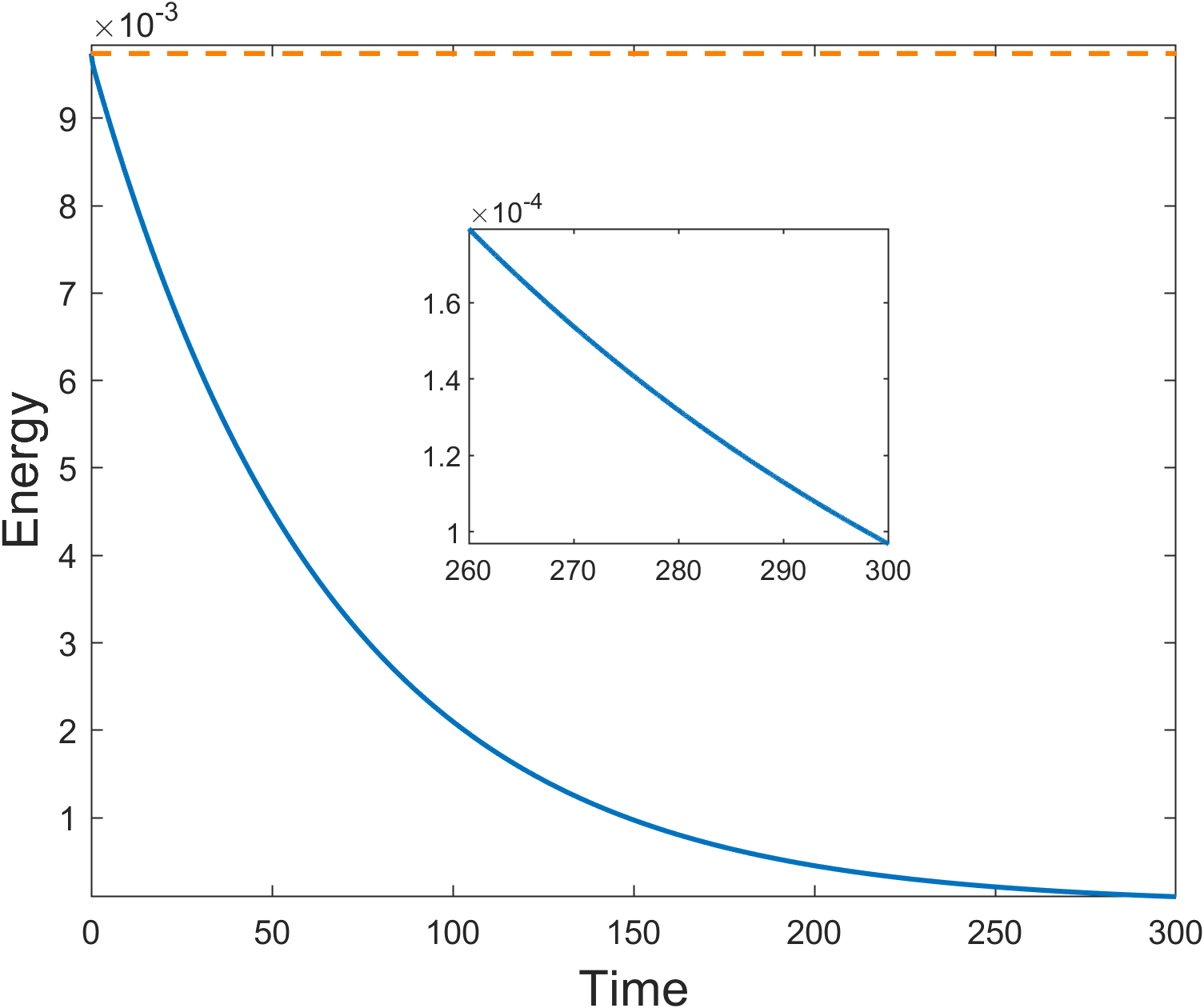}}
		\caption{Evolution of the supremum norm $\|\cdot\|_{\mathcal{X}}$ and energy computed by the ETD1 scheme in Example 1. The dashed line in the left figure is the maximum bound $\sqrt m$ while the dashed line in the right figure is the initial energy.  }
  \label{fig:4.1}
\end{figure}
\begin{figure}[ht!]
    \centering
		\subfigure{\includegraphics[width=0.42\textwidth,
		height=45mm]{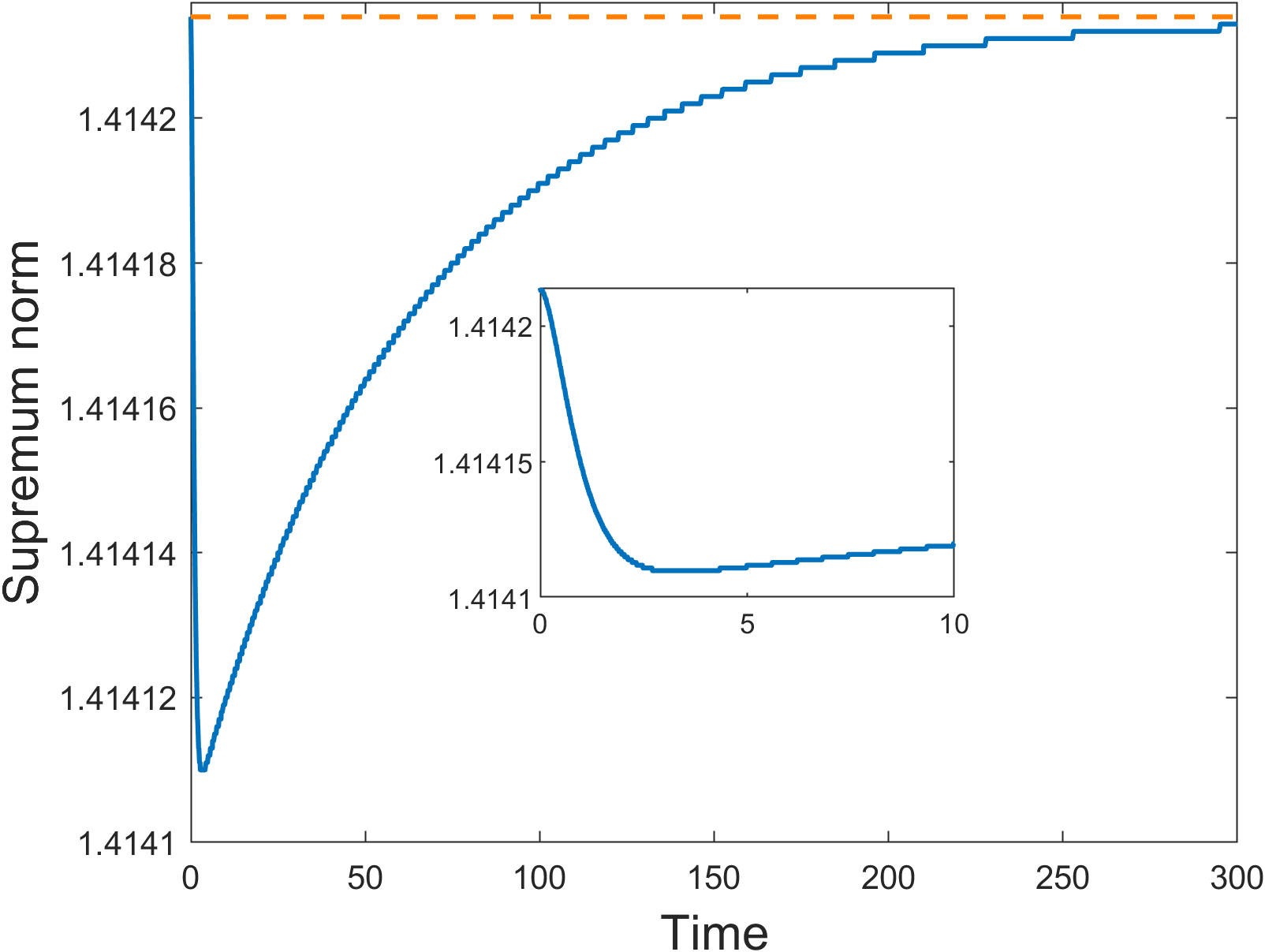}}\quad
		\subfigure{\includegraphics[width=0.40\textwidth,
		height=47mm]{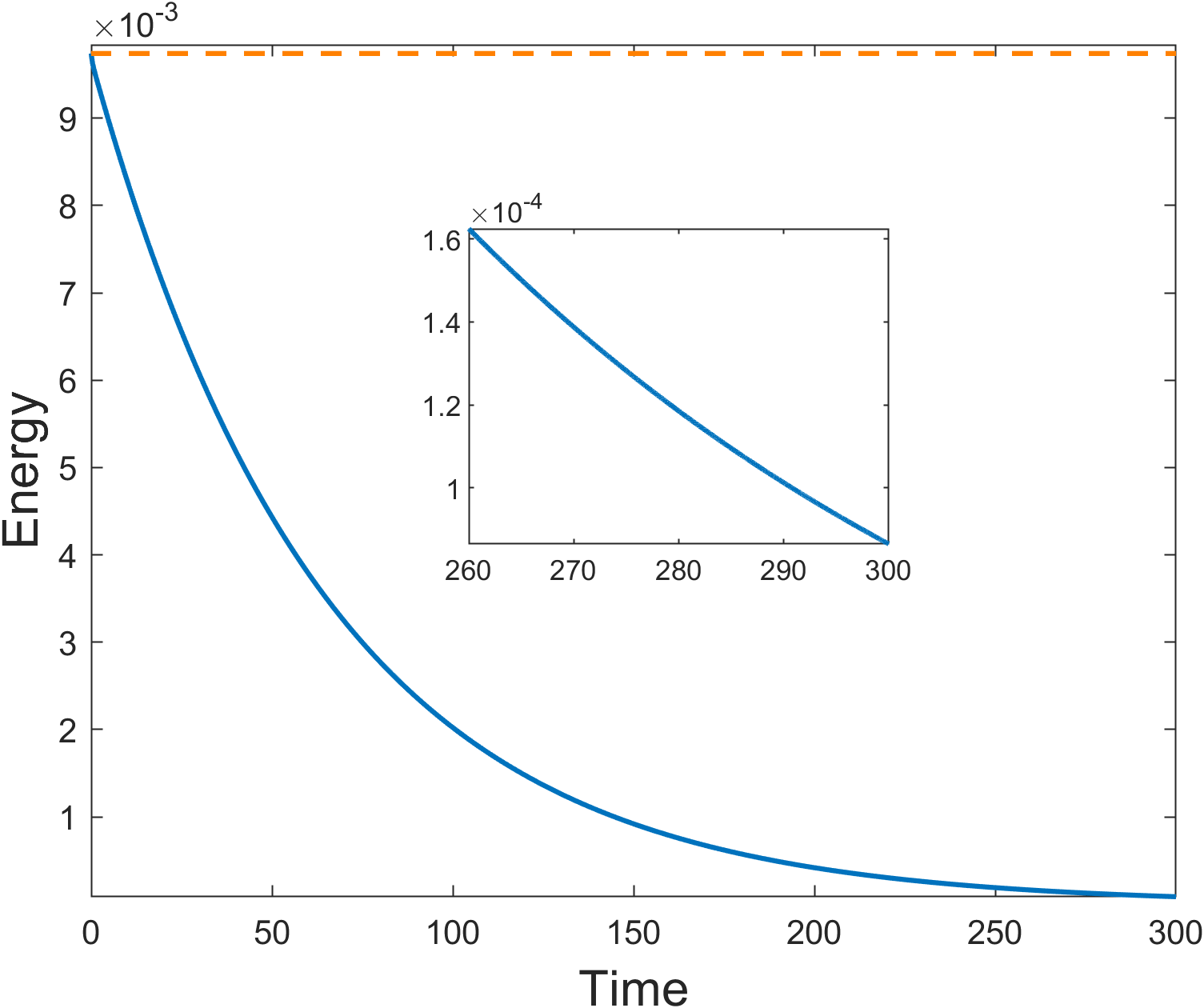}}
		\caption{Evolution of the supremum norm $\|\cdot\|_{\mathcal{X}}$ and energy computed by the ETDRK2 scheme in Example 1. The dashed line in the left figure is the maximum bound $\sqrt m$ while the dashed line in the right figure is the initial energy.}
  \label{fig:4.2}
\end{figure}

\subsection{Simulation of matrix-valued field and interface evolution}

In this section, we present numerous numerical examples in two and three dimensions to simulate the evolution of matrix-valued fields and interfaces for the matrix-valued Allen--Cahn equation with initial values of orthogonal matrix fields. 
The article \cite{wang2019interface} investigates the initial value problem for the matrix-valued Allen--Cahn equation using an asymptotic approach. If the initial conditions have regions with positive and negative determinants, an interface is formed. Away from the interface, the matrix-valued field in each region behaves as a single-sign determinant case, reducing to the complex Ginzburg-Landau case \cite{ma2023energy}. The interface is then driven by curvature and jumps of the squared tangential derivative of the phase spanning the interface along the interface. In the following, we verify that the results of the discretization of the matrix-valued Allen--Cahn equation using the ETDRK2 scheme satisfy the laws of motion of this interface as presented in \cite{wang2019interface}.

\subsubsection{Two-dimensional matrix-valued Allen--Cahn equation}

In the following two-dimensional examples, we consider the spatial domain $\Omega=[-\frac 12,\frac 12]^2$ to be the flat torus in dimension two and use $256\times 256$ grid points to discretize $\Omega$ with various initial conditions. In the subsequent figures of the matrix-valued field and interface evolution for the matrix-valued Allen--Cahn equation, the domain is colored by the determinant of the matrix $U(t,x,y)$ and the vector field is generated by the first column vector of the matrix $U(t,x,y)$. We set the stabilizing parameter $\kappa=5$.

{\bf Example 2.} We take $\varepsilon=0.01$ and set the initial condition to be
\begin{equation}\label{eq:4.2}
   \begin{aligned}
	U^0(x,y)=\begin{cases}
	\begin{bmatrix}
	\cos\alpha&-\sin\alpha\\
	\sin\alpha&\cos\alpha
	\end{bmatrix}\quad |x-y|< 0.5,\\
	\\
	\begin{bmatrix}
	\cos\alpha&\sin\alpha\\
	\sin\alpha&-\cos\alpha
	\end{bmatrix}\quad\text{otherwise},\\
	\end{cases}
    \end{aligned}
\end{equation}
where $\alpha=\alpha(x,y)=0$ or $\frac{\pi}{2}\sin(2\pi(x+y))$. The time step is fixed as $\tau=0.01$. In order to observe the effect of the matrix-valued field on the motion law of the interface, we perform two experiments where the initial condition has the same interface but different initial matrix-valued fields. 

Figures \ref{fig:4.3} and \ref{fig:4.5} give the interface simulation for two different initial matrix fields, respectively. We can observe that the matrix field influences the interface motion.

Figures \ref{fig:4.4} and \ref{fig:4.6} present the evolution of the supremum norm and energy for two different initial value matrix fields, respectively. It can be seen that the discrete maximum bound principle (Theorem \ref{MBP2}) and the energy dissipation law (Theorem \ref{Energy}) are satisfied for both examples with different initial matrix fields.

\begin{figure}[ht!]
    \centering
	\subfigure{\includegraphics[width=0.30\textwidth,
		height=39mm]{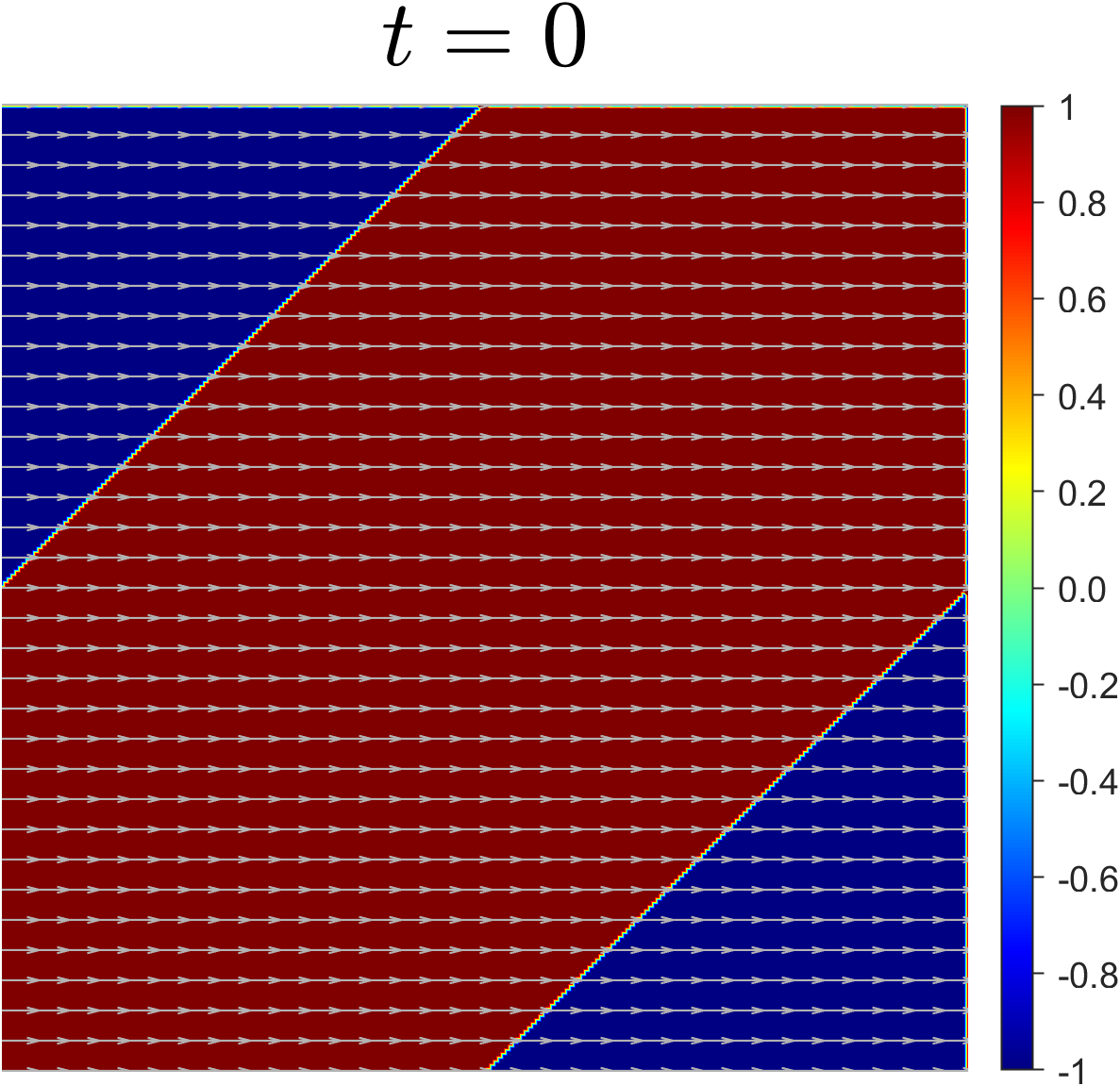}}\quad
	\subfigure{\includegraphics[width=0.30\textwidth,
		height=39mm]{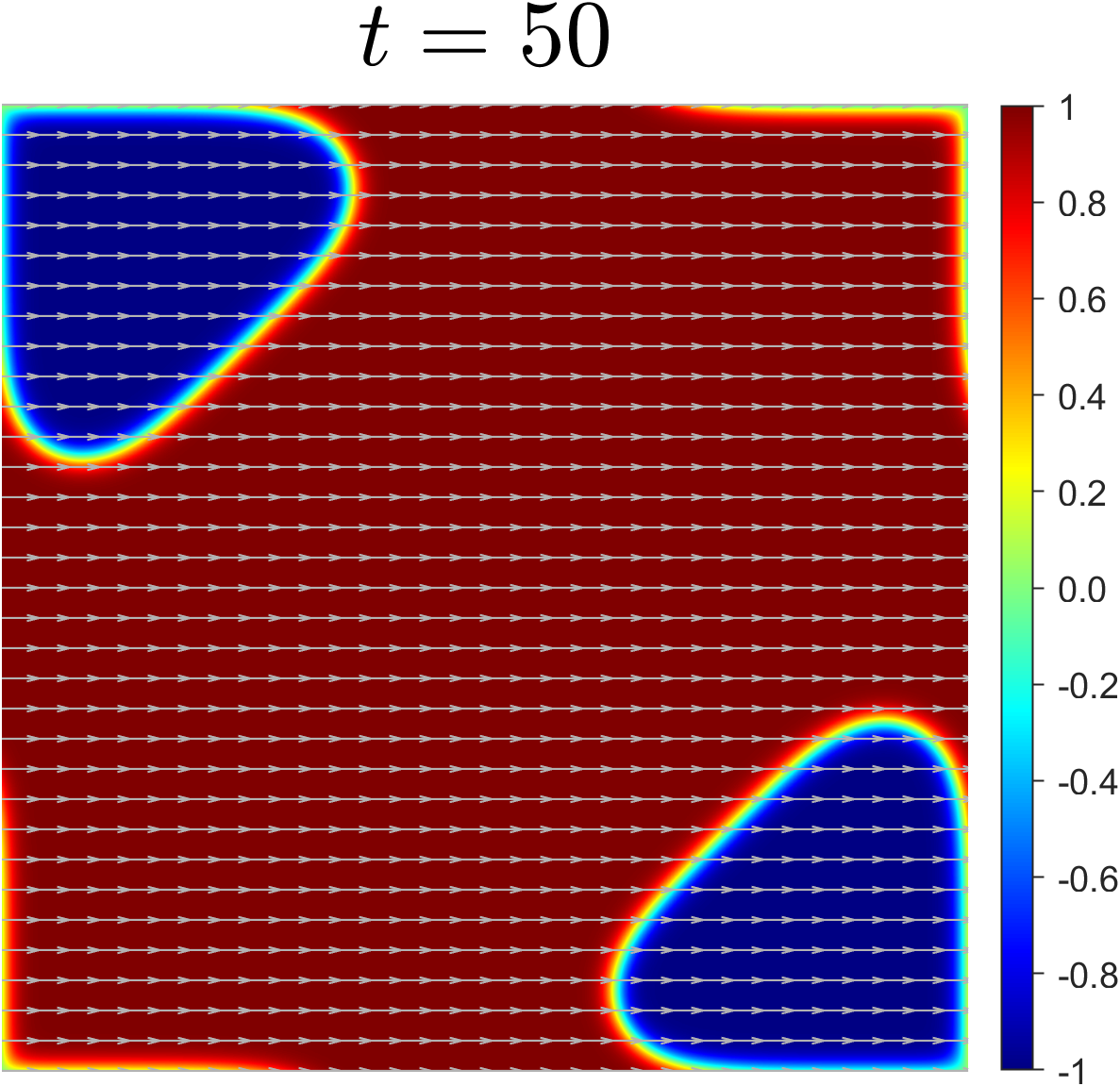}}\quad
	\subfigure{\includegraphics[width=0.30\textwidth,
		height=39mm]{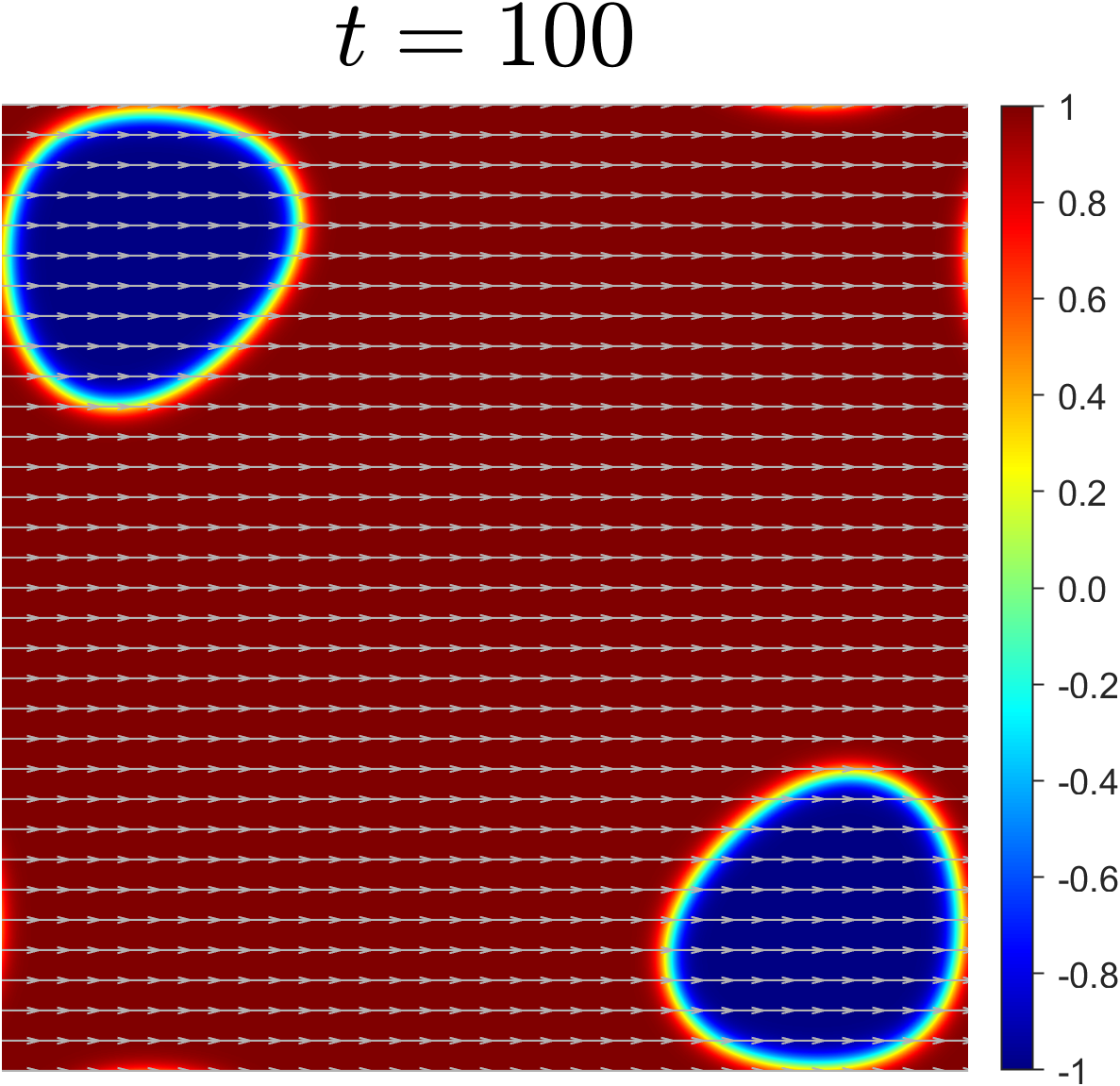}}\quad
	\subfigure{\includegraphics[width=0.30\textwidth,
		height=39mm]{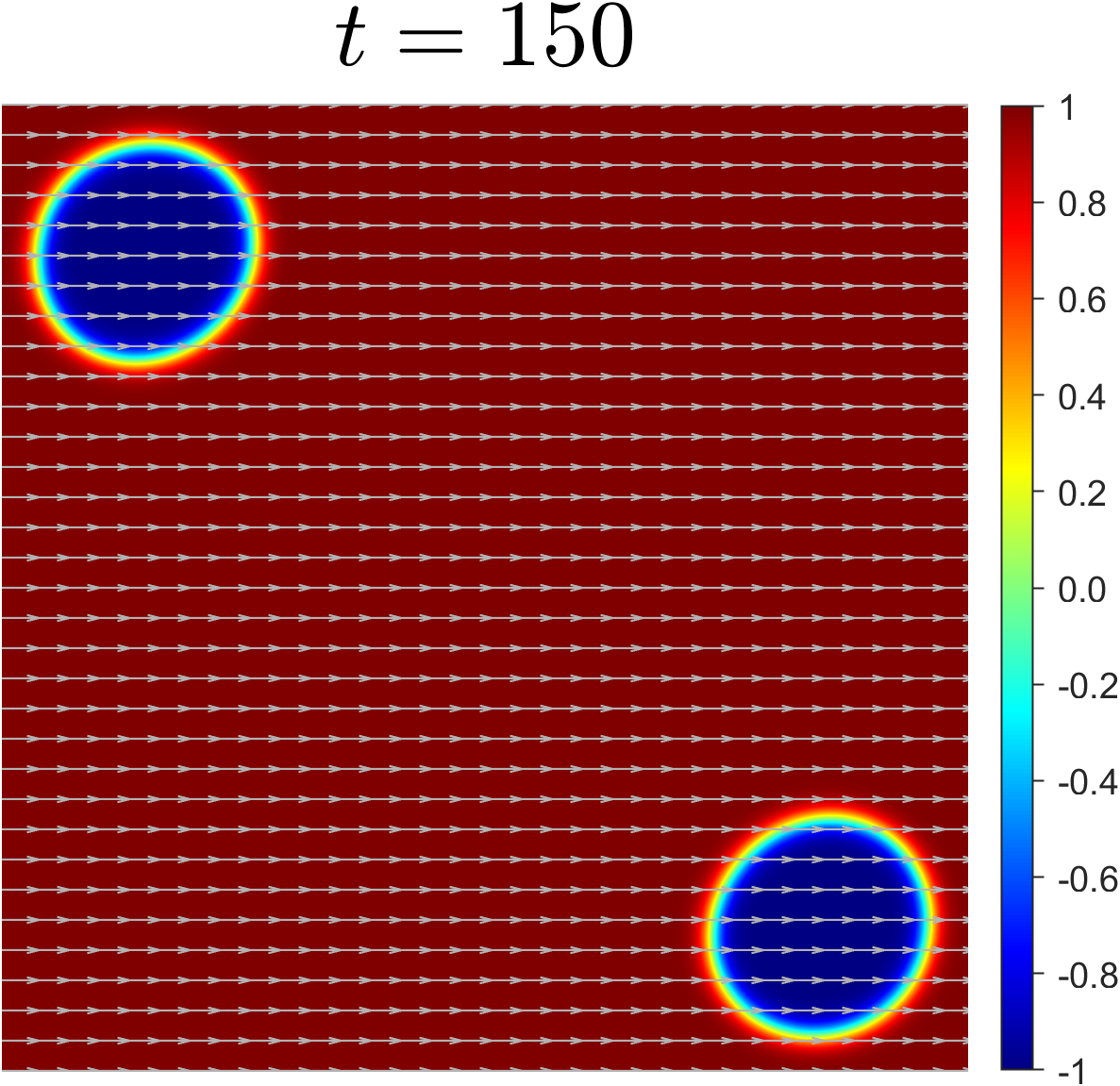}}\quad
	\subfigure{\includegraphics[width=0.30\textwidth,
		height=39mm]{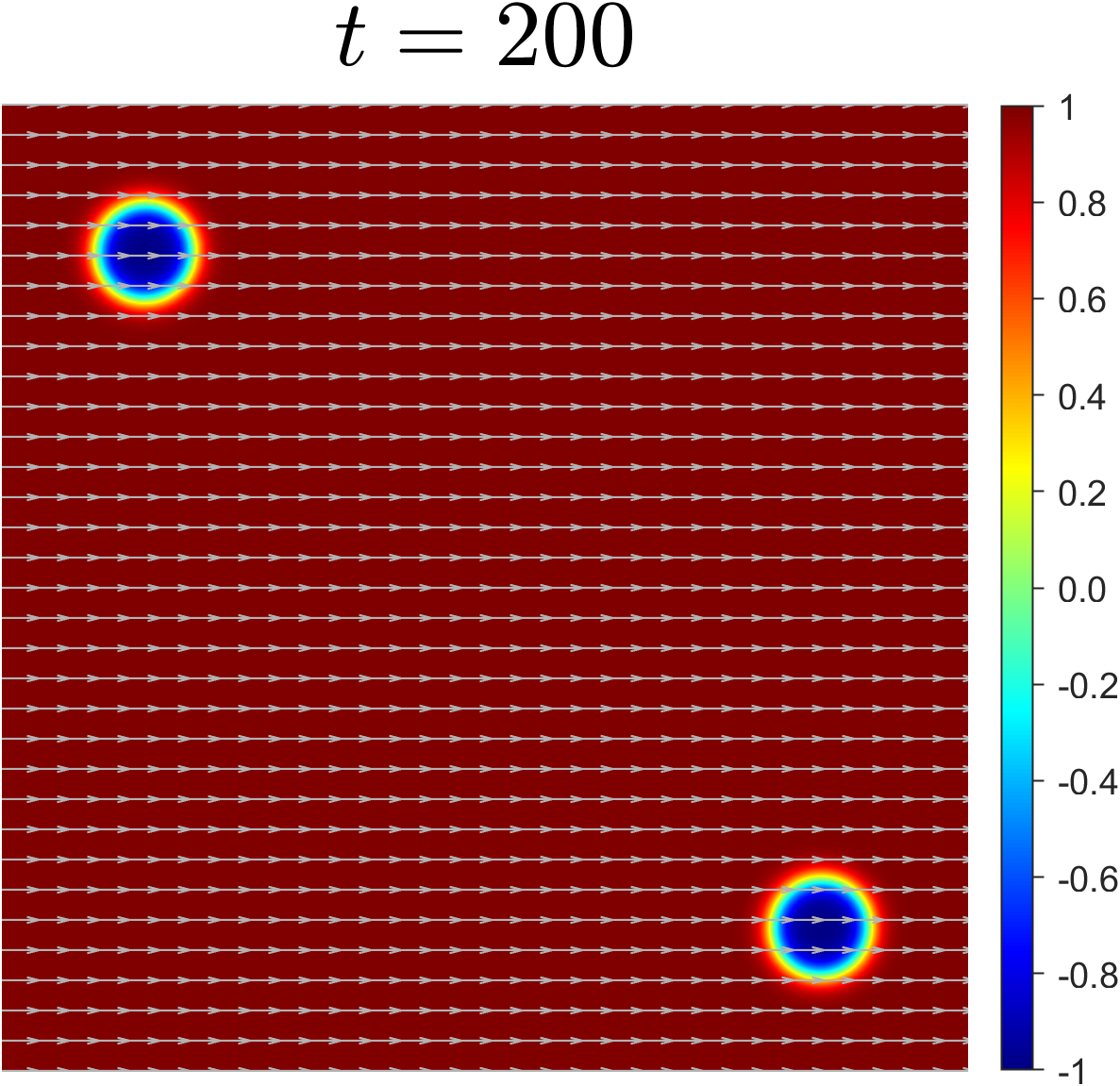}}\quad
	\subfigure{\includegraphics[width=0.30\textwidth,
		height=39mm]{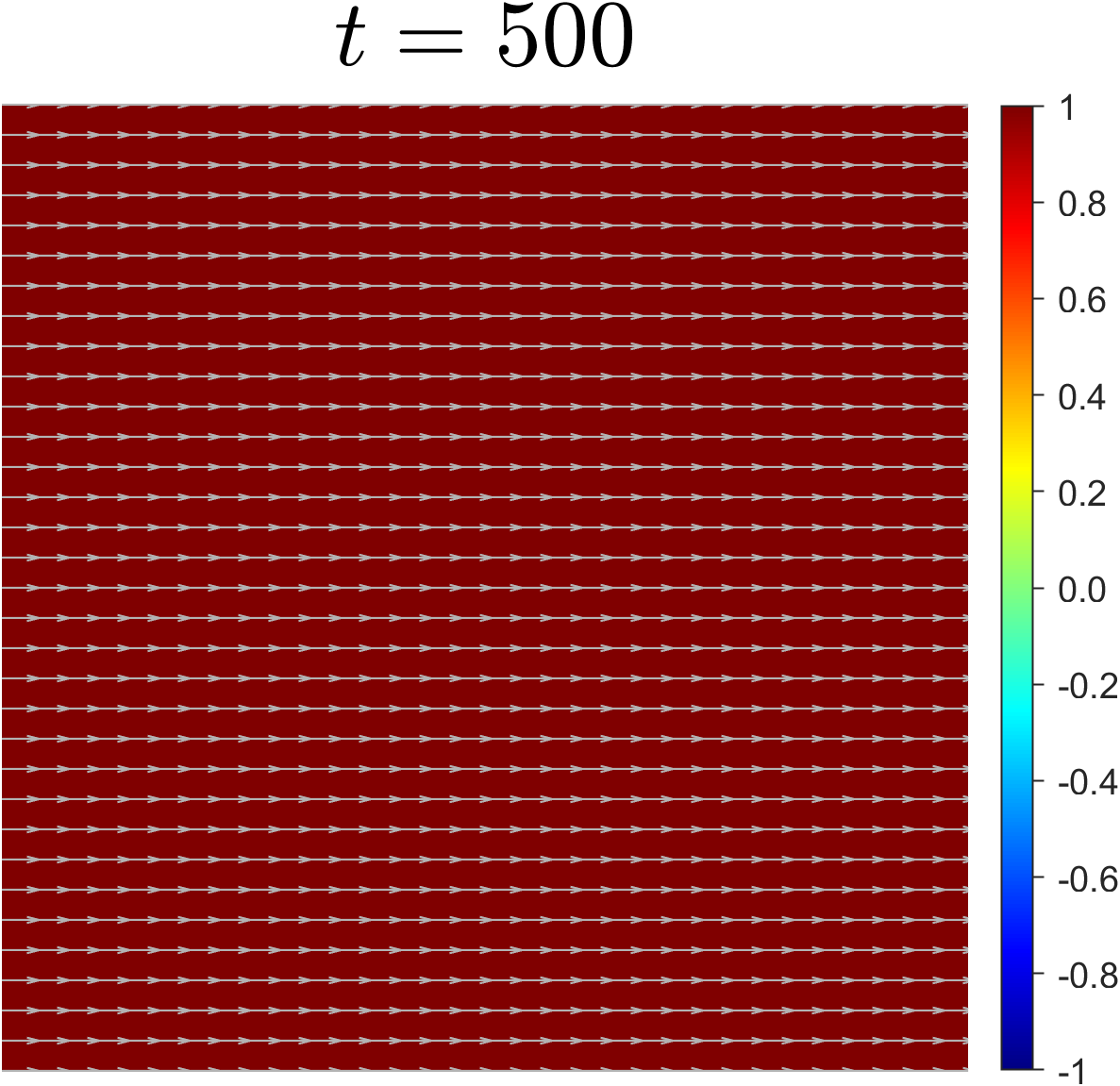}}
	\caption{ Evolution of the matrix-valued field and interface at $t=0,50,100,150,200,500$. The initial field is given in \eqref{eq:4.2} with $\alpha(x,y)=0$.}
        \label{fig:4.3}
\end{figure}

\begin{figure}[ht!]
    \begin{center}
		\subfigure{\includegraphics[width=0.40\textwidth,
		height=45mm]{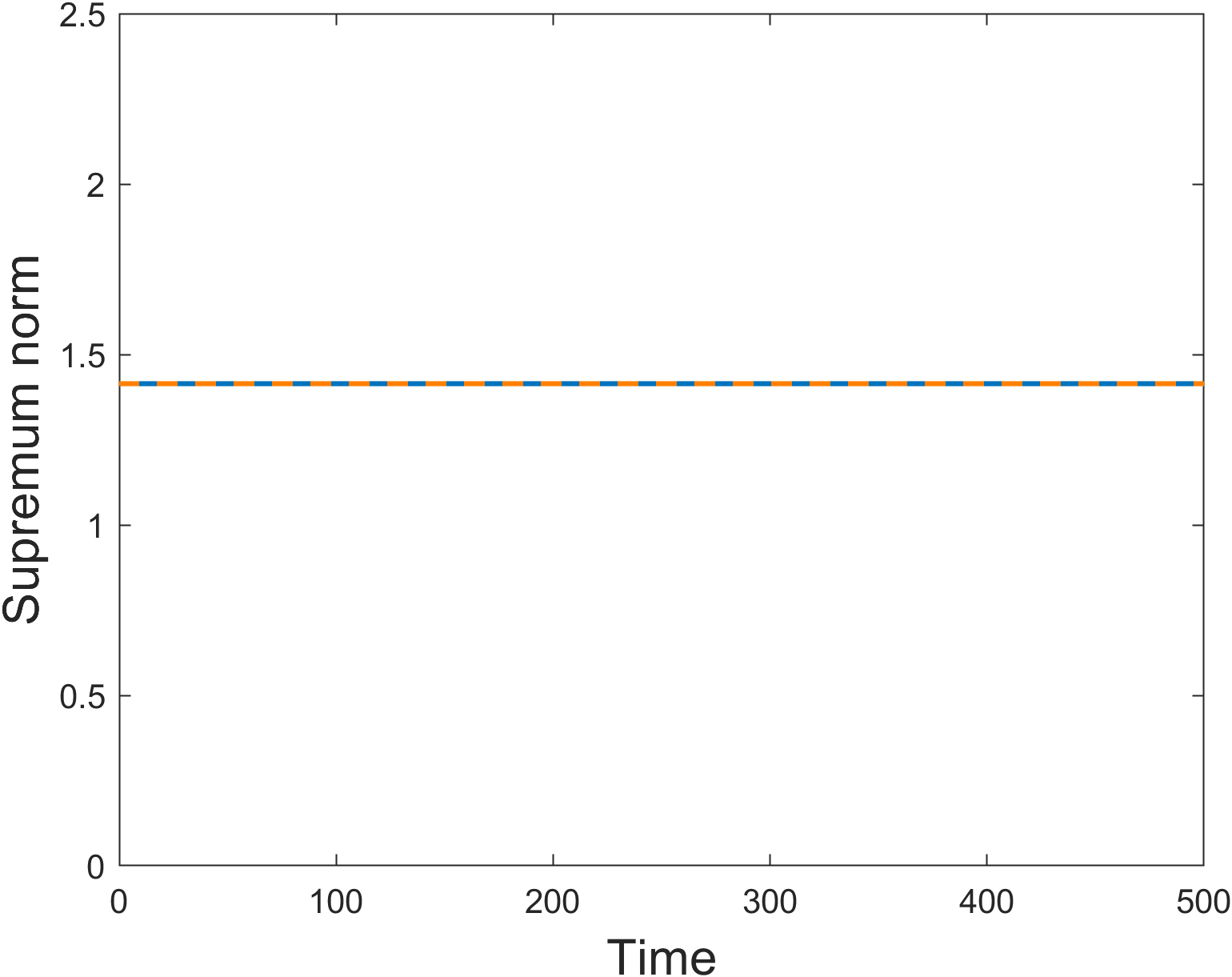}}\quad
		\subfigure{\includegraphics[width=0.40\textwidth,
		height=45mm]{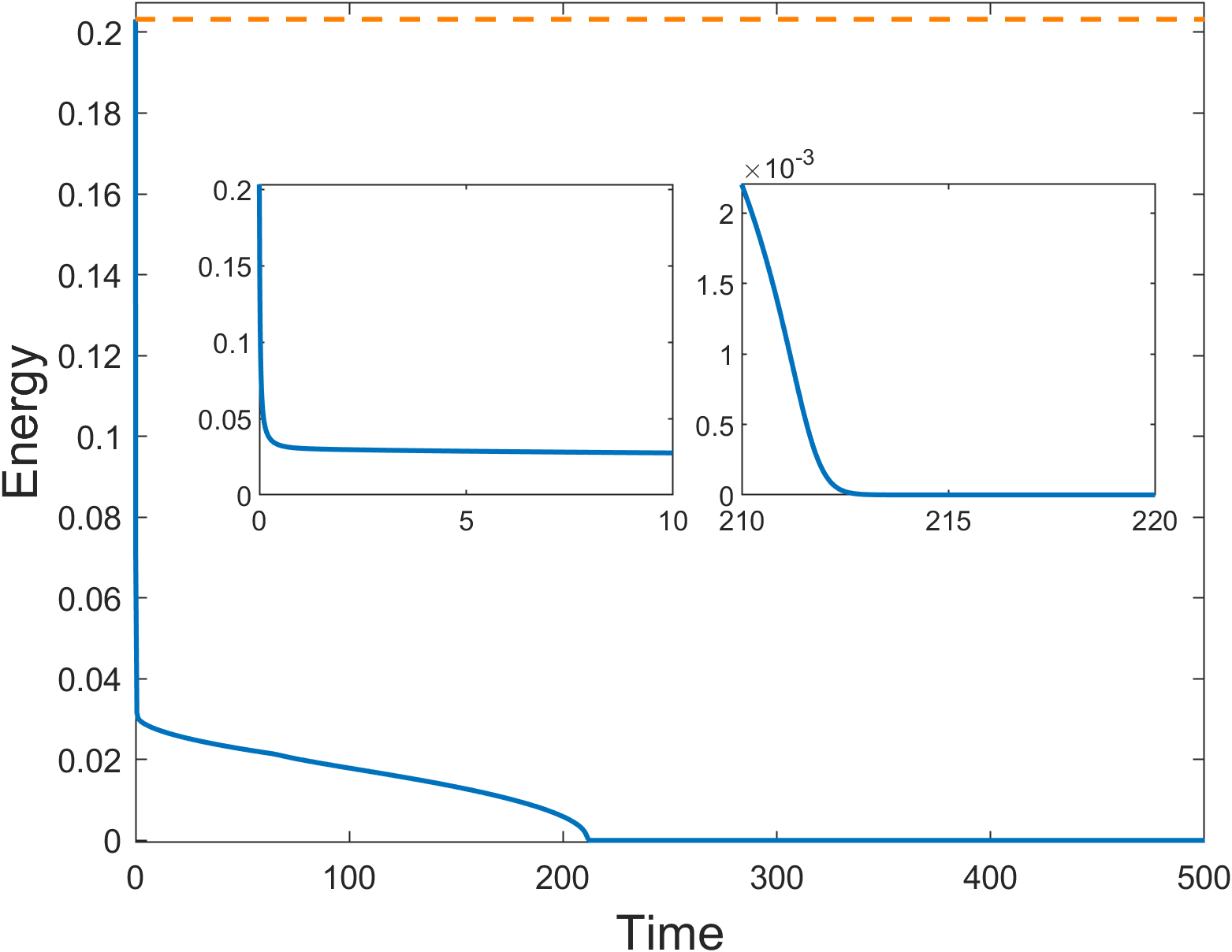}}
		\caption{Evolution of the supremum norm $\|\cdot\|_{\mathcal{X}}$ and energy with initial condition \eqref{eq:4.2} and $\alpha(x,y)=0$. The dashed line in the left figure is the maximum bound $\sqrt m$ while the dashed line in the right figure is the initial energy.}
  \label{fig:4.4}
   \end{center}
\end{figure}

 \begin{figure}[ht!]
    \centering
	\subfigure{\includegraphics[width=0.30\textwidth,
		height=39mm]{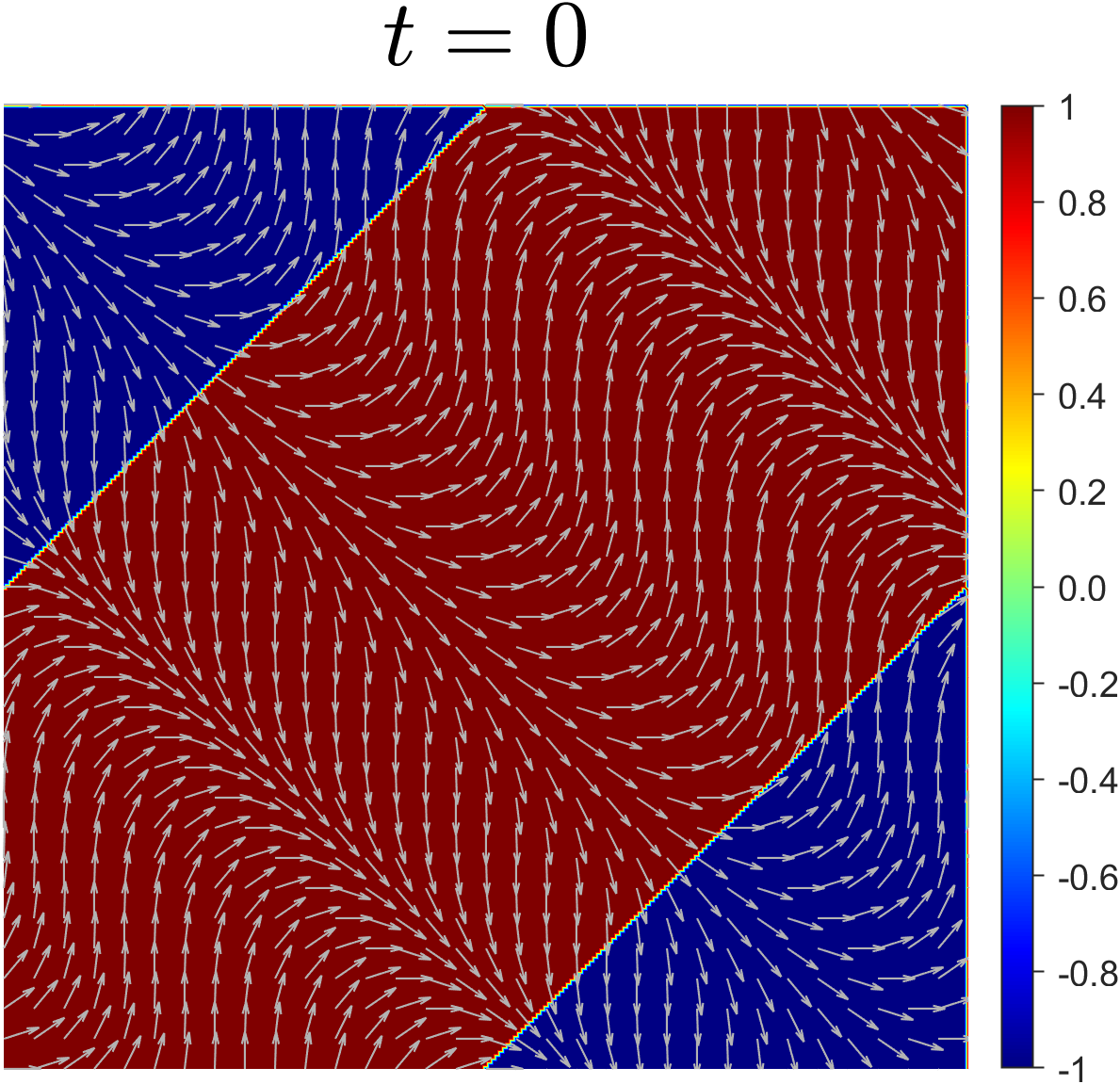}}\quad
	\subfigure{\includegraphics[width=0.30\textwidth,
		height=39mm]{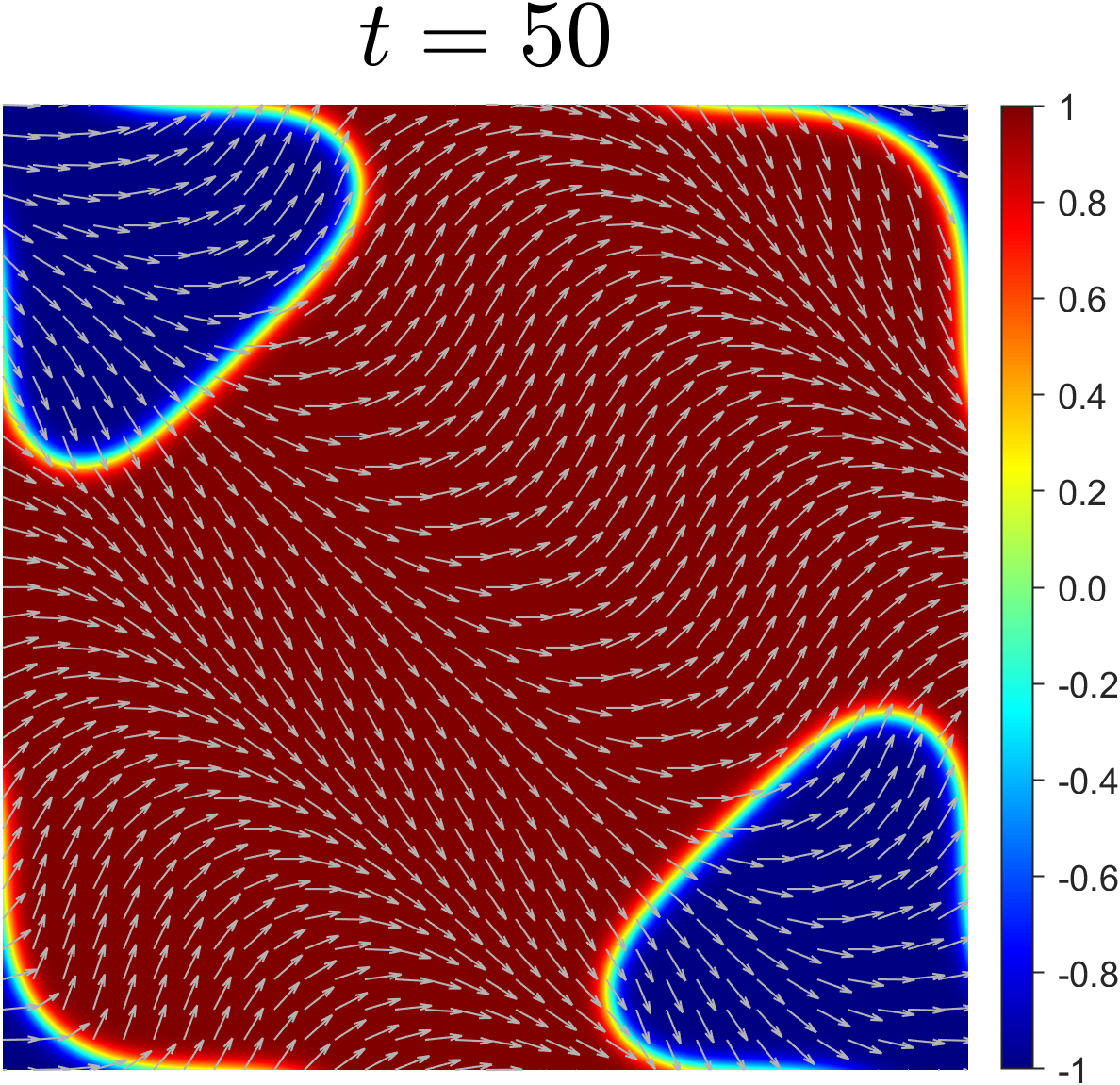}}\quad
	\subfigure{\includegraphics[width=0.30\textwidth,
		height=39mm]{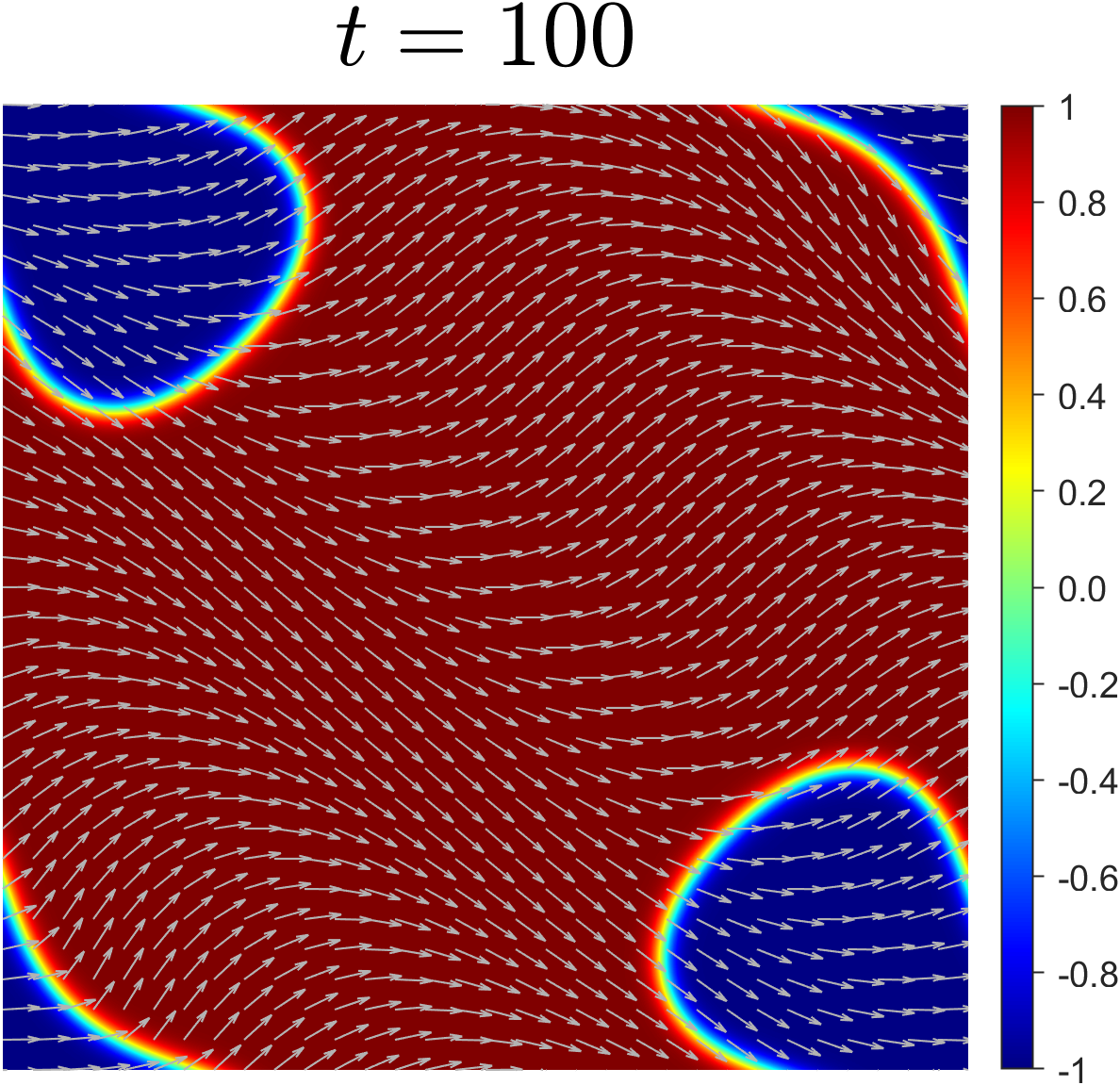}}\quad
	\subfigure{\includegraphics[width=0.30\textwidth,
		height=39mm]{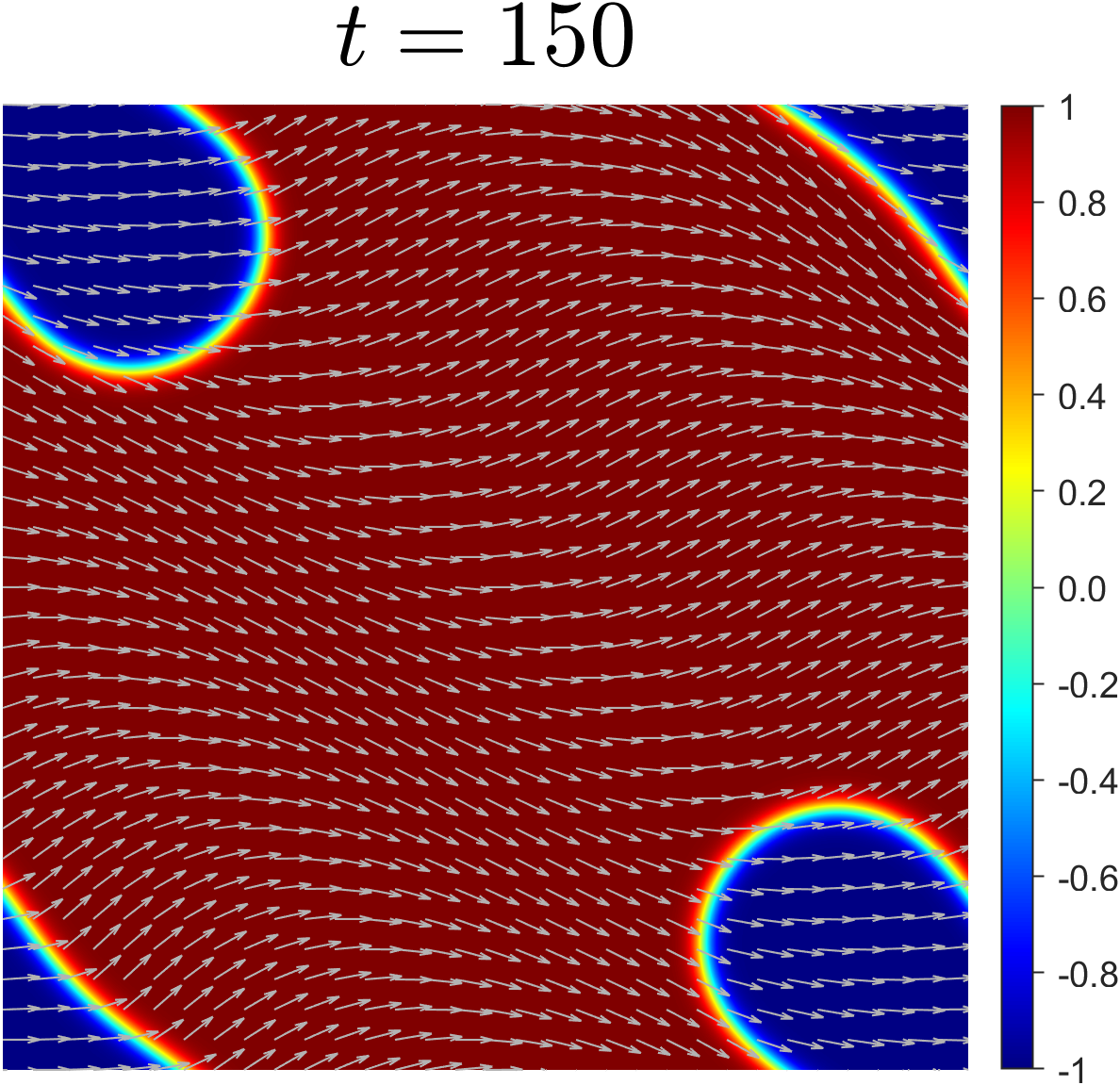}}\quad
	\subfigure{\includegraphics[width=0.30\textwidth,
		height=39mm]{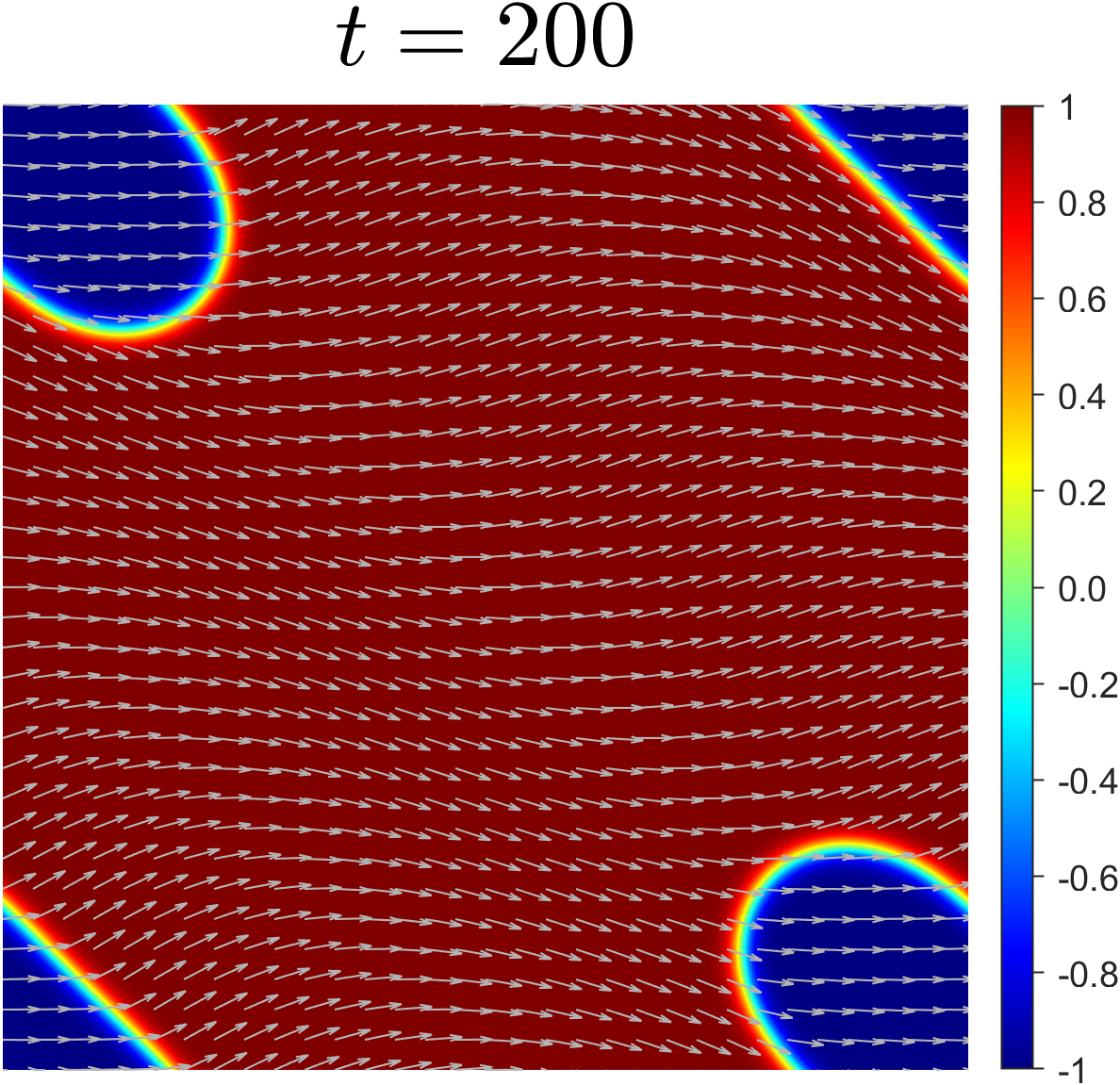}}\quad
	\subfigure{\includegraphics[width=0.30\textwidth,
		height=39mm]{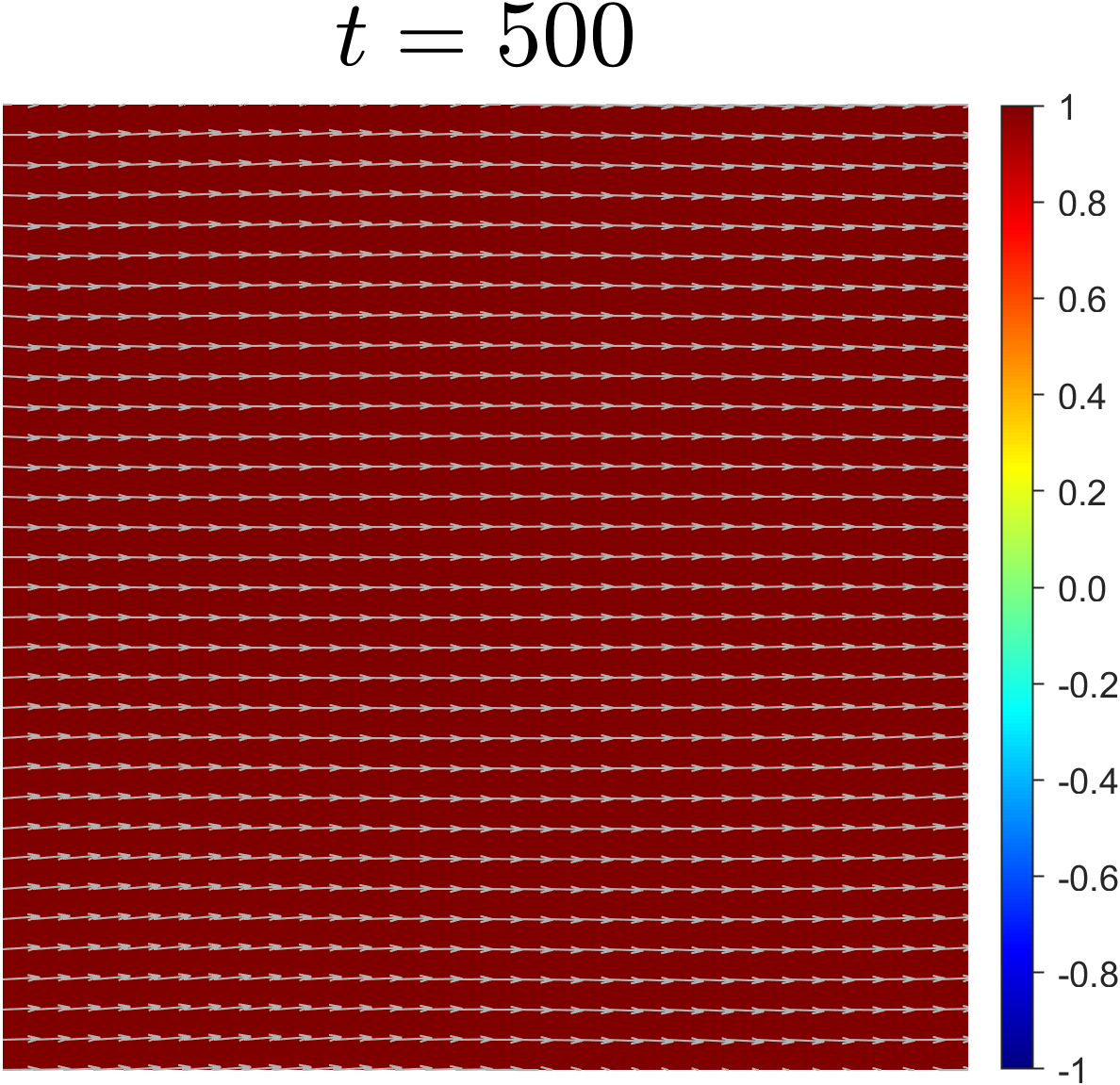}}
	\caption{ Evolution of the matrix-valued field and interface at $t=0,50,100,150,200,500$. The initial field is given in \eqref{eq:4.2} with $\alpha(x,y)=\frac{\pi}{2}\sin(2\pi(x+y))$.}
        \label{fig:4.5}
\end{figure}

\begin{figure}
   \centering
		\subfigure{\includegraphics[width=0.42\textwidth,
		height=45mm]{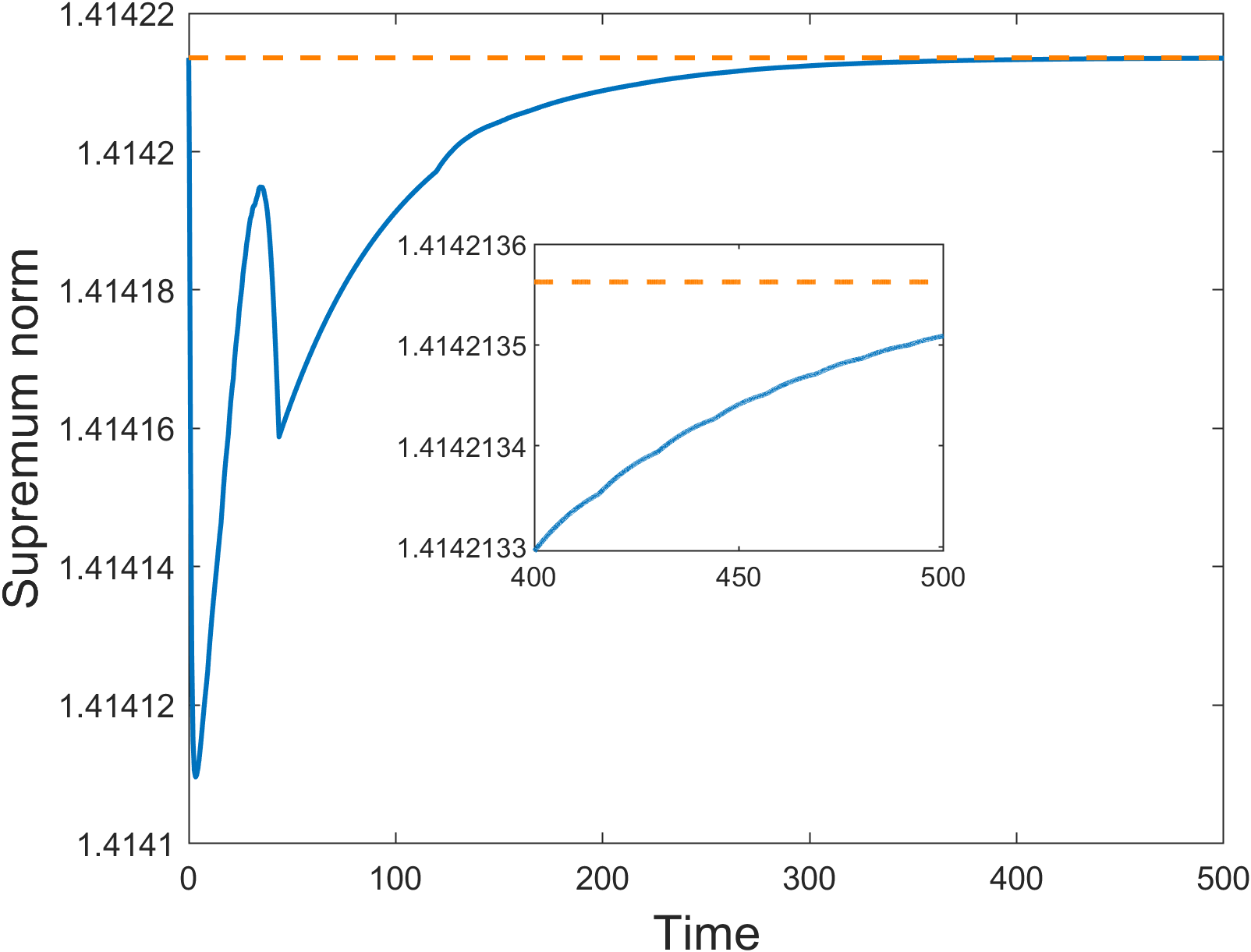}}\quad
		\subfigure{\includegraphics[width=0.42\textwidth,
		height=45mm]{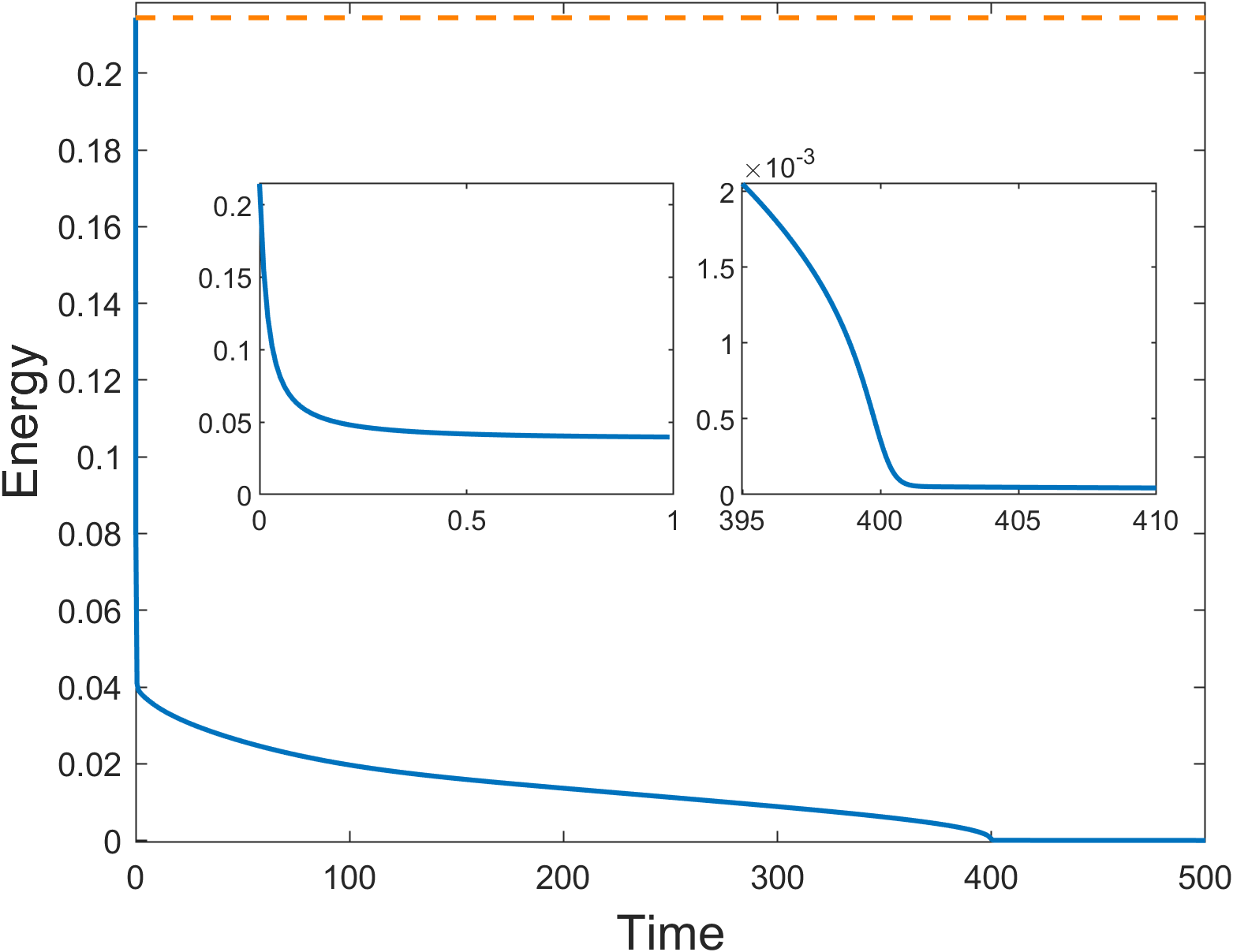}}
		\caption{Evolution of the supremum norm $\|\cdot\|_{\mathcal{X}}$ and energy with initial condition \eqref{eq:4.2} and $\alpha(x,y)=\frac{\pi}{2}\sin(2\pi(x+y))$. The dashed line in the left figure is the maximum bound $\sqrt m$ while the dashed line in the right figure is the initial energy.}
  \label{fig:4.6}
\end{figure}

{\bf Example 3.}  We consider the matrix-valued Allen--Cahn equation \eqref{eq:mac} within the domain $\Omega$, where $\varepsilon$ is set to 0.01, and the initial condition is specified as
\begin{equation}\label{eq:4.3}
     \begin{aligned}
	U^0(x,y)=\begin{cases}
	\begin{bmatrix}
		\cos\alpha_1&-\sin\alpha_1\\
		\sin\alpha_1&\cos\alpha_1
	\end{bmatrix}\quad \mbox{if}~ |x|>0.25,\\
	\\
	\begin{bmatrix}
		\cos\alpha_2&\sin\alpha_2\\
		\sin\alpha_2&-\cos\alpha_2
	\end{bmatrix}\quad\text{otherwise},\\
	\end{cases}
     \end{aligned}
\end{equation}
for different choices of $\alpha_1(x,y)$ and $\alpha_2(x,y)$ satisfying $\Delta \alpha_1=0$ and $\Delta\alpha_2=0$. The time step is fixed as $\tau=0.01$. Here, we perform several experiments where the initial condition has two straight parallel interfaces and consider the following three cases:
\begin{align*}
(\mbox{\romannumeral1})\quad\alpha_1(x,y)=2\pi y, \alpha_2(x,y)=4\pi y;\\
(\mbox{\romannumeral2})\quad\alpha_1(x,y)=2\pi y, \alpha_2(x,y)=8\pi y;\\
(\mbox{\romannumeral3})\quad\alpha_1(x,y)=8\pi y, \alpha_2(x,y)=2\pi y.
\end{align*}

Figures \ref{fig:4.7}, \ref{fig:4.9}, and \ref{fig:4.11} show the evolution of the matrix-valued field and interface at different times for the three cases, respectively. We can observe that the straight interfaces have nonzero speed along their normal directions and the speed of the interface in case (\romannumeral2) is about five times higher than that in case (\romannumeral1). In addition, the dynamics in case (\romannumeral3) has an opposite direction to that in case (\romannumeral2). In all three cases, the matrix-valued field evolves towards a uniform matrix-valued field. On one hand, this phenomenon is different from that of the classical scalar Allen--Cahn. In the case of scalar Allen--Cahn, such initial solution can be a stationary solution. On the other hand, one can also interpret the scalar Allen--Cahn system as a matrix-valued Allen--Cahn with constant matrix-valued fields. All above observations are consistent with the analysis of the laws of motion in \cite{wang2019interface}. 

Figures \ref{fig:4.8}, \ref{fig:4.10}, and \ref{fig:4.12} present the evolution of the supremum norm and energy for three cases, respectively. Notably, all three cases satisfy the discrete maximum bound principle (Theorem \ref{MBP2}) and the energy dissipation law (Theorem \ref{Energy}).

\begin{figure}
	\centering
		\subfigure{\includegraphics[width=0.30\textwidth,
		height=39mm]{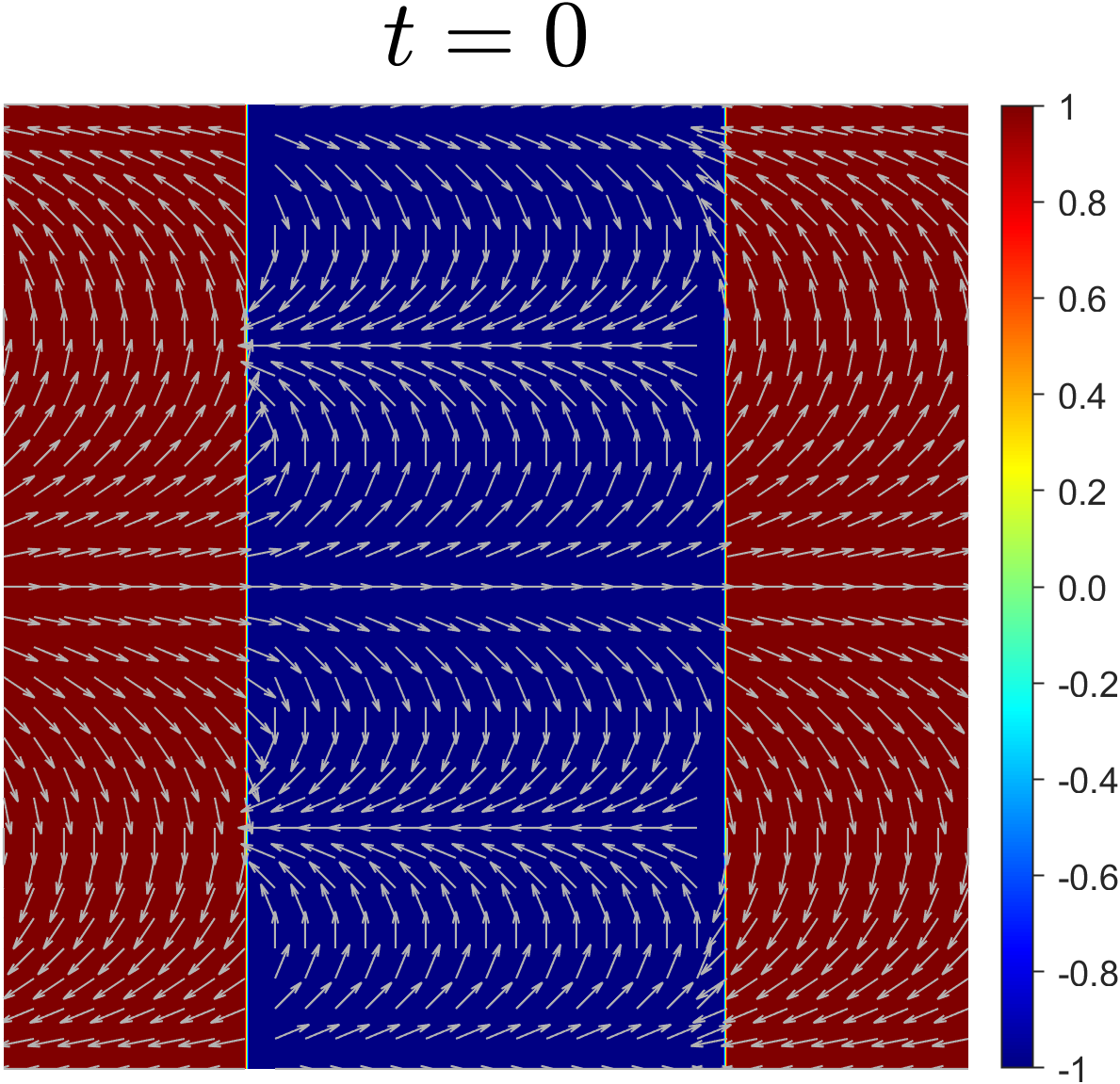}}\quad
		\subfigure{\includegraphics[width=0.30\textwidth,
		height=39mm]{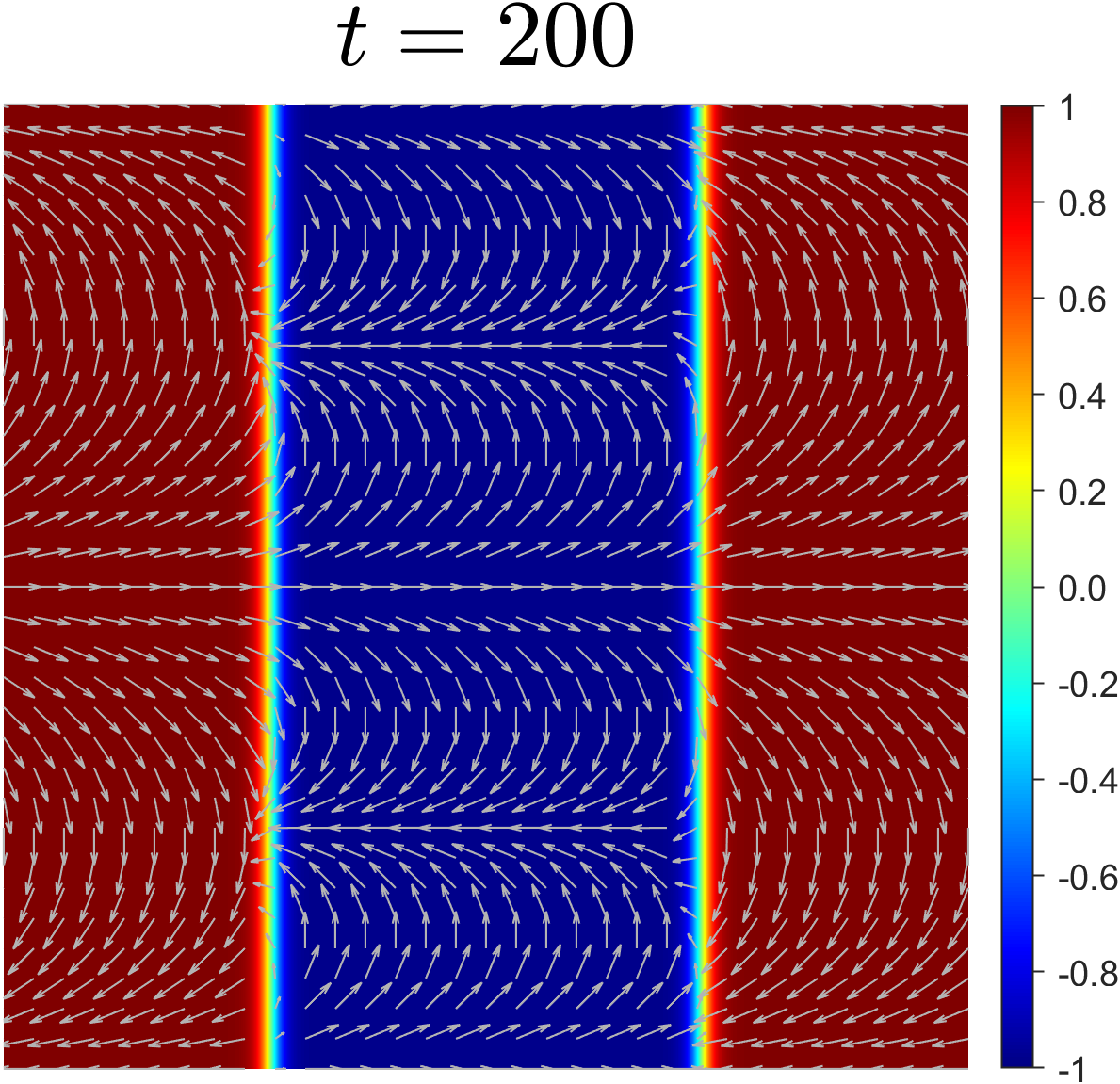}}\quad
		\subfigure{\includegraphics[width=0.30\textwidth,
		height=39mm]{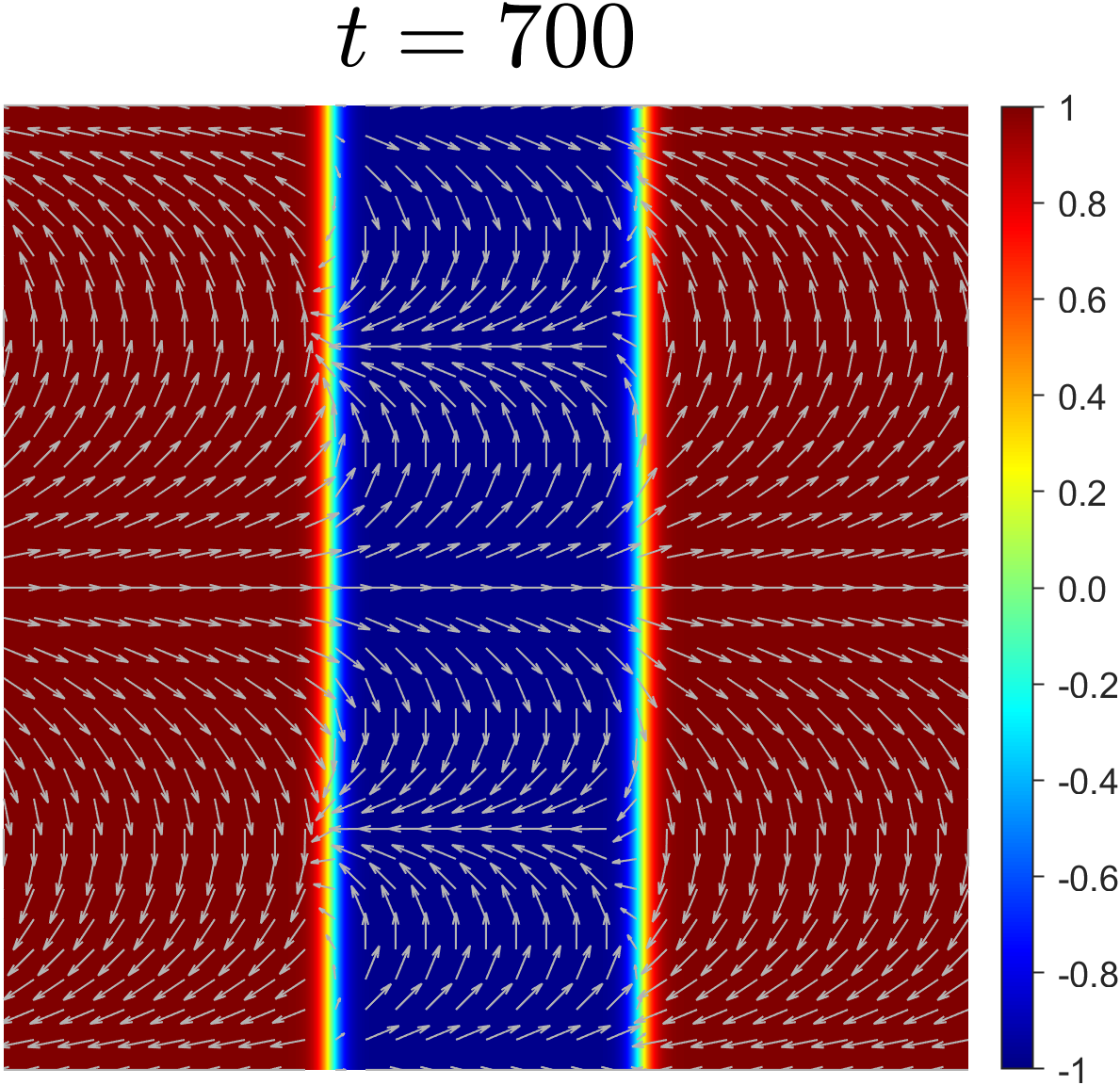}}\quad
		\subfigure{\includegraphics[width=0.30\textwidth,
		height=39mm]{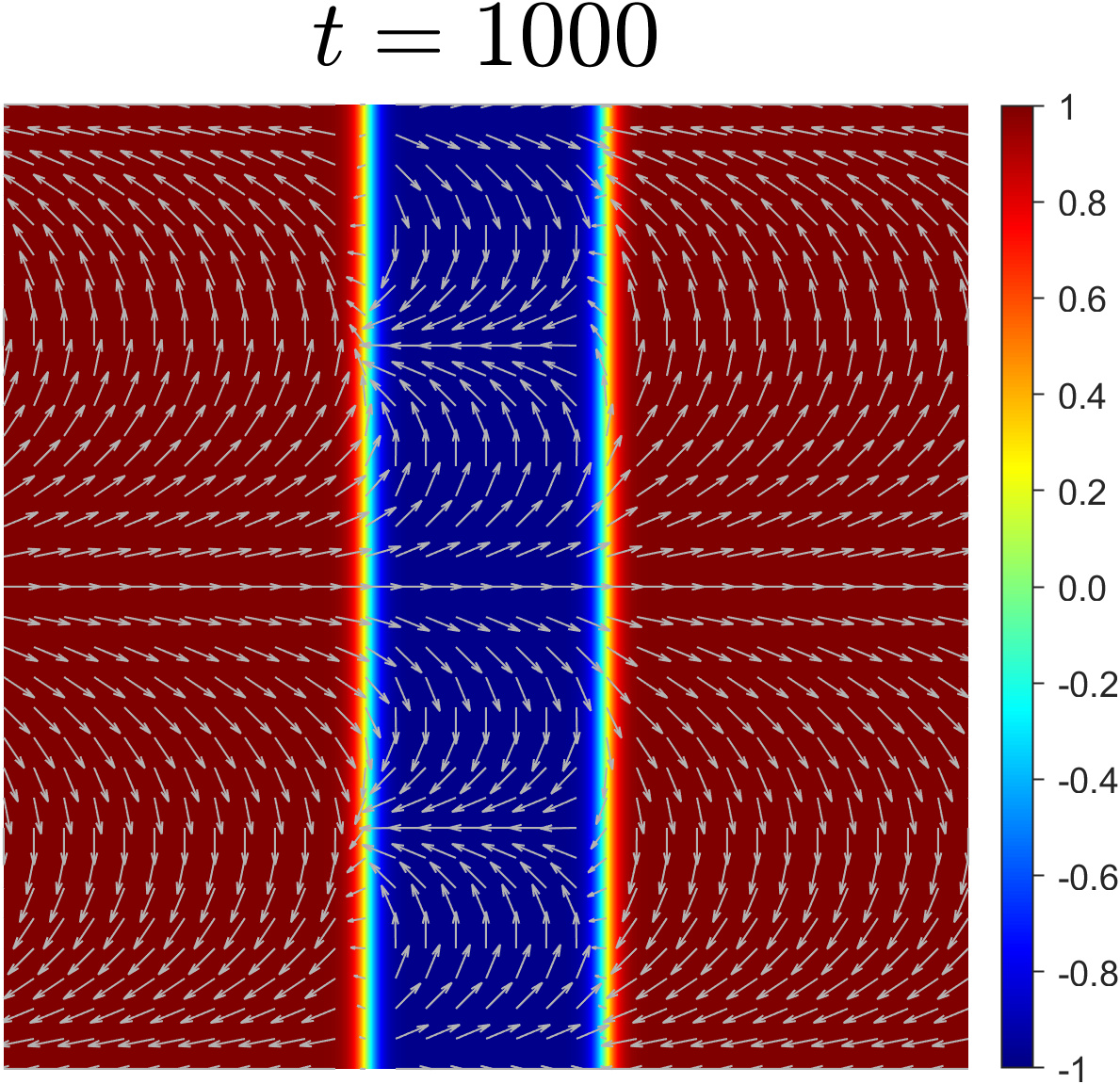}}\quad
		\subfigure{\includegraphics[width=0.30\textwidth,
		height=39mm]{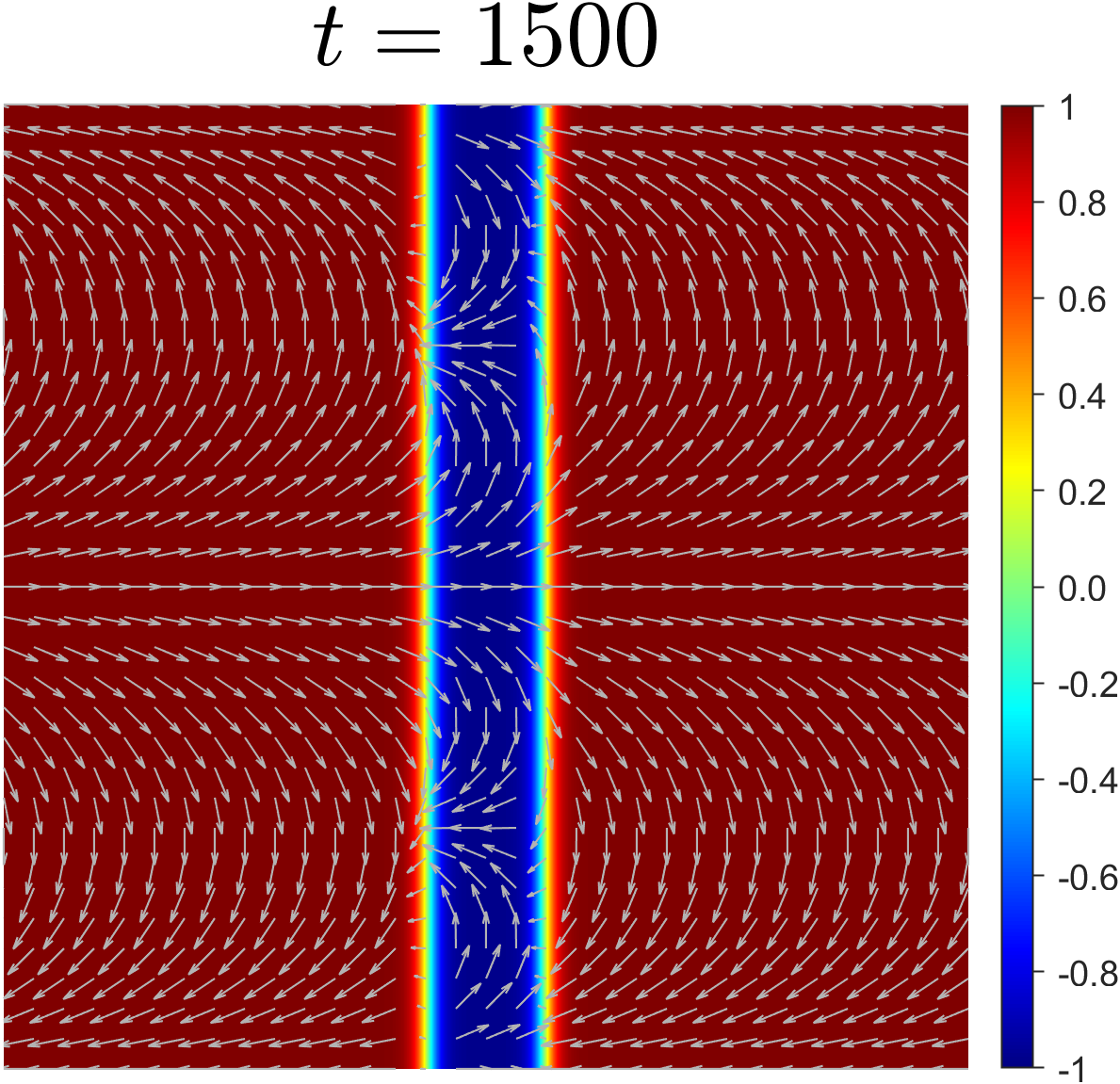}}\quad
		\subfigure{\includegraphics[width=0.30\textwidth,
		height=39mm]{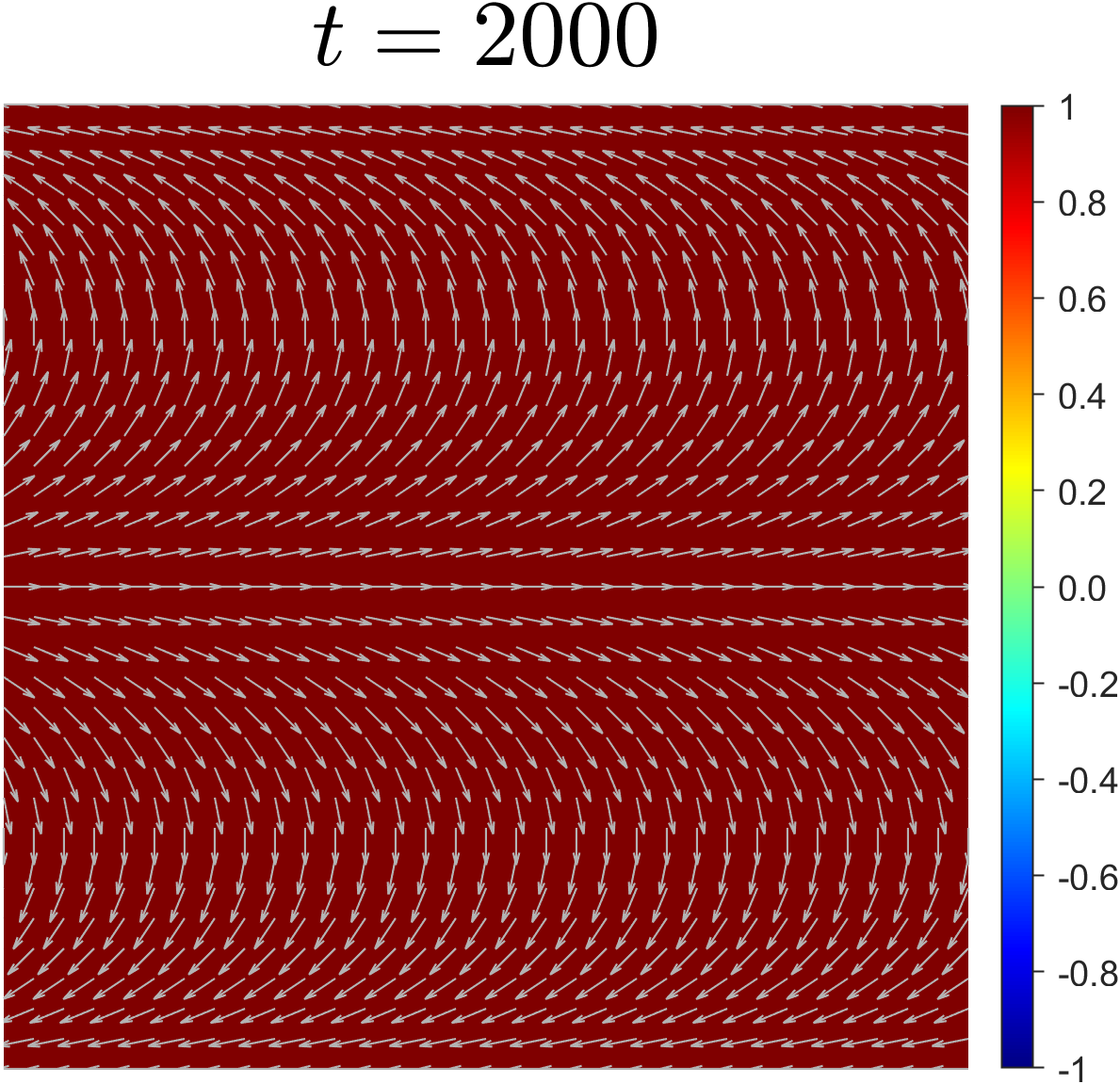}}
		\caption{Evolution of the matrix-valued field and interface at $t=0,200,700,1000,1500,2000$. The initial field is given in \eqref{eq:4.3} with $(\alpha_1,\alpha_2)=(2\pi y,4\pi y)$.}
  \label{fig:4.7}
\end{figure}

\begin{figure}
	\centering
		\subfigure{\includegraphics[width=0.42\textwidth,
		height=45mm]{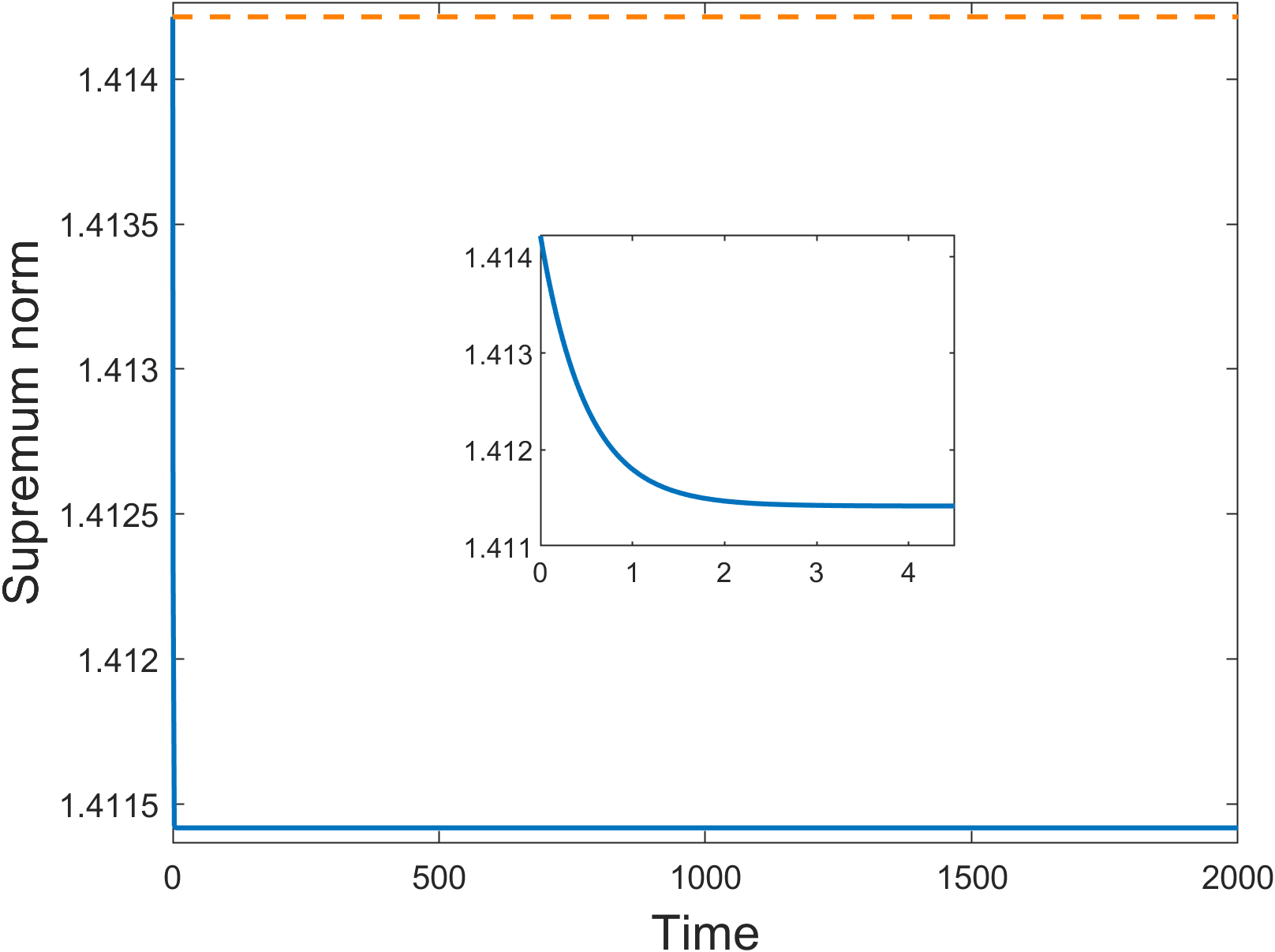}}\quad
		\subfigure{\includegraphics[width=0.42\textwidth,
		height=45mm]{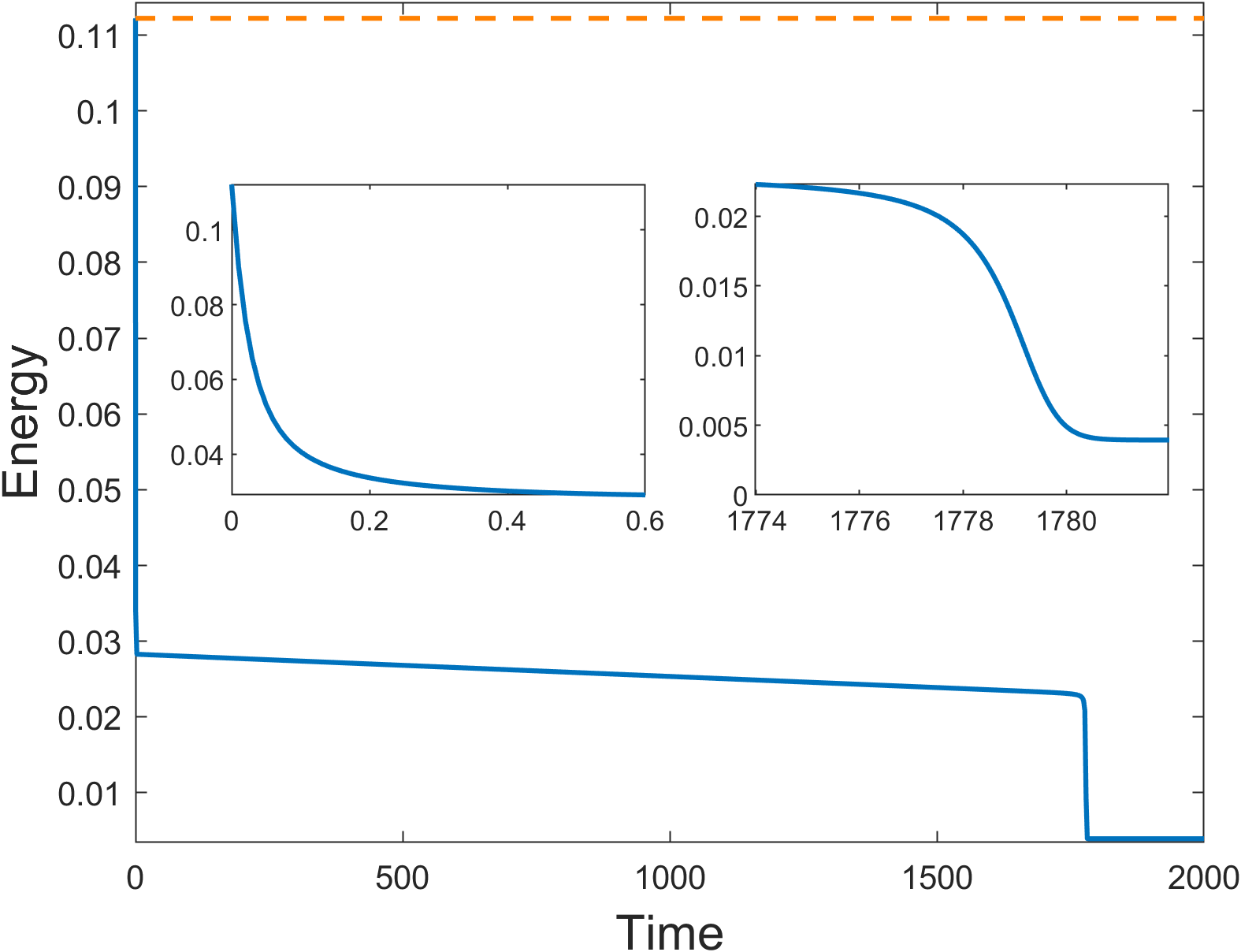}}
		\caption{Evolution of the supremum norm $\|\cdot\|_{\mathcal{X}}$ and energy with initial condition \eqref{eq:4.3} and $(\alpha_1,\alpha_2)=(2\pi y,4\pi y)$. The dashed line in the left figure is the maximum bound $\sqrt m$ while the dashed line in the right figure is the initial energy.}
  \label{fig:4.8}
\end{figure}
 
 \begin{figure}
	\centering
		\subfigure{\includegraphics[width=0.30\textwidth,
		height=39mm]{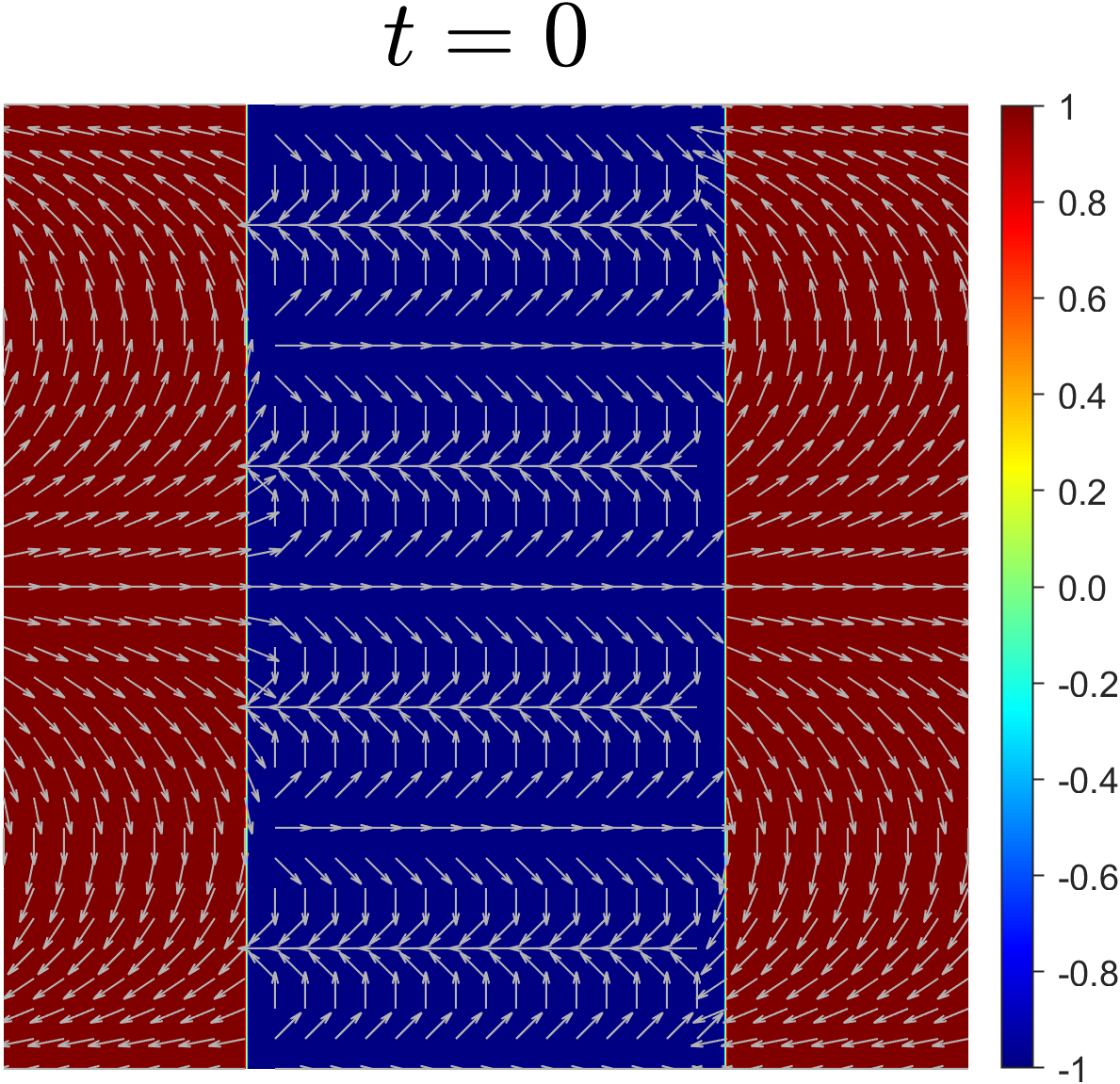}}\quad
		\subfigure{\includegraphics[width=0.30\textwidth,
		height=39mm]{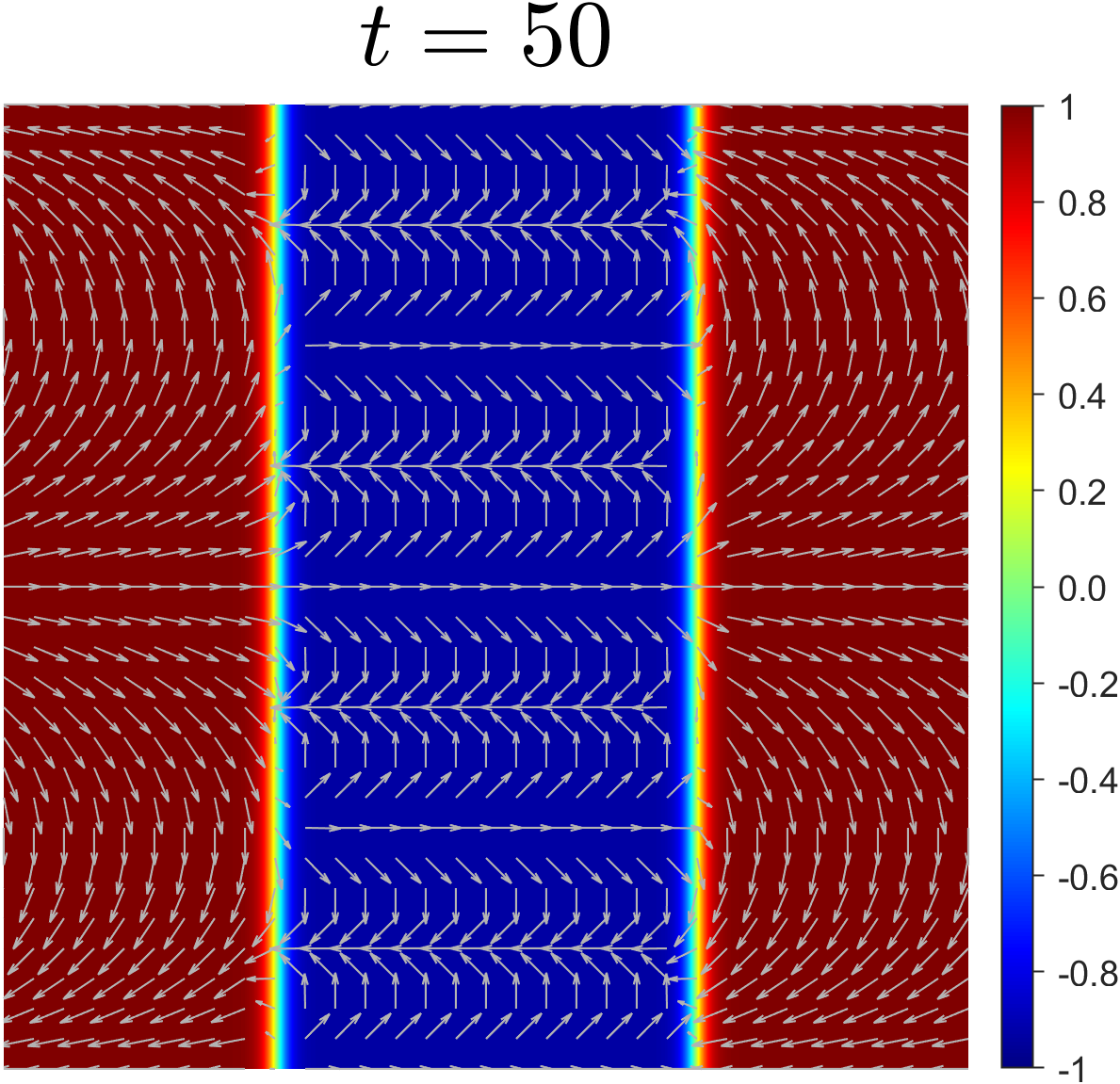}}\quad
		\subfigure{\includegraphics[width=0.30\textwidth,
		height=39mm]{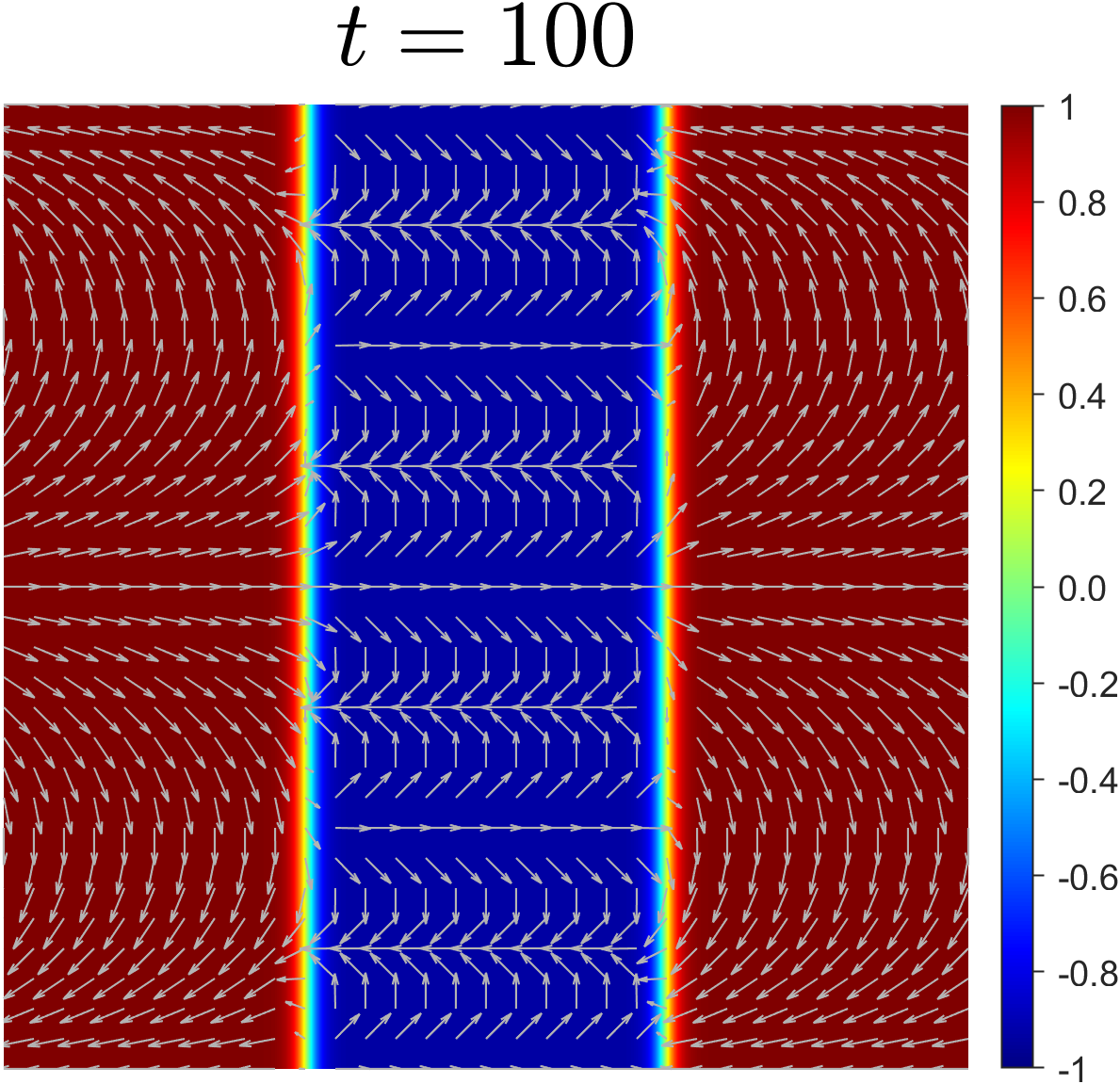}}\quad
		\subfigure{\includegraphics[width=0.30\textwidth,
		height=39mm]{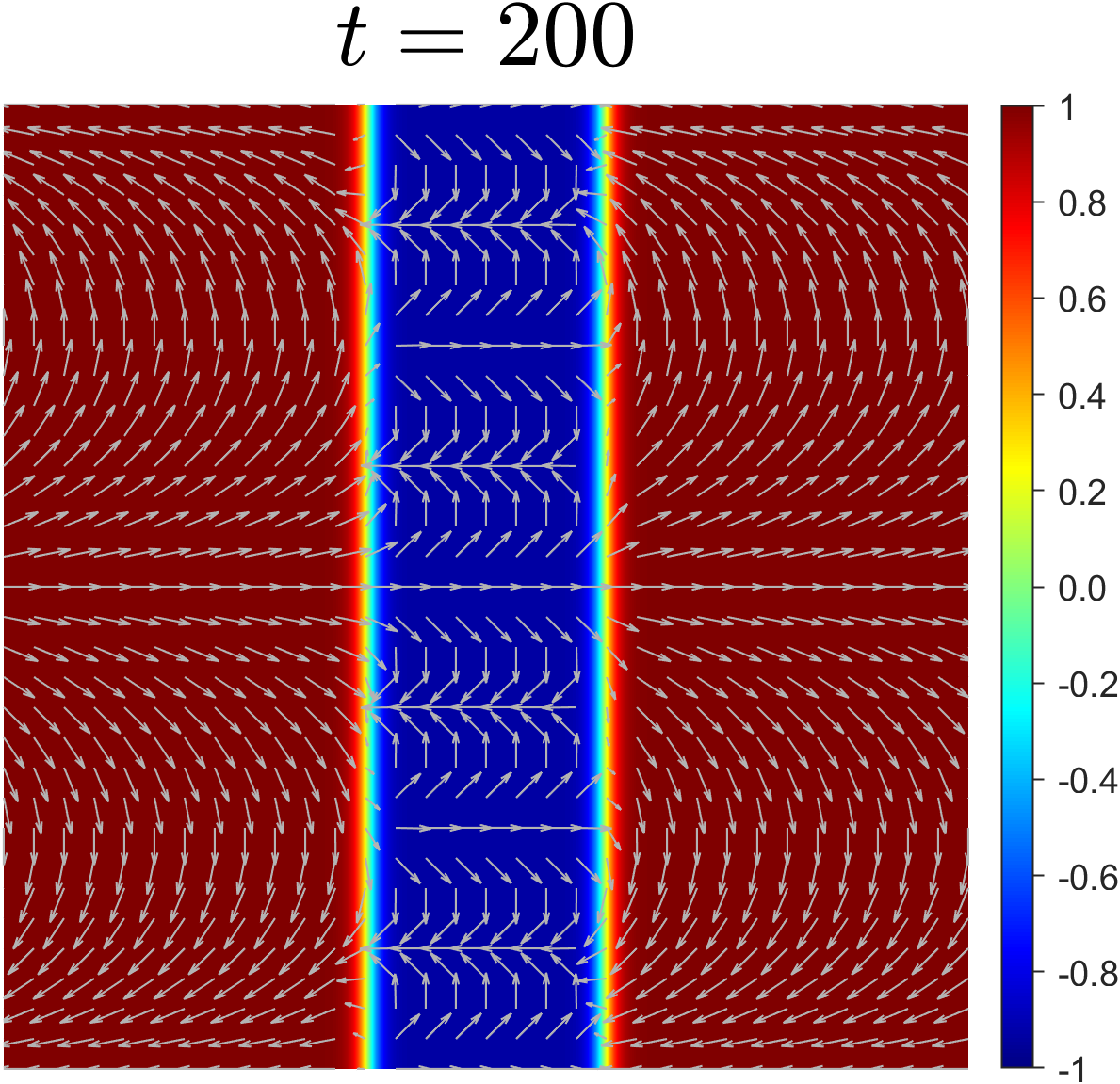}}\quad
		\subfigure{\includegraphics[width=0.30\textwidth,
		height=39mm]{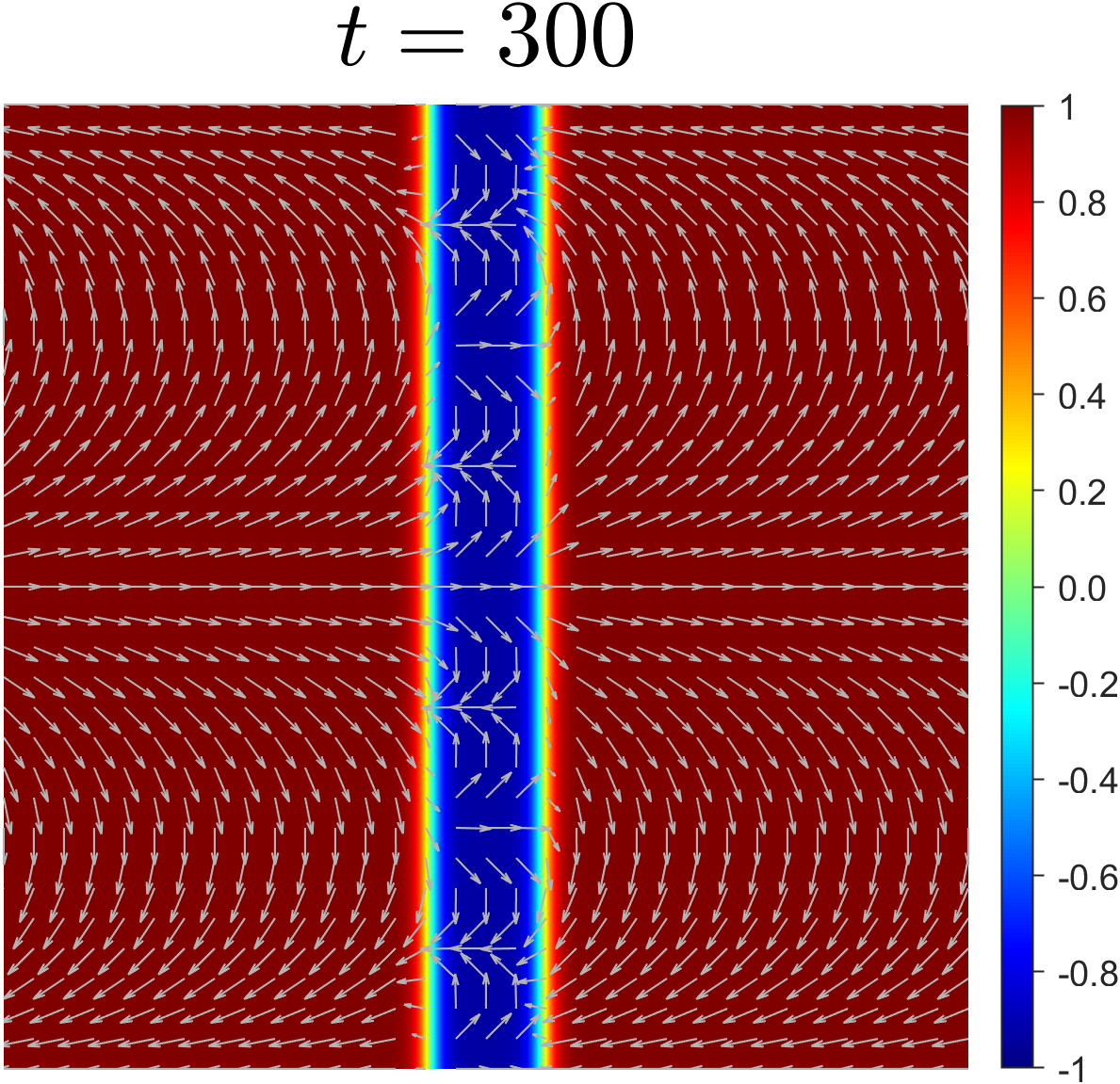}}\quad
		\subfigure{\includegraphics[width=0.30\textwidth,
		height=39mm]{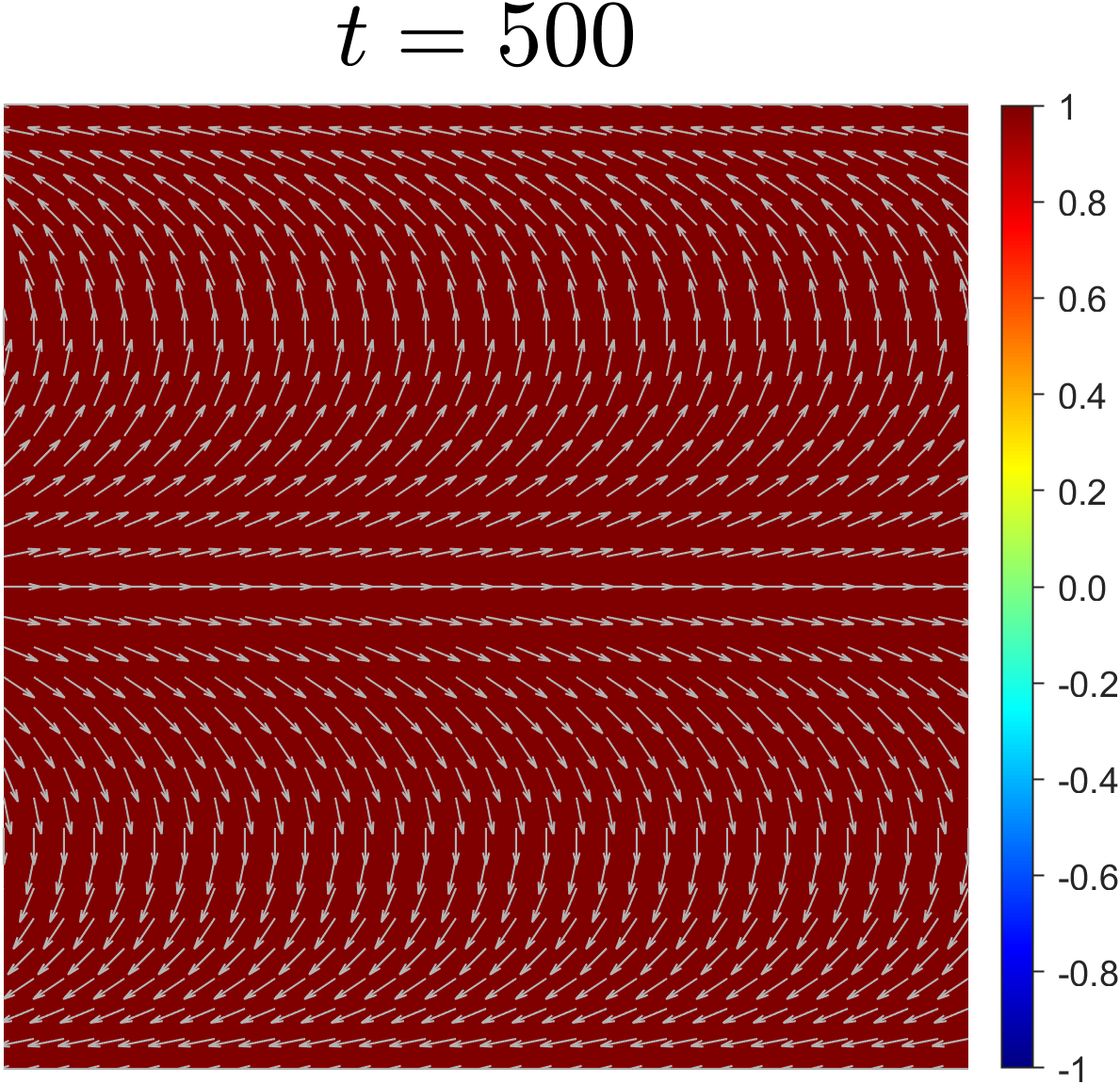}}
		\caption{Evolution of the matrix-valued field and interface at $t=0,50,100,200,300,500$. The initial field is given in \eqref{eq:4.3} with $(\alpha_1,\alpha_2)=(2\pi y,8\pi y)$.}
   \label{fig:4.9}
\end{figure}
\begin{figure}
	\centering
		\subfigure{\includegraphics[width=0.42\textwidth,
		height=45mm]{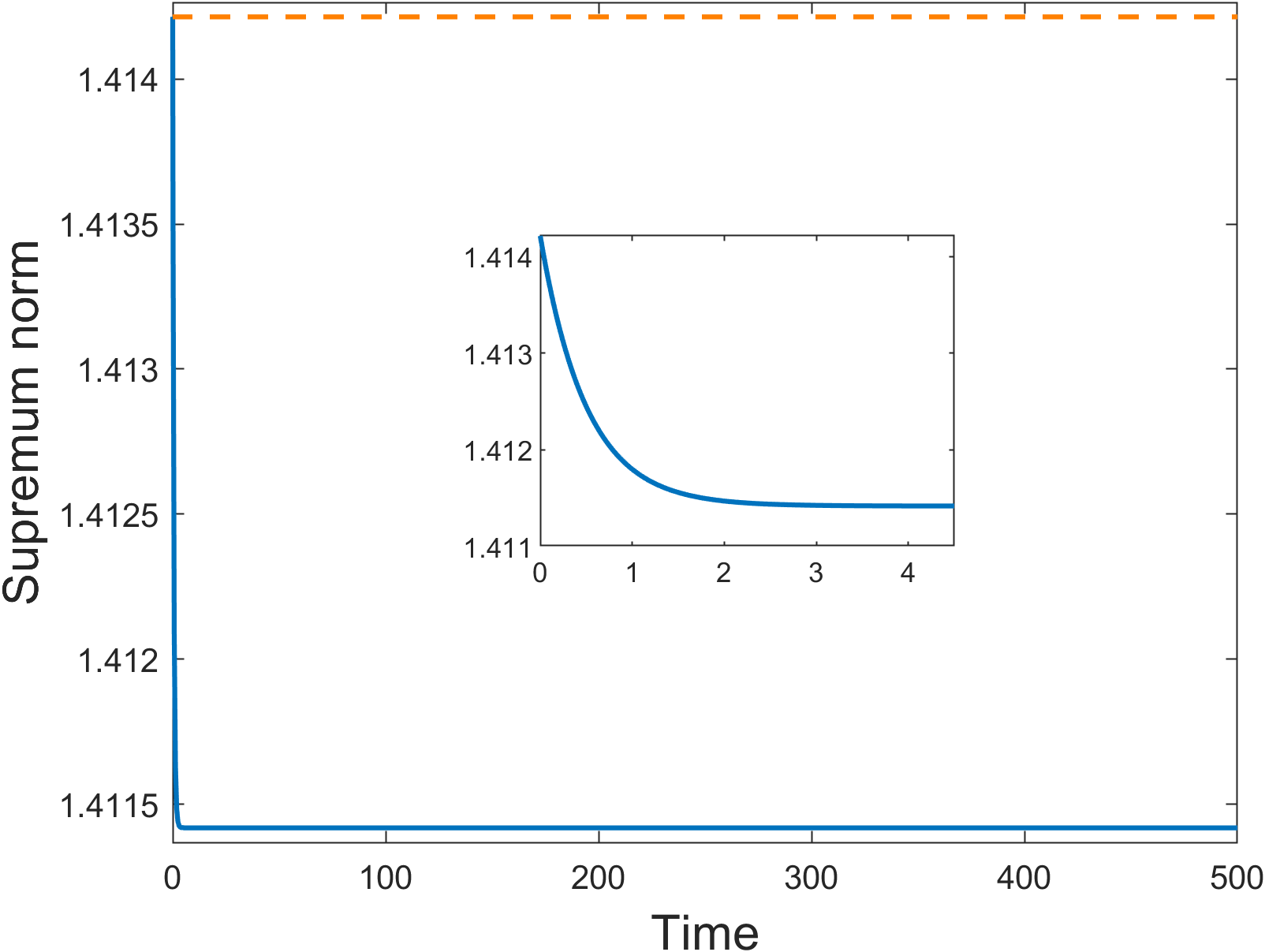}}\quad
		\subfigure{\includegraphics[width=0.42\textwidth,
		height=45mm]{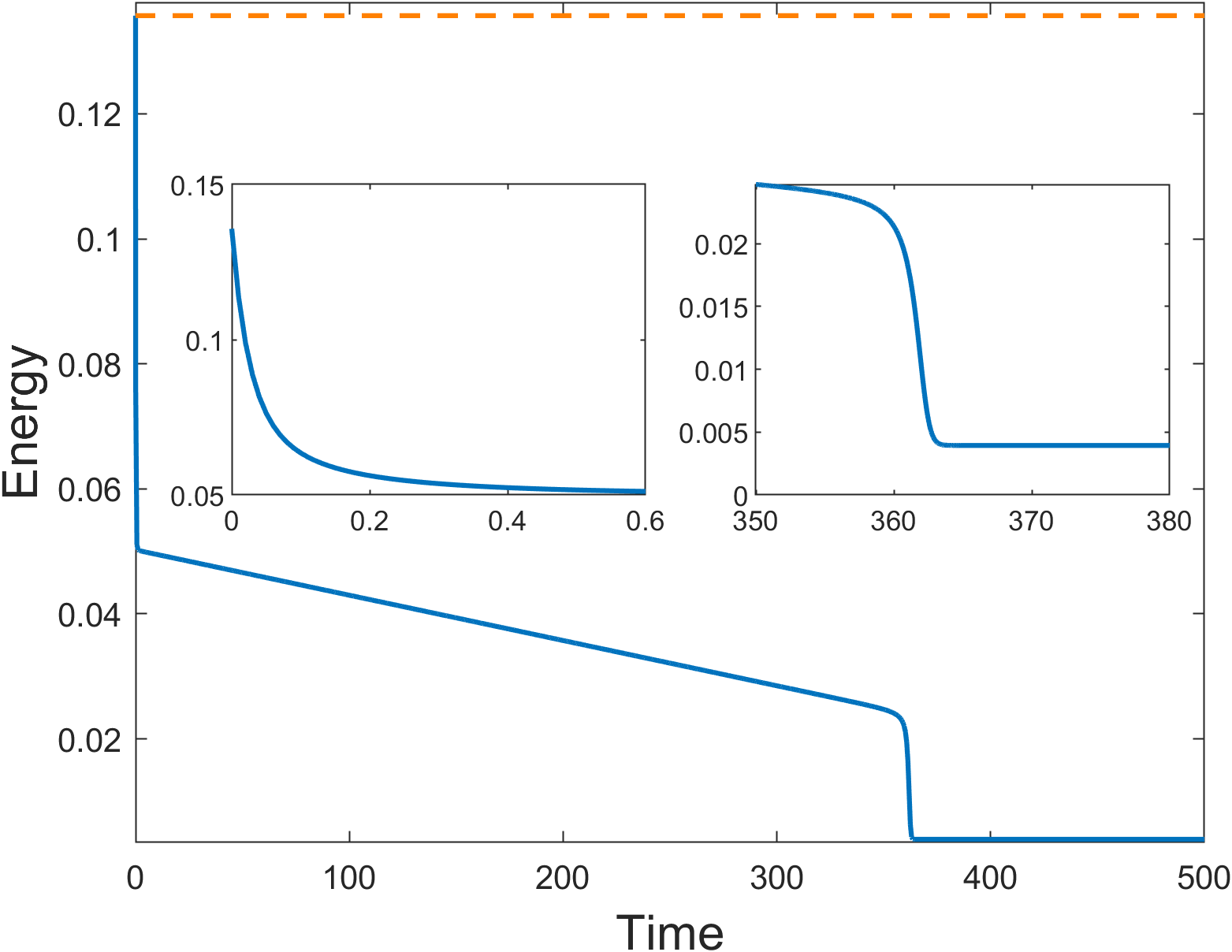}}
		\caption{Evolution of the supremum norm $\|\cdot\|_{\mathcal{X}}$ and energy with initial condition \eqref{eq:4.3} and $(\alpha_1,\alpha_2)=(2\pi y,8\pi y)$. The dashed line in the left figure is the maximum bound $\sqrt m$ while the dashed line in the right figure is the initial energy.}
    \label{fig:4.10}
\end{figure}

 \begin{figure}
	\centering
		\subfigure{\includegraphics[width=0.30\textwidth,
		height=39mm]{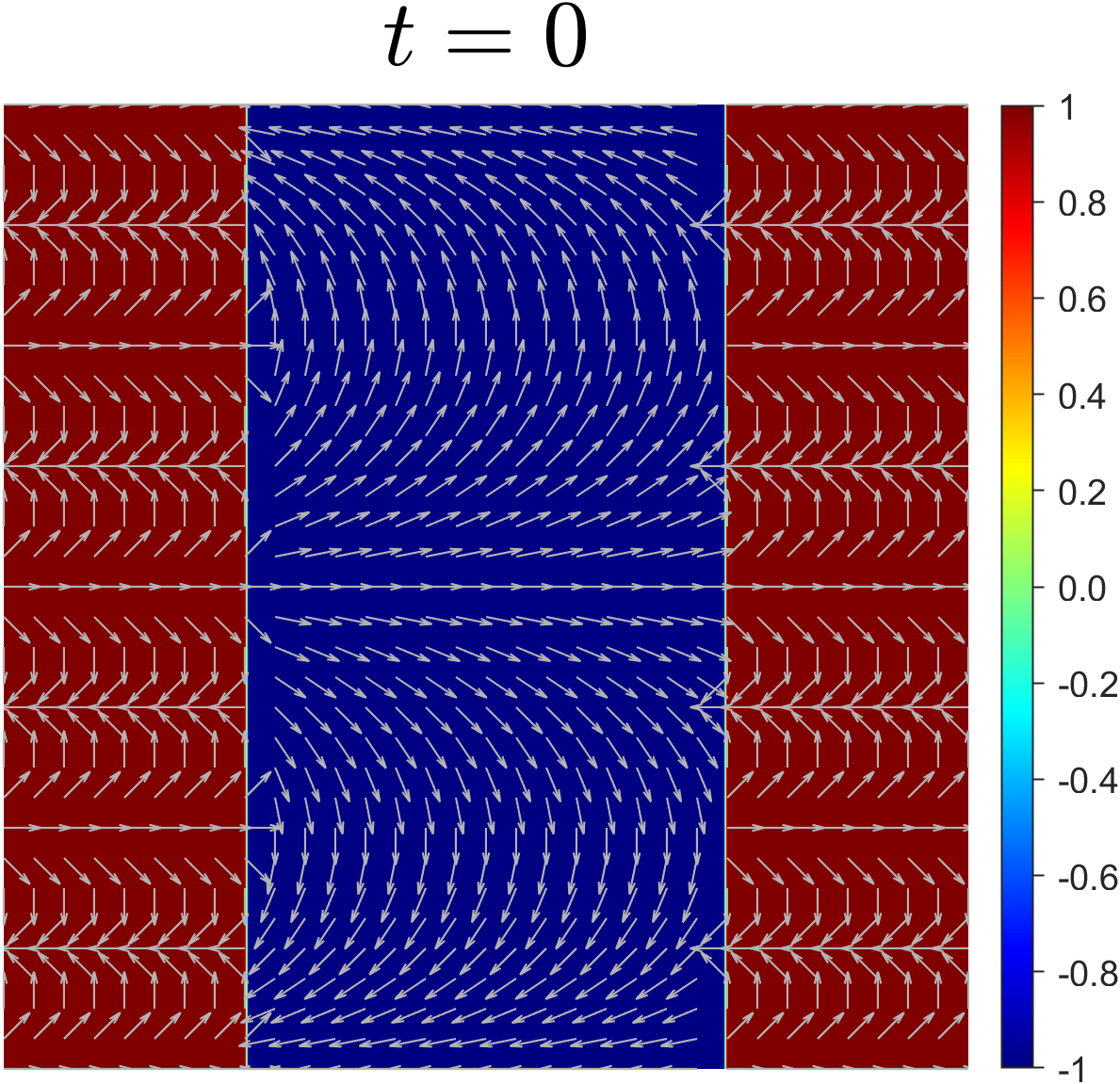}}\quad
		\subfigure{\includegraphics[width=0.30\textwidth,
		height=39mm]{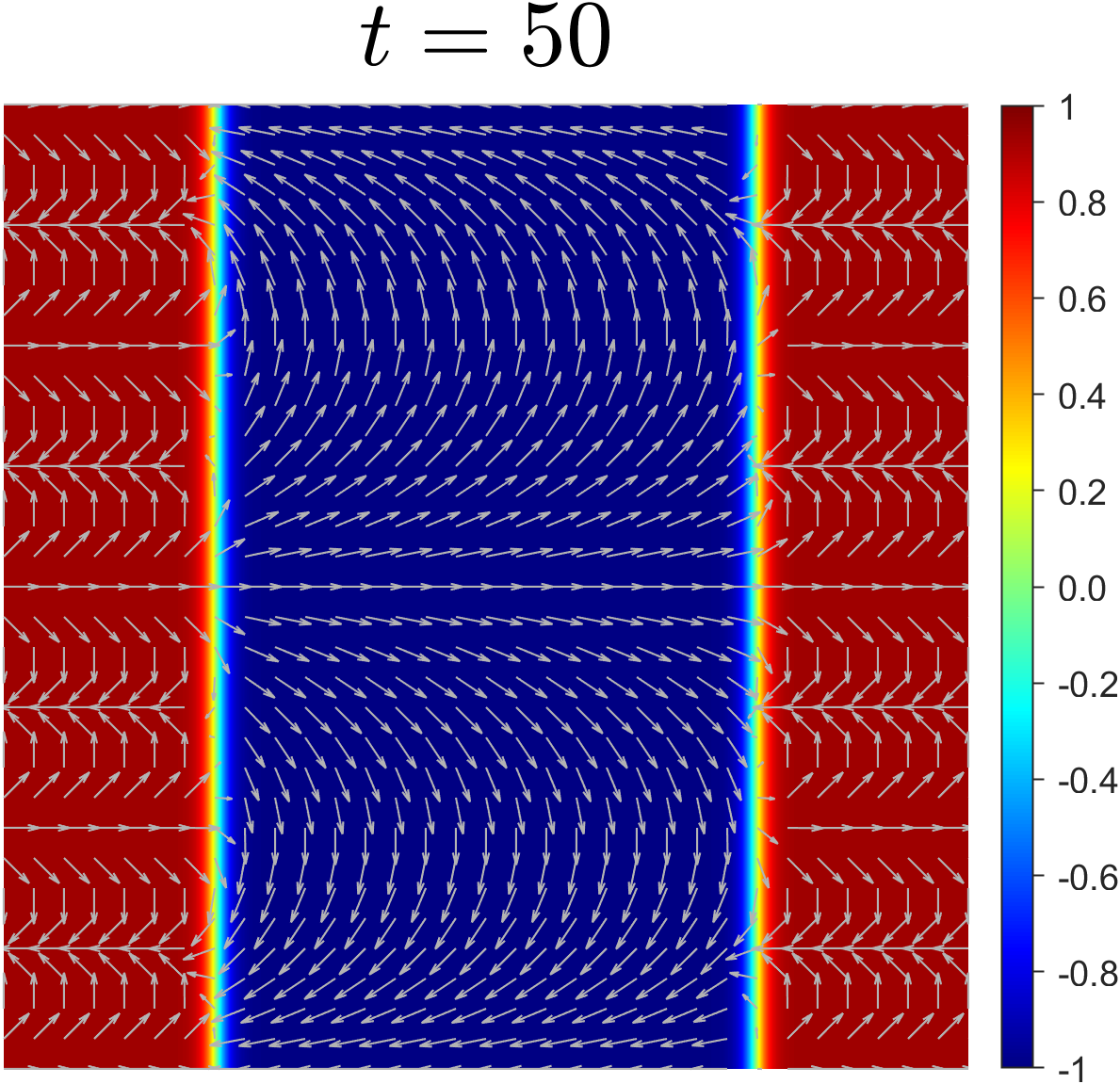}}\quad
		\subfigure{\includegraphics[width=0.30\textwidth,
		height=39mm]{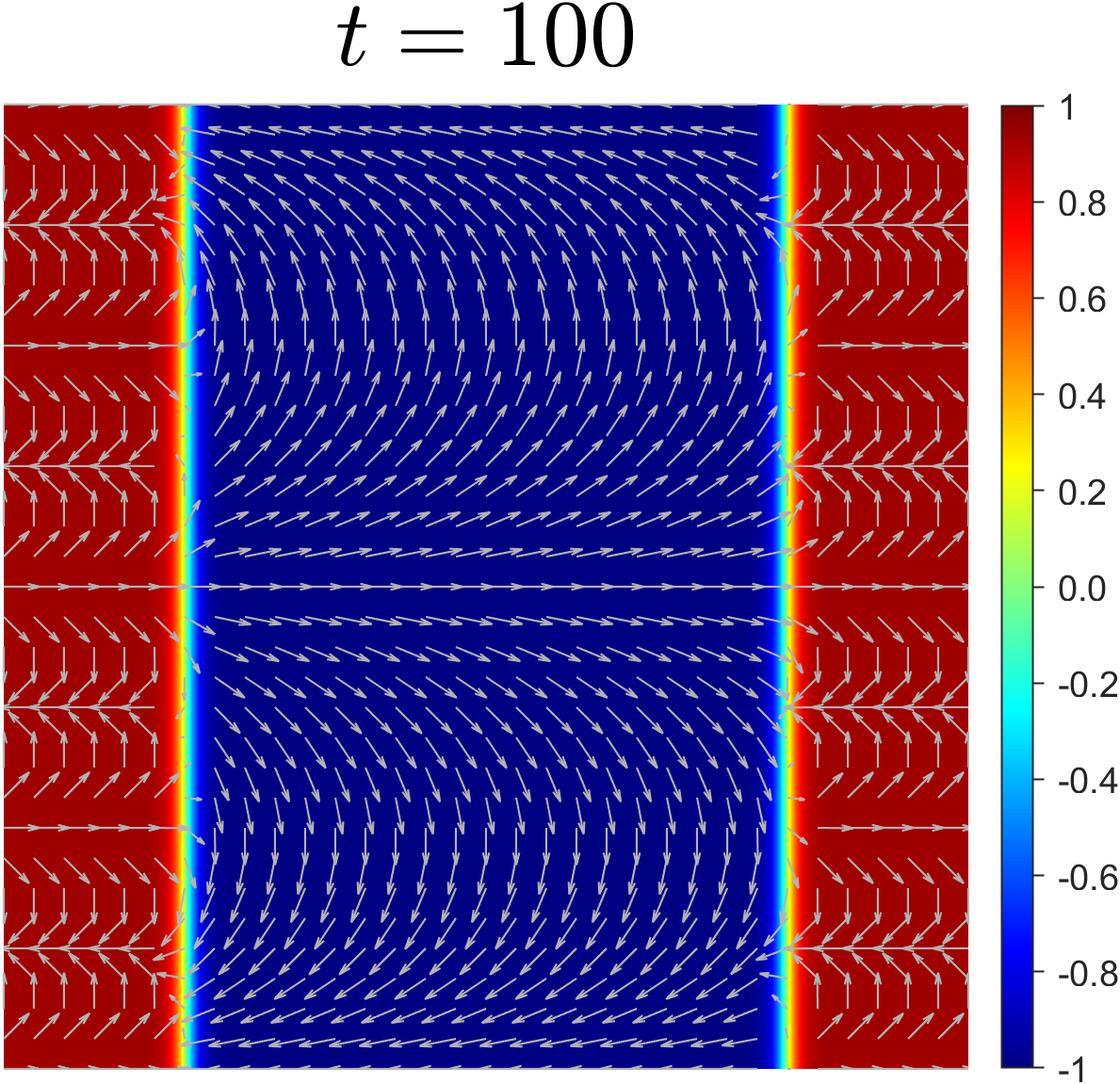}}\quad
		\subfigure{\includegraphics[width=0.30\textwidth,
		height=39mm]{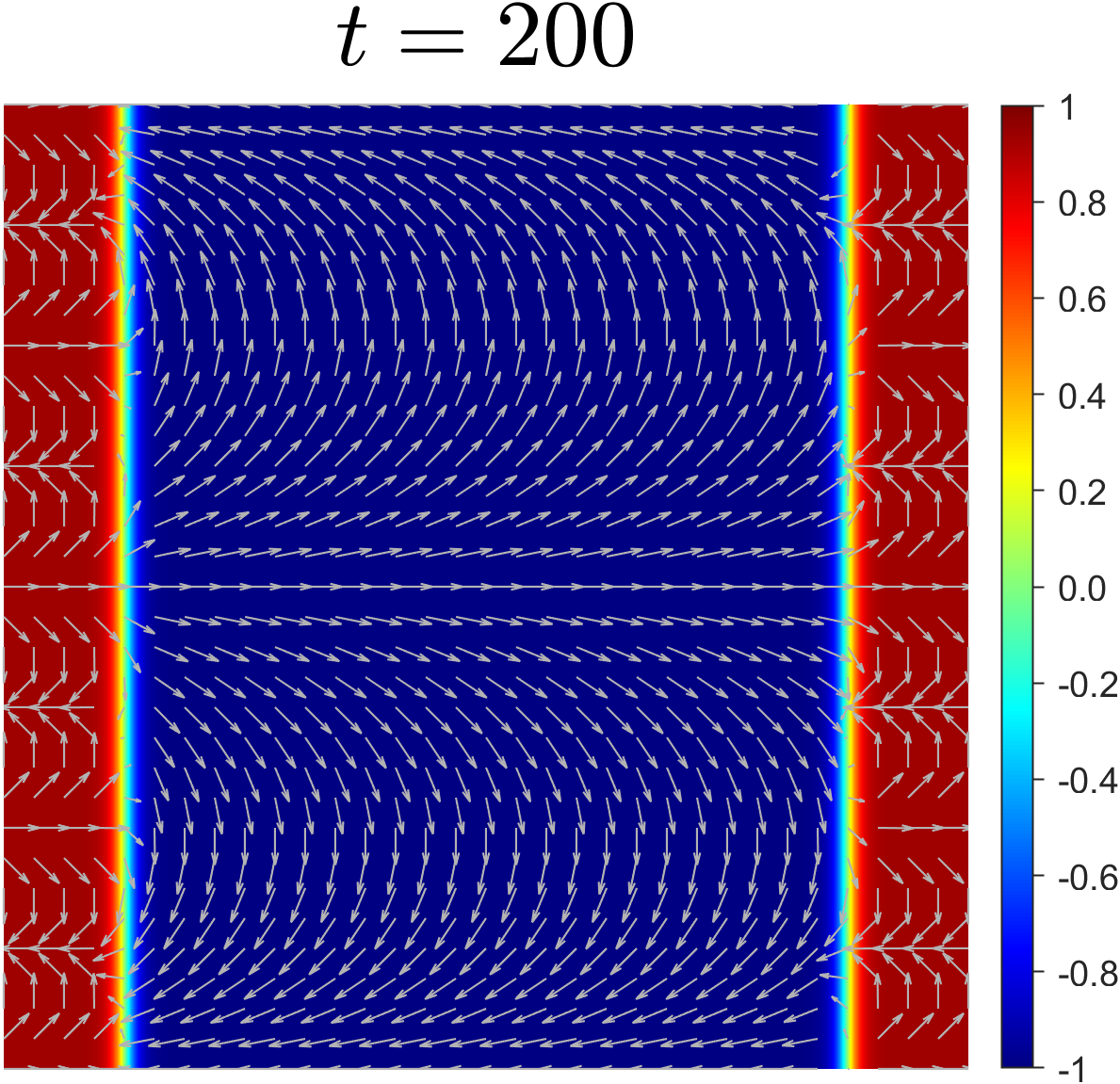}}\quad
		\subfigure{\includegraphics[width=0.30\textwidth,
		height=39mm]{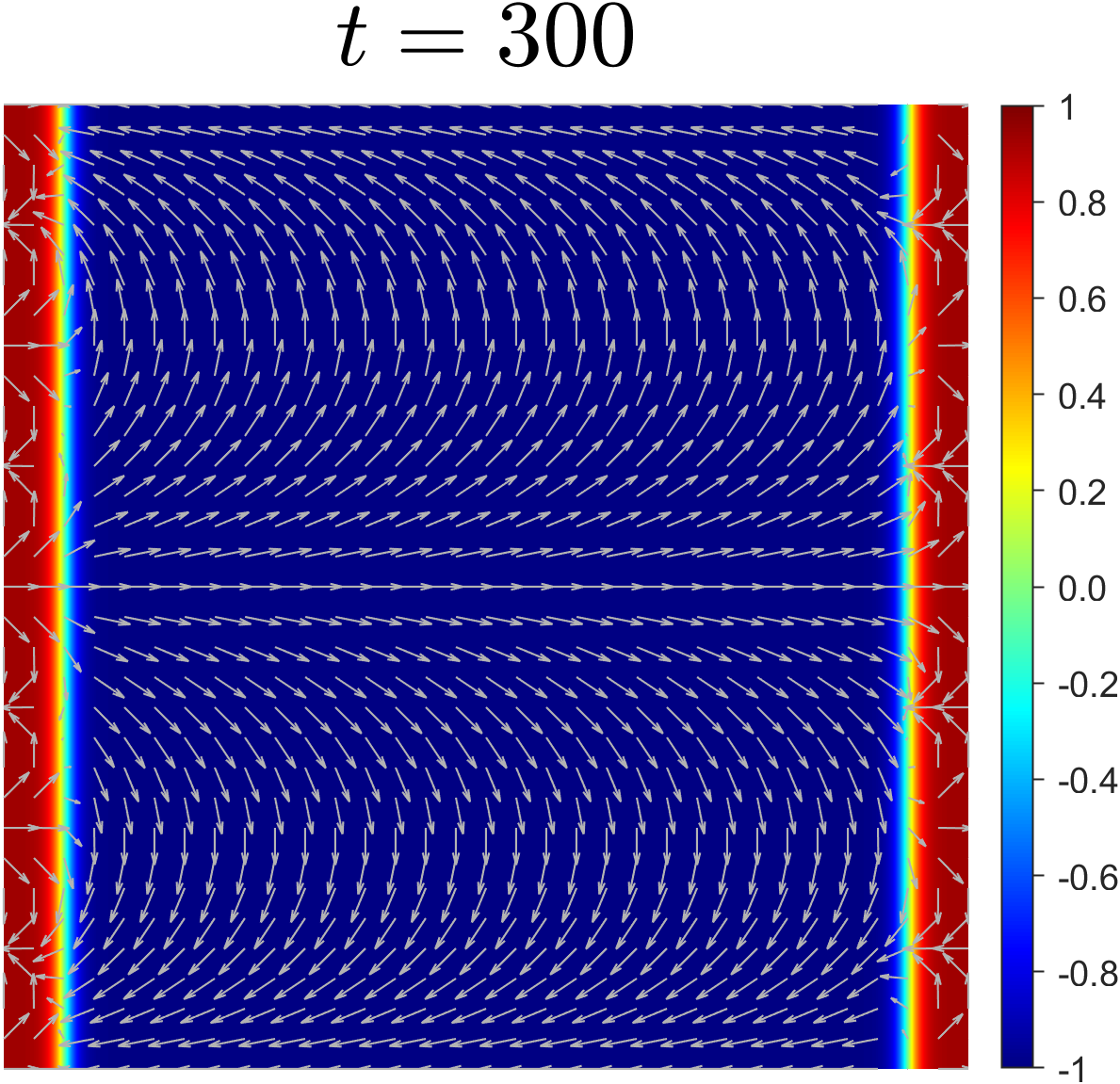}}\quad
		\subfigure{\includegraphics[width=0.30\textwidth,
		height=39mm]{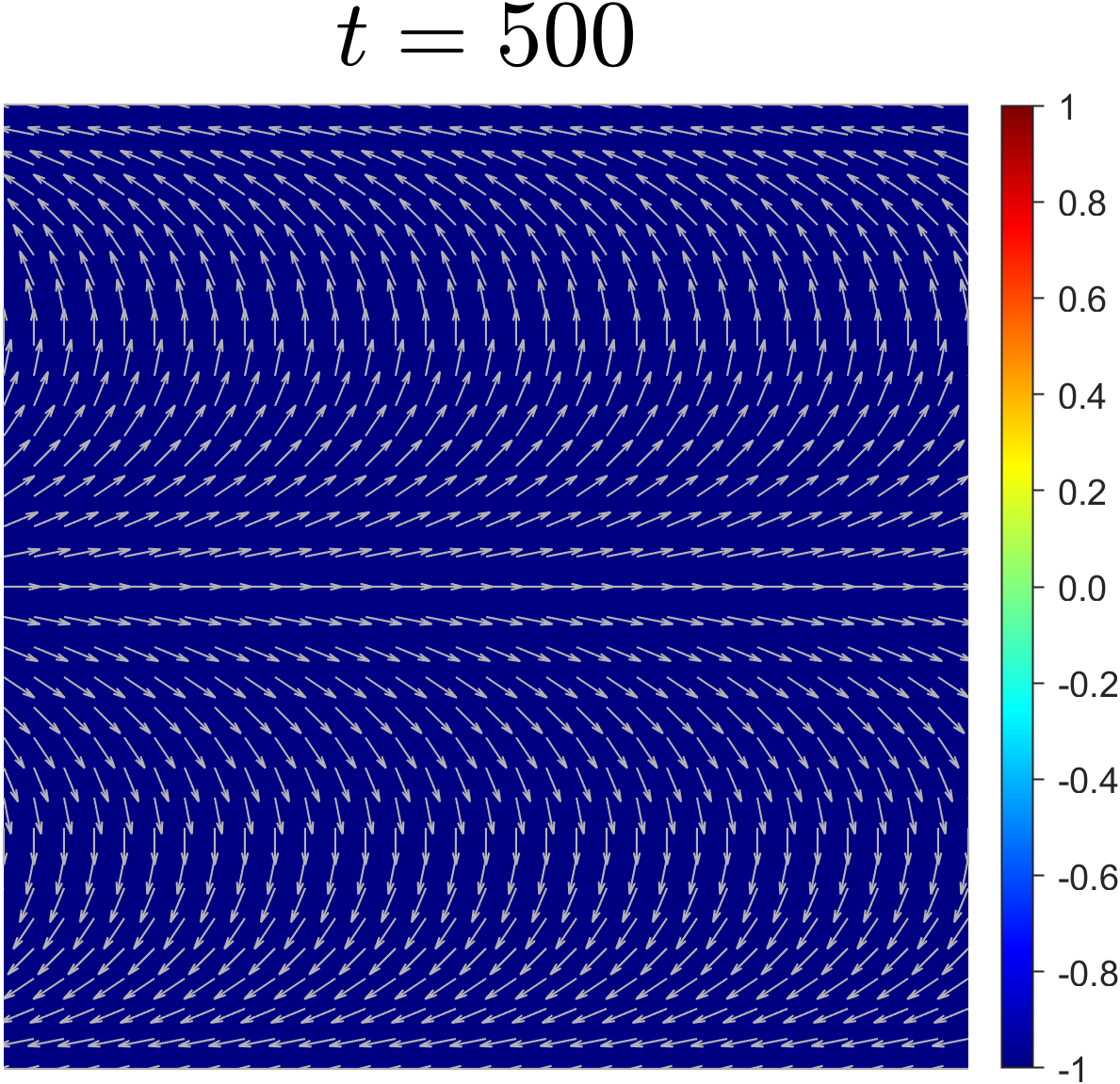}}
		\caption{Evolution of the matrix-valued field and interface at $t=0,50,100,200,300,500$. The initial field is given in \eqref{eq:4.3} with $(\alpha_1,\alpha_2)=(8\pi y,2\pi y)$.}
	\label{fig:4.11}
\end{figure}

\begin{figure}
    \centering
       \subfigure{\includegraphics[width=0.42\textwidth,
		height=45mm]{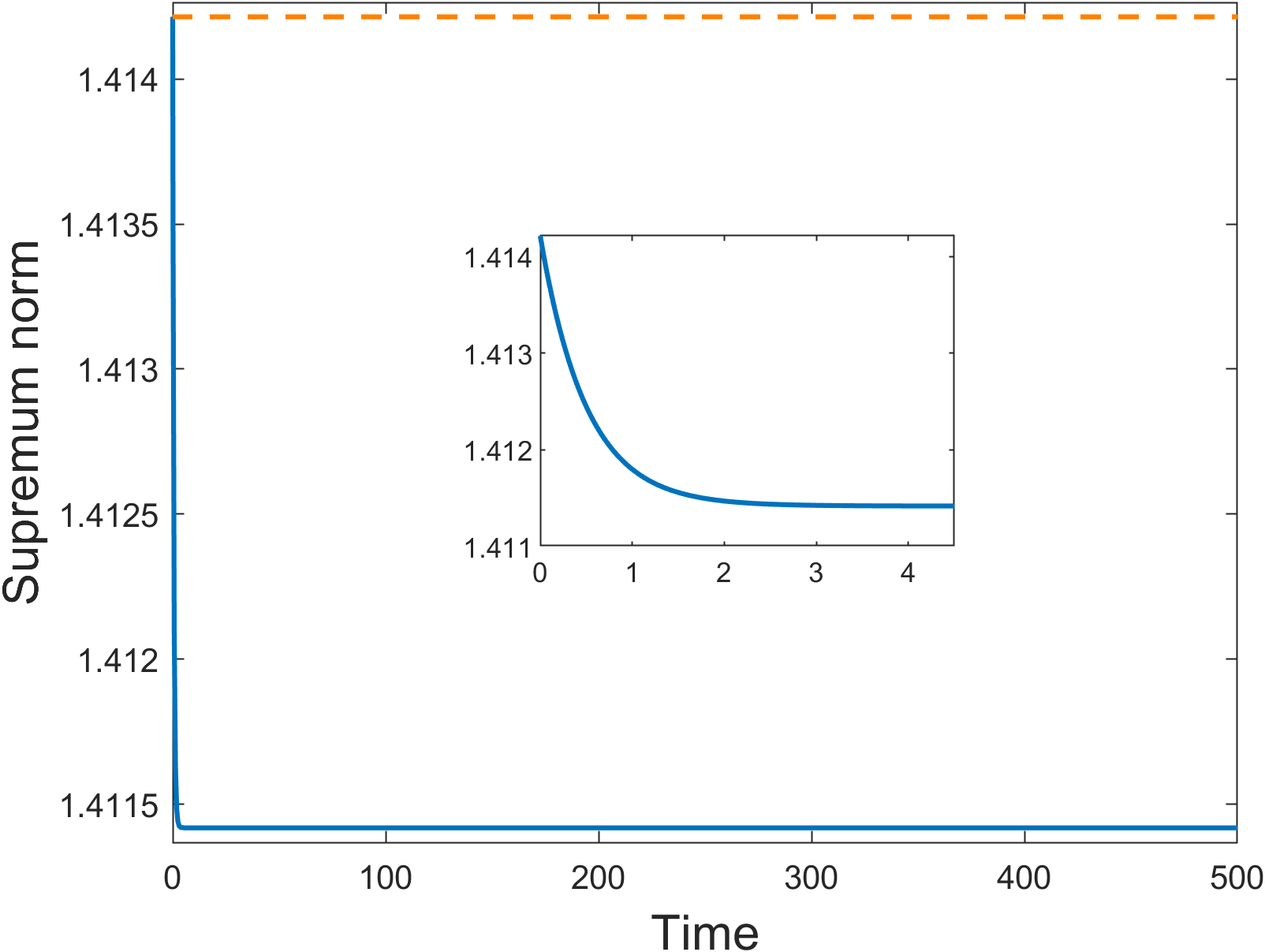}}\quad
	\subfigure{\includegraphics[width=0.42\textwidth,
		height=45mm]{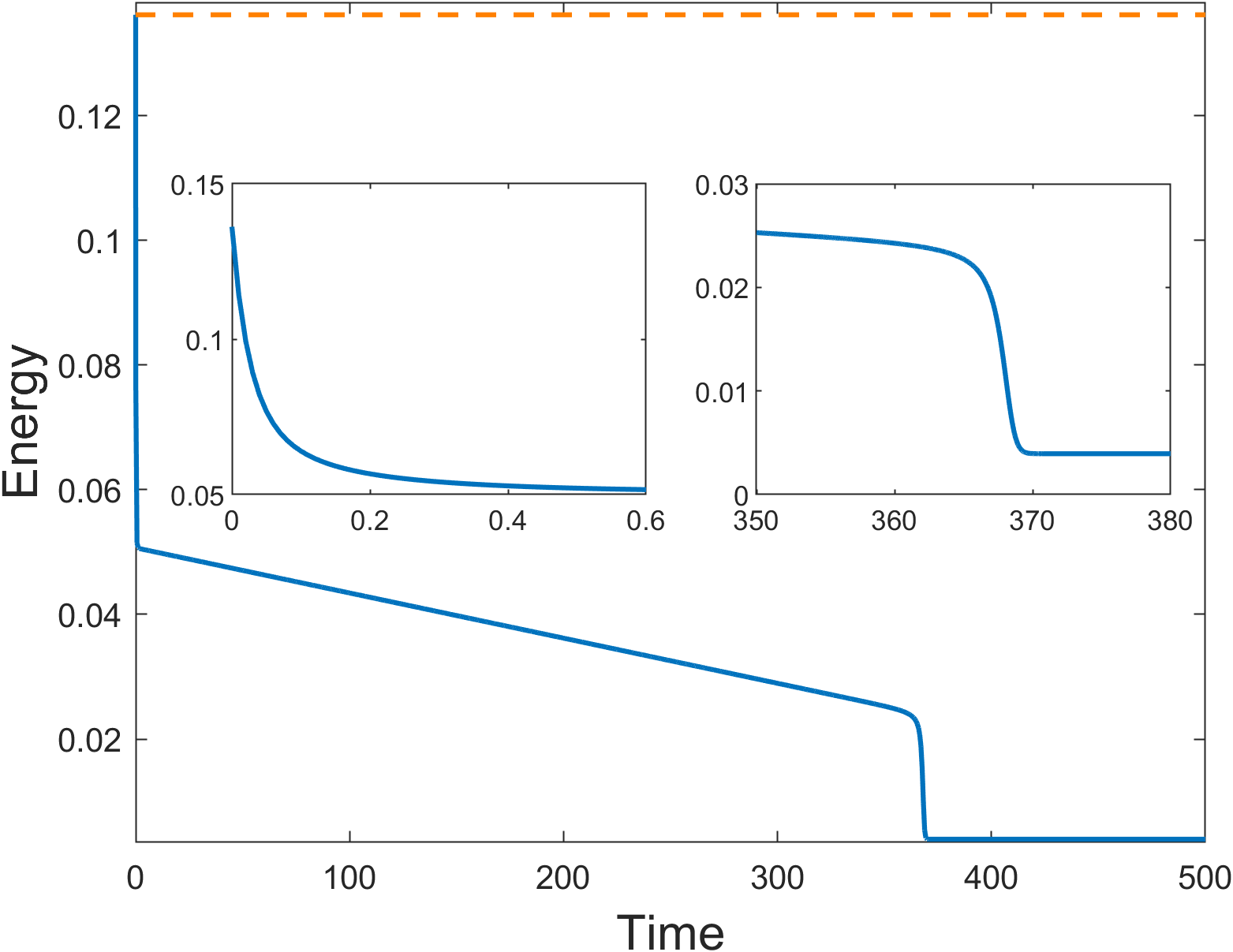}}
		\caption{Evolution of the supremum norm $\|\cdot\|_{\mathcal{X}}$ and energy with initial condition \eqref{eq:4.3} and $(\alpha_1,\alpha_2)=(8\pi y,2\pi y)$. The dashed line in the left figure is the maximum bound $\sqrt m$ while the dashed line in the right figure is the initial energy.}
   \label{fig:4.12}
\end{figure}

{\bf Example 4.} We explore the matrix-valued Allen--Cahn equation \eqref{eq:mac} across the domain $\Omega$, with $\varepsilon$ fixed at 0.01, and initializing with condition
	\begin{equation}\label{eq:4.4}
		\begin{aligned}
			U^0(x,y)=\begin{cases}
				\begin{bmatrix}
					\cos\alpha&-\sin\alpha\\
					\sin\alpha&\cos\alpha
				\end{bmatrix}\quad \mbox{if}~ xy\leq 0,\\
				\\
				\begin{bmatrix}
					\cos\alpha&\sin\alpha\\
					\sin\alpha&-\cos\alpha
				\end{bmatrix}\quad\text{otherwise}.\\
			\end{cases}
		\end{aligned}
	\end{equation}
where $\alpha=\alpha(x,y)=\frac{\pi}{2}\sin(2\pi x)\sin(2\pi y)$. 

Figure \ref{fig:4.13} gives the evolution of the matrix-valued field and the interface. We can observe that the matrix-valued field evolves towards a uniform matrix-valued field, while the region with determinant $-1$ gradually shrinks to a circle as the interface evolves.

Figure \ref{fig:4.14} presents the evolution of the supremum norm and energy. It can be observed that the discrete maximum bound principle (Theorem \ref{MBP2}) and the energy dissipation law (Theorem \ref{Energy}) also hold under the initial condition \eqref{eq:4.4}.

\begin{figure}
    \centering
	\subfigure{\includegraphics[width=0.30\textwidth,
		height=39mm]{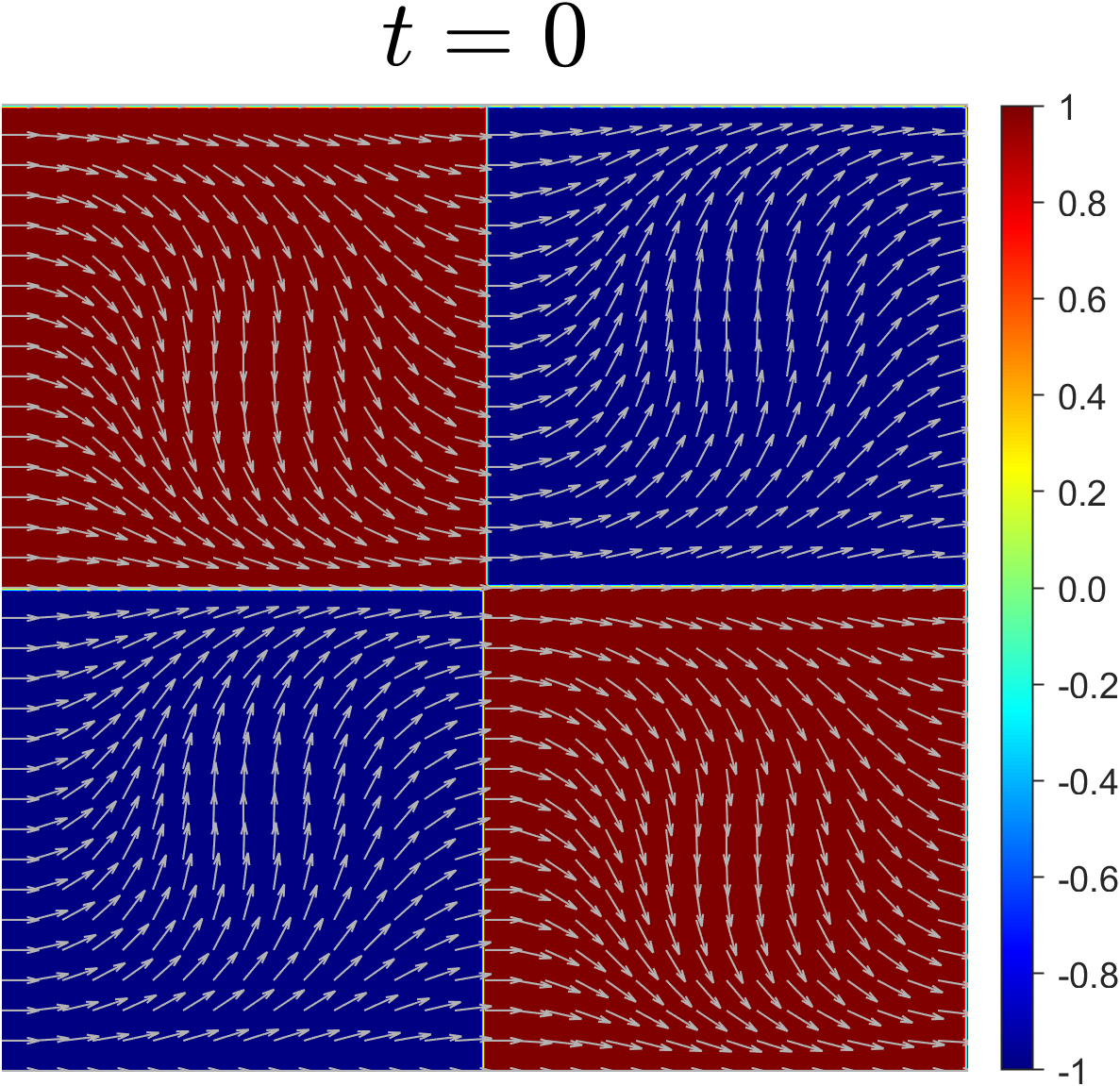}}\quad
	\subfigure{\includegraphics[width=0.30\textwidth,
		height=39mm]{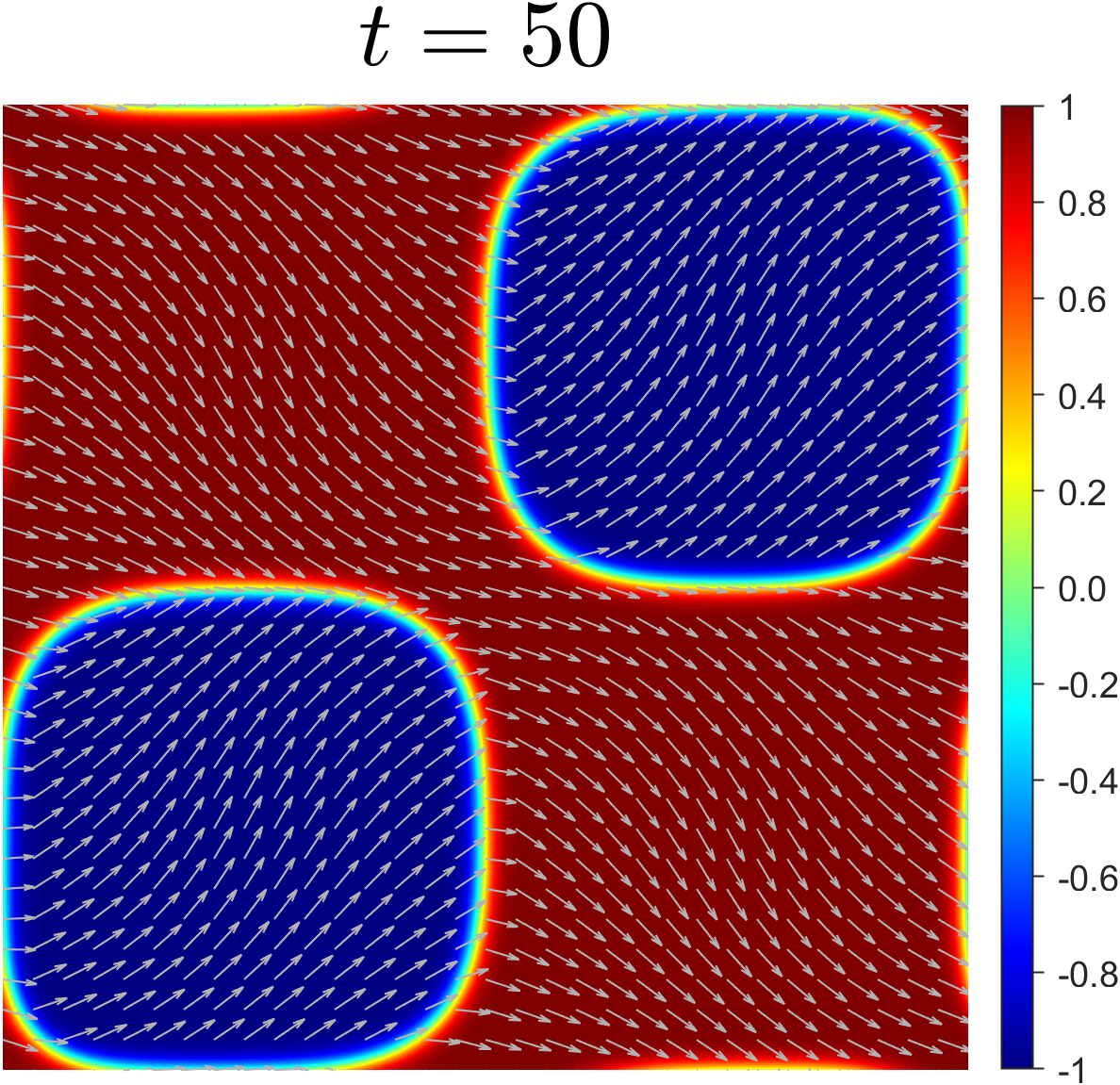}}\quad
	\subfigure{\includegraphics[width=0.30\textwidth,
		height=39mm]{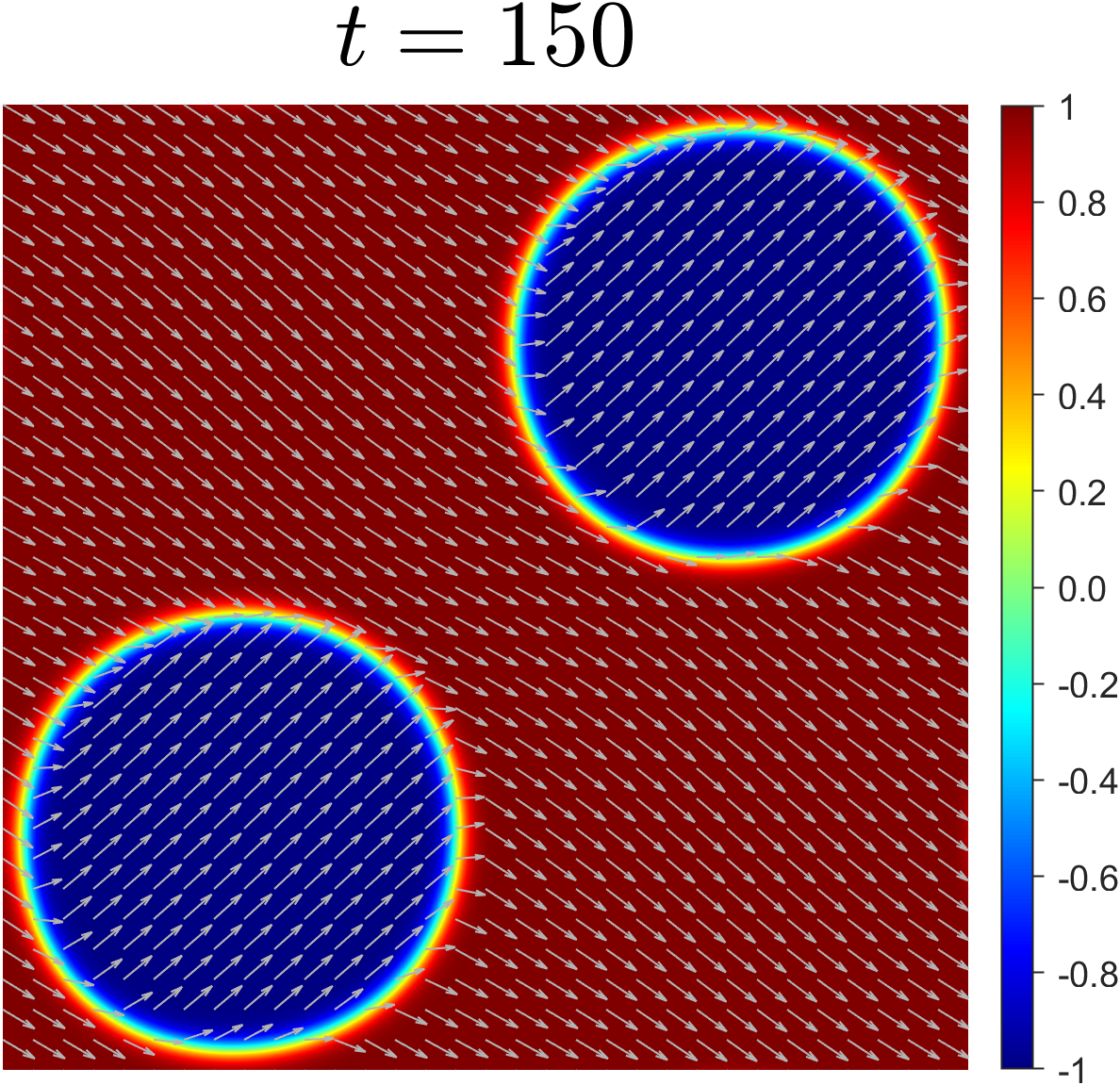}}\quad
	\subfigure{\includegraphics[width=0.30\textwidth,
		height=39mm]{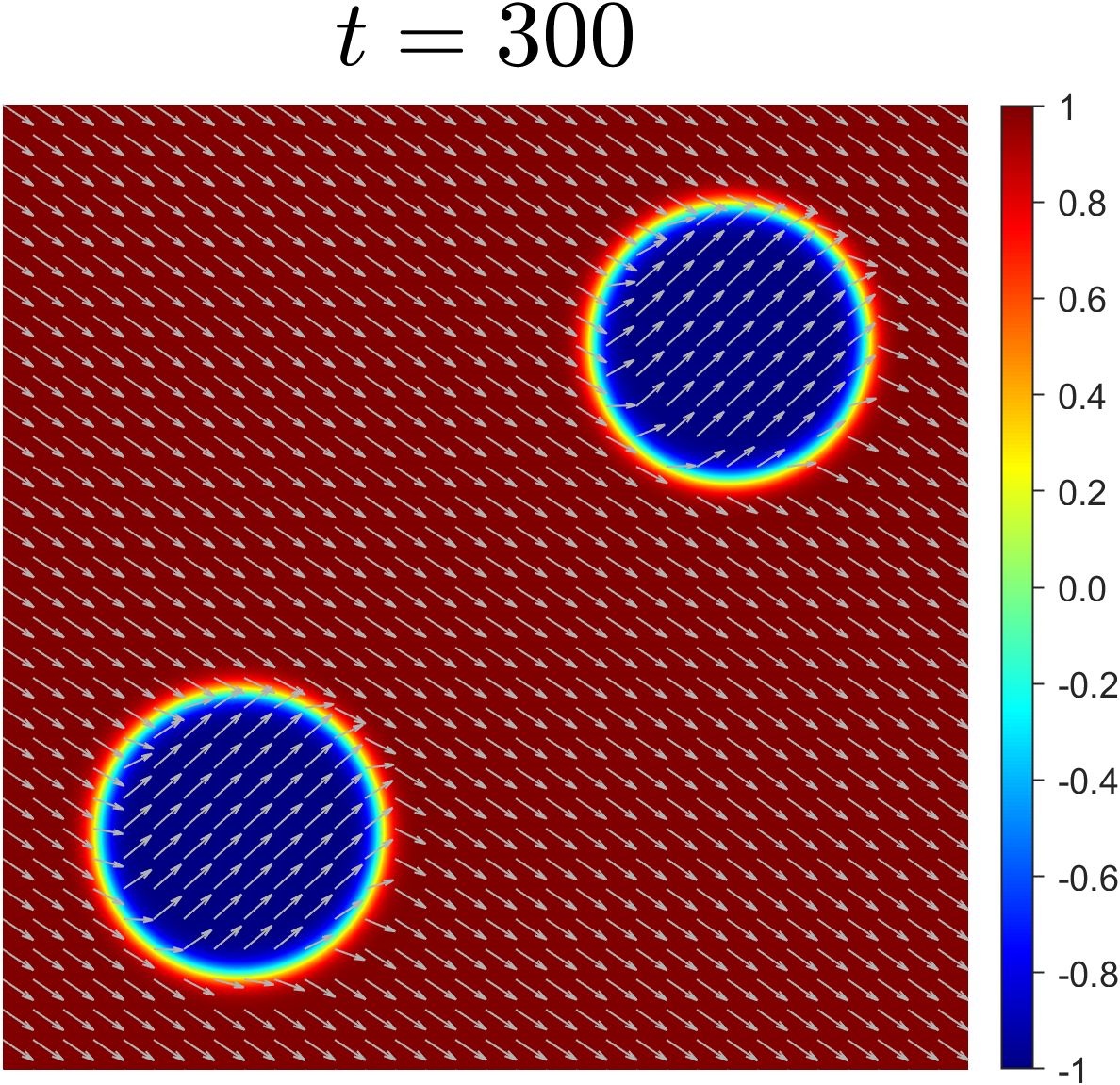}}\quad
	\subfigure{\includegraphics[width=0.30\textwidth,
		height=39mm]{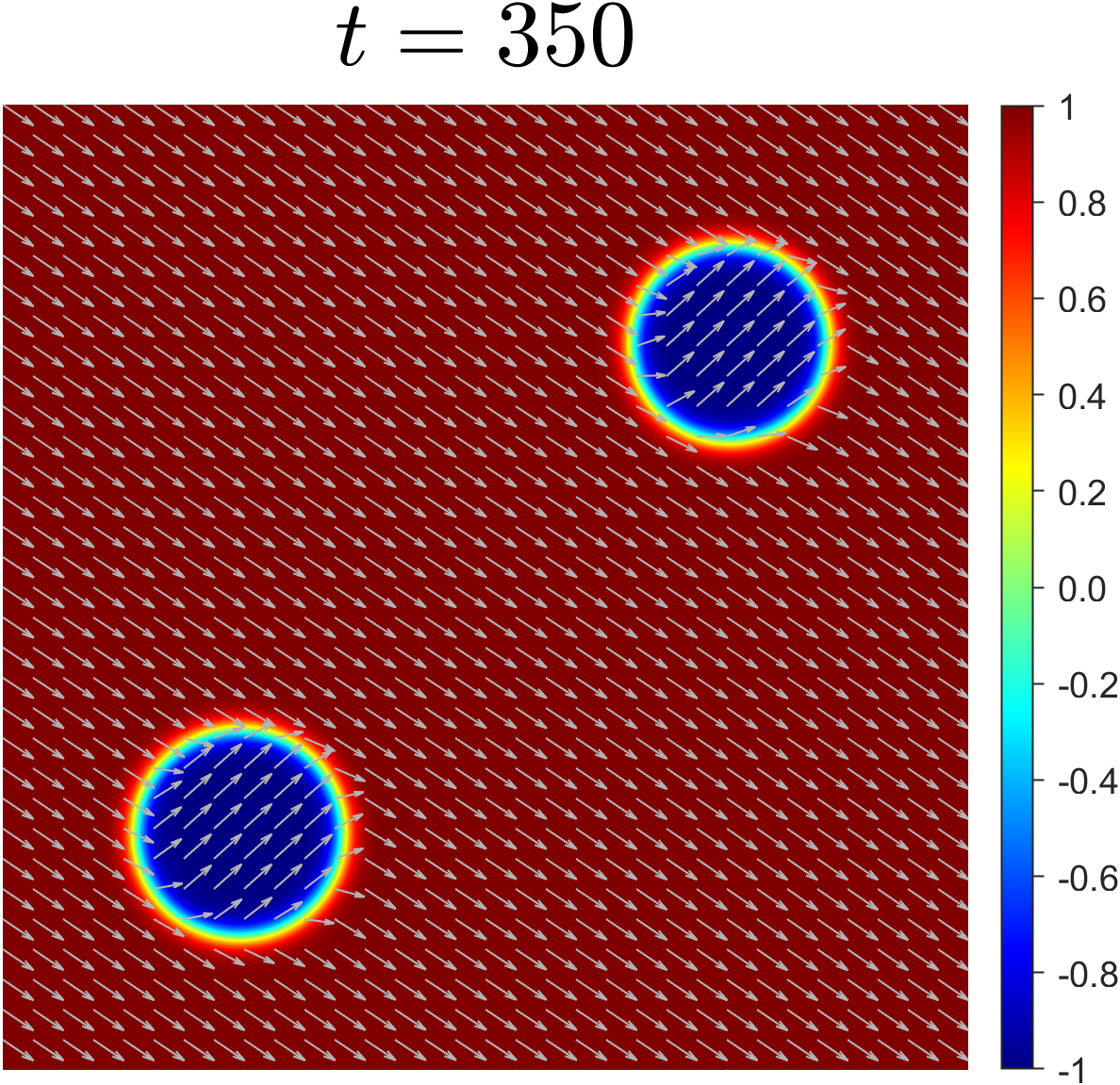}}\quad
	\subfigure{\includegraphics[width=0.30\textwidth,
		height=39mm]{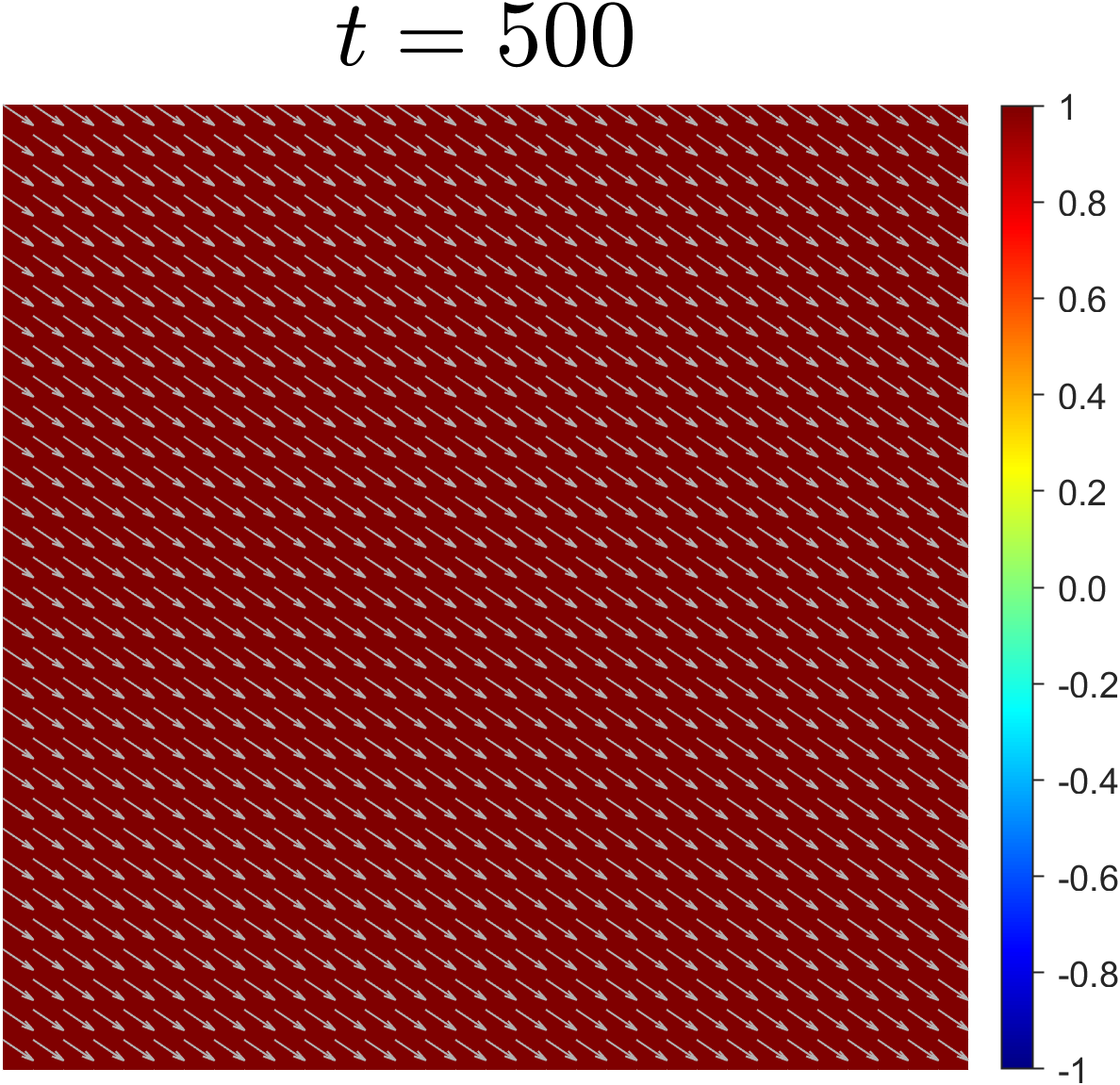}}
	\caption{Evolution of the matrix-valued field and interface at $t=0,50,150,300,350,500$. The initial field is given in \eqref{eq:4.4} with $\alpha(x,y)=\frac{\pi}{2}\sin(2\pi x)\sin(2\pi y)$.}
   \label{fig:4.13}
\end{figure}
 \begin{figure}
      \centering
	\subfigure{\includegraphics[width=0.42\textwidth,
	height=45mm]{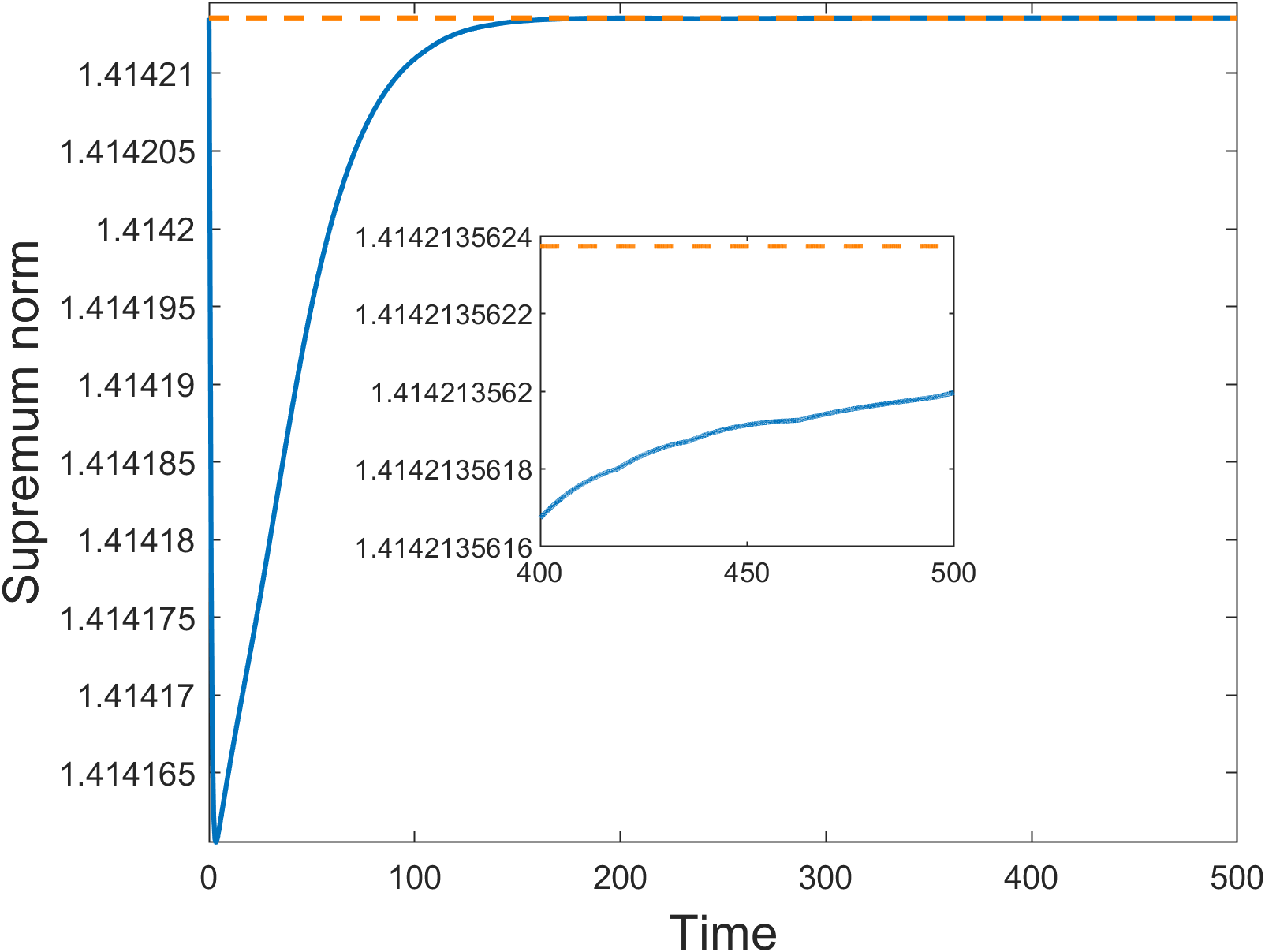}}\quad
	\subfigure{\includegraphics[width=0.42\textwidth,
	height=45mm]{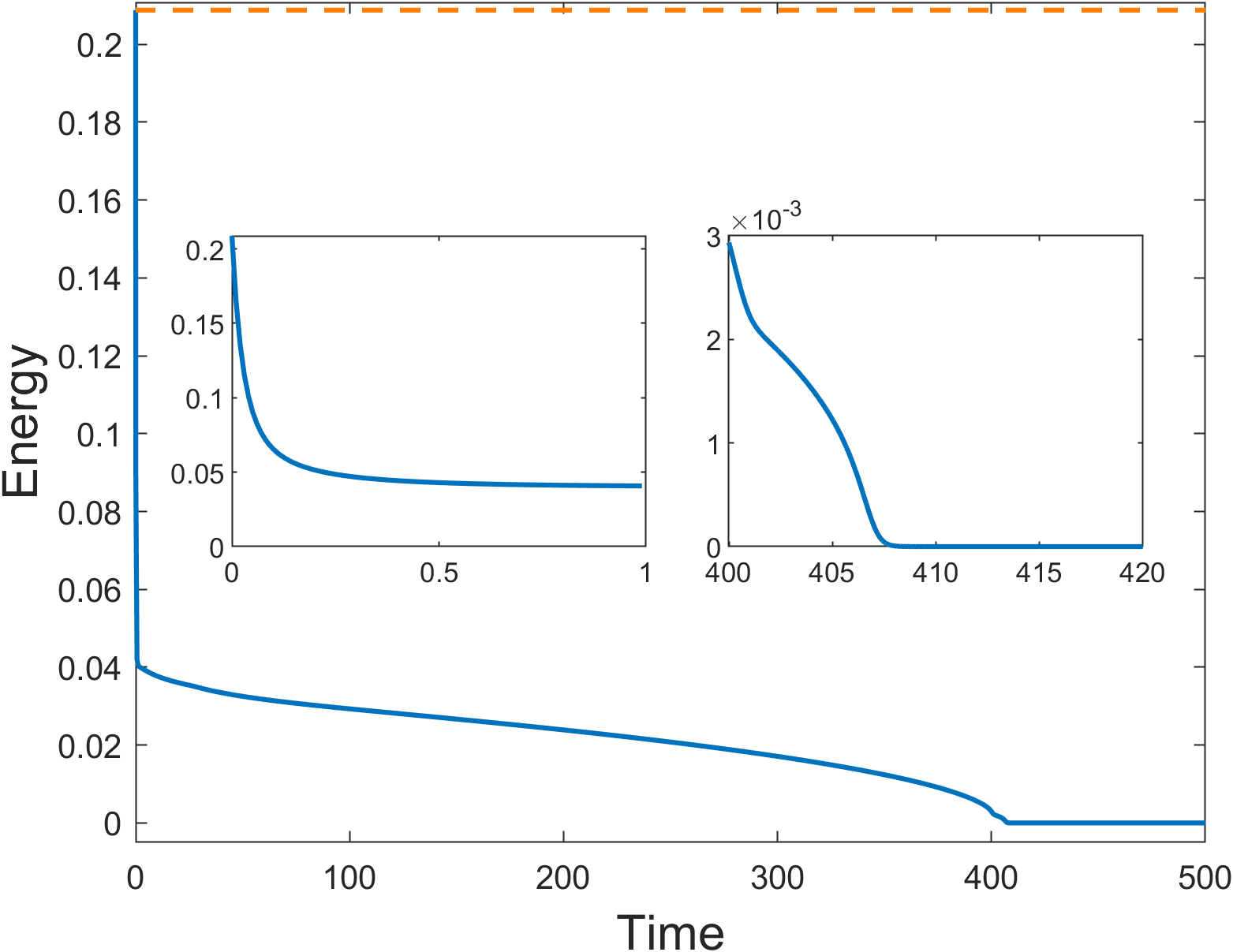}}
    \caption{Evolution of the supremum norm $\|\cdot\|_{\mathcal{X}}$ and energy with initial condition \eqref{eq:4.4} and $\alpha(x,y)=\frac{\pi}{2}\sin(2\pi x)\sin(2\pi y)$. The dashed line in the left figure is the maximum bound $\sqrt m$ while the dashed line in the right figure is the initial energy.}
    \label{fig:4.14}
\end{figure}

\subsubsection{Three-dimensional matrix-valued Allen--Cahn equation}

In the following three-dimensional examples, we consider the spatial domain $\Omega=\left[-\frac 12,\frac 12\right]^3$ and discretize $\Omega$ using $80\times 80\times 80$ mesh points for various initial conditions. Due to the specified initial conditions, the determinant of the matrix exhibits both positive and negative values. We use the zero level set to display its interface and observe the evolution. The vector field is generated by the first column vector of the matrix $U(t,x,y,z)$. We set the stabilization parameter $\kappa=8$ and perform a pointwise projection of the discretized initial matrix during the numerical computation, as \cite[Lemma 1.1]{osting2020diffusion}  shows.

{\bf Example 5.} In this example, we numerically simulate the evolution of a three-dimensional ring, governed by the matrix-valued Allen--Cahn equation \eqref{eq:mac} with $\varepsilon=0.01$. The initial condition is given by
\begin{equation}\label{eq:4.5}
     \begin{aligned}
	U^0(x,y,z)=
       \begin{cases}
		\mathcal{P}\begin{bmatrix}
		\frac{1}{2}\cos\alpha&\frac{\sqrt{6}}{2}\cos\alpha&-\frac{\sqrt{2}}{2}\sin\alpha\\
		\frac{1}{2}\sin\alpha&\frac{\sqrt{6}}{2}\sin\alpha&\frac{\sqrt{2}}{2}\cos\alpha\\
            \frac{\sqrt{3}}{2}&-\frac{\sqrt{2}}{2}&0
		\end{bmatrix}\quad \mbox{if}~$(x,y,z)$~\mbox{satisfy condition A},\\
  \\
		\mathcal{P}\begin{bmatrix}
		-\frac{1}{2}\cos\alpha&\frac{\sqrt{6}}{2}\cos\alpha&\frac{\sqrt{2}}{2}\sin\alpha\\
		-\frac{1}{2}\sin\alpha&\frac{\sqrt{6}}{2}\sin\alpha&-\frac{\sqrt{2}}{2}\cos\alpha\\
            \frac{\sqrt{3}}{2}&\frac{\sqrt{2}}{2}&0
		\end{bmatrix}\quad\text{otherwise},\\
	\end{cases}
     \end{aligned}
\end{equation}
where $\alpha=\alpha(x,y,z)=2\pi x(y+z)$, condition A is $\left(0.2-\sqrt{x^2+y^2}\right)^2+z^2< 0.15^2$, and $\mathcal{P}$ is the pointwise projection.

We use the ETDRK 2 scheme with time step $\tau=0.1$ and plot the results with the initial condition \eqref{eq:4.5} in Figures \ref{fig:4.15} and \ref{fig:4.16}. We observe from Figure \ref{fig:4.15} that the initial circular region with a value of $1$ for the determinant eventually evolves into an ellipsoid eventually disappearing, and the initial matrix field evolves from a disordered state to a uniform matrix field.

In Figure \ref{fig:4.17}, we plot the evolution of the original energy and the supremum norm with initial matrix \eqref{eq:4.5}. We find that the original energy decreases and the supremum norm does not exceed $\sqrt{3}$.

\begin{figure}
	\begin{center}
		\subfigure{\includegraphics[width=0.31\textwidth,
			height=48mm]{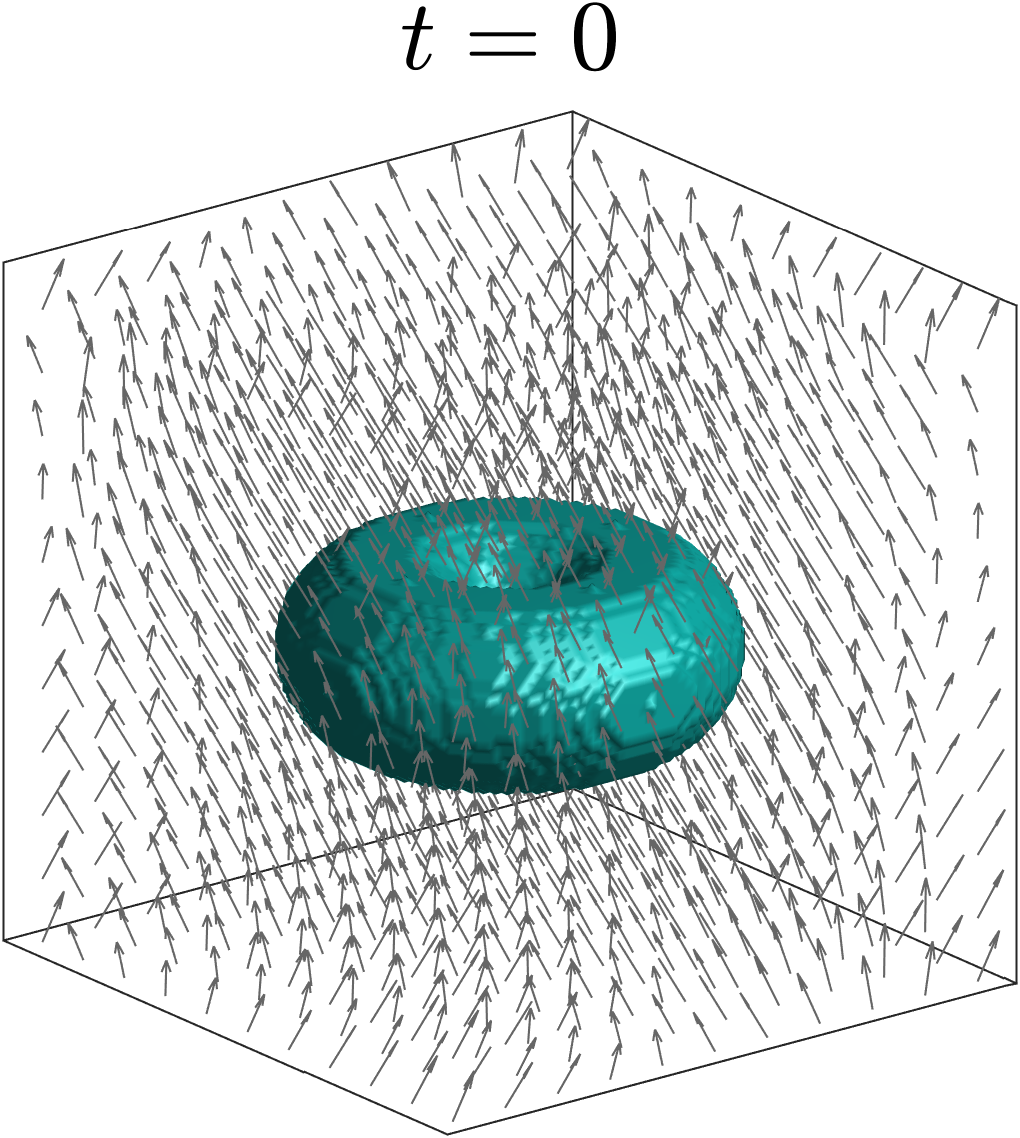}}\quad
		\subfigure{\includegraphics[width=0.31\textwidth,
			height=48mm]{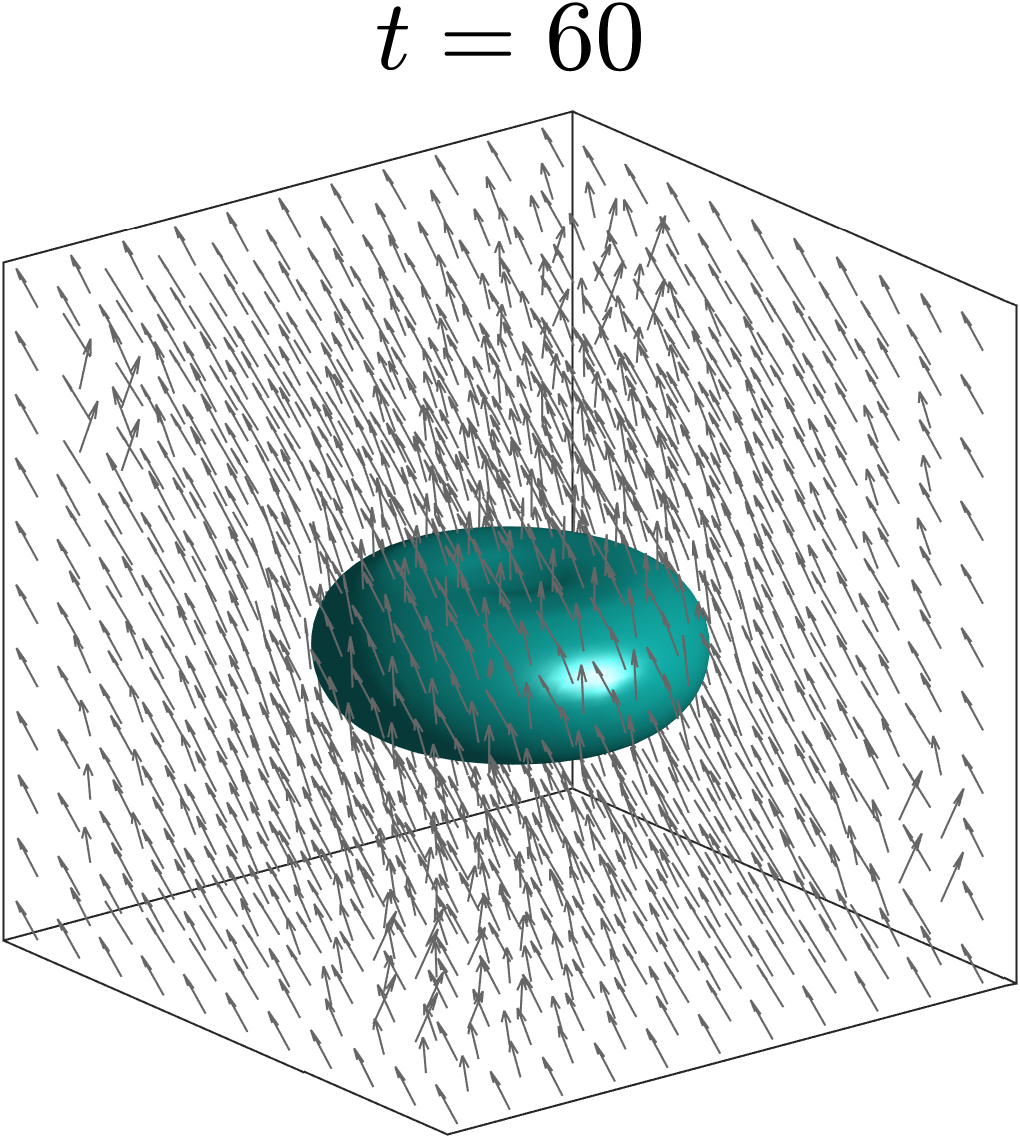}}\quad
		\subfigure{\includegraphics[width=0.31\textwidth,
			height=48mm]{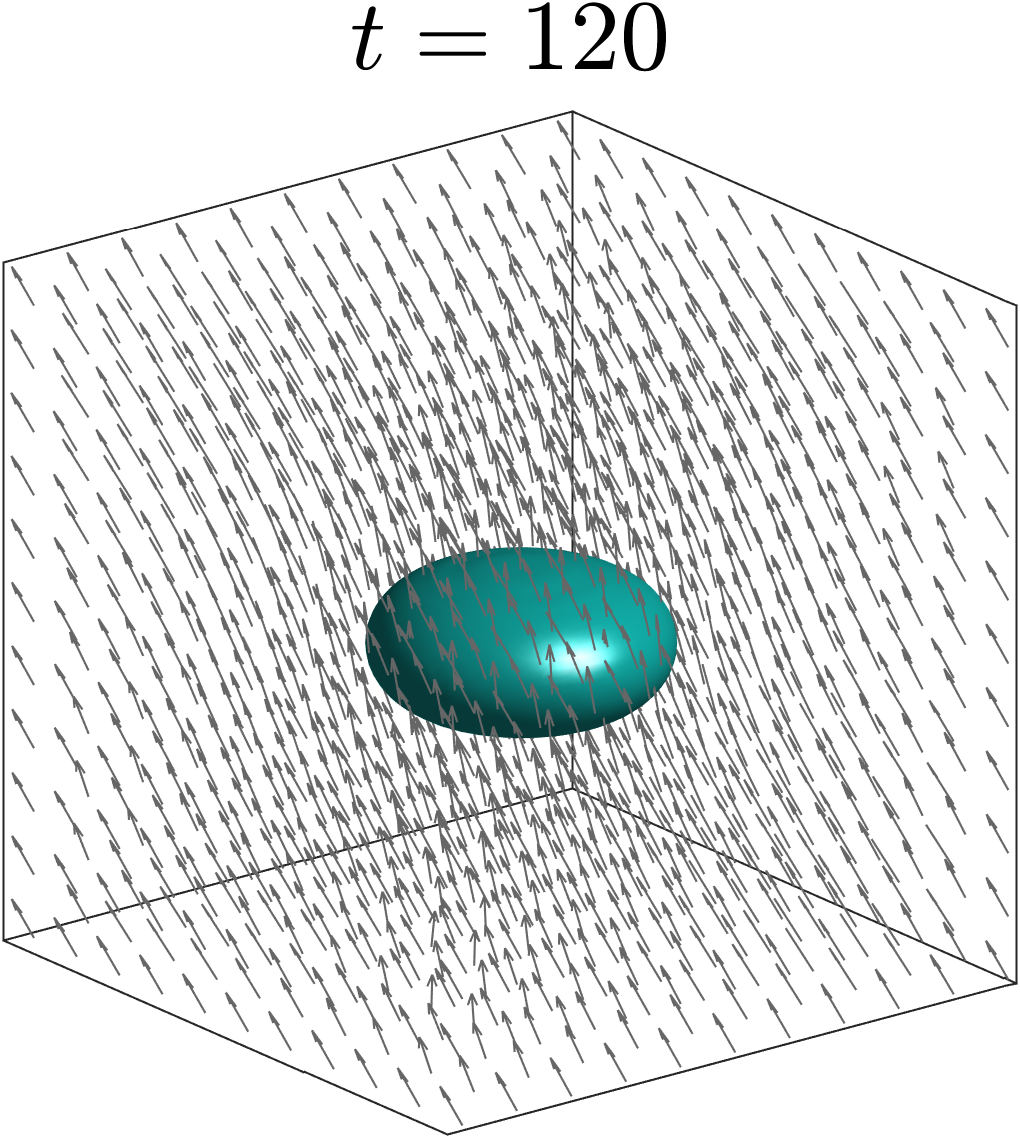}}\quad
		\subfigure{\includegraphics[width=0.31\textwidth,
			height=48mm]{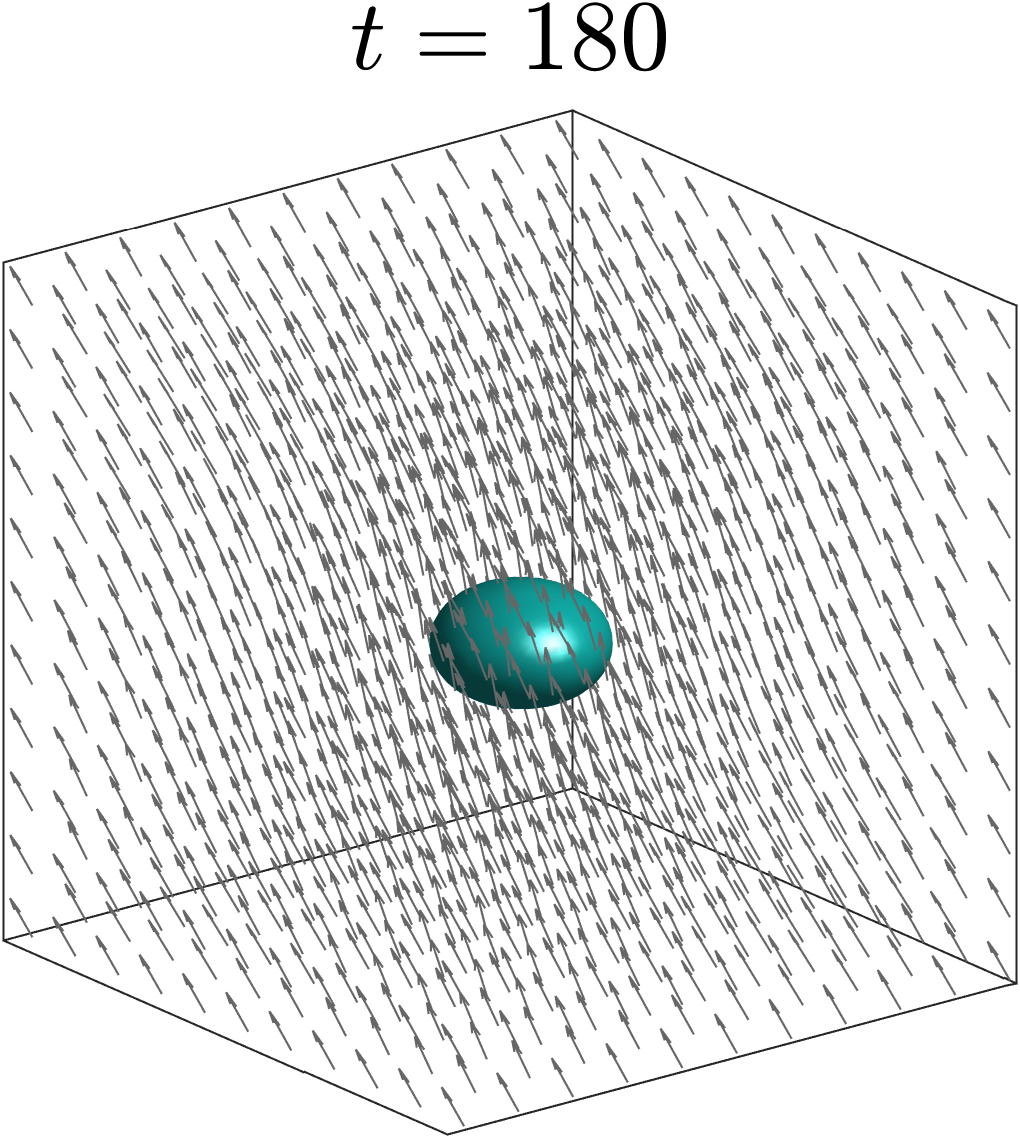}}\quad
		\subfigure{\includegraphics[width=0.31\textwidth,
			height=48mm]{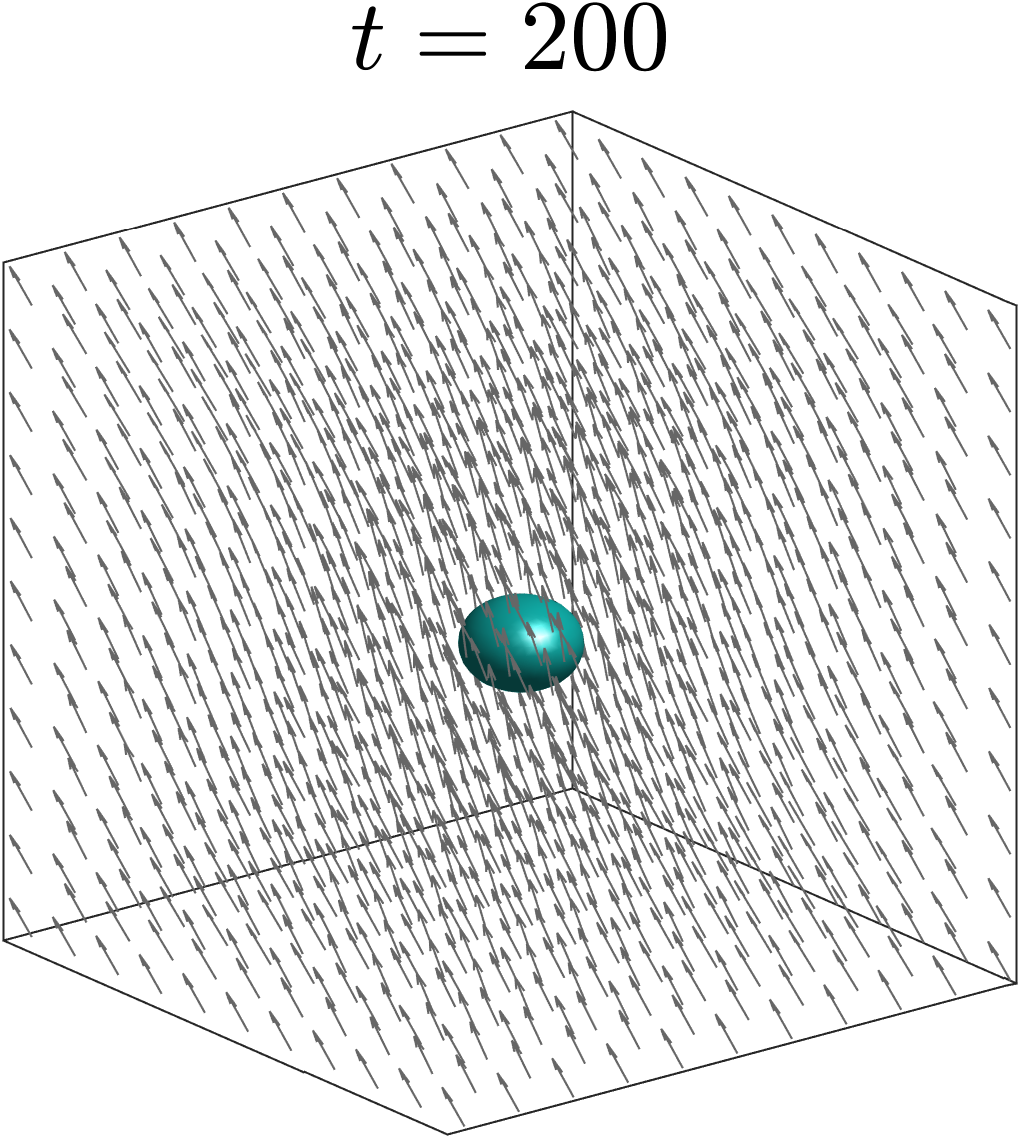}}\quad
		\subfigure{\includegraphics[width=0.31\textwidth,
			height=48mm]{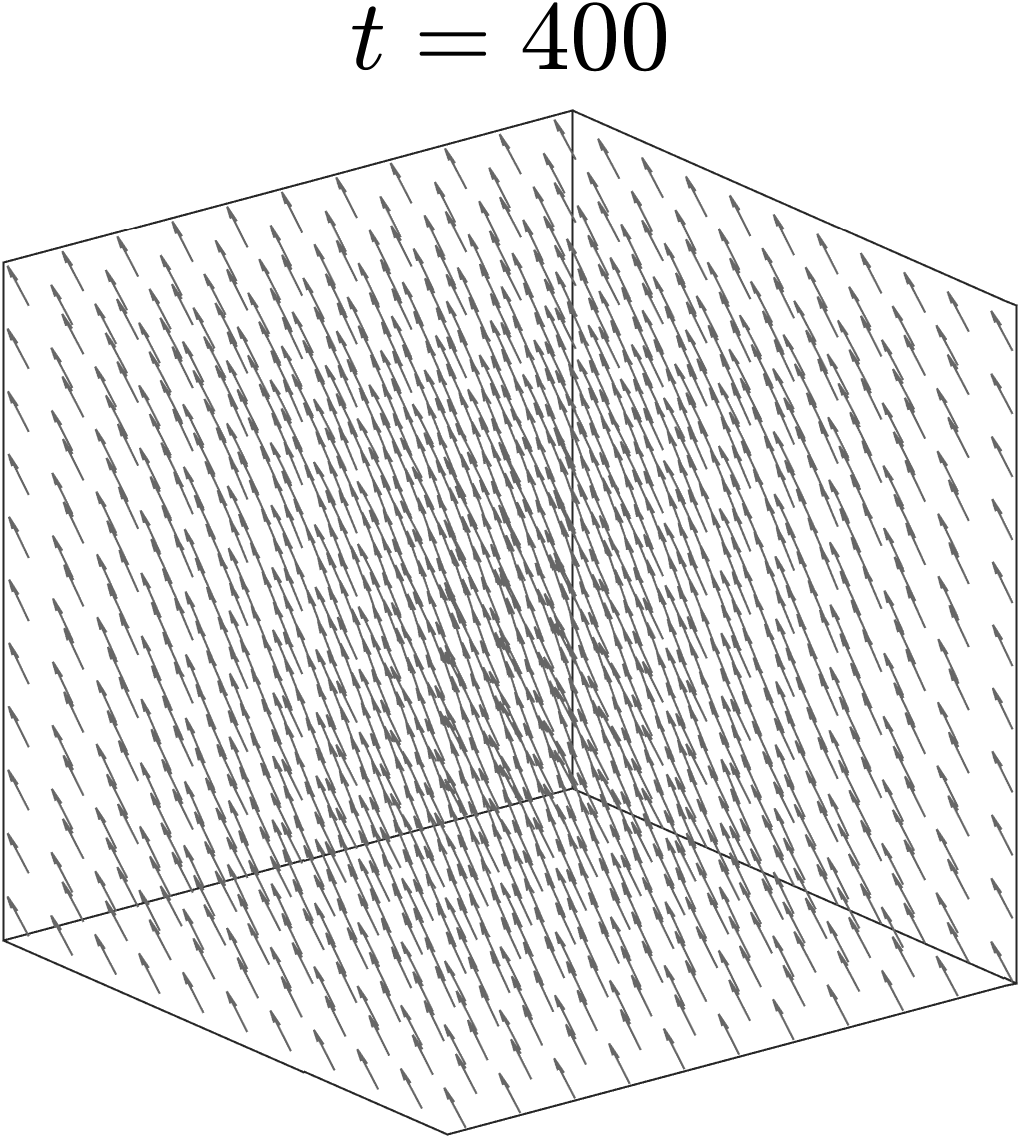}}
	\caption{Evolution of the matrix-valued field and interface at $t=0,60,120,180,200,400$. The initial field is given in \eqref{eq:4.5} with $\alpha(x,y,z)=2\pi x(y+z)$.}
 \label{fig:4.15}
	\end{center}
\end{figure}

\begin{figure}
	\begin{center}
		\subfigure{\includegraphics[width=0.28\textwidth,
			height=40mm]{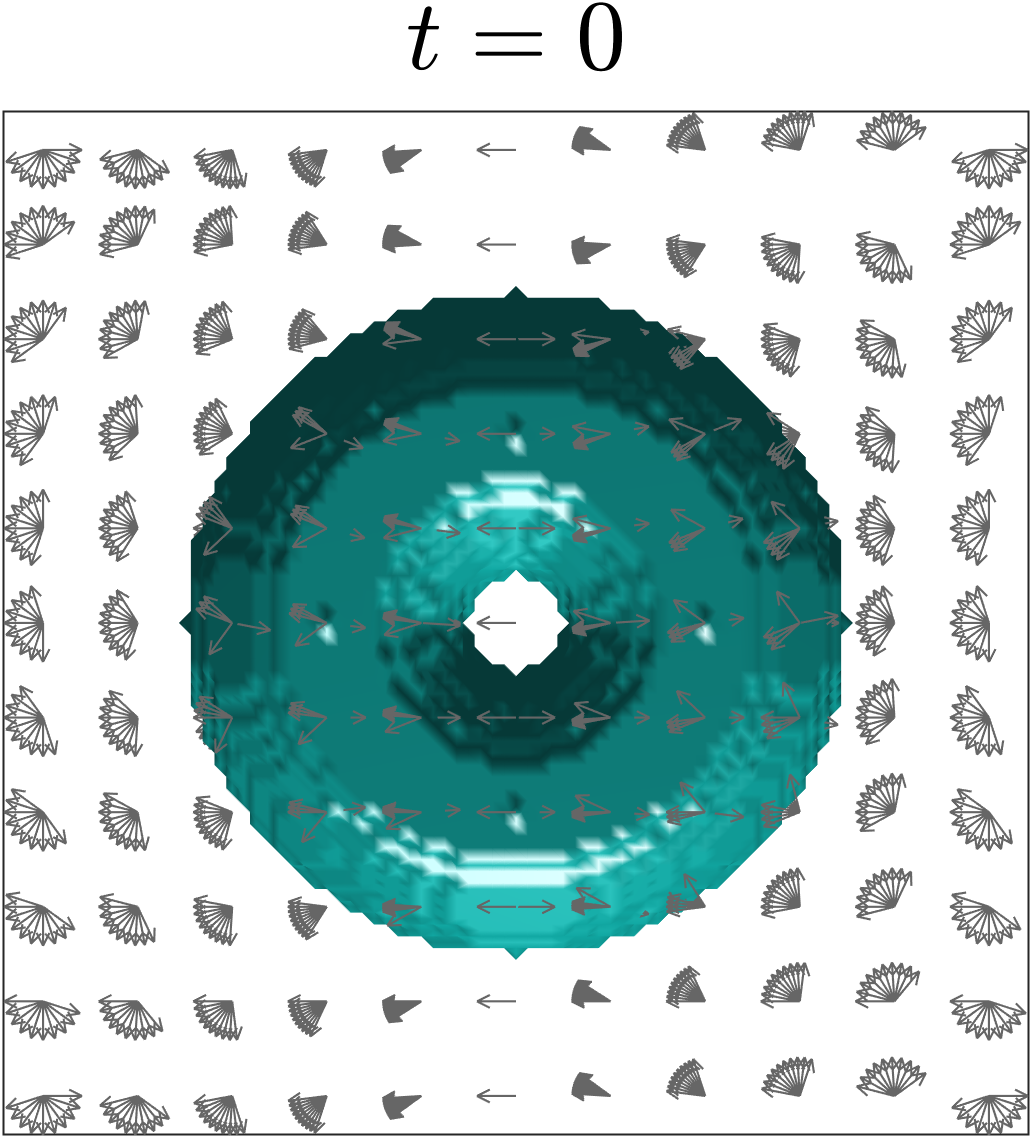}}\quad
		\subfigure{\includegraphics[width=0.28\textwidth,
			height=40mm]{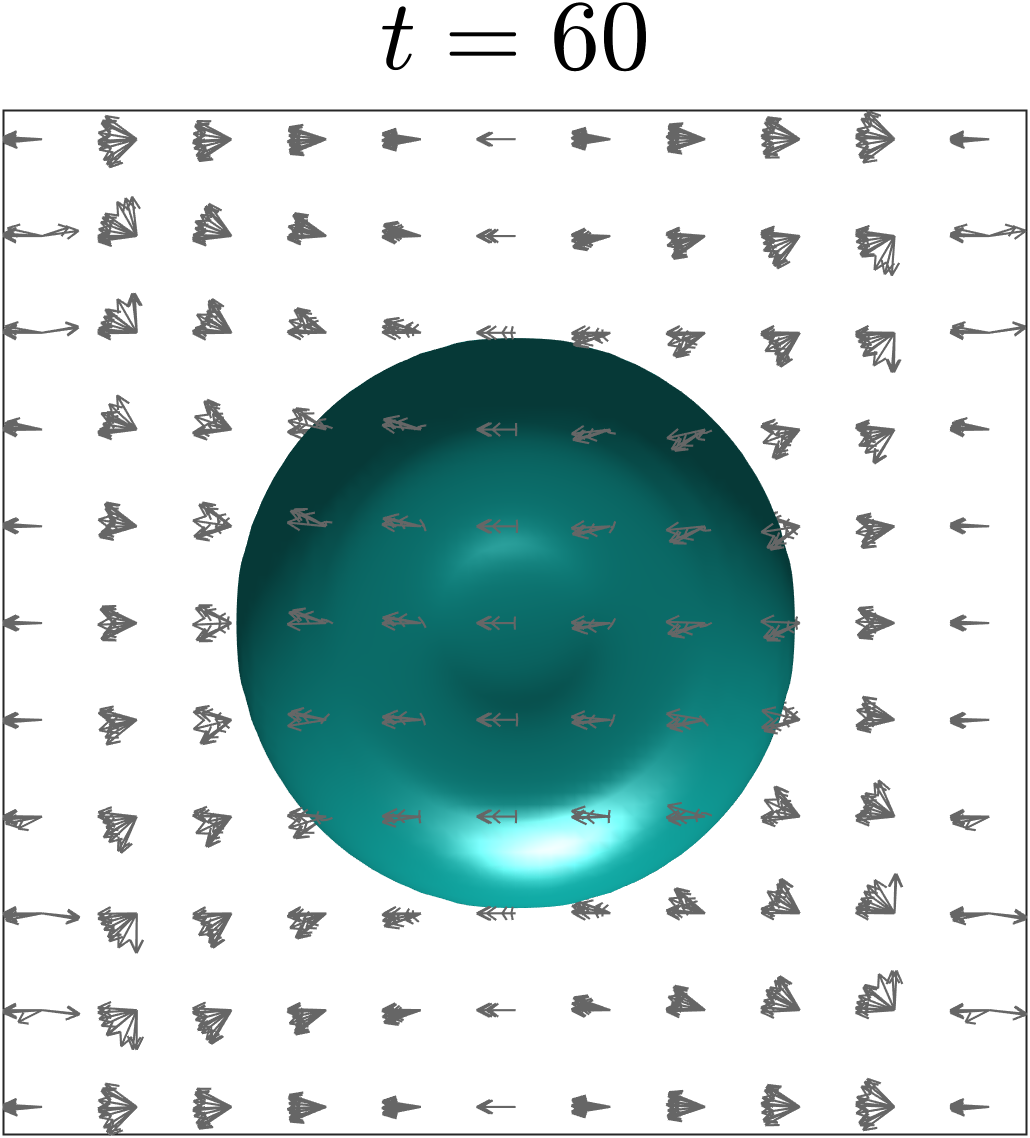}}\quad
    	\subfigure{\includegraphics[width=0.28\textwidth,
			height=40mm]{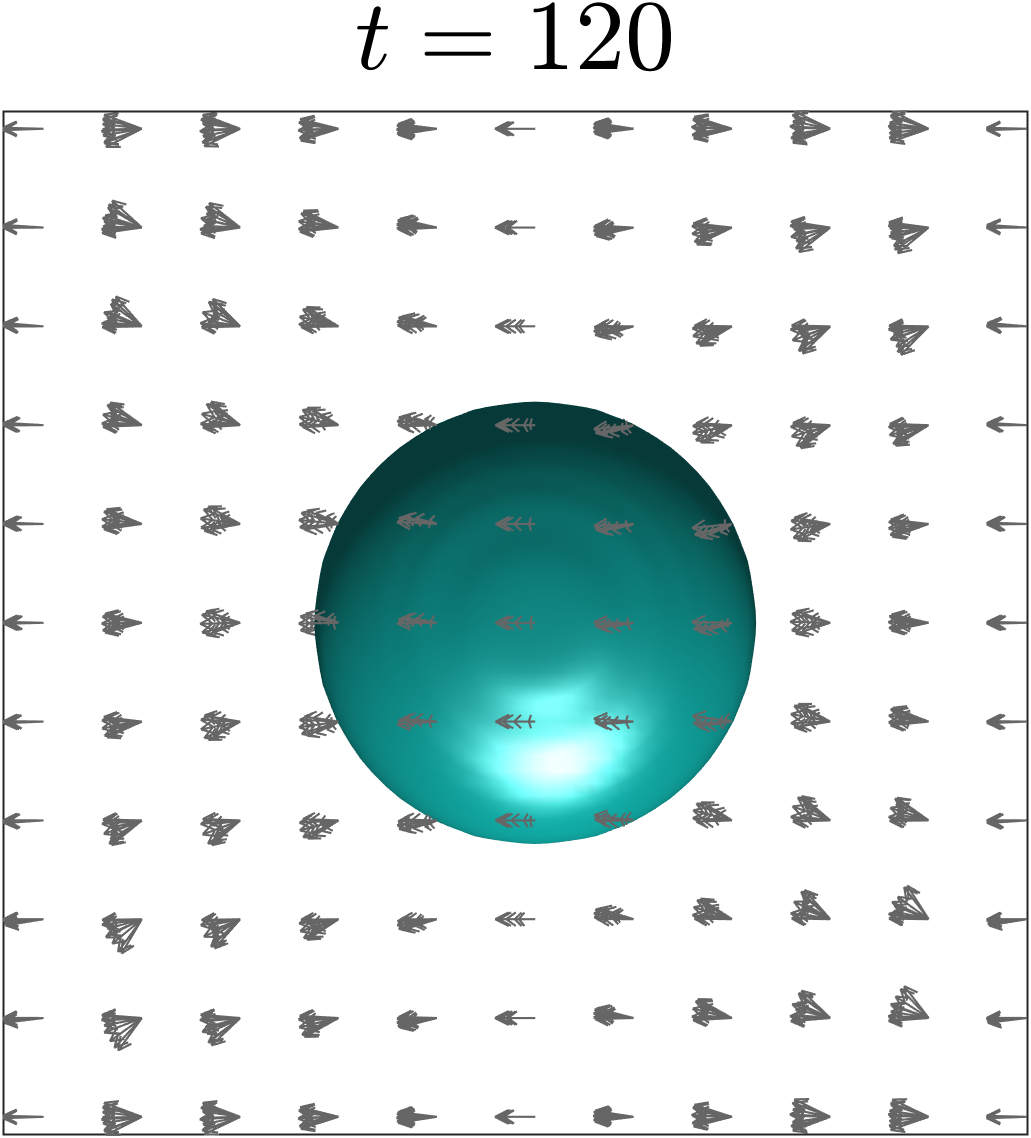}}\quad
		\subfigure{\includegraphics[width=0.28\textwidth,
			height=40mm]{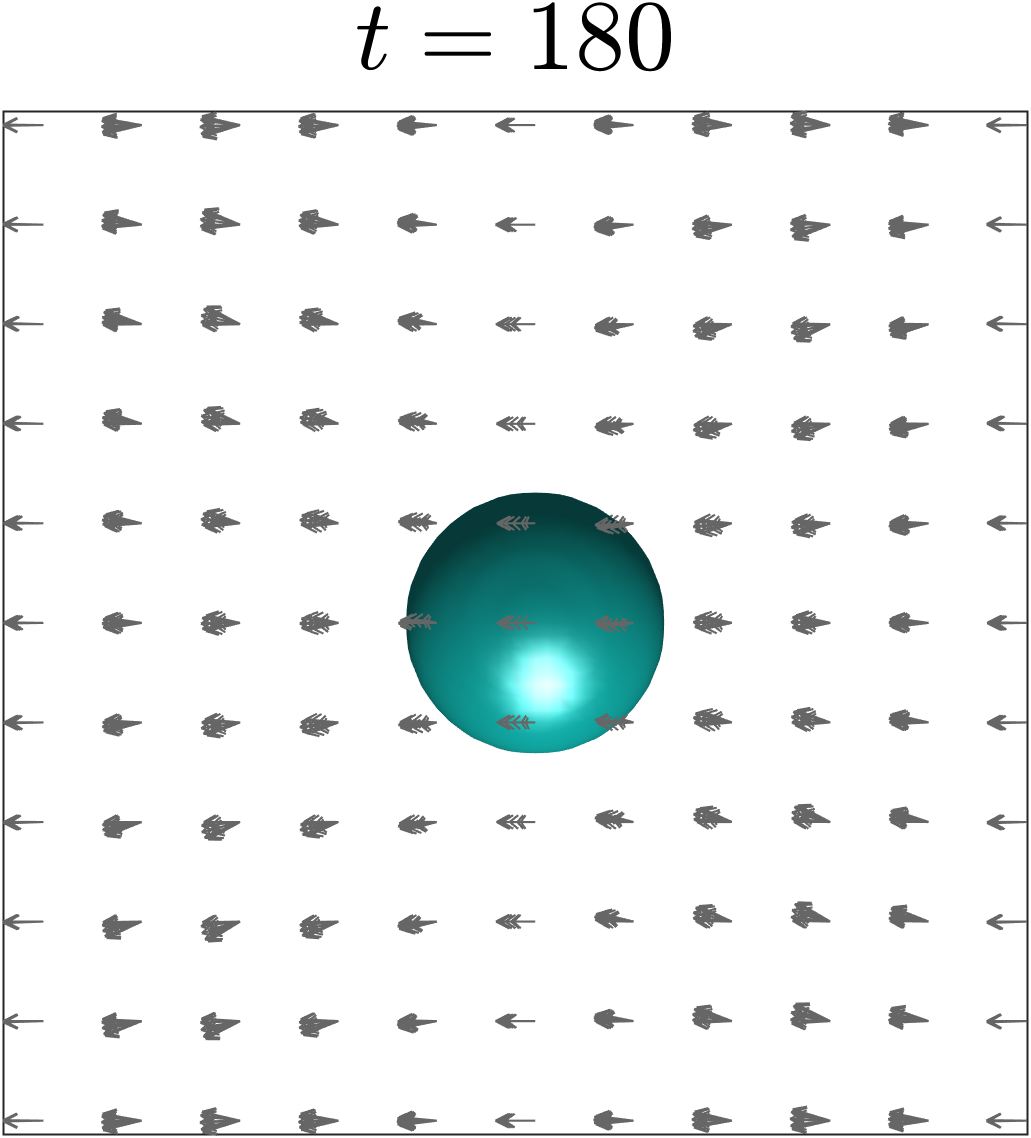}}\quad
    	\subfigure{\includegraphics[width=0.28\textwidth,
			height=40mm]{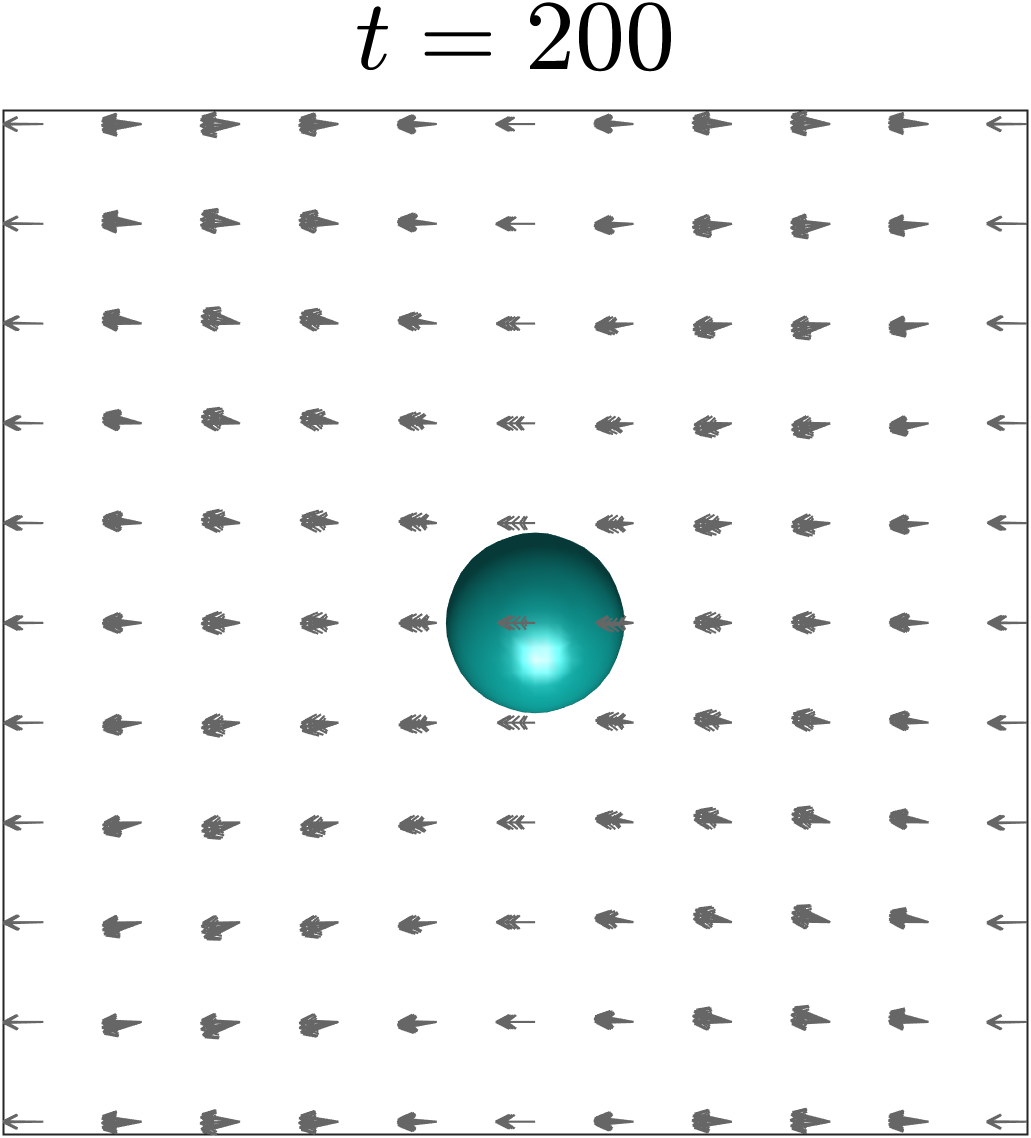}}\quad
		\subfigure{\includegraphics[width=0.28\textwidth,
			height=40mm]{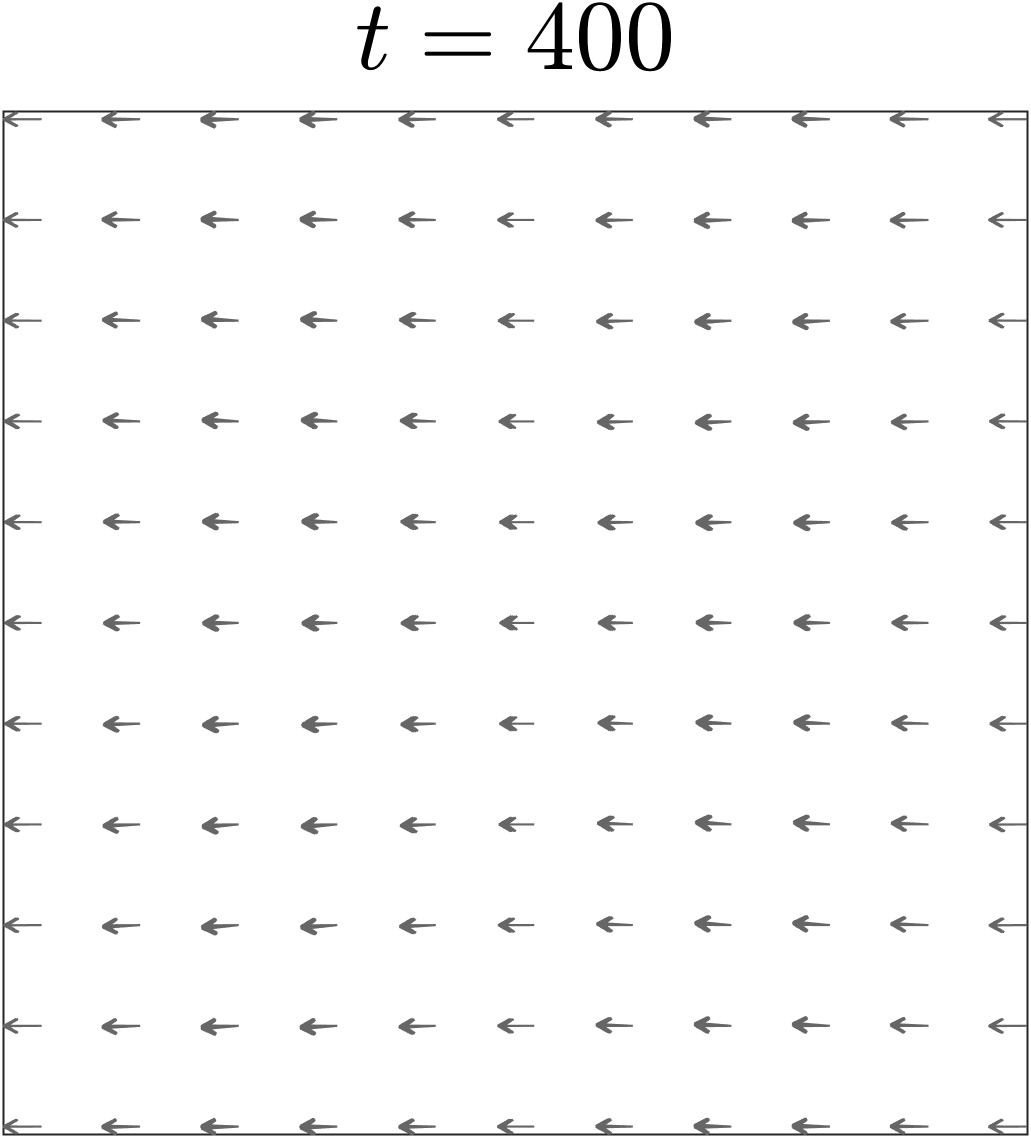}}
	\caption{A vertical view of the evolution of matrix-valued field and interface at $t=0,60,120,180,200,400$. The initial field is given in \eqref{eq:4.5} with $\alpha(x,y,z)=2\pi x(y+z)$.}
 \label{fig:4.16}
	\end{center}
\end{figure}

 \begin{figure}
	\begin{center}
		\subfigure{\includegraphics[width=0.42\textwidth,
			height=45mm]{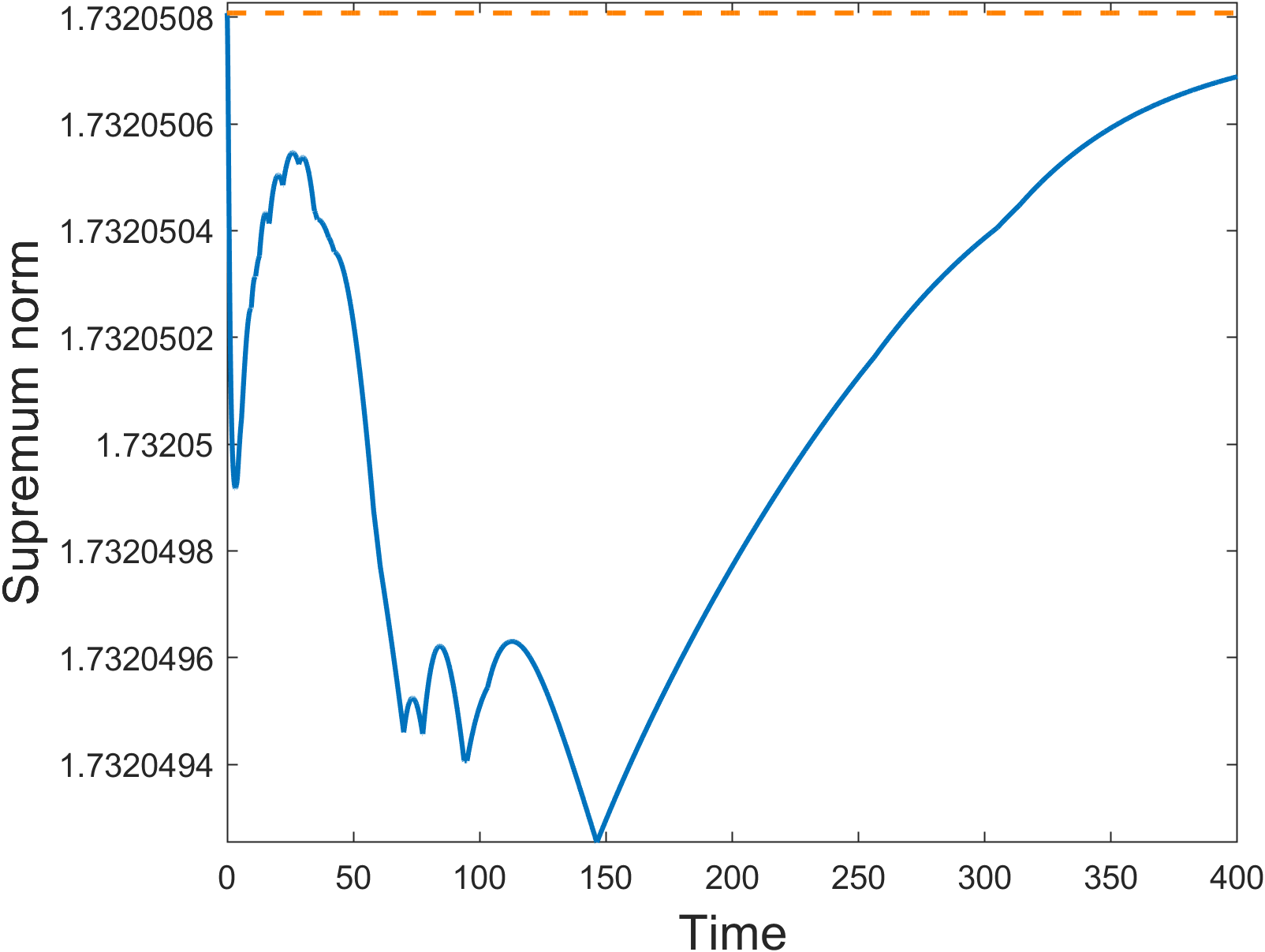}}\quad
		\subfigure{\includegraphics[width=0.42\textwidth,
			height=45mm]{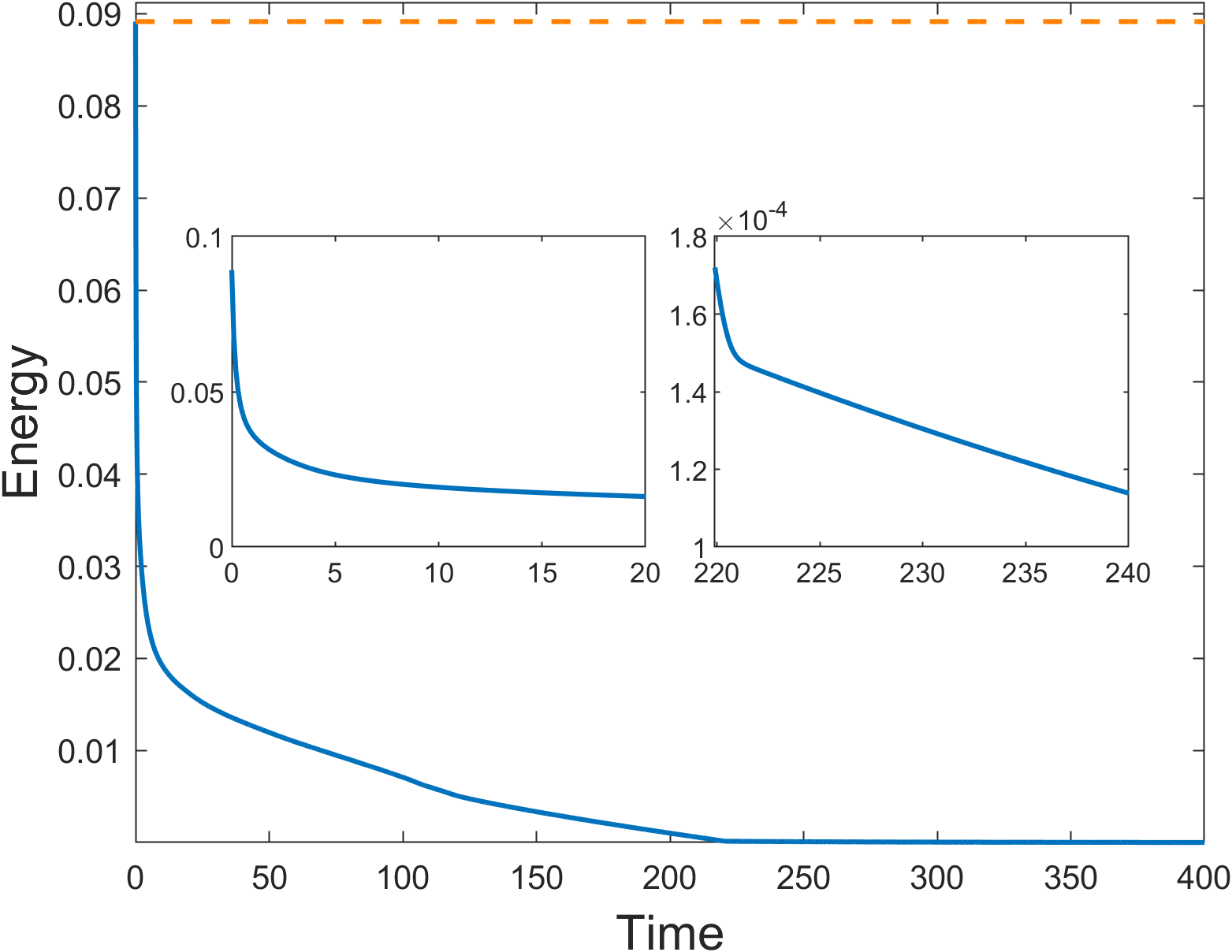}}
		\caption{Evolution of the supremum norm $\|\cdot\|_{\mathcal{X}}$ and energy with initial condition \eqref{eq:4.5} and $\alpha(x,y,z)=2\pi x(y+z)$. The dashed line in the left figure is the maximum bound $\sqrt m$ while the dashed line in the right figure is the initial energy.}
  \label{fig:4.17}
	\end{center}
\end{figure}

{\bf Example 6.} In this example, we study the evolution of the three interlocking rings, noted as condition B: $\left(0.15-\left(x^2+(y+0.21)^2\right)^{\frac{1}{2}}\right)^2+z^2<r^2$ or $\left(0.2-\left(y^2+z^2\right)^{\frac{1}{2}}\right)^2+x^2<r^2$ or $\left(0.15-\left(x^2+(y-0.21)^2\right)^{\frac{1}{2}}\right)^2+z^2<r^2$. We observe the effect of the value of the radius of the rings on the evolutionary process of the matrix-valued field and interface. Here, we consider two cases with $r=0.04$ and $0.06$. The initial matrix domain is denoted as
\begin{equation}\label{eq:4.6}
     \begin{aligned}
	U^0(x,y,z)=
       \begin{cases}
		\mathcal{P}\begin{bmatrix}
		\frac{1}{2}\cos\alpha&\frac{\sqrt{6}}{2}\cos\alpha&-\frac{\sqrt{2}}{2}\sin\alpha\\
		\frac{1}{2}\sin\alpha&\frac{\sqrt{6}}{2}\sin\alpha&\frac{\sqrt{2}}{2}\cos\alpha\\
            \frac{\sqrt{3}}{2}&-\frac{\sqrt{2}}{2}&0
		\end{bmatrix}\quad \mbox{if}~$(x,y,z)$~\mbox{satisfy condition B},\\
  \\
		\mathcal{P}\begin{bmatrix}
		-\frac{1}{2}\cos\alpha&\frac{\sqrt{6}}{2}\cos\alpha&\frac{\sqrt{2}}{2}\sin\alpha\\
		-\frac{1}{2}\sin\alpha&\frac{\sqrt{6}}{2}\sin\alpha&-\frac{\sqrt{2}}{2}\cos\alpha\\
            \frac{\sqrt{3}}{2}&\frac{\sqrt{2}}{2}&0
		\end{bmatrix}\quad\text{otherwise}.\\
	\end{cases}
     \end{aligned}
\end{equation}
where $\alpha=\alpha(x,y,z)=4\pi xyz$ and $\mathcal{P}$ is the pointwise projection. We take $\varepsilon=0.01$ and use the ETDRK2 scheme with time step $\tau = 0.1$ to study the evolution of three interlocking rings at two different radius.

In Figures \ref{fig:4.18} and \ref{fig:4.19}, we describe the evolution of three interlocking rings with $r=0.06$. We observe that the three rings gradually merge and finally split into two teardrop-shaped spheres, which eventually disappear. The vector field evolves from a disordered state to a uniform state. Then, we simulated the case with $r=0.04$ and plotted the results in Figures \ref{fig:4.21} and \ref{fig:4.22}, respectively. We observe that the three rings are not compatible, but contract independently until they disappear. The vector field evolves from a disordered state to a uniform state in both cases.

In Figures \ref{fig:4.20} and \ref{fig:4.23}, we plot the evolution of the original energy and the supremum norm for each of the two cases. We observe that in both cases the original energy decays with time development and that the supremum norm does not exceed $\sqrt{3}$.
\begin{figure}
	\begin{center}
		\subfigure{\includegraphics[width=0.31\textwidth,
			height=48mm]{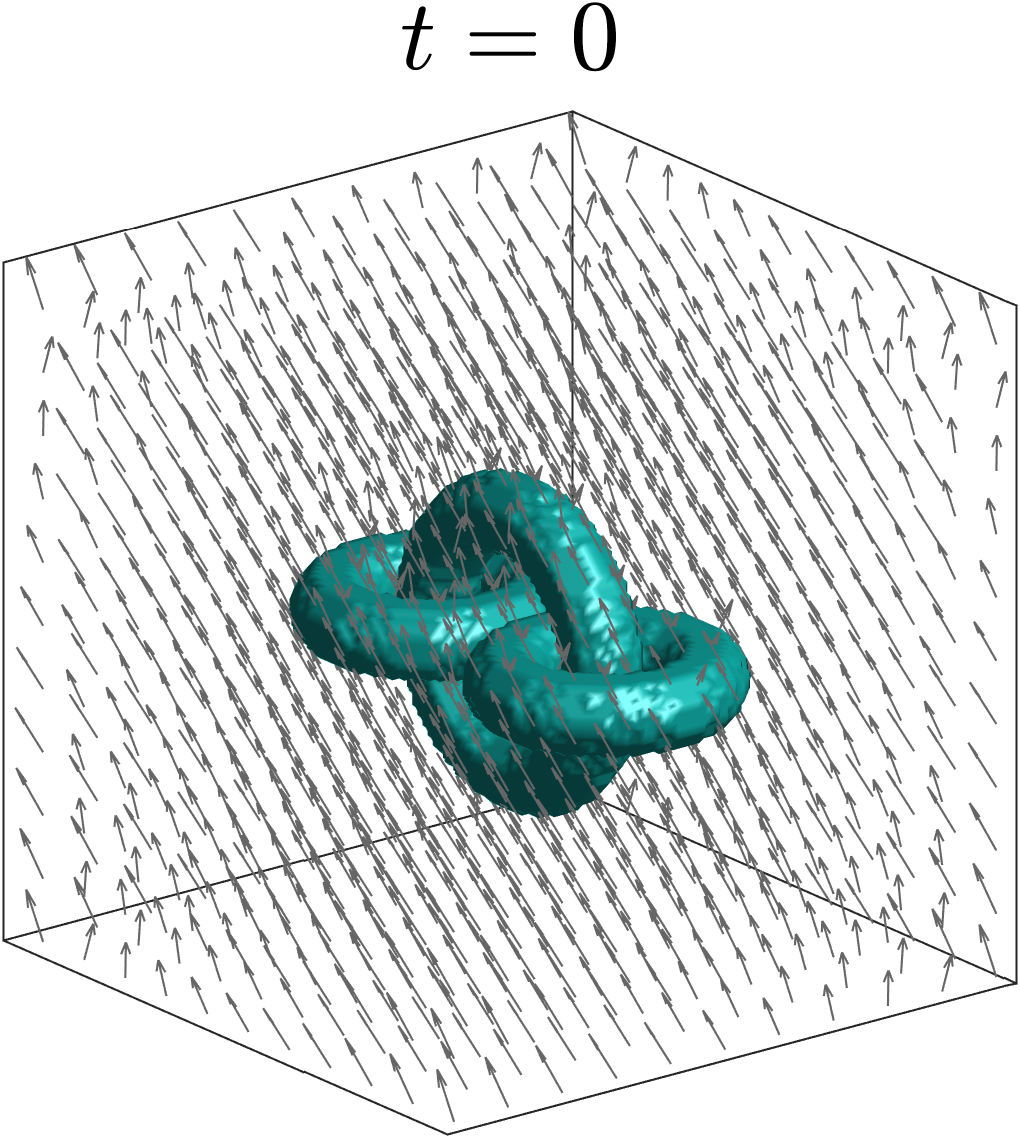}}\quad
		\subfigure{\includegraphics[width=0.31\textwidth,
			height=48mm]{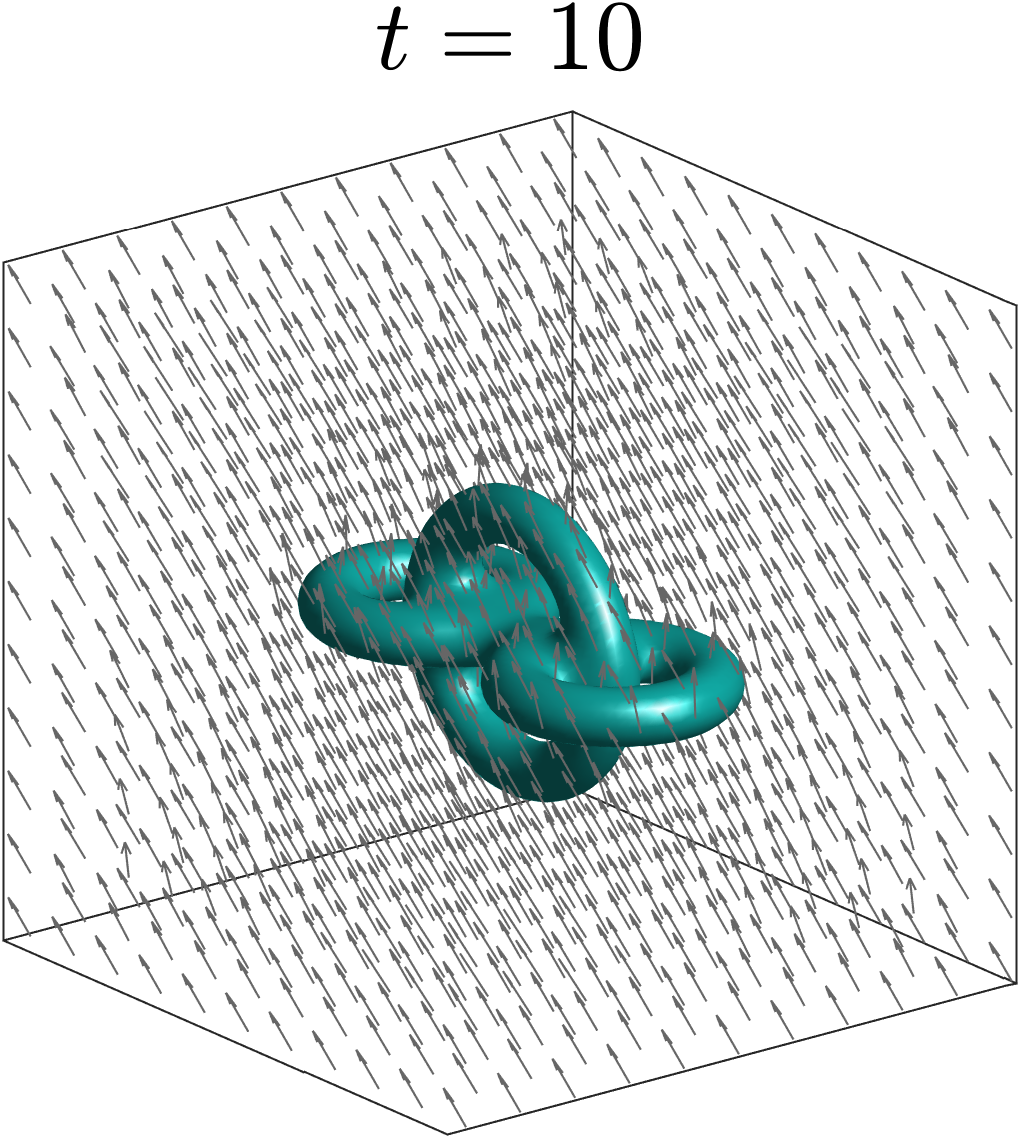}}\quad
		\subfigure{\includegraphics[width=0.31\textwidth,
			height=48mm]{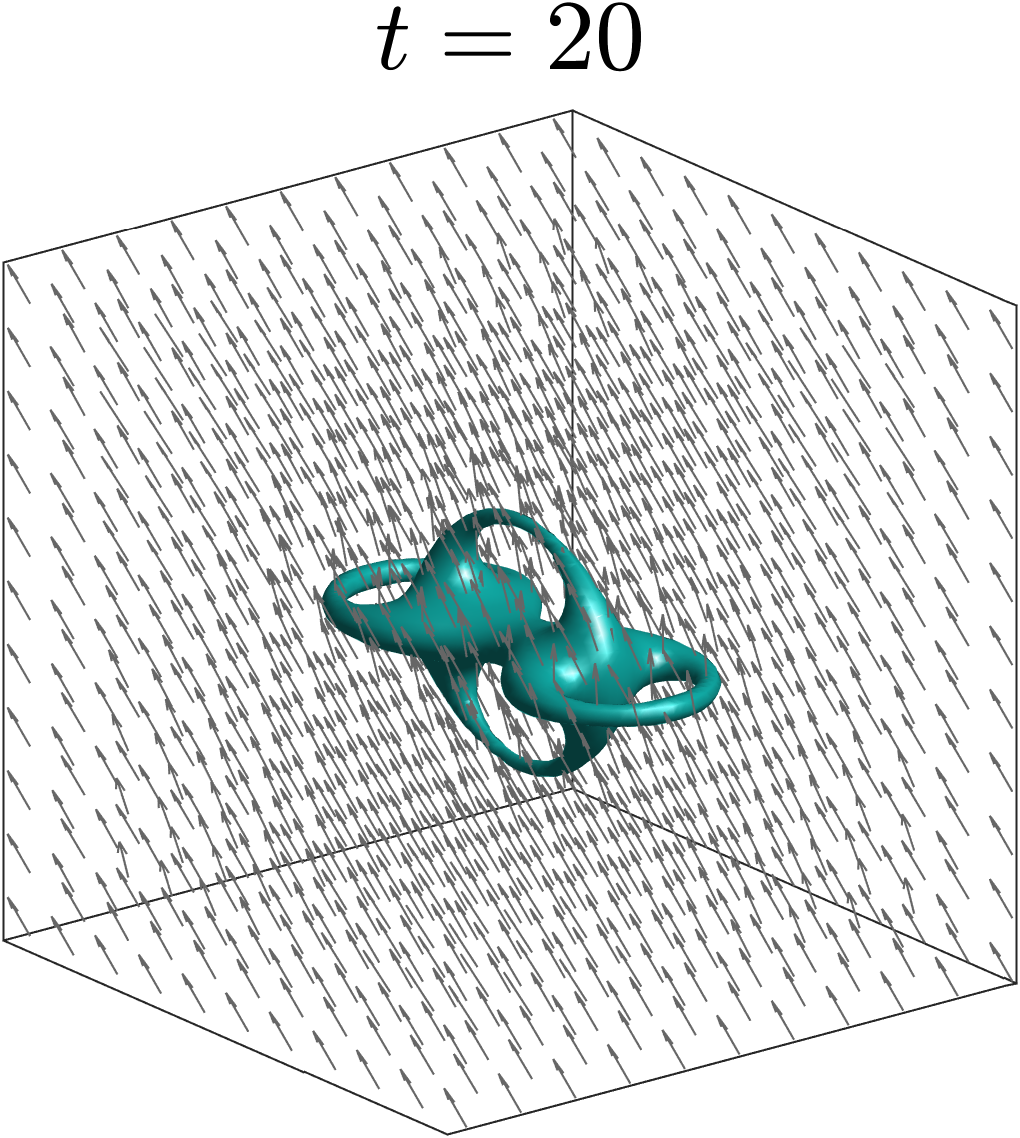}}\quad
		\subfigure{\includegraphics[width=0.31\textwidth,
			height=48mm]{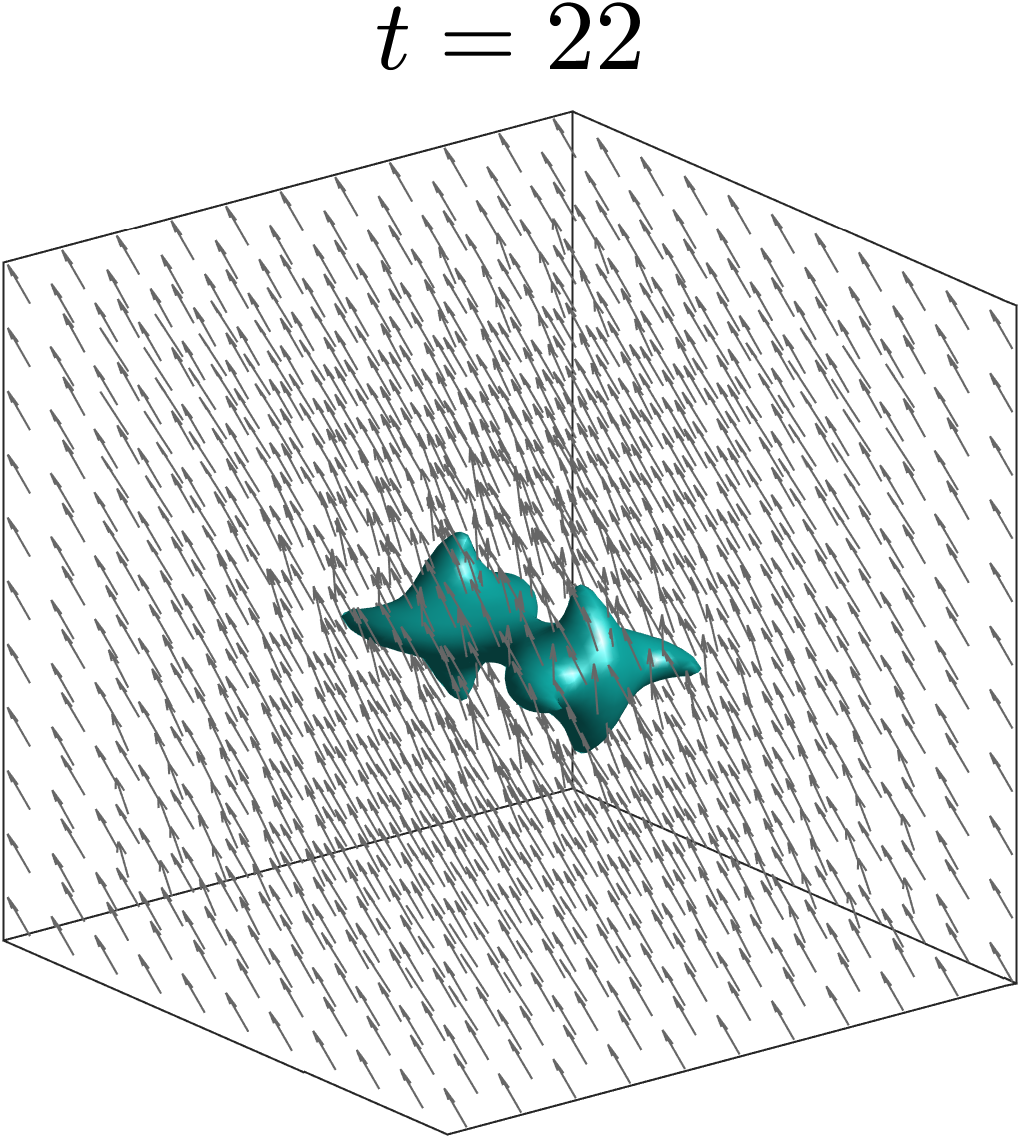}}\quad
		\subfigure{\includegraphics[width=0.31\textwidth,
			height=48mm]{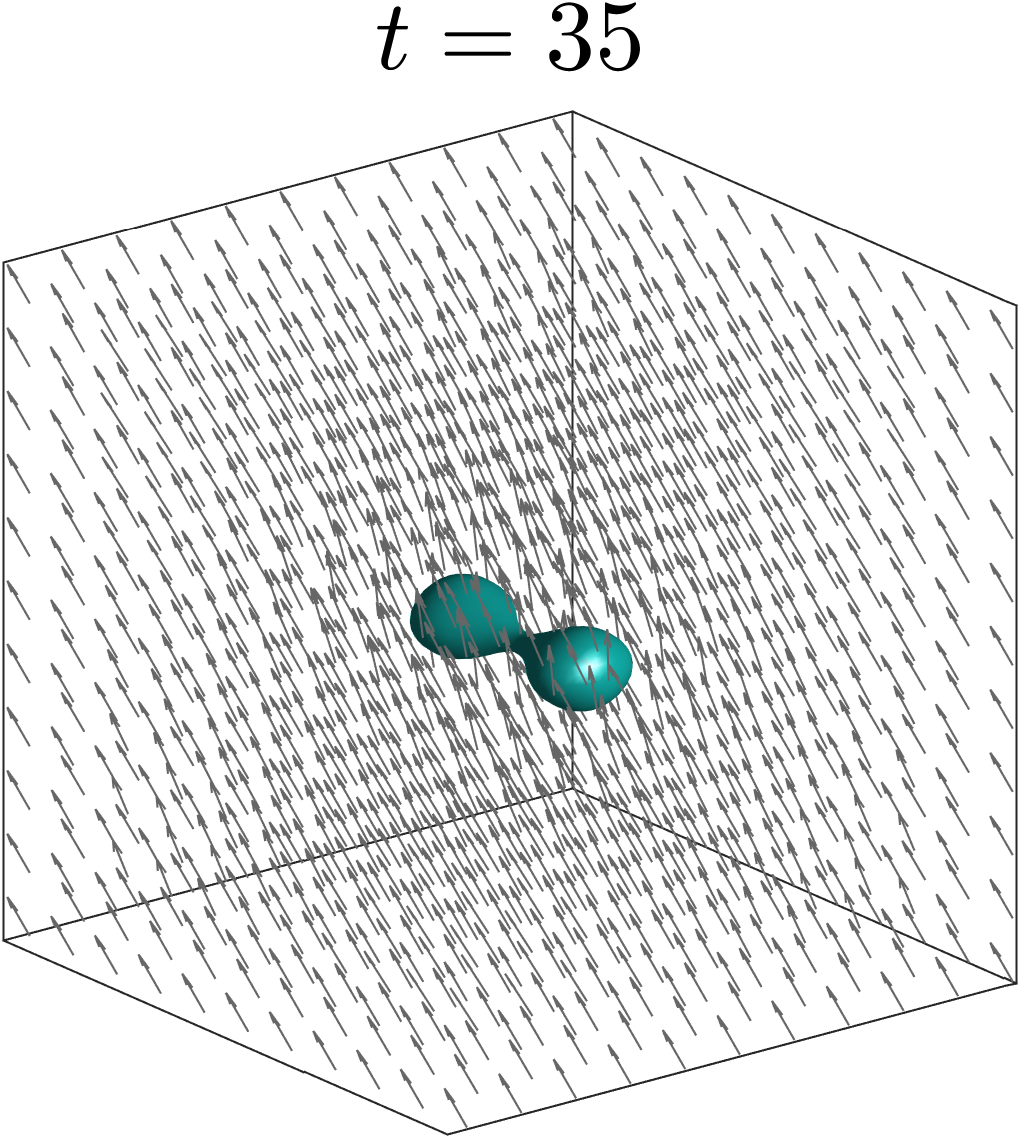}}\quad
		\subfigure{\includegraphics[width=0.31\textwidth,
			height=48mm]{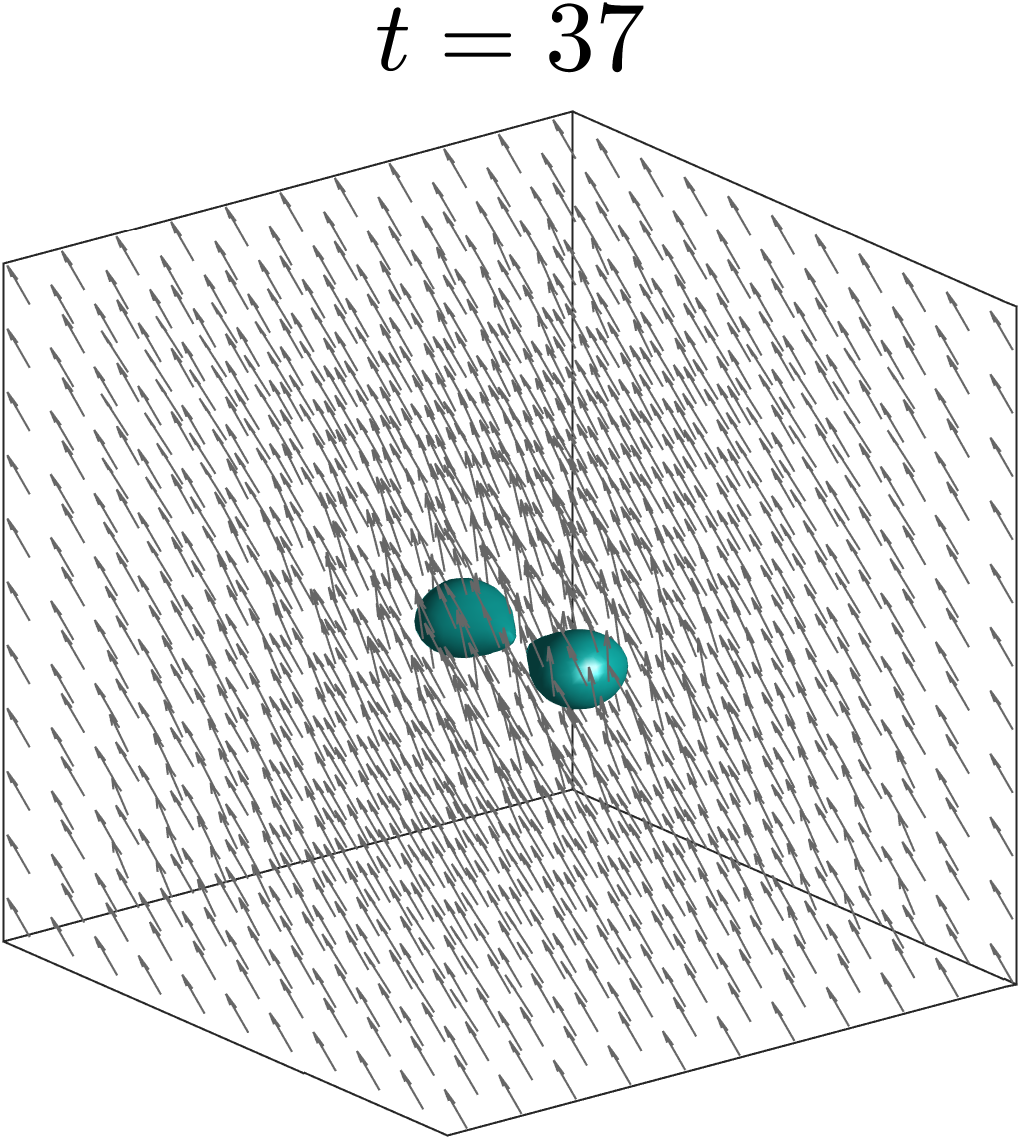}}\quad
    	\subfigure{\includegraphics[width=0.31\textwidth,
			height=48mm]{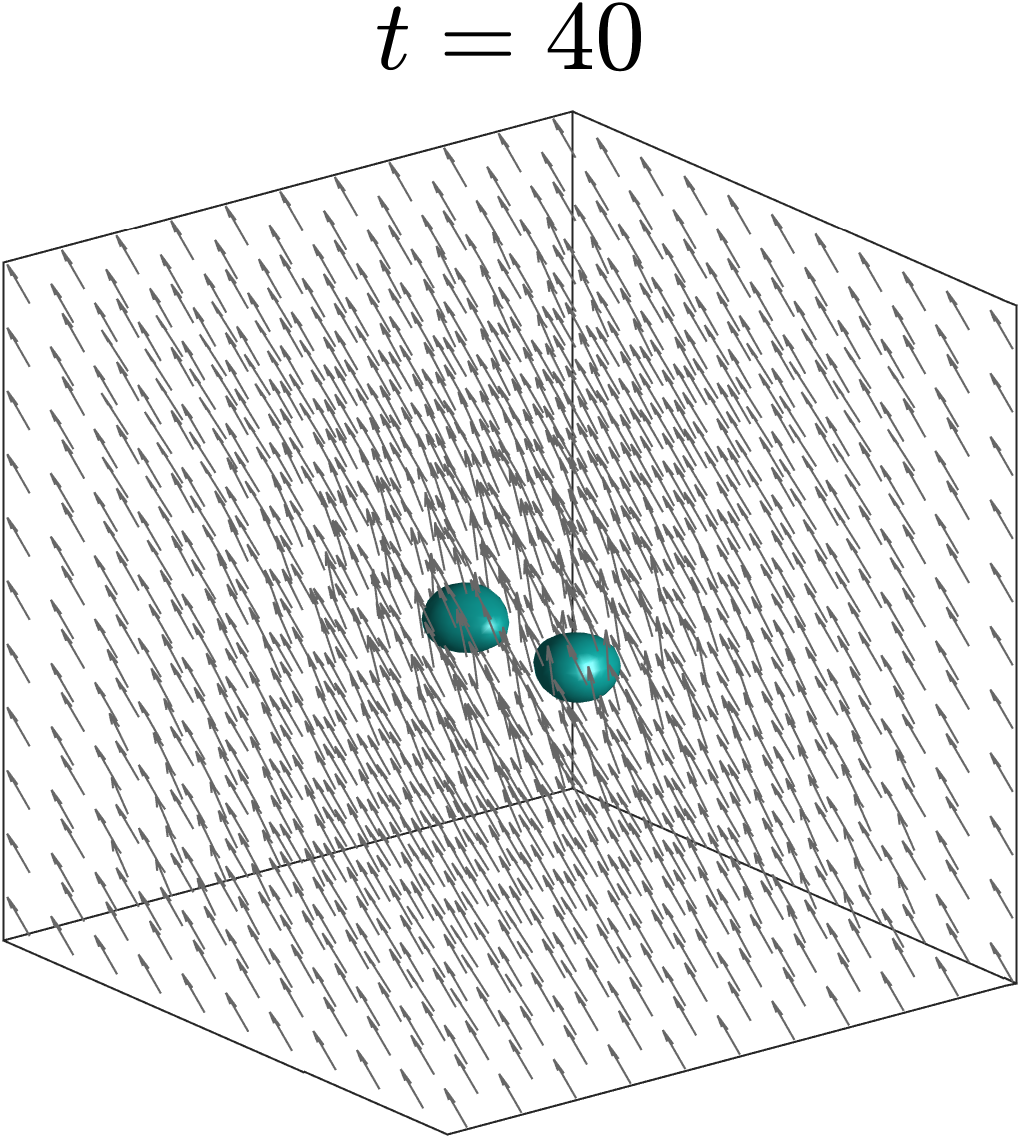}}\quad
		\subfigure{\includegraphics[width=0.31\textwidth,
			height=48mm]{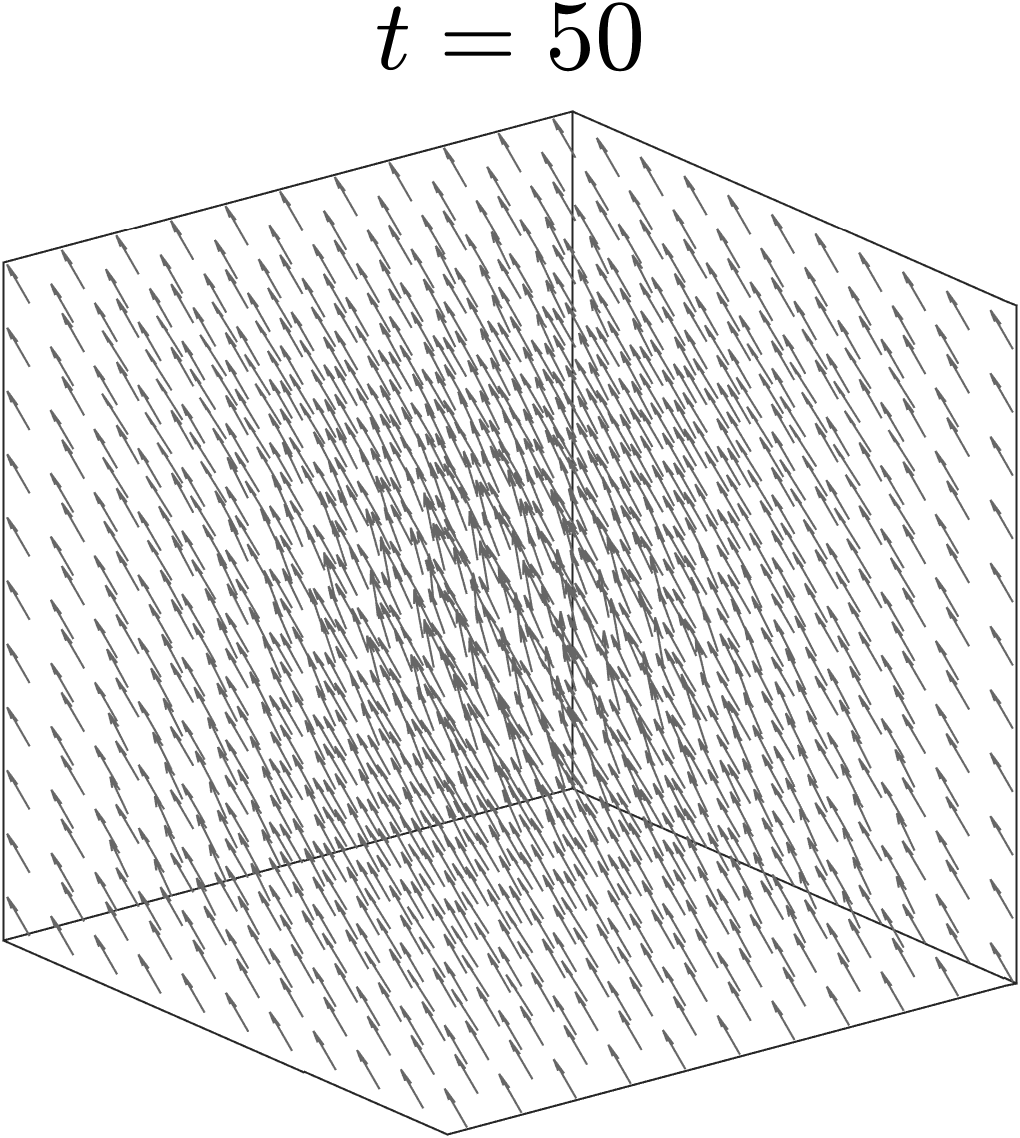}}\quad
		\subfigure{\includegraphics[width=0.31\textwidth,
			height=48mm]{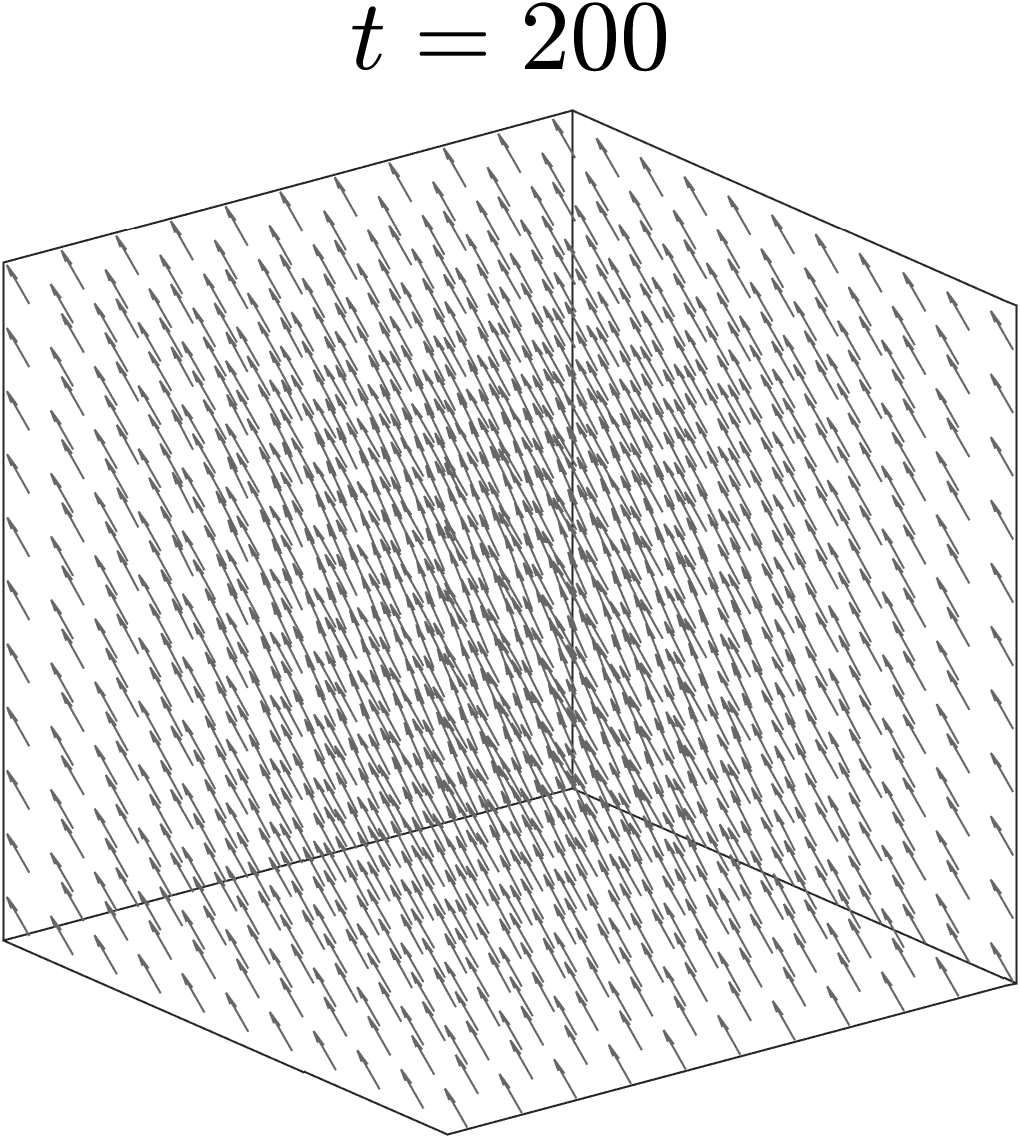}}
\caption{Evolution of the matrix-valued field and interface at $t=0,10,20,22,35,37,40,$ $50,200$. The initial field is given in \eqref{eq:4.6} with $\alpha(x,y,z)=4\pi xyz$ and $r=0.06$.}
  \label{fig:4.18}
	\end{center}
\end{figure}

\begin{figure}
    \begin{center}
		\subfigure{\includegraphics[width=0.28\textwidth,
			height=40mm]{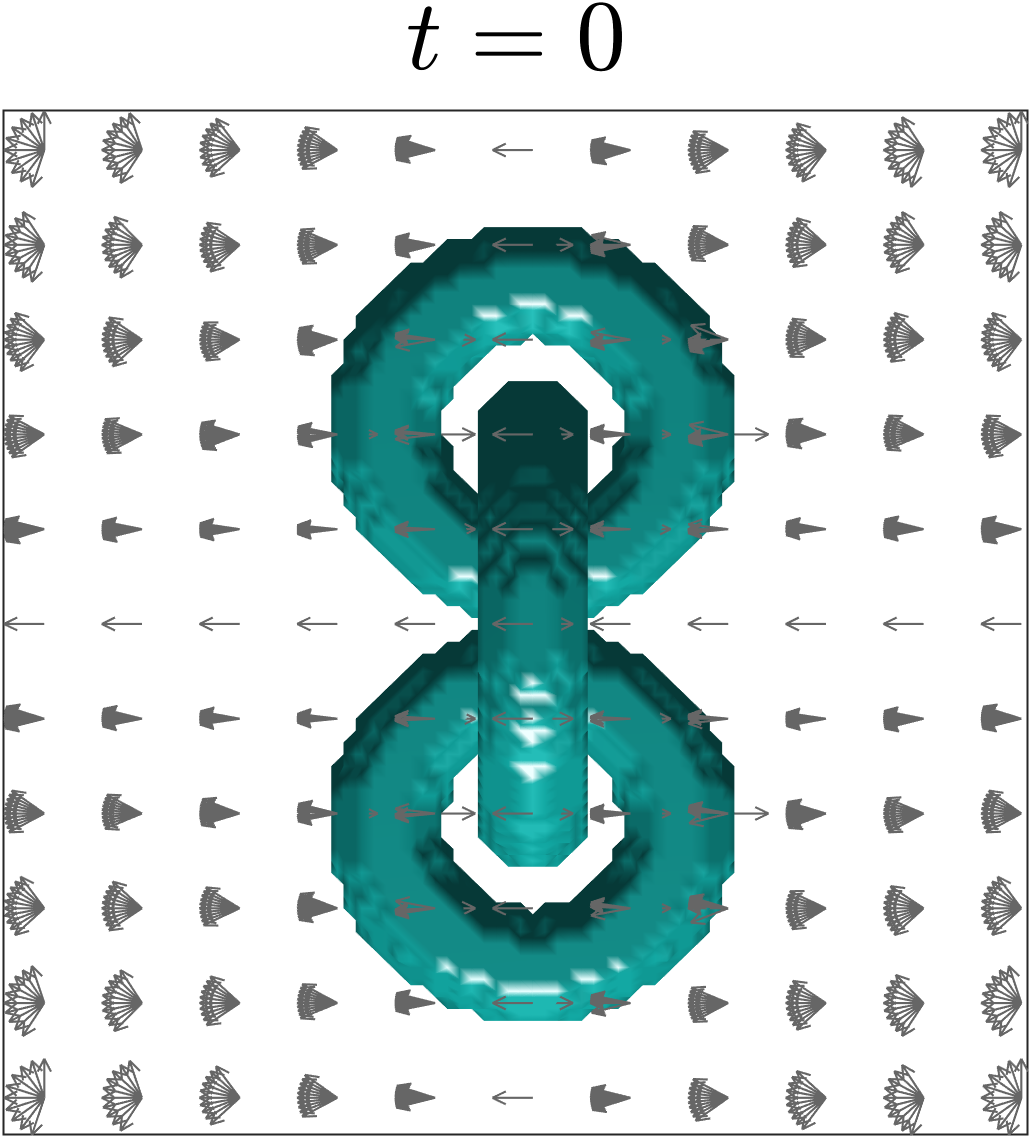}}\quad
		\subfigure{\includegraphics[width=0.28\textwidth,
			height=40mm]{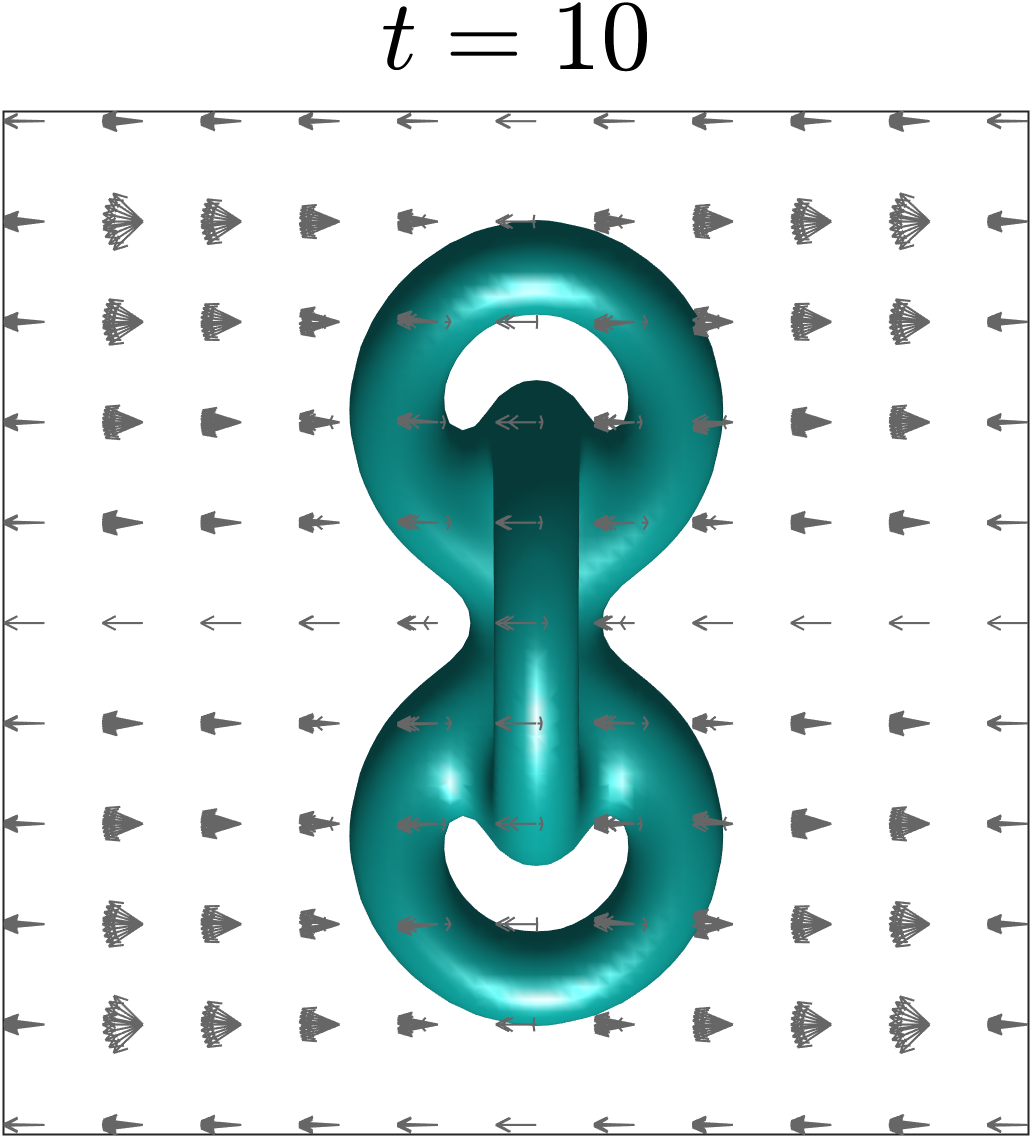}}\quad
    	\subfigure{\includegraphics[width=0.28\textwidth,
			height=40mm]{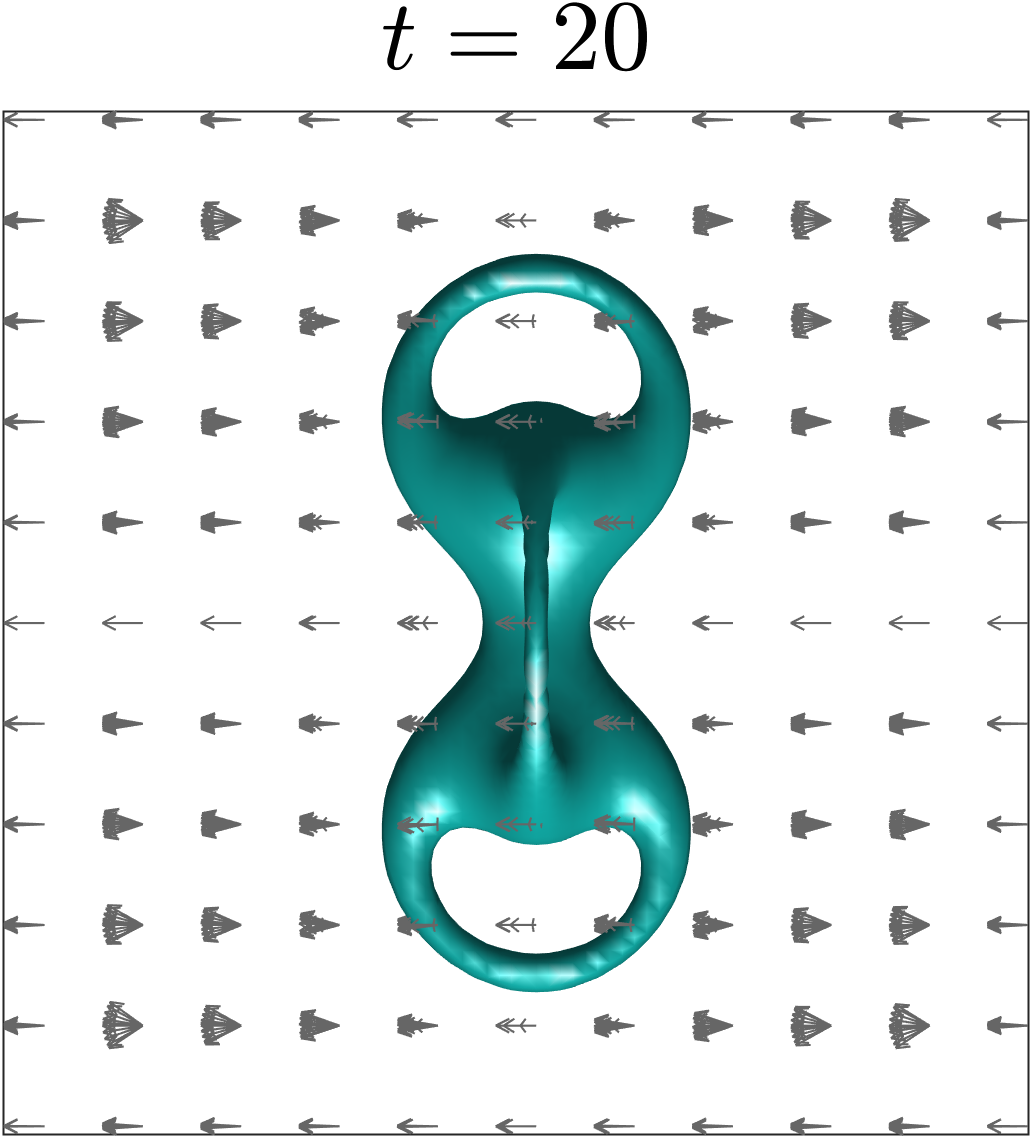}}\quad
		\subfigure{\includegraphics[width=0.28\textwidth,
			height=40mm]{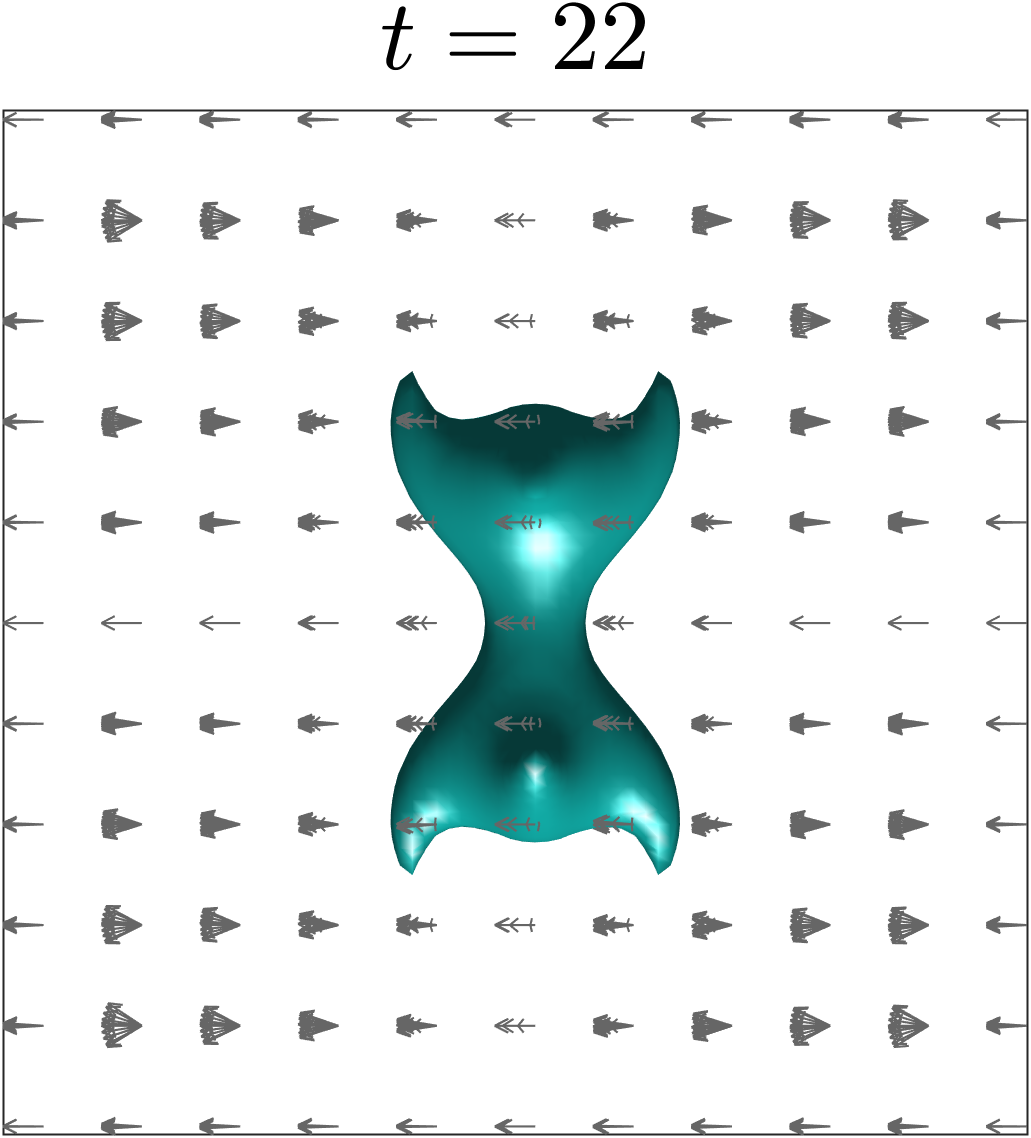}}\quad
    	\subfigure{\includegraphics[width=0.28\textwidth,
			height=40mm]{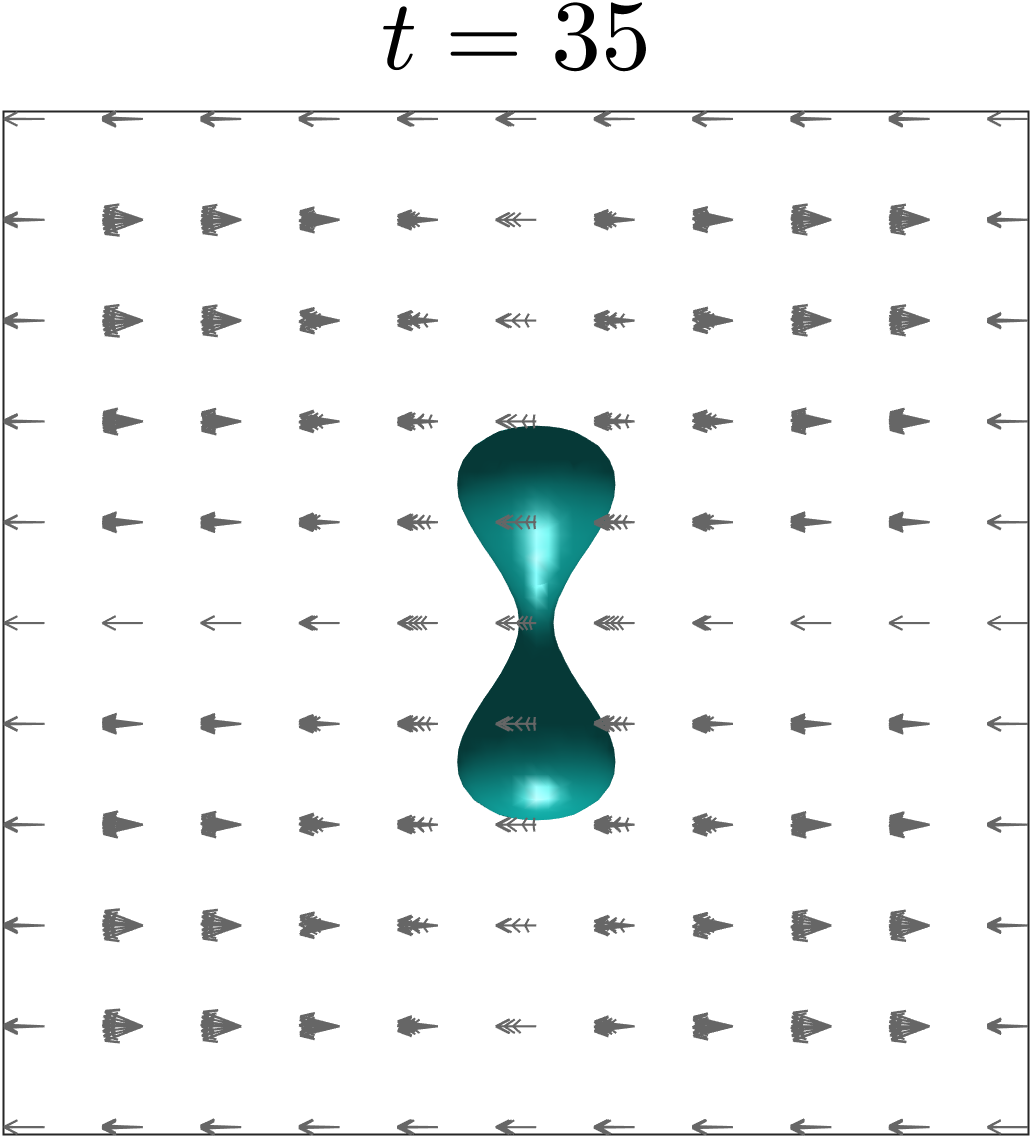}}\quad
		\subfigure{\includegraphics[width=0.28\textwidth,
			height=40mm]{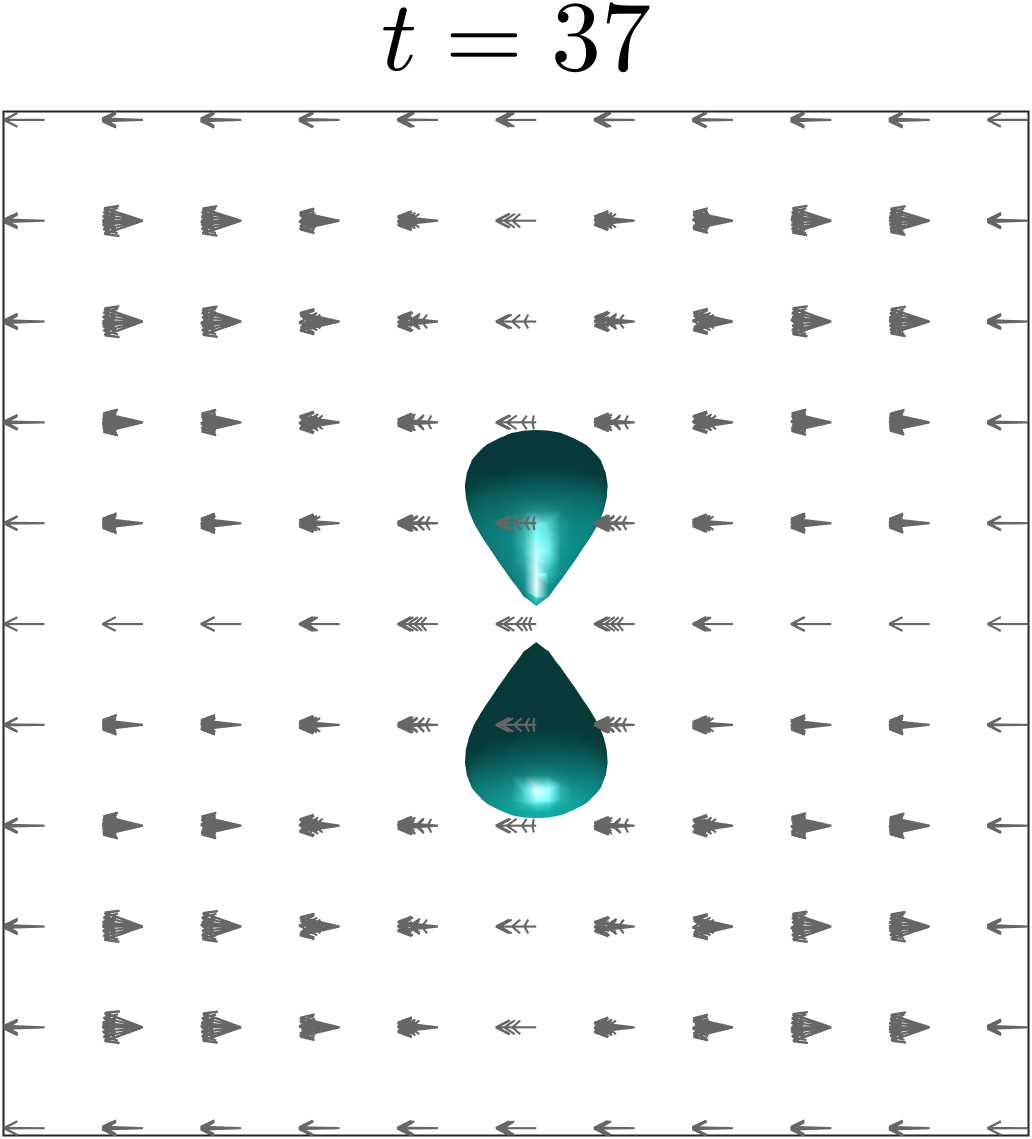}}\quad
    	\subfigure{\includegraphics[width=0.28\textwidth,
			height=40mm]{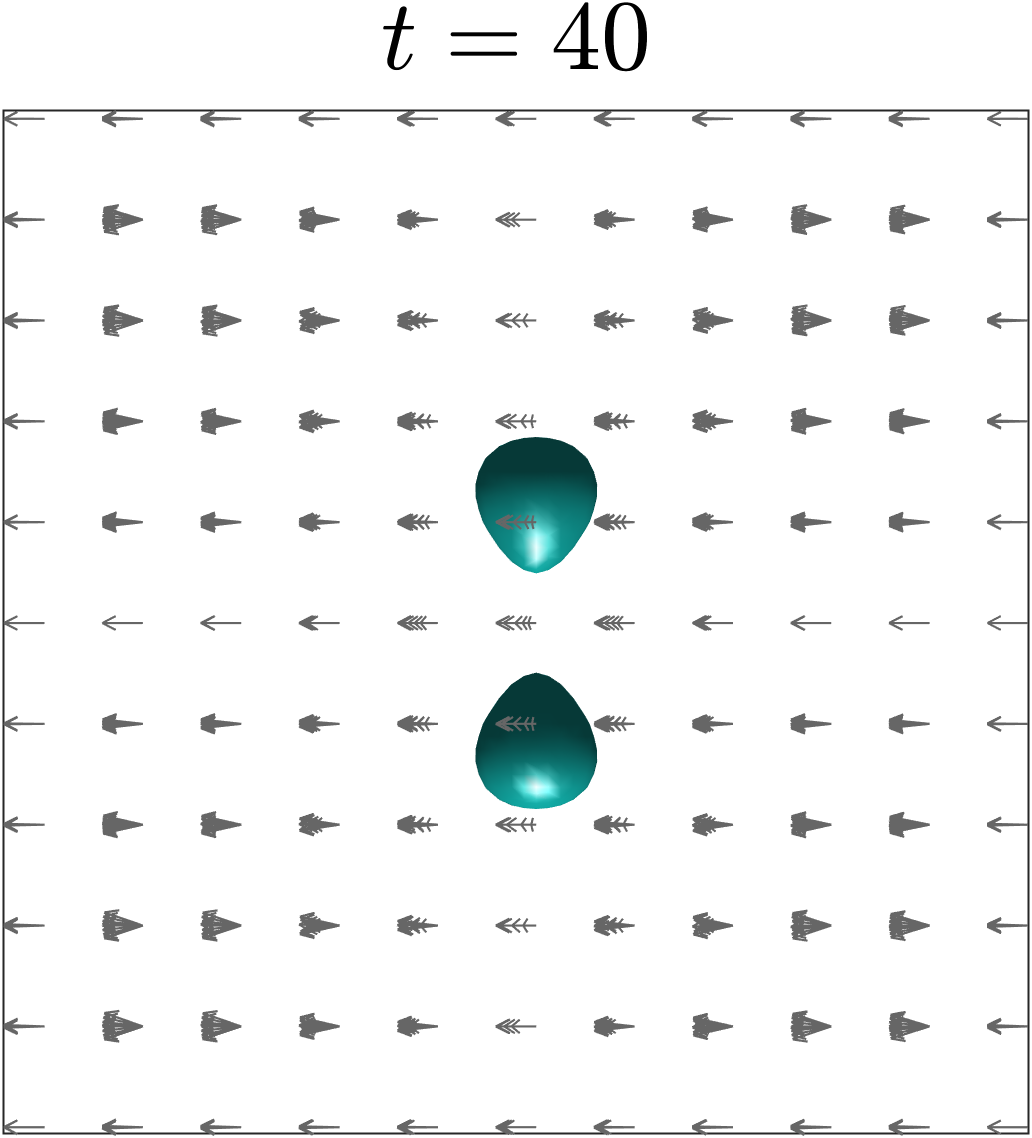}}\quad
		\subfigure{\includegraphics[width=0.28\textwidth,
			height=40mm]{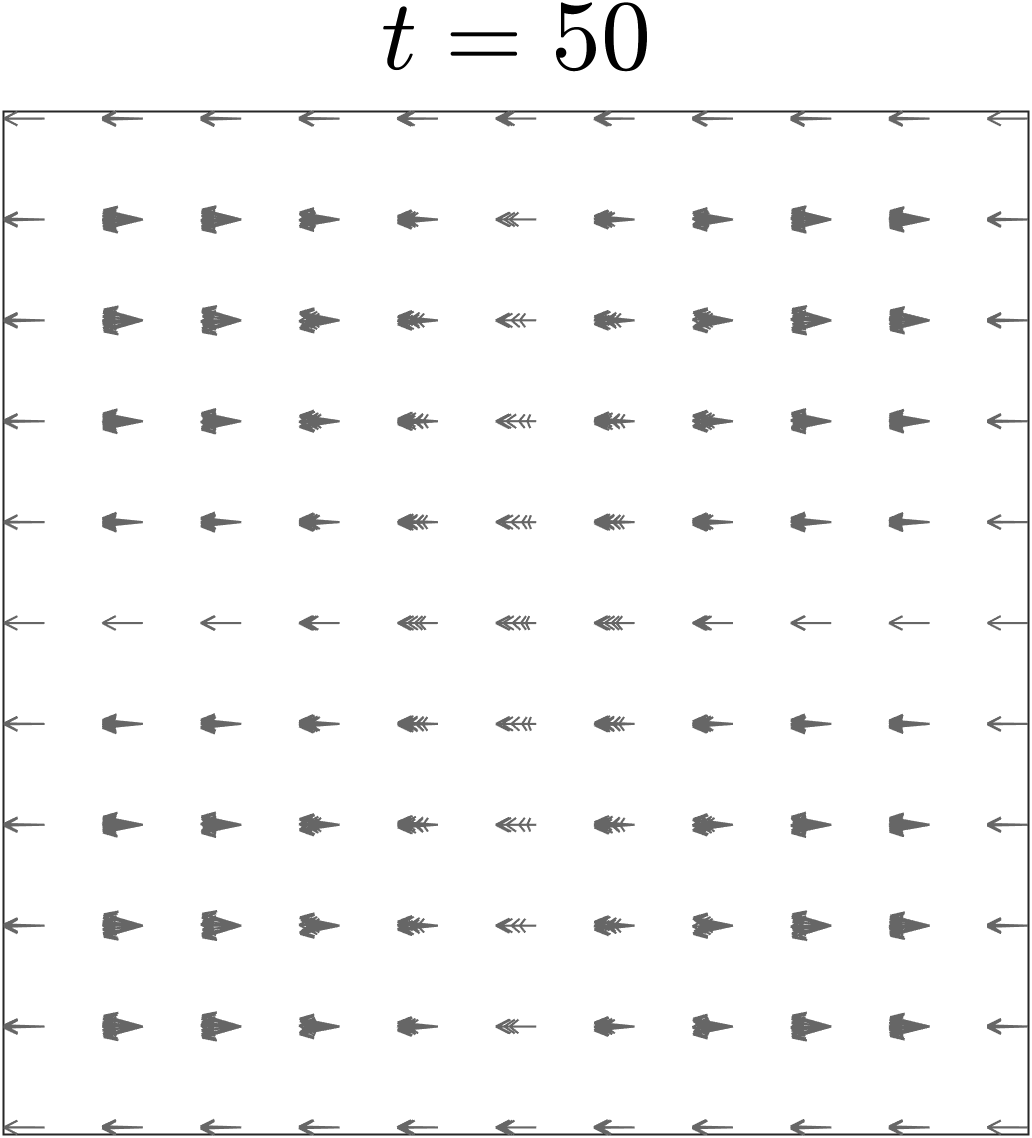}}\quad
		\subfigure{\includegraphics[width=0.28\textwidth,
			height=40mm]{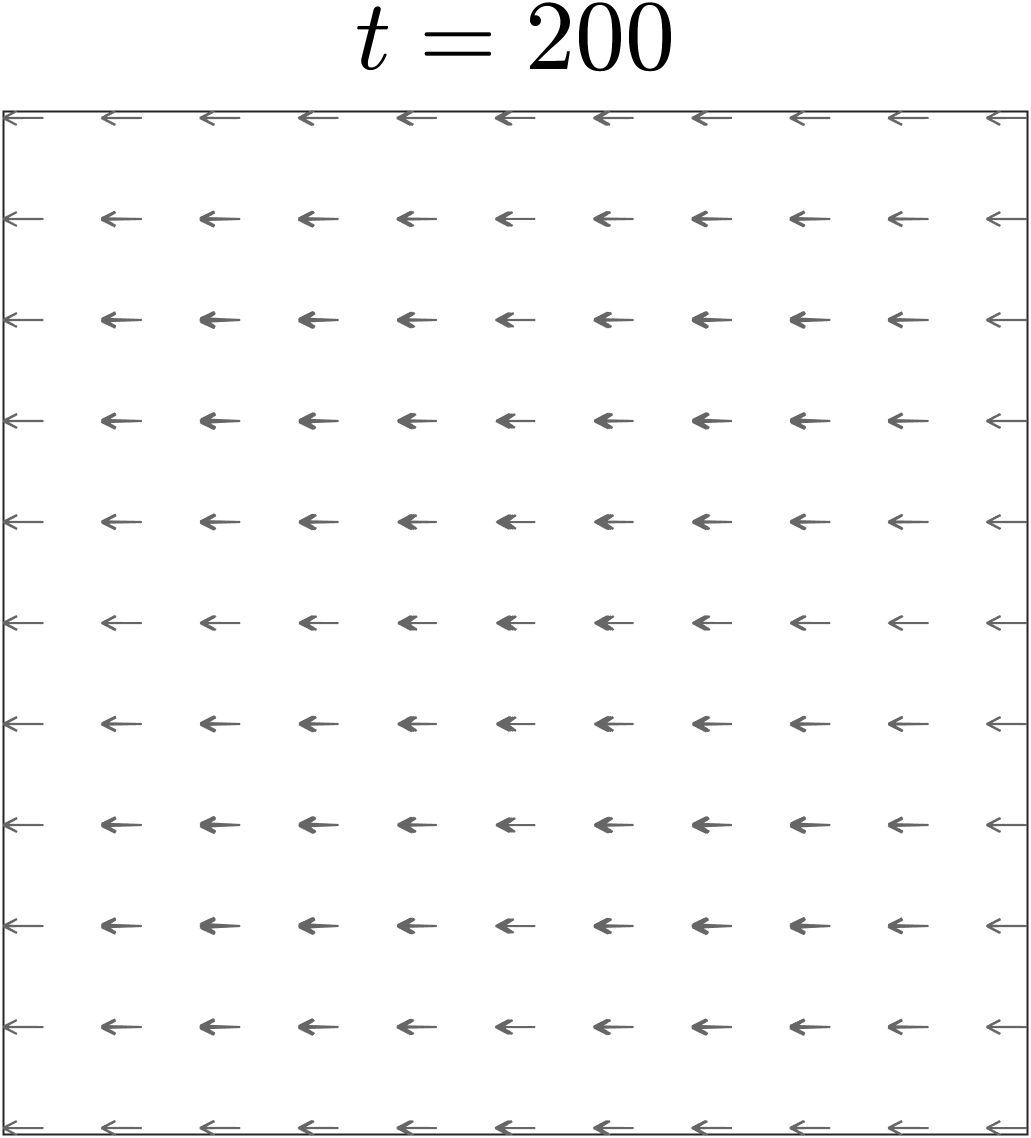}}
		\caption{A vertical view of the evolution of the matrix-valued field and interface at $t=0,10,20,22,35,37,40,50,200$. The initial field is given in \eqref{eq:4.6} with $\alpha(x,y,z)=4\pi xyz$ and $r=0.06$.}
  \label{fig:4.19}
    \end{center}
\end{figure}

 \begin{figure}
	\begin{center}
		\subfigure{\includegraphics[width=0.42\textwidth,
			height=45mm]{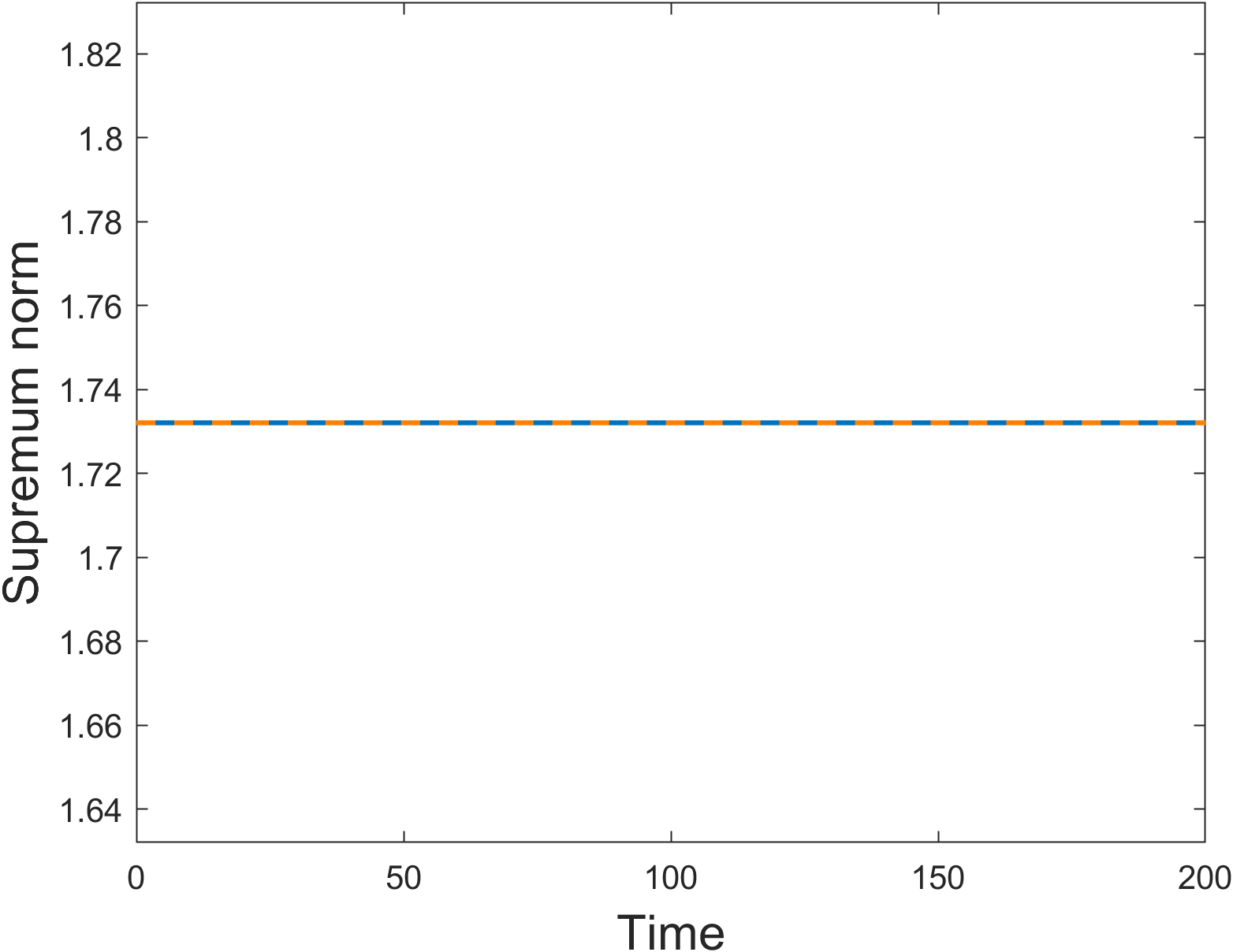}}\quad
		\subfigure{\includegraphics[width=0.42\textwidth,
			height=45mm]{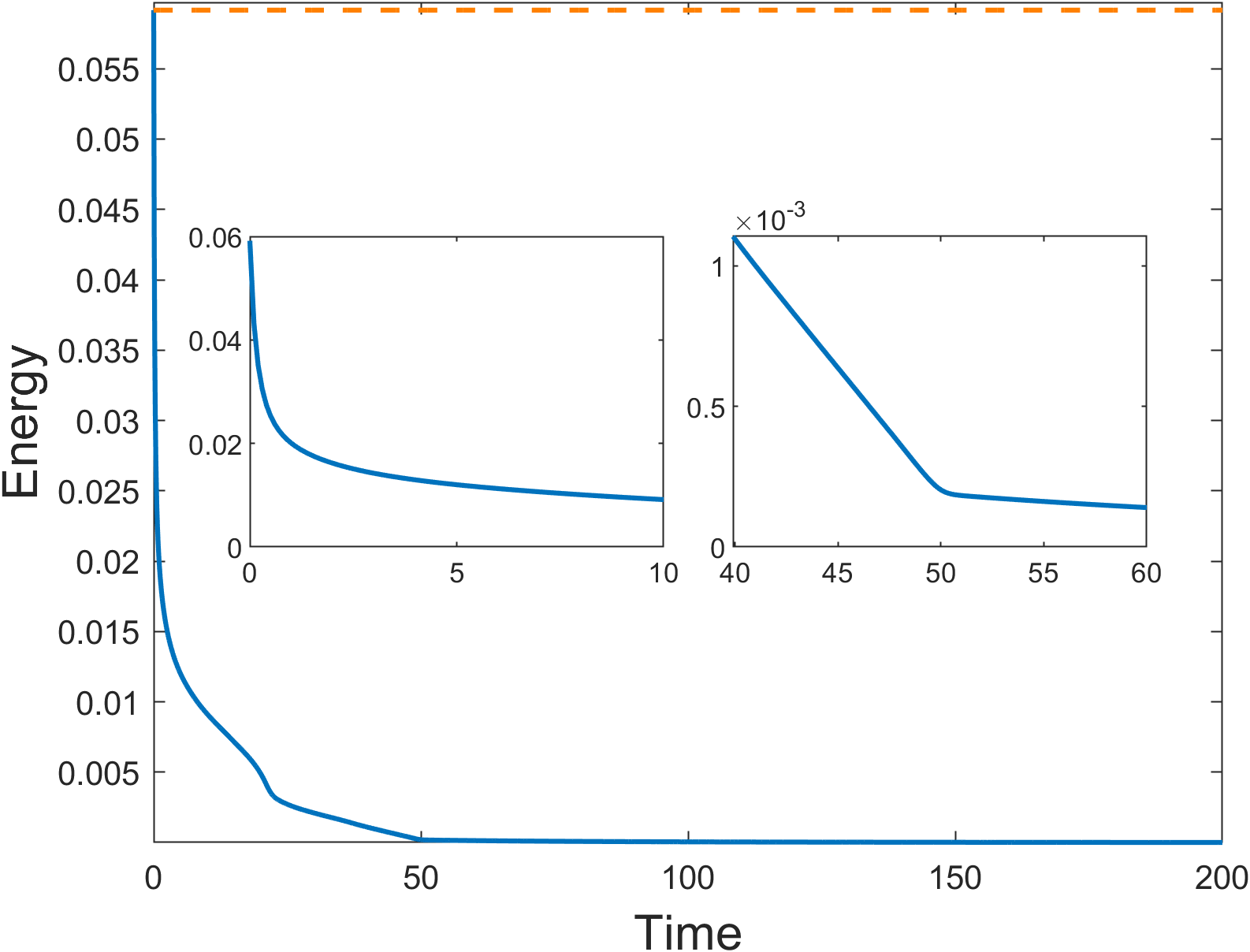}}
		\caption{Evolution of the supremum norm $\|\cdot\|_{\mathcal{X}}$ and energy with initial condition \eqref{eq:4.6} and $\alpha(x,y,z)=4\pi xyz$, $r=0.06$. The dashed line in the left figure is the maximum bound $\sqrt m$ while the dashed line in the right figure is the initial energy.}
  \label{fig:4.20}
	\end{center}
\end{figure}

\begin{figure}
	\begin{center}
		\subfigure{\includegraphics[width=0.31\textwidth,
			height=48mm]{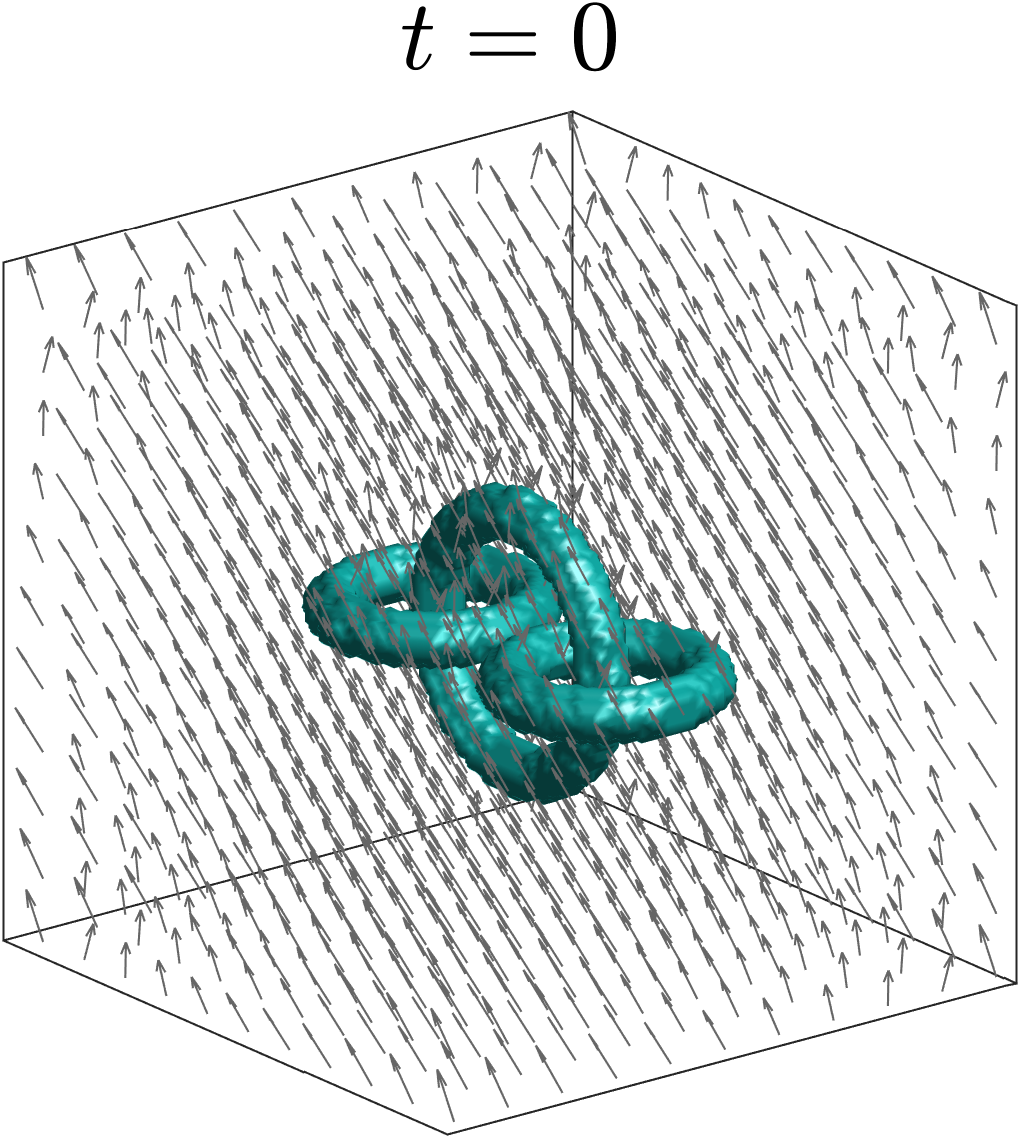}}\quad
		\subfigure{\includegraphics[width=0.31\textwidth,
			height=48mm]{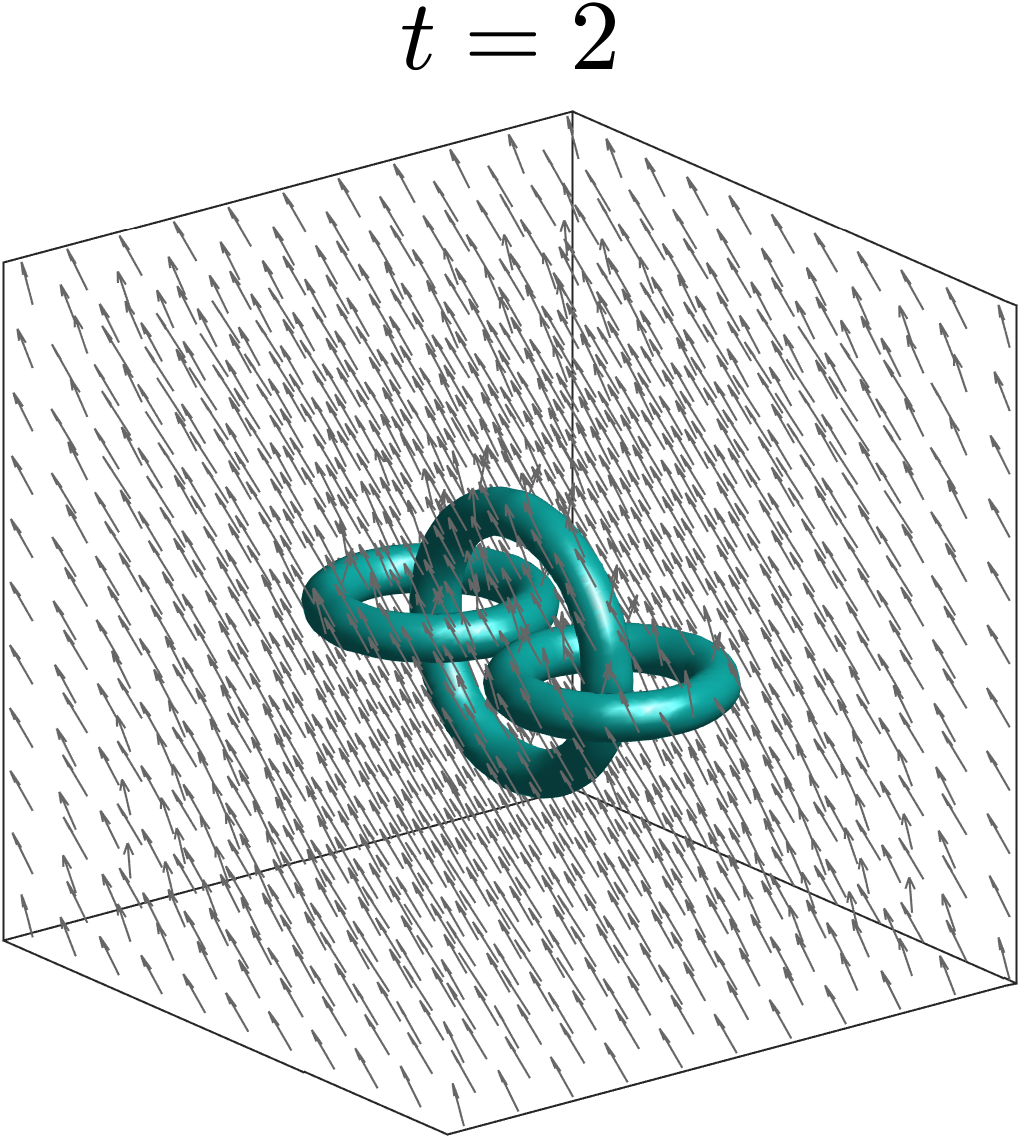}}\quad
		\subfigure{\includegraphics[width=0.31\textwidth,
			height=48mm]{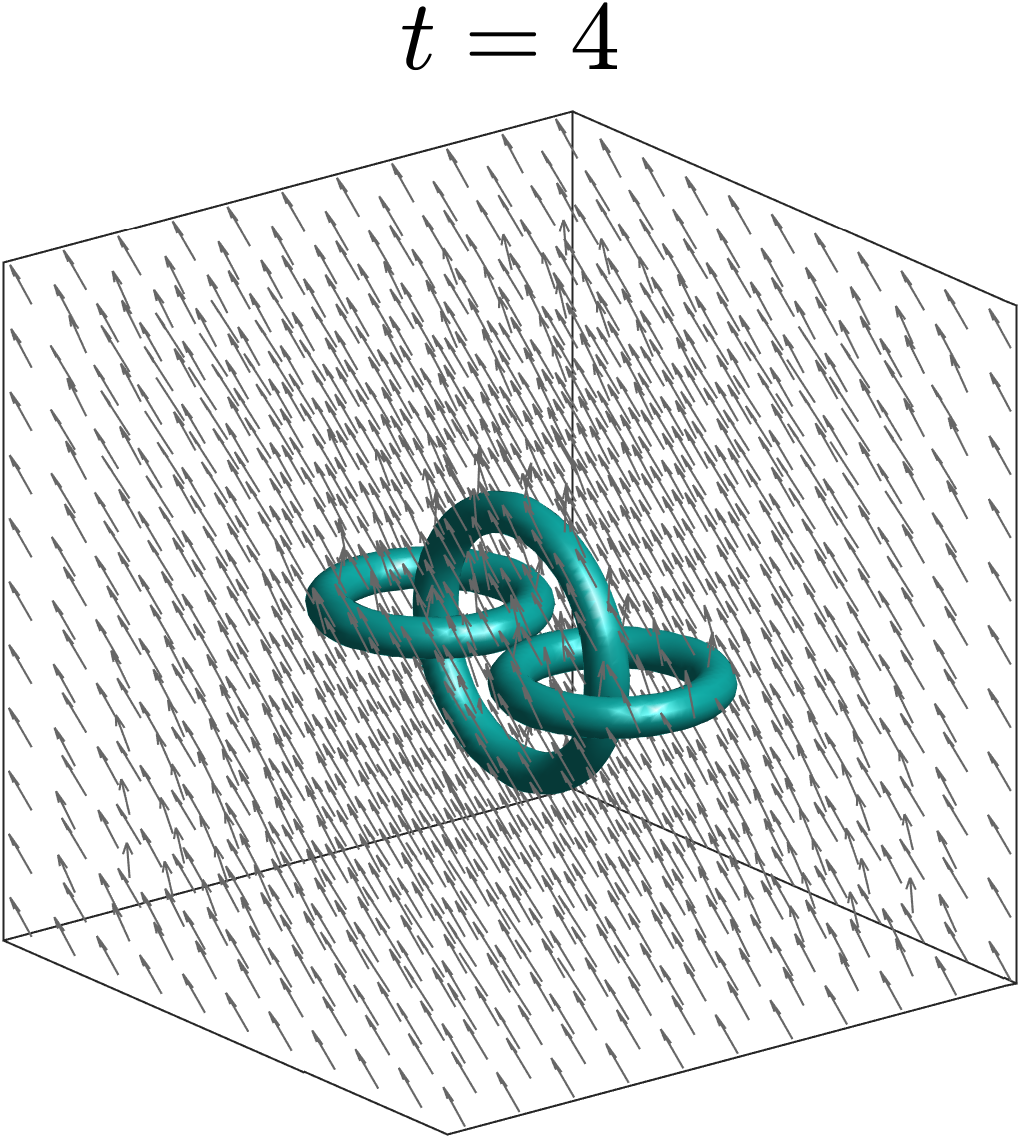}}\quad
		\subfigure{\includegraphics[width=0.31\textwidth,
			height=48mm]{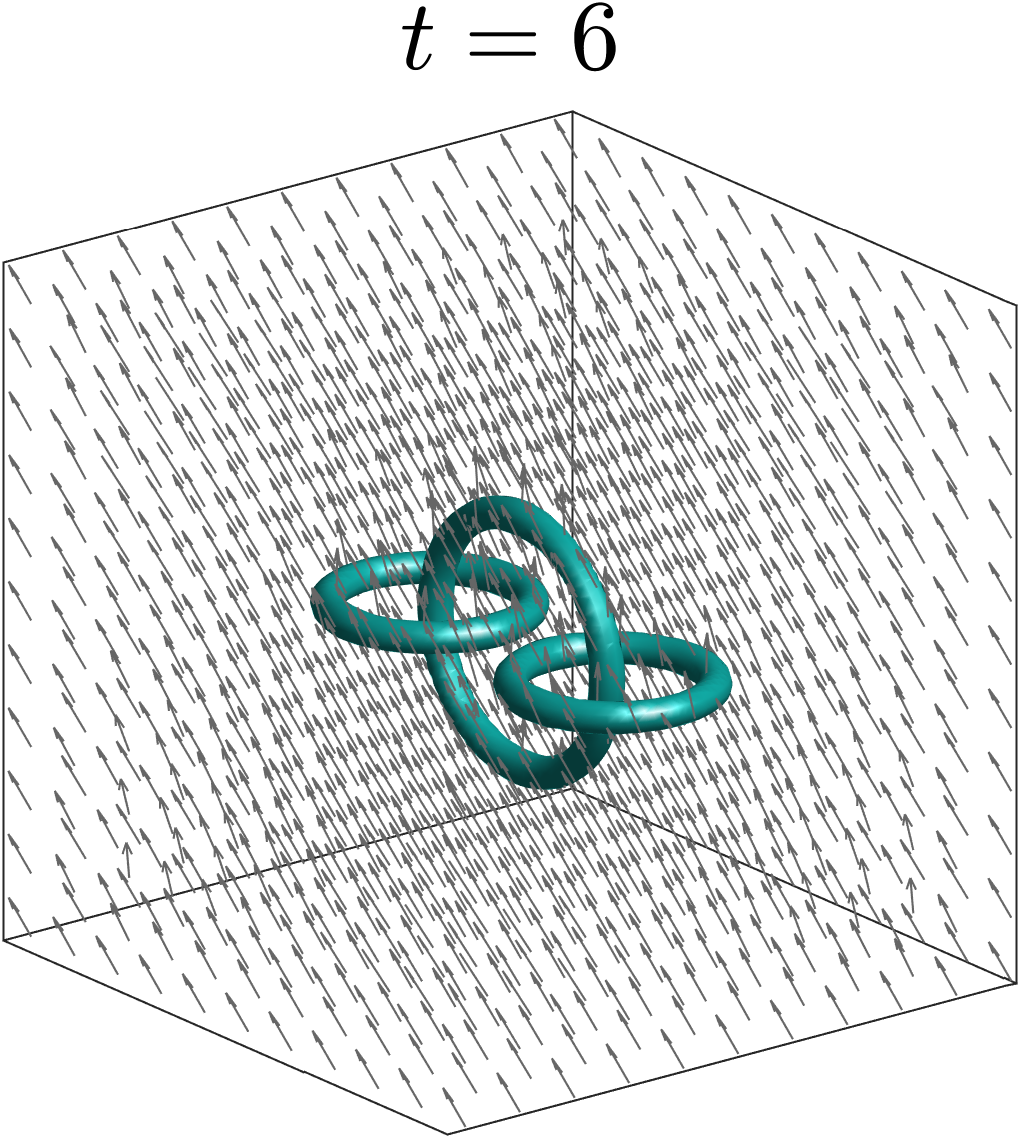}}\quad
		\subfigure{\includegraphics[width=0.31\textwidth,
			height=48mm]{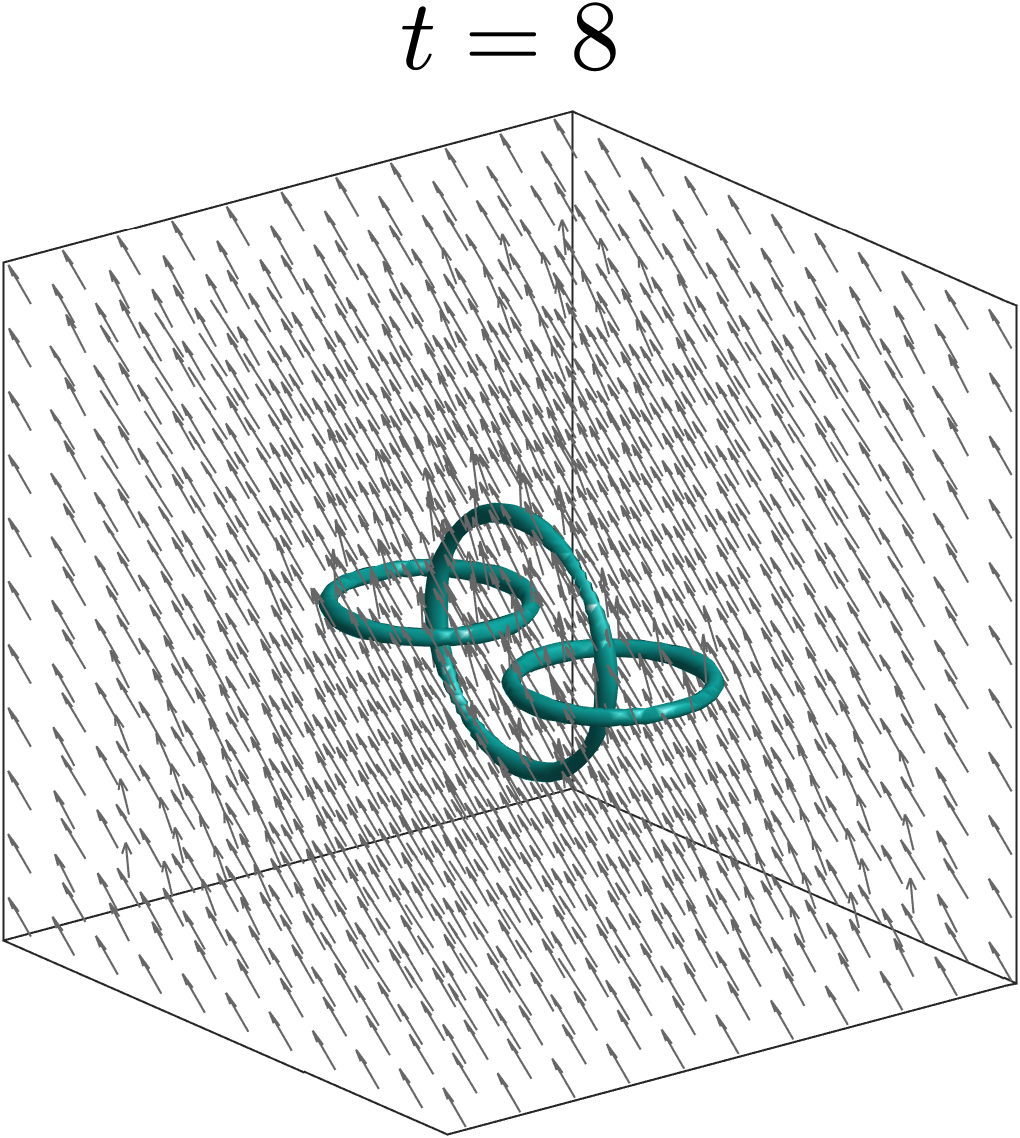}}\quad
		\subfigure{\includegraphics[width=0.31\textwidth,
			height=48mm]{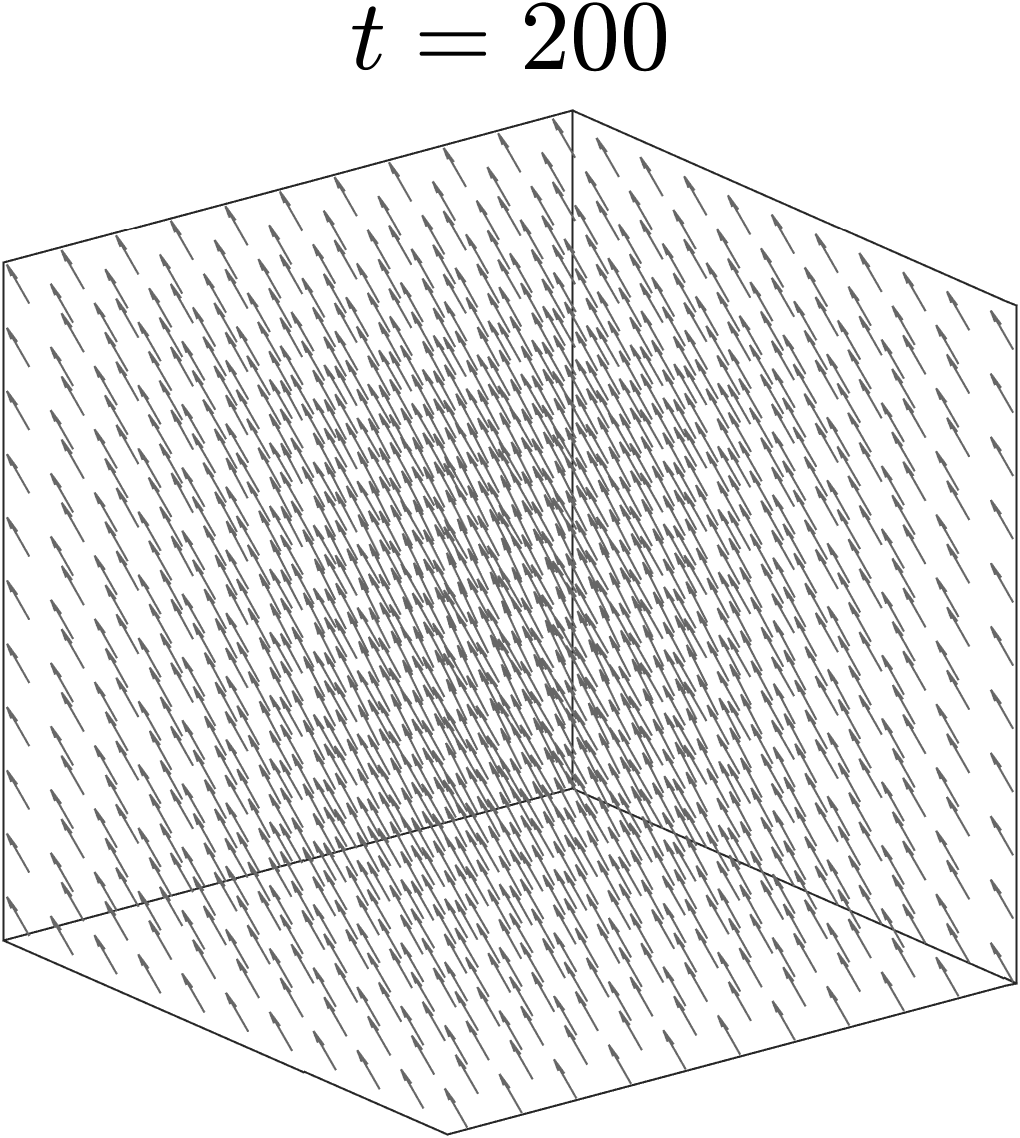}}
		\caption{Evolution of the matrix-valued field and interface at $t=0,2,4,6,8,200$. The initial field is given in \eqref{eq:4.6} with $\alpha(x,y,z)=4\pi xyz$ and $r=0.04$.}
  \label{fig:4.21}
	\end{center}
\end{figure}
 \begin{figure}
	\begin{center}
		\subfigure{\includegraphics[width=0.28\textwidth,
			height=40mm]{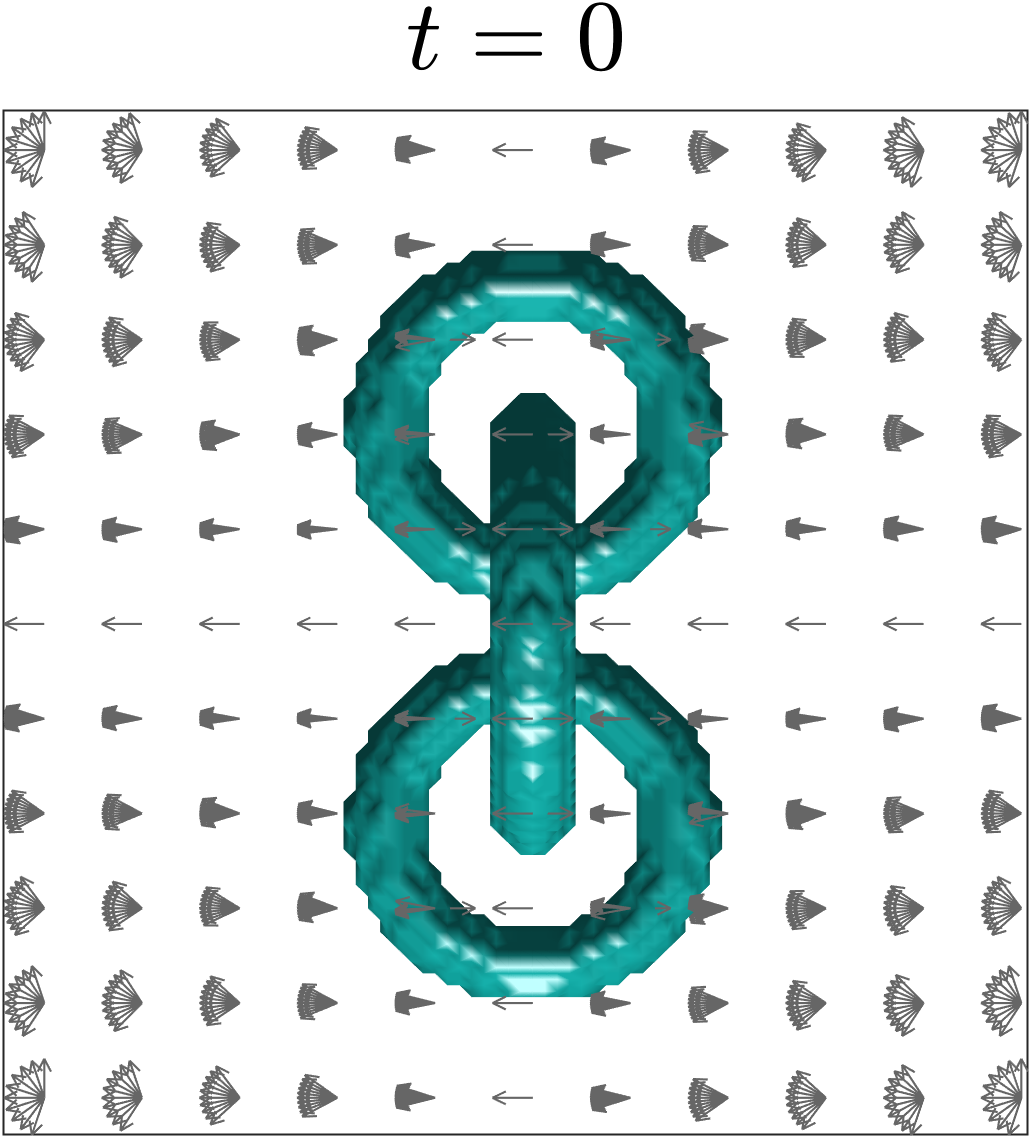}}\quad
		\subfigure{\includegraphics[width=0.28\textwidth,
			height=40mm]{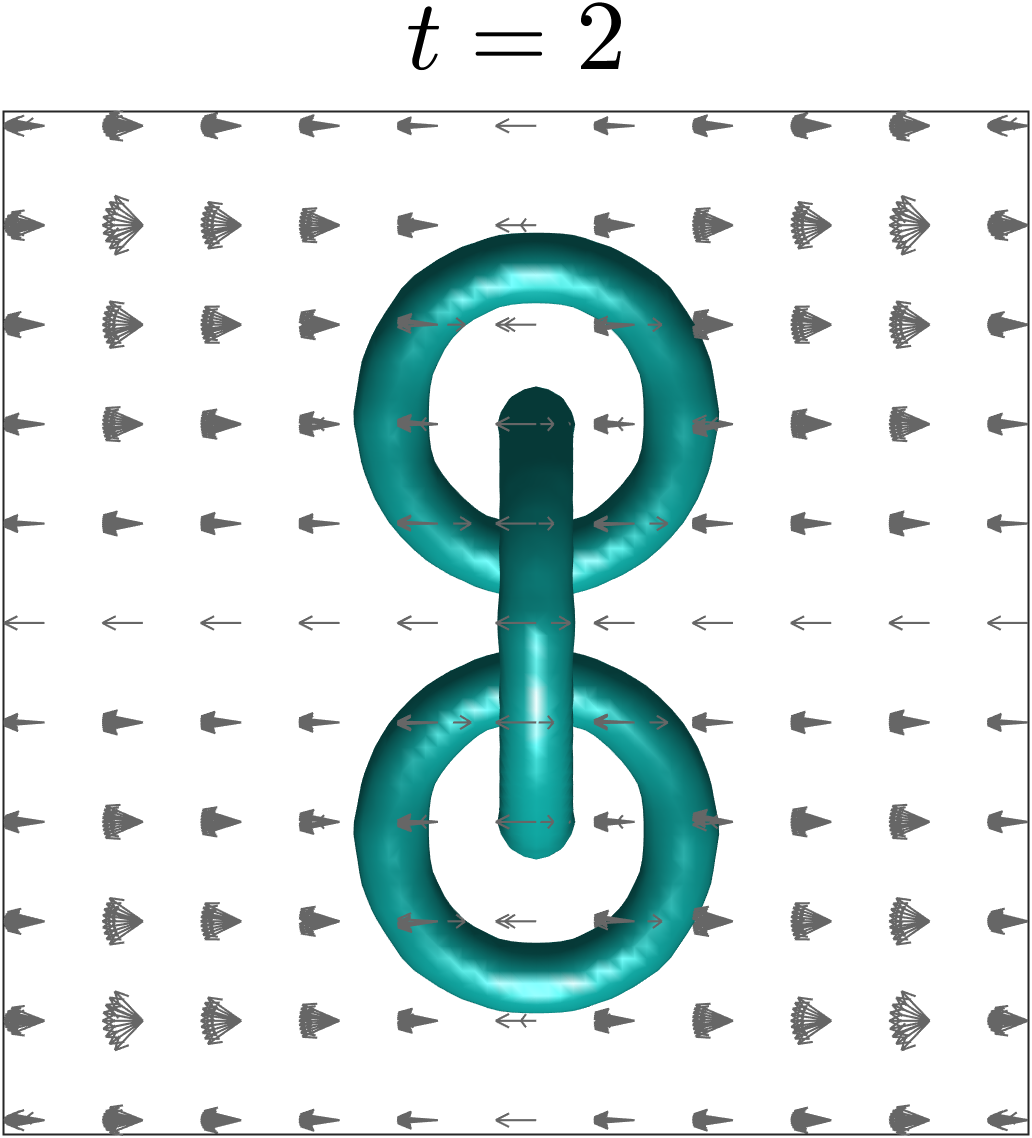}}\quad
    	\subfigure{\includegraphics[width=0.28\textwidth,
			height=40mm]{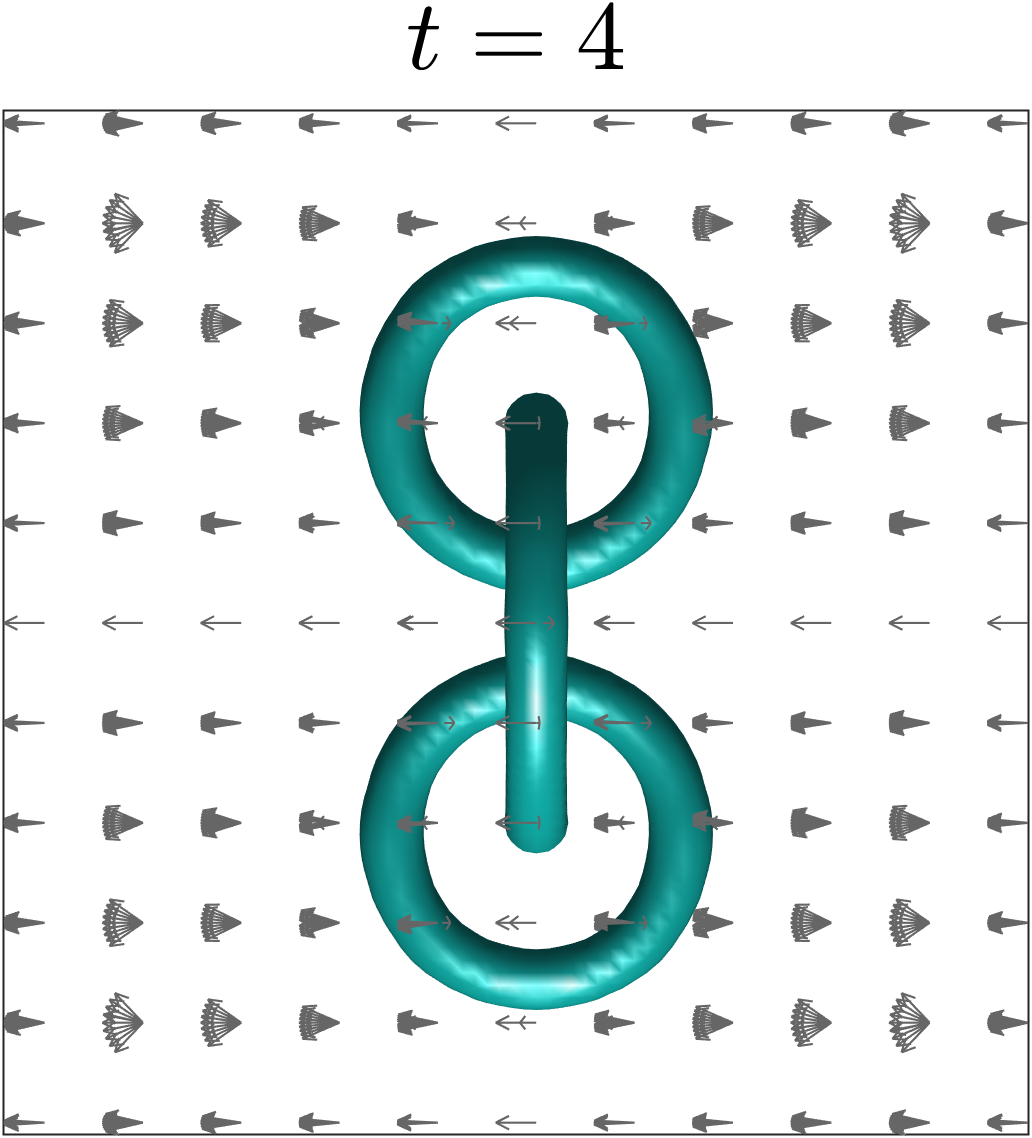}}\quad
		\subfigure{\includegraphics[width=0.28\textwidth,
			height=40mm]{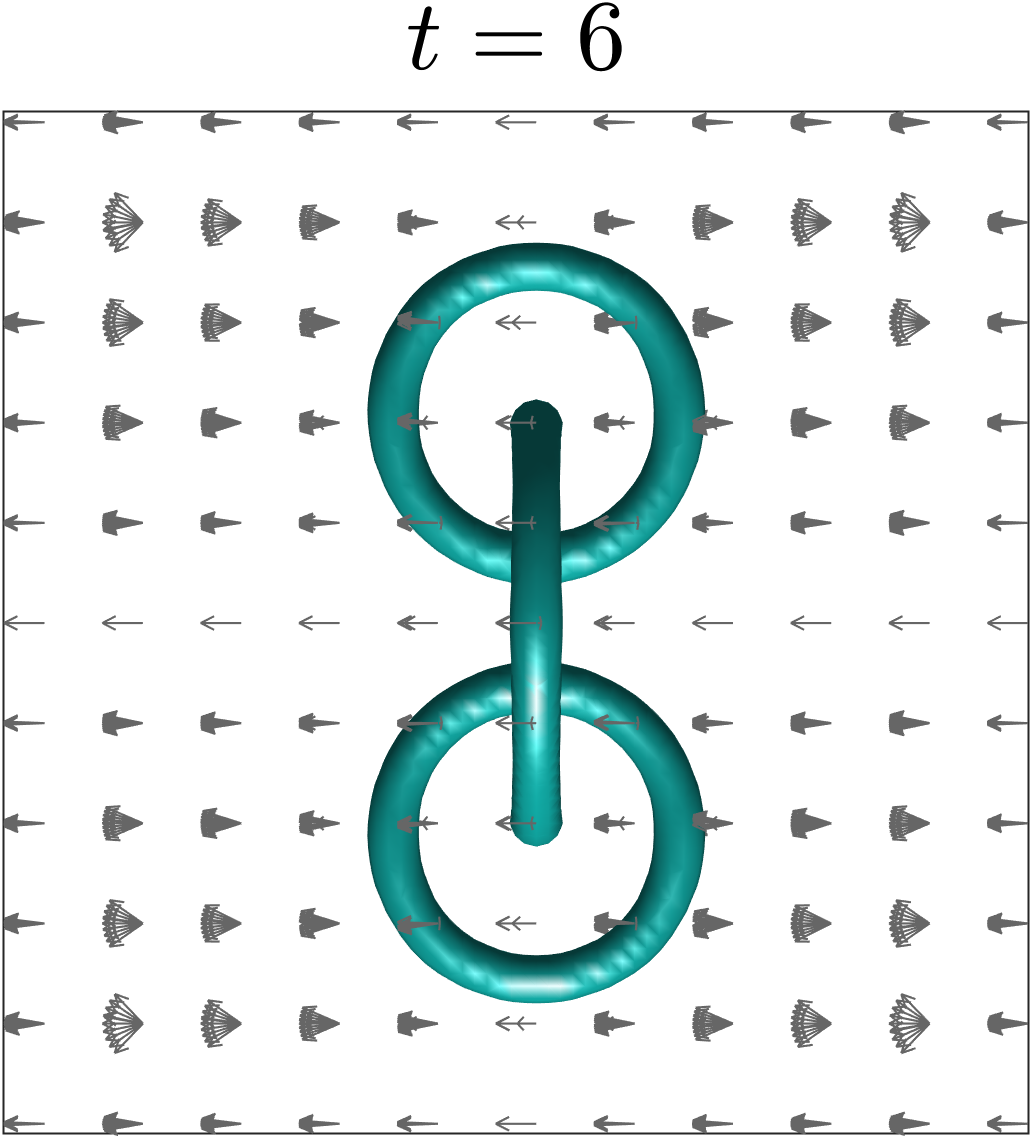}}\quad
    	\subfigure{\includegraphics[width=0.28\textwidth,
			height=40mm]{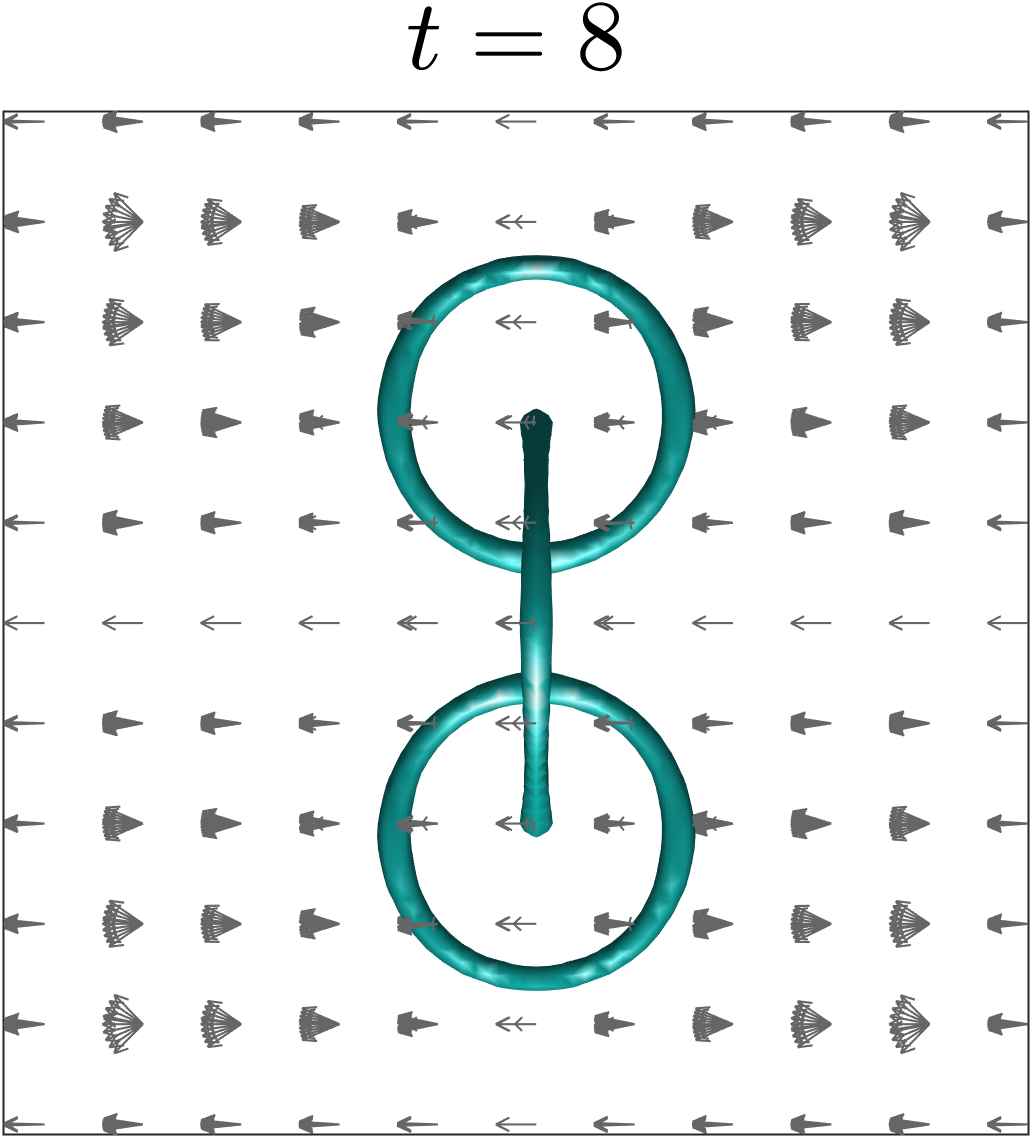}}\quad
		\subfigure{\includegraphics[width=0.28\textwidth,
			height=40mm]{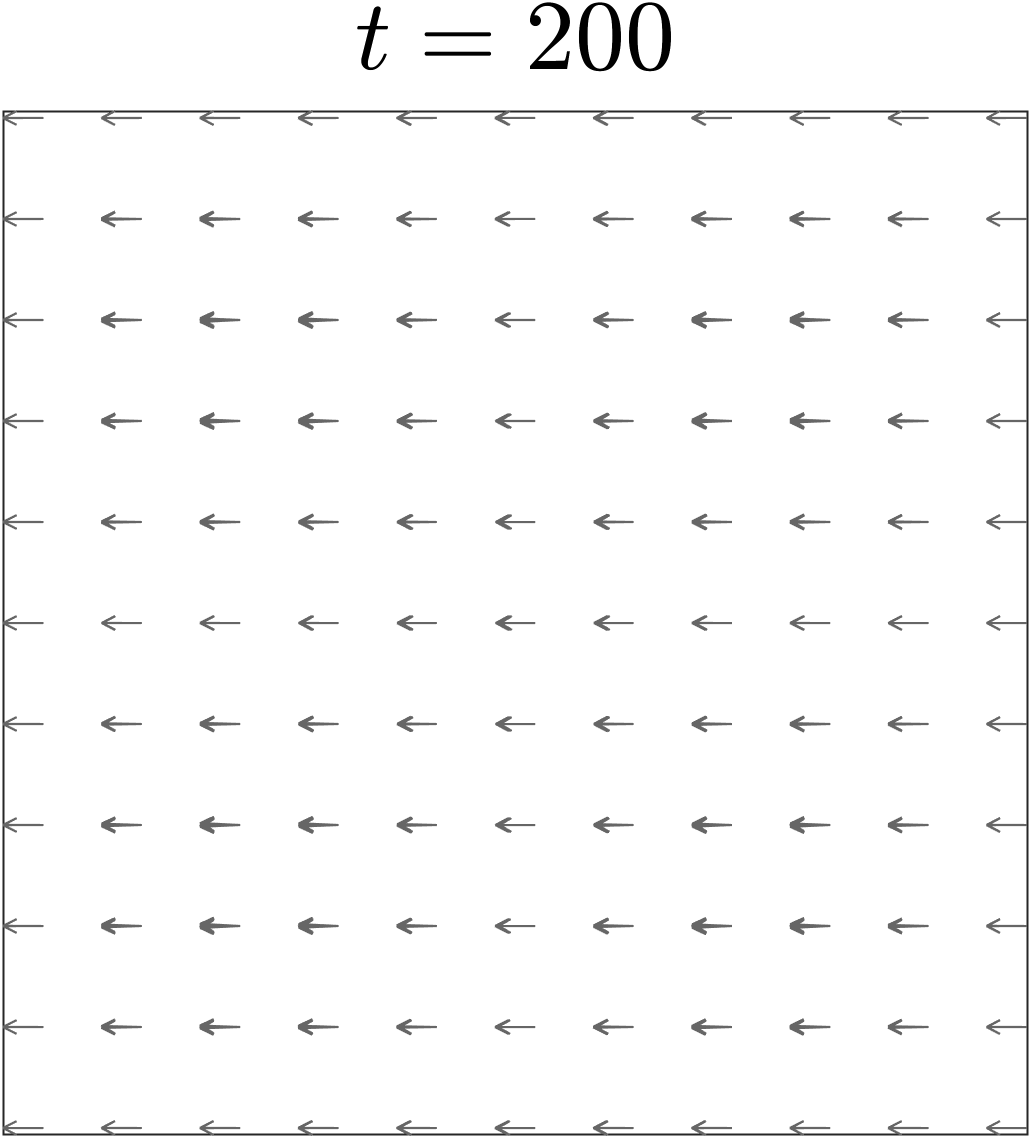}}
		\caption{A vertical view of the evolution of the matrix-valued field and interface at $t=0,2,4,6,8,200$. The initial field is given in \eqref{eq:4.6} with $\alpha(x,y,z)=4\pi xyz$ and $r=0.04$.}
  \label{fig:4.22}
	\end{center}
\end{figure}

\begin{figure}
	\begin{center}
		\subfigure{\includegraphics[width=0.42\textwidth,
			height=47mm]{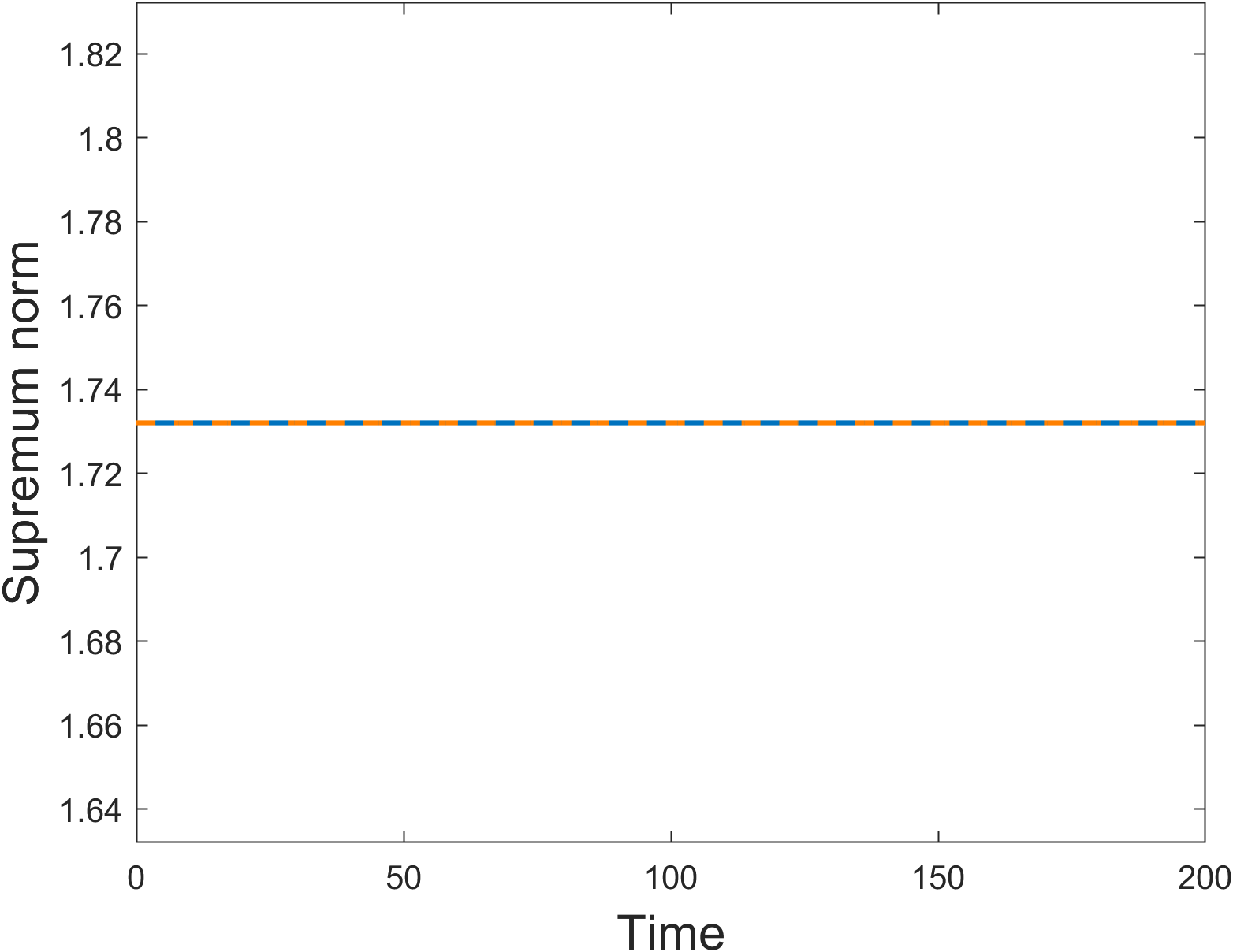}}\quad
		\subfigure{\includegraphics[width=0.42\textwidth,
			height=47mm]{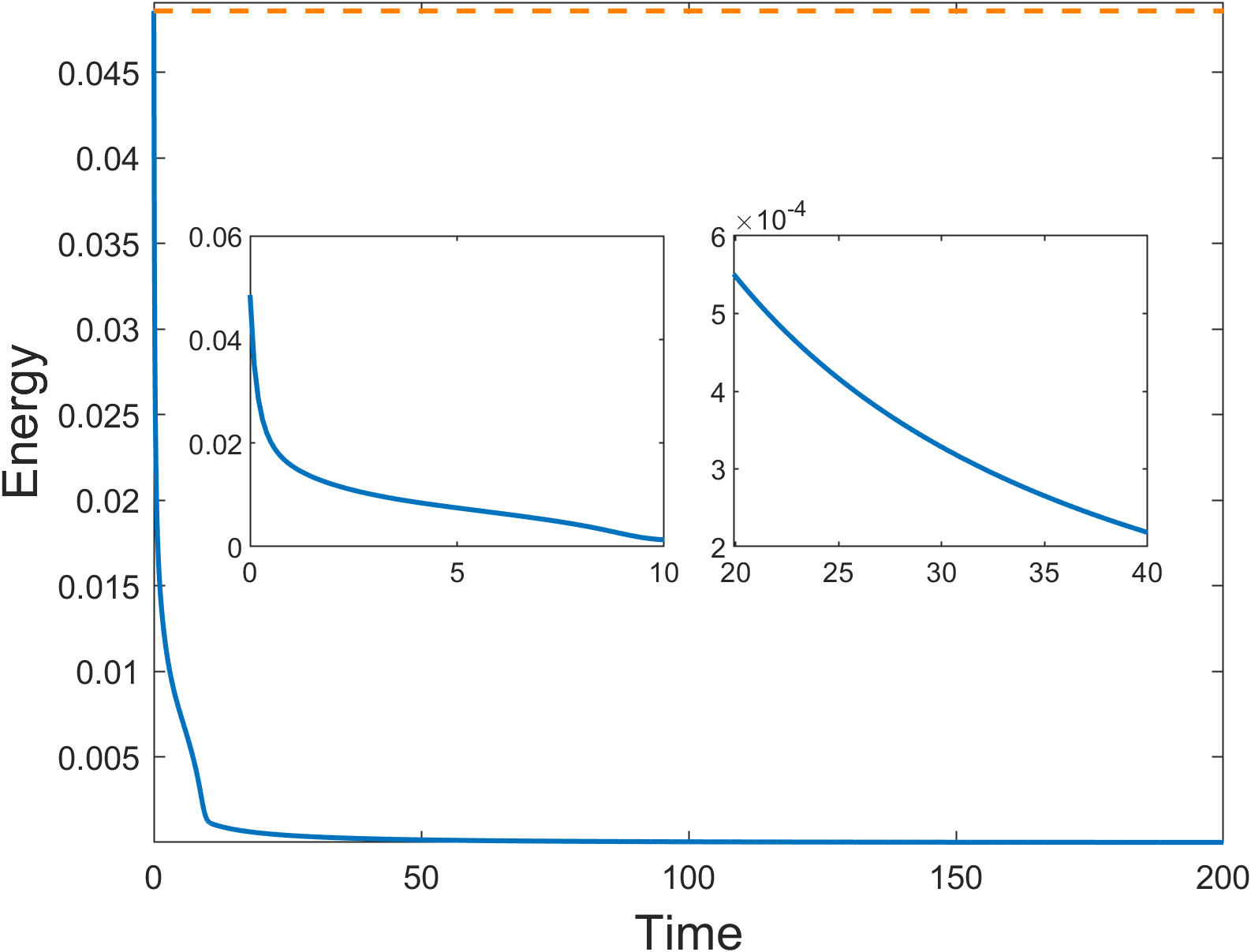}}
		\caption{Evolution of the supremum norm $\|\cdot\|_{\mathcal{X}}$ and energy with initial condition \eqref{eq:4.6} and $\alpha(x,y,z)=4\pi xyz$, $r=0.04$. The dashed line in the left figure is the maximum bound $\sqrt m$ while the dashed line in the right figure is the initial energy.}
        \label{fig:4.23}
	\end{center}
\end{figure}

\section{Conclusion}

In this work, we perform time discretization of the matrix-valued Allen--Cahn equation using first- and second-order ETD schemes. It is shown that both schemes can preserve unconditionally the MBP for any initial matrix-valued field $U\in \mathbb{R}^{m\times m}$ satisfying $\|U^0\|_F\leq\sqrt{m}$ for any $\mathbf x\in \overline \Omega$. This improves the MBP result in \cite{du2021maximum} requiring a symmetric initial matrix-valued field.
In addition, we prove that both the first and second order ETD schemes satisfy the energy dissipation law unconditionally.

In Theorem \ref{Conv1} and \ref{Conv2}, $U\in C^1([0,T];\mathcal{X})$ and $U\in C^2([0,T];\mathcal{X})$ are assumed respectively. We mention that 
if the initial data $U^0$ is sufficiently smooth, $\|U^0\|_{\mathcal X}\leq \sqrt m$, and $\varepsilon \leq 1$, it is expected that the first and second order derivatives in time of the exact solution $U$, $\partial_t U$ and $\partial^2_t U$, could be proved to be bounded independent of $\varepsilon$ in $\mathcal X$-norm. That is to say, the $C^1([0,T];\mathcal{X})$-norm and $C^2([0,T];\mathcal{X})$-norm are expected to be independent of $\varepsilon$.
Such $\varepsilon$-dependency issues will be discussed in our future work. 

\section*{Acknowledgements}
%This research is supported by the Guangdong Key Lab of Mathematical Foundations for Artificial Intelligence.

 C. Quan is supported by National Natural Science Foundation of China  (Grant No. 12271241), Guangdong Provincial Key Laboratory of Mathematical Foundations for Artificial Intelligence (2023B1212010001), Guangdong Basic and Applied Basic Research Foundation (Grant No. 2023B1515020030), and Shenzhen Science and Technology Innovation Program (Grant No. JCYJ20230807092402004, RCYX20210609104358076). 
D. Wang is partially supported by National Natural Science Foundation of China (Grant No. 12101524, 12422116), Guangdong Basic and Applied Basic Research Foundation (Grant No. 2023A1515012199), Shenzhen Science and Technology Innovation Program (Grant No. JCYJ20220530143803007, RCYX20221008092843046), Guangdong Provincial Key Laboratory of Mathematical Foundations for Artificial Intelligence (2023B1212010001), and Hetao Shenzhen-Hong Kong Science and Technology Innovation Cooperation Zone Project (No.HZQSWS-KCCYB-2024016).

\bibliographystyle{abbrvnat}
\bibliography{references}
\end{document}